\newtheorem{theorem}{Theorem}[section]
\newtheorem{corollary}[theorem]{Corollary}
\newtheorem{conjecture}[theorem]{Conjecture}
\newtheorem{proposition}[theorem]{Proposition}
\newtheorem{lemma}[theorem]{Lemma}
\newtheorem{lem}[theorem]{}
\theoremstyle{definition}
\newtheorem{definition}[theorem]{Definition}
\theoremstyle{remark}
\newtheorem{remark}[theorem]{Remark}
\newtheorem{example}[theorem]{Example}
\newcommand{\blem}{\begin{lem} \rm}
\newcommand{\elem}{\end{lem}}
\newcommand\A{\mathcal{A}}
\newcommand\M{\mathcal{M}}
\newcommand{\W}{\mathcal{W}}
\newcommand{\T}{\mathcal{T}}
\newcommand{\J}{\mathcal{J}}
\newcommand{\R}{\mathbb{R}}
\renewcommand{\H}{\mathbb{H}}
\newcommand{\C}{\mathbb{C}}
\newcommand{\Z}{\mathbb{Z}}
\newcommand{\ddt}{\frac{d}{dt}}
\renewcommand{\P}{\mathbb{P}}
\newcommand{\PP}{\mathcal{P}}
\newcommand\lie[1]{\mathfrak{#1}}
\newcommand{\h}{\lie{h}}
\newcommand{\g}{\lie{g}}
\renewcommand{\t}{\lie{t}}
\newcommand{\on}{\operatorname}
\newcommand{\Si}{\Sigma}
\newcommand{\ainfty}{{$A_\infty$\ }}
\newcommand{\Def}{\on{Def}}
\newcommand{\dist}{\on{dist}}
\newcommand{\pre}{{\on{pre}}}
\newcommand{\fr}{{\on{fr}}}
\newcommand{\dual}{\vee}
\newcommand{\Lag}{\on{Lag}}
\newcommand{\loc}{{\on{loc}}}
\newcommand{\End}{\on{End}}
\newcommand{\x}{\times}
\newcommand{\de}{\delta}
\newcommand{\al}{\alpha}
\newcommand{\Aut}{ \on{Aut} } 
\newcommand{\aut}{ \on{aut} }
\newcommand{\Hol}{ \on{Hol} }
\newcommand{\Hom}{ \on{Hom}}
\newcommand{\Ind}{ \on{Ind}}
\renewcommand{\ker}{ \on{ker}}
\newcommand{\Vol}{  \on{Vol}}
\newcommand{\diag}{  \on{diag}}
\newcommand\dirac{/\kern-1.2ex\partial} 
\newcommand\qu{/\kern-.7ex/} 
\newcommand\lqu{\backslash \kern-.7ex \backslash} 
\newcommand\dr{r_+ \kern-.7ex - \kern-.7ex r_-}
\newcommand{\labell}\label
\renewcommand{\d}{{\on{d}}}
\newcommand{\ol}{\overline}
\newcommand{\olp}{\ol{\partial}}
\newcommand\Phinv{\Phi^{-1}}
\newcommand\eps{\epsilon}
\newcommand\om{\omega}
\newcommand{\f}{\frac}
\newcommand{\lan}{\langle}
\newcommand{\ran}{\rangle}
\newcommand{\hh}{{\f{1}{2}}}
\newcommand{\ti}{\tilde}
\newcommand\cE{\mathcal{E}}
\newcommand\cF{\mathcal{F}}
\newcommand\cB{\mathcal{B}}
\newcommand\cA{\mathcal{A}}
\newcommand\mdeg{\gamma}
\newcommand\univ{\on{univ}}
\newcommand\cT{\mathcal{T}}
\newcommand\cI{\mathcal{I}}
\newcommand\cH{\mathcal{H}}
\renewcommand{\ss}{{\on{ss}}}
\newcommand\Map{\on{Map}}
\newcommand\rank{\on{rank}}
\newcommand\ev{\on{ev}}
\newcommand\Vect{\on{Vect}}
\newcommand\G{\mathcal{G}}
\newcommand\grad{\on{grad}}
\newcommand\reg{{\on{reg}}}
\newcommand\bra[1]{ < \kern-.7ex {#1} \kern-.7ex >} 
\newcommand\bdefn{\begin{definition}}
\newcommand\edefn{\end{definition}}
\newcommand\bea{\begin{eqnarray*}}
\newcommand\eea{\end{eqnarray*}}
\newcommand\bcv{\left[ \begin{array}{r} }
\newcommand\ecv{\end{array} \right] }
\newcommand\bma{\left[ \begin{array}{l} }
\newcommand\ema{\end{array} \right]}
\newcommand\ben{\begin{enumerate}}
\newcommand\een{\end{enumerate}}
\newcommand\beq{\begin{equation}}
\newcommand\eeq{\end{equation}}
\newcommand\bex{\begin{example}}
\newcommand\bsj{\left\{ \begin{array}{rrr} }
\newcommand\esj{\end{array} \right\}}
\newcommand\eex{\end{example}}
\newcommand\crit{{\on{crit}}}
\newcommand\sx{*\kern-.5ex_X}
\def\mathunderaccent#1{\let\theaccent#1\mathpalette\putaccentunder}
\def\putaccentunder#1#2{\oalign{$#1#2$\crcr\hidewidth \vbox
to.2ex{\hbox{$#1\theaccent{}$}\vss}\hidewidth}}
\begin{document}

\title{Gauged Floer theory of toric moment fibers}

\author{Christopher T. Woodward}


\address{Mathematics-Hill Center,
Rutgers University, 110 Frelinghuysen Road, Piscataway, NJ 08854-8019,
U.S.A.}  \email{ctw@math.rutgers.edu}

\thanks{Partially supported by NSF
 grant DMS0904358}

 \thanks{Corrected from the published version in {\em Geometric and
     Functional Analysis} 21 (2011) 680--749, by permission of the
   editors.}

\begin{abstract}  
We investigate the small area limit of the gauged Lagrangian Floer
cohomology of Frauenfelder \cite{frau:thesis}.  The resulting
cohomology theory, which we call {\em quasimap Floer cohomology}, is
an obstruction to displaceability of Lagrangians in the symplectic
quotient.  We use the theory to reproduce the results of
Fukaya-Oh-Ohta-Ono \cite{fooo:toric1}, \cite{fooo:toric2} and Cho-Oh
\cite{chooh:toric} on non-displaceability of moment fibers of
not-necessarily-Fano toric varieties and extend their results to toric
orbifolds, without using virtual fundamental chains.  Finally we
describe a conjectural relationship with Floer cohomology in the
quotient.
\end{abstract}

\maketitle

\tableofcontents

\section{Introduction} 

Let $X$ be a symplectic manifold. A Lagrangian submanifold $L \subset
X$ is (Hamiltonian) {\em displaceable} if there exists a Hamiltonian
diffeomorphism $\phi: X \to X $ such that $L \cap \phi(L) =
\emptyset$.  Otherwise, $L$ is called {\em non-displaceable}.
Hamiltonian non-displaceability questions are among the most basic in
symplectic topology.  For example if $L = \{ (x,x) | x \in X \}
\subset X^- \times X$ is the diagonal and $X$ is compact then Arnold
conjectured (and Floer \cite{floer:lag} later proved under certain
conditions) that $L$ is non-displaceable.  Floer introduced a method
for proving non-displaceability based on the study of a complex whose
underlying vector space is generated by intersection points $L \cap
\phi(L)$, if transversal, and whose differential counts finite-energy
holomorphic strips with boundary values in $(L,\phi(L))$.  The
resulting {\em Floer cohomology group} $HF(L,\phi(L))$ is independent
of the choice of $\phi$, so that if $L$ is displaceable then
$HF(L,\phi(L))$ vanishes.  On the other hand, one can sometimes
compute $HF(L,\phi(L))$ by, for example, taking $\phi$ small and
identifying it with the Morse homology.

In a series of papers \cite{fooo:toric1}, \cite{fooo:toric2}
Fukaya-Oh-Ohta-Ono used this strategy to prove non-displaceability
results for certain moment fibers of toric varieties, generalizing
earlier work of Cho-Oh \cite{chooh:toric} and Entov-Polterovich
\cite{entov:rigid}.  Most of these fibers have vanishing Floer
cohomology, and many are displaceable by elementary means, which we
will discuss further in a moment.  Fukaya et al show that a moment
fiber has non-vanishing Floer cohomology if there is a critical point
of a {\em potential} obtained by counting holomorphic disks with
boundary on the Lagrangian.  Fukaya et al were able to write the
potential as the sum of a {\em naive potential} plus quantum
corrections arising from sphere bubbles, and show that the naive
potential is given by an explicit formula which appeared in the paper
of the physicists Hori-Vafa \cite[5.16]{ho:mi}.

One can take a different approach to this problem using the
realization of a toric variety as a symplectic quotient.  Let $G$ be a
compact connected group with Lie algebra $\g$ and $X$ a Hamiltonian
$G$-manifold with moment map $\Phi: X \to \g^\dual := \Hom(\g,\R)$.
If $G$ acts freely on the level set $\Phinv(0)$ then the {\em
  symplectic quotient} $X \qu G = \Phinv(0)/G$ is a smooth symplectic
manifold, or more generally a symplectic orbifold if the action has
finite stabilizers.  In particular, any smooth projective toric
variety can be realized as the quotient $X \qu G$ of a vector space
$X$ by the action of a torus $G$.  Given a Lagrangian $L \subset X \qu
G$ we ask whether $L$ is displaceable.  The pre-image $\ti{L}$ of $L$
is a $G$-invariant Lagrangian in $X$.  An approach to
non-displaceability for Lagrangians in Hamiltonian $G$-manifolds was
introduced by Frauenfelder \cite{frauen:mfh}.  At first sight, his
theory looks even more complicated than that of Fukaya et al: he
counts pairs $(A,u)$ consisting of a connection $A \in
\Omega^1(\Sigma,\g)$ on $\Sigma := \R \times [0,1]$ together with a
map $u : \Sigma \to X$ satisfying a pair of {\em vortex equations}:
$u$ is holomorphic with respect to the connection determined by $A$
and the curvature $F_A$ is equal to minus the pull-back $u^* \Phi$ of
the moment map: $ F_A = - u^* \Phi \Vol_\Sigma $.  Here $\Vol_\Sigma
\in \Omega^2(\Sigma)$ is a choice of area form, in this case a
multiple of the standard area form, and constitutes a parameter in the
theory that can be varied.  In the limit $\Vol_\Sigma \to \infty$, the
vortex equations become equivalent to the Cauchy-Riemann equation for
a map to the quotient $X \qu G$, so one expects the gauged Floer
theory to be related to the Floer theory of the quotient \cite{ga:gw}.
In the case that $X$ has no holomorphic spheres, the gauged Floer
theory has better compactness properties than the theory in $X \qu G$.
Frauenfelder \cite{frau:thesis} used his gauged Floer theory to prove
a version of the Arnold-Givental conjecture in this context.

In this paper we study the zero-area limit $\Vol_\Sigma \to 0$ of
gauged Floer theory.  The corresponding limit for gauged Gromov-Witten
invariants was studied in Gonzalez-Woodward \cite{small}, and used to
prove a version of the abelianization conjecture for Gromov-Witten
invariants.  In the zero-area limit the vortex equations reduce to the
Cauchy-Riemann equation in $X$.  We show that the resulting {\em
  quasimap Floer theory} retains the relationship to
non-displaceability, that is, its non-vanishing obstructs
displaceability, in cases where it is defined.  For any toric moment
fiber one obtains an \ainfty algebra, in a way that avoids one of the
main technical complications of the theory of Fukaya et al: since all
holomorphic disks are regular for the standard complex structure,
there is no need for Kuranishi structures or virtual fundamental
chains.  A classification result of Cho-Oh \cite{chooh:toric} gives an
explicit formula for the holomorphic disks, see Theorem
\ref{blaschke}.  The classification leads to an explicit formula for
the curvature of the resulting \ainfty algebra and an explicit
criterion for the non-vanishing of the quasimap Floer cohomology.
This allows us to reproduce the results of Fukaya et al
\cite{fooo:toric1}, \cite{fooo:toric2}, as well as give extensions to
orbifold quotients such as weighted projective spaces.

The main result Theorem \ref{main} below is an explicit sufficient
condition for a moment fiber of a toric orbifold to be
non-displaceable; its form is the same as that of Fukaya et al
\cite{fooo:toric2}.  Let $X \cong \C^N$ be Hermitian vector space, $H
\cong U(1)^N$ the standard maximal torus of $\Aut(X)$, and $G \subset
H$ a sub-torus.  Choose a moment map for the $H$-action; this induces
a moment map for the action of $G$.  Suppose that the symplectic
quotient $X \qu G$ is locally free and so a symplectic orbifold.  It
has a residual action of a torus $T := H/G$, and the residual moment
map $\Psi: X \qu G \to \t^\dual$ defines a homeomorphism of $(X \qu
G)/T$ onto its {\em moment polytope} $\Psi(X \qu G)$.  We denote by
$v_1,\ldots,v_N \in \t$ the images of minus the standard basis vectors
$e_1,\ldots,e_N \in \h \cong \R^N$ in $\t$.  The polytope $\Psi(X \qu
G)$ is given by linear inequalities
\begin{equation} \label{lineq} 
\Psi(X \qu G) = \{ \lambda \in \t^\dual \ | \ l_i(\lambda) \ge 0 \} ,
\quad l_i(\lambda)/2\pi := \langle \lambda, v_i \rangle -c_i, \quad i =
1,\ldots, N .\end{equation}
where $\lan \cdot, \cdot \ran: \t^\dual \times \t \to \R$ is the
canonical pairing and $c_1,\ldots,c_N$ are constants given by the
choice of moment map.  Let $\Lambda$ be the {\em universal Novikov
  field} consisting of possibly infinite sums of real powers of a
formal variable $q$,
$$ \Lambda = \left\{ \sum_{n =0}^\infty a_n q^{d_n} , \quad a_n \in
\C, d_n \in \R, \quad \lim_{n \to \infty} d_n = \infty \right\} .$$
Let $\Lambda_+$ resp. $\Lambda_0$ denote the subrings consisting of
sums with only positive resp. non-negative powers.  Any fiber
$L_\lambda = \Psi^{-1}(\lambda)$ over an interior point $\lambda \in
\on{int}(\Psi(X \qu G))$ is a Lagrangian torus, namely a single free
$T$-orbit.  We identify $H^1(L_\lambda,\Lambda_0) \cong
H^1(T,\Lambda_0)^T \cong \t^\dual \otimes \Lambda_0$, so that in
particular for any $v \in \t$ and $\beta \in H^1(L_\lambda,\Lambda_0)$ we
have a pairing $\lan v, \beta \ran \in \Lambda_0$ and an exponential
$e^{\lan v, \beta \ran} \in \Lambda_0$.  Define the {\em Hori-Vafa
  potential}
$$ W^G_\lambda: H^1(L_\lambda,\Lambda_0) \to \Lambda_0, \quad \beta
\mapsto \sum_{i=1}^N e^{\lan v_i, \beta \ran } q^{l_i(\lambda)} ;$$
as in \cite[(5.16)]{ho:mi}.

\begin{theorem} \label{main}   If $ W^G_\lambda$ has a critical point, 
then $L_\lambda \subset X \qu G$ is non-displaceable.
\end{theorem}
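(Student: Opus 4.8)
\smallskip
\noindent\textit{Proof strategy.}
The plan is to deduce Theorem \ref{main} from the quasimap Floer \ainfty algebra attached to $L_\lambda$. Write $\ti L_\lambda$ for the $G$-invariant Lagrangian torus in $X=\C^N$ lying over $L_\lambda$. By the general construction set up earlier, $\ti L_\lambda$ carries a curved \ainfty algebra on $H^*(L_\lambda,\Lambda_0)$ whose operations $\mathfrak m_k$ count $G$-quotients of holomorphic disks in $X$ with boundary on $\ti L_\lambda$ --- the zero-area degeneration of the vortex equations of the gauged theory. I would: (i) compute the curvature $\mathfrak m_0$ and the deformed curvature $\mathfrak m_0^\beta$ for $\beta\in H^1(L_\lambda,\Lambda_0)$, showing $\mathfrak m_0^\beta=W^G_\lambda(\beta)\,\mathbf 1_{L_\lambda}$; (ii) deduce that every such $\beta$ is a weak bounding cochain, so the quasimap Floer cohomology $\on{QHF}(L_\lambda,\beta)$ is defined; (iii) show $\on{QHF}(L_\lambda,\beta)\neq 0$ whenever $\beta$ is a critical point of $W^G_\lambda$; and (iv) invoke the fact, established earlier in the paper, that nonvanishing of quasimap Floer cohomology obstructs displaceability of $L_\lambda$ in $X\qu G$.

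For step (i) I would use the Cho--Oh classification (Theorem \ref{blaschke}): every nonconstant holomorphic disk with boundary on $\ti L_\lambda$ is coordinate-wise a Blaschke product, its Maslov index equals twice its total degree, and the moduli space of those of Maslov index $2$ is a disjoint union over $i=1,\dots,N$ of components, each a single $\ti L_\lambda$-family; the boundary loop of the $i$-th basic disk represents $v_i\in\t\cong H_1(L_\lambda)$, and its symplectic area equals $l_i(\lambda)$ under the identification of $X\qu G$ with the toric orbifold. Crucially, every such disk is regular for the standard complex structure, so the \ainfty operations are defined over $\Lambda_0$ by honest transverse fiber products with no Kuranishi perturbation, and since $\C^N$ carries no nonconstant holomorphic spheres there are no sphere-bubble corrections. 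Disks of Maslov index $>2$ do not contribute to $\mathfrak m_0$ for dimensional reasons, since the image of the one-pointed evaluation map would then lie in a homology degree exceeding $\dim L_\lambda$.

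Evaluating the one-pointed moduli space of $i$-th basic disks thus produces $q^{l_i(\lambda)}\mathbf 1_{L_\lambda}$. Deforming by $\beta\in H^1(L_\lambda,\Lambda_0)\cong\t^\dual\otimes\Lambda_0$ inserts copies of $\beta$ at the remaining boundary marked points; summing the geometric series over the number of insertions multiplies the $i$-th term by $e^{\langle v_i,\beta\rangle}$, the boundary of the $i$-th disk being $v_i$, so that
\[
\mathfrak m_0^\beta \;=\; \left(\sum_{i=1}^N e^{\langle v_i,\beta\rangle}q^{l_i(\lambda)}\right)\mathbf 1_{L_\lambda} \;=\; W^G_\lambda(\beta)\,\mathbf 1_{L_\lambda} .
\]
Hence $\beta$ is a weak bounding cochain, $(\mathfrak m_1^\beta)^2=0$, and the theory is defined. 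If $\beta$ is a critical point of $W^G_\lambda$, differentiating the disk count in a direction $\theta\in H^1(L_\lambda)$ shows that the leading $q$-order part of $\mathfrak m_1^\beta$ on $H^1(L_\lambda)$ is governed by the differential $dW^G_\lambda(\beta)$ and so vanishes; a standard non-archimedean spectral-sequence argument then shows $\on{QHF}(L_\lambda,\beta)\neq 0$ (indeed $\cong H^*(L_\lambda;\Lambda)$ when the critical point is nondegenerate). Combined with step (iv), $L_\lambda$ is non-displaceable.

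The step I expect to be the main obstacle is (i) in the orbifold case: pushing the Cho--Oh classification through the locally free quotient and checking that the areas and boundary classes of the basic quasimap disks are exactly the data $l_i(\lambda)$ and $v_i$ of \eqref{lineq}, while verifying that the twisted sectors of $X\qu G$ contribute no additional disk families or corrections to $\mathfrak m_0$. A secondary point needing care is the non-archimedean argument in step (iii), where one uses that $l_i(\lambda)>0$ on the interior of the polytope and that each $e^{\langle v_i,\beta\rangle}$ is a unit in $\Lambda_0$.
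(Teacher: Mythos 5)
Your overall architecture matches the paper's: use the Cho--Oh classification (Theorem \ref{blaschke}) and automatic regularity to compute the curvature of the quasimap \ainfty algebra, show the deformed Floer cohomology is nonzero at a critical point of $W^G_\lambda$, and invoke the earlier result that displaceability forces vanishing (via the bimodule isomorphism $CQF(L)\simeq CQF(L,L)$). However, your steps (i)--(ii) contain a genuine gap in this framework. You obtain the factor $e^{\langle v_i,\beta\rangle}$ by ``inserting copies of $\beta$ at boundary marked points and summing the geometric series,'' i.e.\ by the divisor equation for arbitrary numbers of boundary insertions, and you conclude that every $\beta\in H^1(L_\lambda,\Lambda_0)$ is a weak bounding cochain. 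The paper explicitly cannot do this: the Morse/treed-disk perturbation scheme destroys the forgetful maps needed for the divisor equation (this is stated right after the definition of the potential, citing Cho), which is precisely why its potential is \emph{not} defined as a function on Maurer--Cartan solutions. Moreover, for the deformed curvature $\mathfrak{m}_0^\beta=\sum_k \mathfrak{m}_k(\beta,\dots,\beta)$ your dimension count excluding Maslov index $>2$ no longer applies once insertions are allowed, so weak unobstructedness and the exact exponential formula are exactly what the unavailable divisor equation would have to supply.

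The paper's route around this is to deform the \emph{brane structure} instead: $\beta$ is encoded as a flat $\Lambda_0$-line bundle on $L_\lambda$ with holonomy $e^{\langle a,\beta\rangle}$, so $e^{\langle v_i,\beta\rangle}$ enters $\mu_0^b(1)$ directly through $\Hol_L(u)$ with no boundary insertions (Proposition \ref{mu0} combined with Theorem \ref{blaschke}), giving $\mu_0^b(1)=W^G_\lambda(b)1_L$ by definition of the potential. Only one special case of the divisor equation is then proved and needed, Proposition \ref{divisor} (for $\mu_1$ of a degree-one Morse cocycle against Maslov-two disks), yielding $\mu_1^b(\beta)=\partial_{[\beta]}W(b)$ (Proposition \ref{partials}); vanishing of $\mu_1^b$ on all of $HM(L_\lambda)$ is then obtained by induction on degree via the Leibniz rule, using that $H^*(L_\lambda)$ is generated in degree one, and the Morse-to-Floer spectral sequence gives $HQF(L_\lambda^b;\Lambda_0)\cong HM(L_\lambda;\Lambda_0)$ (Theorem \ref{hqfhm}); your step (iii) should be replaced by this argument rather than a generic ``non-archimedean spectral-sequence'' appeal. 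Two smaller omissions: nonvanishing of the free part over $\Lambda_0$ must be converted to nonvanishing over $\Lambda$ before applying the displaceability criterion, and for noncompact $X\qu G$ a displacing Hamiltonian need not be compactly supported, so one must first cut it off near the compact set $\bigcup_t\phi_t(\ti{L}_\lambda)$ as the paper does.
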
 
\noindent In the case that $X \qu G$ is compact and smooth and there
exists a non-degenerate critical point this is due to Fukaya et al
\cite{fooo:toric1}, and without the non-degeneracy condition in their
second paper \cite{fooo:toric2}.  Overlapping results using different
methods are given by Entov-Polterovich \cite{entov:rigid}.  Related
works include Alston \cite{alston:cliff}, Alston-Amorim \cite{al:tf},
and Abreu-Macarini, in progress, who compute the Floer cohomology of
toric moment fibers with the real part of the ambient toric variety.
The existence of $\lambda$ in the interior such that $W^G_\lambda$ has
a critical point for compact $X \qu G$ is proved in Fukaya et al
\cite[Proposition 4.7]{fooo:toric1} for toric varieties with rational
symplectic classes; the authors conjecture that the rationality
condition is not necessary.  For non-compact $X \qu G$, there may not
exist any such $\lambda$, for example if $X \qu G = \C$ then every
compact Lagrangian is displaceable.  As far as the author knows,
existence of a non-displaceable Lagrangian in an arbitrary compact
symplectic manifold is an open question.  We also prove a Theorem
\ref{bulk} which includes bulk deformations as Fukaya et al's second
paper \cite{fooo:toric2}.

Our proof differs from Fukaya et al in several ways.  We already
mentioned that we count disks in $X$ rather than in $X \qu G$.  The
proof in Fukaya et al depends on a detailed study of the correction
terms arising in their potential from sphere bubbling in $X \qu G$,
which is not necessary in our case since $X$ has no holomorphic
spheres.  Furthermore we use the combined Morse-Fukaya approach to the
construction of \ainfty algebras, in which one counts configurations
consisting of gradient lines and holomorphic disks with Lagrangian
boundary conditions; this avoids various difficulties with choices of
generic chains or smoothness of moduli spaces of stable maps.  A
generic perturbation of the gradient flow equations gives an \ainfty
structure on the space of Morse cochains of $L$.  A disadvantage of
this approach is that the source objects (tree disks etc.) have
somewhat more complicated moduli spaces than the usual realizations of
associahedra, multiplihedra etc.  Furthermore the structure maps of
the \ainfty algebras constructed this way do not satisfy a divisor
equation of the type described by Cho \cite{cho:products}, who noted
that the perturbation scheme may destroy the necessary
forgetful maps.  The latter requires us to take a slightly different
definition of the potential than Fukaya et al.  Then we have to show
that the special case of the divisor equation that we need does in
fact hold, and implies that any critical point of the potential gives
rise to non-vanishing Floer cohomology.  For pairs of Lagrangians
intersecting transversally in $X \qu G$, we define an \ainfty bimodule
by counting configurations of Floer trajectories in $X$, holomorphic
disks, and gradient trees.  The pre-image Lagrangians intersect only
cleanly in $X$, and this requires several results (exponential decay,
energy quantization, gluing) for Lagrangian clean intersections which
are probably known to experts, but which do not seem to have
completely appeared in the literature.

These results on non-displaceability should be contrasted with those
of McDuff \cite{mc:di} who gives a method for {\em displacing} the
fibers of a moment polytope of a toric variety, based on the
observation that if $\lambda$ is sufficiently close to the boundary of
$\Psi(X \qu G)$ then $L_\lambda$ is small in some Darboux chart and so
displaceable.  More precisely a vector $\alpha$ in the coweight
lattice $\t_\Z$ is {\em integrally transverse} to an open facet $F$ of
the moment polytope $\Psi(X \qu G)$ if $\{ \alpha \}$ can be completed
to a basis of $\t_\Z$ by vectors parallel to $F$.  A {\em probe} with
direction $\alpha \in \t_\Z$ and initial point $\mu \in F$ is the open
half of the line segment lying in the direction of $\alpha$ from
$\mu$.  If $\lambda$ lies in a probe, then $L_\lambda$ is
displaceable.


\begin{example} 
In the case $X \qu G = \P^1$ with moment polytope 
$[0,1]$, 
$ W_\lambda^G(\beta) = e^{\beta} q^{\lambda} + e^{-\beta} q^{1 -
  \lambda} $
which has a critical point iff $ \lambda = 1/2$.  The fiber
$L_{\lambda}$ for $\lambda = 1/2$ is the unique non-displaceable
moment fiber in $\P^1$, since any other fiber is displaced by a
rotation, and $L_\lambda$ is not displaceable since it separates
$\P^1$ into two disks of equal area.  \end{example}

\begin{example} This example is a case for which $X \qu G$ is
non-compact.  Suppose that $X = \C^3$ and $G = \C^*$ acts with
weights $-1,-1,1$, and the moment map is chosen so that 
$X \qu G$ is the blow-up of $X = \C^2$ at
$(0,0)$, so that the moment polytope is $ \{ (\lambda_1,\lambda_2) \in
\R_{\ge 0}^2, \lambda_1 + \lambda_2 \ge 1 \}$.  The gauged potential
is
$$ W_\lambda^G(\beta) = e^{\beta_1} q^{\lambda_1} +
e^{\beta_2} q^{\lambda_2} + e^{\beta_1 + \beta_2} q^{\lambda_1  + \lambda_2 - 1} .$$
This has a critical point iff $ (\lambda_1,\lambda_2) = (1,1)$, so the
fiber over $(1,1)$ is not displaceable.\end{example} 

\begin{example} 
This example appears in Fukaya et al \cite[Example 4.7]{fooo:toric1}
and illustrates the dependence of the number of non-displaceable
fibers on the choice of symplectic form: Suppose that $X \qu G$ is the
toric blow-up of $\P^1 \times \P^1$ with moment polytope
$$ \Psi(X \qu G) = \{ (\lambda_1,\lambda_2) \in [-1,1]^2 \ |
\ \lambda_1 + \lambda_2 \leq 1 + \alpha \} $$
for some real parameter $\alpha \in (-1,1)$ describing the size of the
blow-up.

\begin{proposition}  The unique non-displaceable toric fibers 
for $X \qu G$ the blow-up of $(\P^1)^2$ are described by the following
three cases: (i) for $\alpha = 0$, the fiber over $\lambda = (0,0)$
(ii) for $\alpha > 0$, the fibers over $(0,0)$ and $(\alpha,\alpha)$
(iii) for $\alpha < 0$, the fibers over
$(-\alpha/3,-\alpha/3),(\alpha, \alpha + 1)$, $(\alpha + 1,\alpha)$.
\end{proposition}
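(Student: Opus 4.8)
The plan is to combine two facts already available: Theorem~\ref{main}, which makes $L_\lambda$ non-displaceable as soon as $W^G_\lambda$ has a critical point, and McDuff's probe criterion recalled above, which makes $L_\lambda$ displaceable as soon as $\lambda$ lies on a probe. The proposition then reduces to (a): at each of the points listed in (i)--(iii) the potential $W^G_\lambda$ has a critical point, together with (b): every $\lambda\in\on{int}(\Psi(X\qu G))$ not on that list lies on some probe. (The valuation analysis used in (a) in fact also shows there are no other critical points --- a reassuring consistency check, though not logically needed.)

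For (a): here $N=5$, with inward facet normals $v_1=(1,0)$, $v_2=(0,1)$, $v_3=(-1,0)$, $v_4=(0,-1)$, $v_5=(-1,-1)$, so that, up to a common positive scalar, $l_1=\lambda_1+1$, $l_2=\lambda_2+1$, $l_3=1-\lambda_1$, $l_4=1-\lambda_2$, $l_5=1+\alpha-\lambda_1-\lambda_2$. Writing $y_j=e^{\beta_j}$ and multiplying $\partial_{\beta_j}W^G_\lambda=0$ by $y_j$, the critical point equations become
\[ y_1^2\,q^{l_1}=q^{l_3}+y_2^{-1}q^{l_5},\qquad y_2^2\,q^{l_2}=q^{l_4}+y_1^{-1}q^{l_5}. \]
Since $\beta\in\t^\dual\otimes\Lambda_0$, each $y_j$ has $q$-adic valuation $0$ with leading coefficient in $\C^*$; I would match valuations term by term. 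The valuation of the right side of the first equation is $\min(l_3,l_5)$ unless $l_3=l_5$ and the leading coefficient of $y_2$ equals $-1$, in which case it exceeds $l_3$ strictly; symmetrically for the second equation. Running through the finite list of cases --- according to which of $l_3,l_5$ (resp.\ $l_4,l_5$) is the smaller, or whether they coincide with the relevant leading coefficient being $-1$ --- and imposing consistency between the two equations pins $\lambda$ down to: $(0,0)$, which is the only possibility when $\alpha\ge0$; the additional diagonal point when $\alpha>0$; and the three points of (iii) when $\alpha<0$. For each such $\lambda$ I would substitute the forced shape of $\beta$, read off the leading-order equations in $(\C^*)^2$ --- a binomial or cubic system in each case, e.g.\ $\bar y^3=1$, $\bar y^2=1$ or $\bar y^3-\bar y^2-1=0$, with every solution in $(\C^*)^2$ --- check that the resulting leading critical point of the potential is nondegenerate (a $2\times2$ determinant), and then lift it by successive approximation in increasing powers of $q$ to a genuine $\beta\in\t^\dual\otimes\Lambda_0$ with $dW^G_\lambda=0$. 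Theorem~\ref{main} applies to give the non-displaceability in (i)--(iii).

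For (b): the five edges have primitive directions $(0,1),(1,0),(0,1),(1,0),(1,-1)$ in the order of $v_1,\dots,v_5$, and $\eta\in\t_\Z=\Z^2$ is integrally transverse to an edge $F$ exactly when $\det(\eta,d_F)=\pm1$. Probes issued from the interiors of the four old edges $\lambda_j=\pm1$ in the inward coordinate directions (which are integrally transverse there) cover all fibers over their union, and probes from the new edge $\lambda_1+\lambda_2=1+\alpha$ in its integrally transverse directions $(-1,0)$ and $(0,-1)$ --- note $(-1,-1)$ is \emph{not} integrally transverse to this edge --- sweep the region adjoining it. I would then check directly, the count depending on the sign of $\alpha$, that the union of all these probes covers the open pentagon except for exactly the points of (i)--(iii); with (a) this shows they are the only non-displaceable toric fibers.

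The step I expect to be hardest is the case analysis in (a): the non-diagonal critical points --- everything other than $(0,0)$ and the $\alpha>0$ diagonal point --- occur only through the cancellation in which $q^{l_3}$ is nearly annihilated by $y_2^{-1}q^{l_5}$ once the leading coefficient of $y_2$ is $-1$ (and symmetrically), so one cannot merely compare the exponents $l_i$ but must carry these cancellations, and the higher valuations they generate, through every branch of the argument; and for each surviving $\lambda$ one still has to produce a nondegenerate complex solution of the reduced system before the $q$-adic iteration is justified. Part (b) is elementary plane geometry, though it needs some care near the new edge, where the admissible probe directions --- and hence the region they sweep --- change with $\alpha$.
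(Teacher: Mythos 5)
Your overall strategy is the same as the paper's: the paper also reduces the proposition to (a) exhibiting critical points of $W^G_\lambda$ and invoking Theorem \ref{main}, plus (b) displacing all remaining fibers by McDuff's probes; it simply cites Fukaya et al for (a) and McDuff for (b), whereas you propose to carry out both steps yourself. The problem is that your step (a) is asserted rather than executed, and if one actually runs the valuation case analysis on the potential you (correctly) set up, it does \emph{not} yield the list in (iii). At $\lambda=(-\alpha/3,-\alpha/3)$ with $\alpha<0$ the three exponents in $\partial_{\beta_1}W^G_\lambda=0$ are $l_1=1-\alpha/3$, $l_3=1+\alpha/3$, $l_5=1+5\alpha/3$; they are pairwise distinct and the minimum $1+5\alpha/3$ is attained by the single term $-e^{-\beta_1-\beta_2}q^{l_5}$, whose leading coefficient is a unit, so the equation has no solution with $\beta\in\t^\dual\otimes\Lambda_0$. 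Similarly at $(\alpha,\alpha+1)$ the exponents in the first equation are $1+\alpha$, $1-\alpha$, $-\alpha$, and the minimum is attained only once unless $\alpha=-1/2$. Carrying the analysis out (most cleanly: set $x_i=e^{\beta_i}q^{\lambda_i}$, so the critical equations become $x_1^2=1+q^\alpha x_2^{-1}$, $x_2^2=1+q^\alpha x_1^{-1}$; subtracting forces either $x_1=x_2$ with $x^3-x-q^\alpha=0$, or $x_1x_2=-1$ with $x_1^2+q^\alpha x_1-1=0$, and Newton polygons give the valuations $(\on{val} x_1,\on{val} x_2)=(\lambda_1,\lambda_2)$) shows that for $\alpha<0$ the critical fibers of the printed potential are $(\alpha/3,\alpha/3)$, $(\alpha,-\alpha)$, $(-\alpha,\alpha)$, with multiplicities $3,1,1$ --- consistent with the count of $5$ quoted after the proposition and with all three colliding at $(0,0)$ as $\alpha\to0^-$. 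Your conclusions for $\alpha\ge0$ are correct.

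So there is a genuine gap: either the case analysis was not actually performed (its outcome is simply stated to be the points of (iii)), or performing it would contradict the statement being proved in the case $\alpha<0$. The coordinates in (iii) appear to be carried over from Fukaya et al's normalization and must be converted to the polytope $[-1,1]^2\cap\{\lambda_1+\lambda_2\le 1+\alpha\}$ and the potential as printed before step (a) can be claimed; until that mismatch is resolved, Theorem \ref{main} applies to the fibers $(\alpha/3,\alpha/3)$, $(\alpha,-\alpha)$, $(-\alpha,\alpha)$, not to those listed. Step (b) is likewise only sketched, and the probe covering has to be checked against whichever list is actually correct, since the probes must sweep exactly the complement of the non-displaceable fibers. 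A minor further slip: the leading-order systems that actually arise are $\bar y^2=1$ at $(0,0)$ for $\alpha>0$, $\bar y_1^3=1$ at the diagonal point for $\alpha<0$, and (on the diagonal at $\alpha=0$) $\bar y^3-\bar y-1=0$, not $\bar y^3-\bar y^2-1=0$.
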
 

\noindent See Figure \ref{blowup}.  

\begin{proof}  The gauged potential (which in this case
is the same as the potential in Fukaya et al) is
\begin{multline}
 W_\lambda^G(\beta) = e^{\beta_1} q^{1 + \lambda_1} + e^{\beta_2} q^{1
   + \lambda_2} + e^{-\beta_1} q^{1 - \lambda_1} + e^{-\beta_2}
 q^{1 - \lambda_2} + e^{-\beta_1-\beta_2} q^{1 + \alpha - \lambda_1 -
   \lambda_2} .\end{multline}
The critical points are described in \cite[Example 4.7]{fooo:toric1}.
The other fibers are displaceable by the technique of McDuff
\cite{mc:di}.
\end{proof} 
\noindent Note that the non-displaceable fibers ``collide and
scatter'' at $\alpha = 0$.  The number of non-displaceable fibers
depends on the symplectic form, but the multiplicities in each case do
add up to the dimension $5$ of the cohomology $H(X \qu G)$
\cite{fooo:toric1}.  The non-displaceable fibers for other choices of
symplectic form are discussed in Fukaya et al \cite{fooo:toric2}.  The
values of the potential, counted with multiplicity, match the
eigenvalues for the quantum action of $c_1(TX)$ on the quantum
cohomology $QH(X \qu G)$, see \cite{fooo:toric1}.  
\end{example} 

\begin{figure}[ht]
\begin{picture}(0,0)%
\includegraphics{cp2ex.pstex}%
\end{picture}%
\setlength{\unitlength}{4144sp}%
\begingroup\makeatletter\ifx\SetFigFontNFSS\undefined%
\gdef\SetFigFontNFSS#1#2#3#4#5{%
  \reset@font\fontsize{#1}{#2pt}%
  \fontfamily{#3}\fontseries{#4}\fontshape{#5}%
  \selectfont}%
\fi\endgroup%
\begin{picture}(3894,1233)(79,-2222)
\put(1560,-2176){\makebox(0,0)[lb]{{$\alpha = 0 $}%
}}
\put(189,-2127){\makebox(0,0)[lb]{{$\alpha > 0 $}%
}}
\put(3177,-2176){\makebox(0,0)[lb]{{$\alpha < 0 $}%
}}
\end{picture}%
\caption{Non-displaceable fibers for the blow-up of $\P^1 \times \P^1$}
\label{blowup}\end{figure} 

\begin{example} 
This example involves the orbifold case.  Consider the weighted
projective plane, $X \qu G = \P(1,n_1,n_2)$ the symplectic quotient of
$X = \C^3$ by the $S^1$ action with weights $(1,n_1,n_2)$.  The
two-torus $T$ acts on $X \qu G$ in Hamiltonian fashion with moment
polytope $\Psi(X \qu G)$ the convex hull of $(0,0), (n_1,0), (0,n_2)$.

\begin{proposition}  For $X \qu G = \P(1,n_1,n_2)$ 
the fiber of $\Psi$ over $\lambda = \diag( (n_1n_2)/(n_1 + n_2 + 1))$
is non-displaceable.
\end{proposition}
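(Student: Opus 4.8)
The plan is to invoke Theorem~\ref{main}: it suffices to exhibit a critical point of the Hori--Vafa potential $W^G_\lambda$ for $\lambda = \diag\big(\frac{n_1 n_2}{n_1+n_2+1}\big)$. First I would record the polytope data. The triangle with vertices $(0,0),(n_1,0),(0,n_2)$ is cut out by $\lambda_1 \ge 0$, $\lambda_2 \ge 0$ and $n_2\lambda_1 + n_1\lambda_2 \le n_1 n_2$, so in the normalization of equation~(\ref{lineq}), up to the overall scaling of $q$, the three facet functions are $l_1(\lambda) = \lambda_1$, $l_2(\lambda) = \lambda_2$, $l_3(\lambda) = n_1 n_2 - n_2\lambda_1 - n_1\lambda_2$, with associated vectors $v_1 = (1,0)$, $v_2 = (0,1)$, $v_3 = (-n_2,-n_1) \in \t$; the weight-$1$ coordinate of $X = \C^3$ is the one producing the hypotenuse, which is why the ``$1$'' enters through the constant $n_1 n_2$ in $l_3$. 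Hence
$$ W^G_\lambda(\beta) = e^{\beta_1} q^{\lambda_1} + e^{\beta_2} q^{\lambda_2} + e^{-n_2\beta_1 - n_1\beta_2}\, q^{\,n_1 n_2 - n_2\lambda_1 - n_1\lambda_2}, \qquad \beta = (\beta_1,\beta_2) \in \t^\dual \otimes \Lambda_0 . $$

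The crucial observation is that at $\lambda = \diag(\ell)$ with $\ell = \frac{n_1 n_2}{n_1+n_2+1}$ one has $l_1(\lambda) = l_2(\lambda) = \ell$ and also $l_3(\lambda) = n_1 n_2 - (n_1+n_2)\ell = \ell$, since $(n_1+n_2+1)\ell = n_1 n_2$. In particular all three are strictly positive, so $\lambda \in \on{int}(\Psi(X \qu G))$ and $L_\lambda$ is an honest Lagrangian torus; and at this $\lambda$ the potential equals $q^{\ell}\big(e^{\beta_1} + e^{\beta_2} + e^{-n_2\beta_1 - n_1\beta_2}\big)$ up to the unit $q^{\ell}$, so the equations $\frac{\partial}{\partial\beta_1} W^G_\lambda = \frac{\partial}{\partial\beta_2} W^G_\lambda = 0$ become $e^{(1+n_2)\beta_1 + n_1\beta_2} = n_2$ and $e^{n_2\beta_1 + (1+n_1)\beta_2} = n_1$.

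The last step is to solve this pair of equations with $\beta \in \t^\dual \otimes \Lambda_0$. Taking real logarithms gives the linear system $(1+n_2)\beta_1 + n_1\beta_2 = \ln n_2$, $\ n_2\beta_1 + (1+n_1)\beta_2 = \ln n_1$, whose coefficient matrix has determinant $(1+n_1)(1+n_2) - n_1 n_2 = 1 + n_1 + n_2 \ne 0$. Hence there is a (unique) solution $(\beta_1,\beta_2) \in \R^2 \subset \t^\dual \otimes \Lambda_0 = H^1(L_\lambda, \Lambda_0)$, so $W^G_\lambda$ has a critical point and Theorem~\ref{main} yields that $L_\lambda$ is non-displaceable.

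I do not expect a genuine obstacle beyond bookkeeping. The one place needing care is matching the orbifold polytope conventions: the precise identification of $\t$ and $\t^\dual$, the signs and (non-)primitivity of the vectors $v_i$ coming from the $S^1$-weights $(1,n_1,n_2)$, and the constants $c_i$ fixed by the chosen moment map, so that the facet functions really are the $l_i$ above and the exponents $\langle v_i,\beta\rangle$ in $W^G_\lambda$ come out as written. Once that is pinned down the computation is the one above; alternatively one can read the critical point off structurally, noting that the unique linear relation $n_2 v_1 + n_1 v_2 + v_3 = 0$ among the $v_i$ forces $e^{\langle v_i,\beta\rangle} q^{l_i(\lambda)}$ to be proportional to $(n_2,n_1,1)$, which is solvable with $\beta$ of nonnegative valuation exactly when $l_1(\lambda) = l_2(\lambda) = l_3(\lambda)$, i.e.\ exactly at the fiber named in the statement.
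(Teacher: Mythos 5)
Your proposal is correct and follows essentially the same route as the paper: write down the gauged (Hori--Vafa) potential for $\P(1,n_1,n_2)$ with facet data $l_1=\lambda_1$, $l_2=\lambda_2$, $l_3=n_1n_2-n_2\lambda_1-n_1\lambda_2$, find a critical point in $H^1(L_\lambda,\Lambda_0)$, and invoke Theorem \ref{main}. The only difference is cosmetic: the paper derives the valuation-matching conditions on $\lambda$ for solvability in $\Lambda_0$ and identifies the unique such $\lambda$, whereas you specialize to $\lambda=\diag(n_1n_2/(n_1+n_2+1))$, note $l_1=l_2=l_3$, and solve the resulting linear system (determinant $1+n_1+n_2\neq 0$) explicitly over $\R\subset\Lambda_0$ -- a perfectly adequate, if anything cleaner, verification.
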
  

\begin{proof}  
The gauged potential is
$$ W^G_\lambda(\beta) = e^{\beta_1} q^{\lambda_1} + e^{\beta_2} q^{\lambda_2}
+ e^{-\beta_1 n_2 - \beta_2 n_1} q^{n_1 n_2 - n_2 \lambda_1 - n_1 \lambda_2}
.$$
The derivatives are given by 
$$ e^{-\beta_1} \partial_{\beta_1} W^G_\lambda(\beta) = q^{\lambda_1} - n_2 e^{- \beta_1(n_2 + 1) -\beta_2 n_1} q^{n_1n_2 - n_2 \lambda_1 - n_1 \lambda_2} $$
$$ e^{-\beta_2} \partial_{\beta_2} W^G_\lambda(\beta) = q^{\lambda_2} - n_1 e^{- \beta_1 n_2- \beta_2(n_1 +1)} q^{n_1n_2 - n_2 \lambda_1 - n_1 \lambda_2} .$$
This has solutions in $\beta \in \Lambda_0$ if and only if
$$ n_1 n_2 - (n_2 + n_1 + 1 )\lambda_1 - (n_1 + n_2) \lambda_2 = 0, 
 n_1 n_2 - (n_1 + n_2) \lambda_1 - (n_1 + n_2 + 1) \lambda_2 = 0 .$$
The unique solution to these equations is $ \lambda = \diag(
(n_1n_2)/(n_1 + n_2 + 1)) $ as claimed.
\end{proof}

\noindent
 The case of $X\qu G = \P(1,3,5)$ is shown below in Figure
\ref{p135}, taken from \cite{mc:di}, with 
shaded regions displaceable
by McDuff's probes.  The non-displaceable fiber over $\lambda =
(5/3,5/3)$ is surrounded by an open subset of fibers for which
displaceability is an open question.  
(The small line segment
connecting $(1,1)$ with $(3/2,3/2)$ consist of fibers that are not
displaceable by probes, either.  In all the previous examples except
this one, the combination of the McDuff method with the Floer
theoretic methods completely resolved the question of
non-displaceability of moment fibers.)  We remark that homological
mirror symmetry for the $B$-model on, in particular, $\P(1,3,5)$ is
proved in Auroux et al \cite{auroux:wpp}.  It would be interesting to
know whether the ``twisted sectors'' in the orbifold quantum
cohomology of these weighted projective spaces play any role in the
displaceability of moment fibers.  Note that in this case there are
two kinds of twisted sectors, coming from the two orbifold
singularities, and these match the intersection of the ``unknown
region'' with the boundary of the polytope.
\end{example} 

\begin{figure}[ht]
\includegraphics[height=2in]{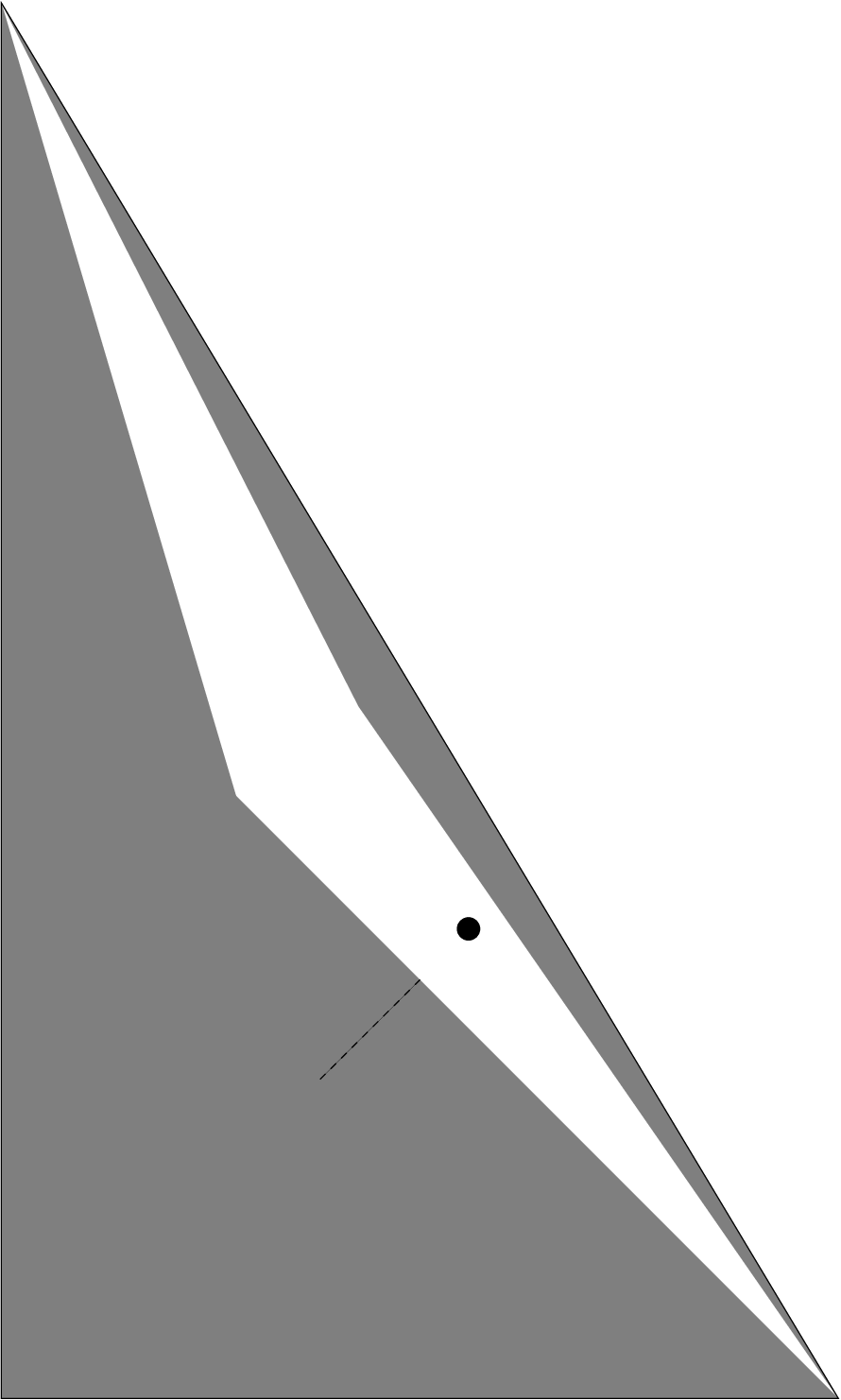}
\caption{Displaceable and non-displaceable fibers for $\P(1,3,5)$.} 
\label{p135}
\end{figure} 

\begin{example} 
As a final example we consider non-displaceability for the weighted
projective line $X \qu G = \P(1,2)$ with polytope $\Psi(X \qu G) =
[0,1]$ and orbifold singularity the fiber over $1$.  The fiber over
$2/3$ is non-displaceable, by the main Theorem \ref{main} and a
computation similar to the first example.  However, any fiber over
$\lambda \in [1/2,1]$ is non-displaceable in this case, since any
displacing flow would have to map the preimage of $[\lambda,1]$ into
the preimage of $[0,\lambda)$.  But this is impossible, since any
  Hamiltonian vector field vanishes at the orbifold point, the
  pre-image of $1$, which is therefore stationary under any
  Hamiltonian flow. 
\end{example} 

The idea of studying curves in $X$ instead of $X \qu G$ is not new and
fits into a long line of studies of {\em gauged sigma models}
\cite{wi:ph}, \cite{gi:eq}, \cite{ho:mi}, \cite{ho:phb}.  This
approach is the basis of Givental's study of mirror symmetry for
complete intersections in toric varieties \cite{gi:eq}.  Givental's
idea was to relate the invariants obtained by integration over moduli
spaces of {\em quasimaps} to those associated to the moduli spaces of
stable maps in the symplectic quotient, by algebraic arguments
involving localization.  By computing the (twisted) quasimap
invariants, Givental obtained for example a formula for the descendent
Gromov-Witten potential for the quintic three-fold.  The space of
quasimaps was identified with the moduli spaces of vortices by
J. Wehrheim \cite{jw:vi}.  In the last section we explain how
arguments similar to that of Gaio-Salamon \cite{ga:gw} lead {\em
  heuristically} to relationship with Floer theory in the quotient.
(However, as far as non-displaceability goes, there is no need to
rigorously prove the correspondence between the two families of
invariants, since the quasimap invariants already obstruct
displaceability.) One naturally expects an $A_\infty$-morphism from
the gauged Fukaya algebra to the Fukaya algebra of the quotient, which
we call the {\em open quantum Kirwan morphism}, and a relation between
the {\em bulk-deformed} potential on $X \qu G$ and the potential for
the gauged theory on $X$.  Carrying out this program would require not
only the compactness and gluing results above but a proper development
of virtual fundamental classes in this setting, which is why the
arguments of the last section are only conjectural.

We thank D. McDuff and M. Abouzaid for helpful suggestions and
encouragement, and G. Xu for pointing out several mistakes.

\section{Quasimap Floer cohomology}
\label{floer}

In this section we explain the definition of quasimap Floer
cohomology, which associates to a pair of Lagrangians $L_0,L_1$ in a
symplectic quotient $X \qu G$ an abelian group $HQF(L_0,L_1)$ by
counting strips in $X$ with boundary in the pre-images
$\ti{L}_0,\ti{L}_1$ of $L_0,L_1$ modulo the action of $G$. Actually,
none of the results of this section will be used for the main Theorem
\ref{main}, for the reason that the proof of Theorem \ref{main} uses
the cases that the two Lagrangians are equal (covered in Section 3)
together with the vanishing of the resulting cohomology when the
Lagrangian is displaceable (covered in Section 5).  However, it seemed
to the author that our Floer cohomology should be introduced before
\ainfty algebras for expositional reasons.

\subsection{Symplectic vortices}

The definition of quasimap Floer cohomology is motivated by a gauged
version of pseudoholomorphic curves introduced by Mundet and Salamon
and collaborators, see for example Cieliebak-Mundet-Gaio-Salamon
\cite{ci:symvortex}, which we now review.  Readers not interested in
the origin of quasimap Floer cohomology may skip to the following
section, with the caveat that without reading this section the
definition of quasimap Floer cohomology may seem rather miraculous.
Let $G$ be a compact connected group with Lie algebra $\g$ and $X$ a
Hamiltonian $G$-manifold with symplectic form $\omega \in \Omega^2(X)$
and proper moment map $\Phi: X \to \g^\dual$.  Let $X \qu G :=
\Phinv(0) /G$ denote the symplectic quotient.  We assume that the
action of $G$ on $\Phinv(0)$ is locally free, that is, has only finite
stabilizers, so that $X \qu G$ is a symplectic orbifold.  Furthermore
we assume that the action has trivial generic stabilizer.

Let $\J(X)$ denote the set of compatible almost complex structures $J
\in \End(TX)$ on $X$, $\J(X)^G$ the subset of invariants under the
action of $G$ by pull-back, and $\J(X \qu G)$ the set of compatible
almost complex structures on $X \qu G$.  There exists a map $\J(X)^G
\to \J(X \qu G)$, obtained by restricting $J$ to $T \Phinv(0) \cap
\g^\perp \cong \pi^* T(X \qu G)$, where $\pi: \Phinv(0) \to X \qu G$
is the projection.

Let $\Sigma$ be a compact Riemann surface.  Holomorphic maps from
$\Sigma$ to $X \qu G$ correspond to {\em gauged holomorphic maps} from
$\Sigma$ to $X$, as we now explain.  Let $P \to \Sigma$ be a principal
$G$-bundle.  Denote by $\A(P) \subset \Omega^1(P,\g)^G$ the space of
connections on $P$, and by $\G(P)$ the group of gauge transformations.
Any connection $A \in \A(P)$ and $J \in \J(X)^G$ induces an almost
complex structure on the associated fiber bundle $P \times_G X$.  Let
$\olp_A$ be the associated Cauchy-Riemann operator, so that if in
particular $\Gamma(P \times_G X)$ is the space of sections and $u \in
\Gamma(P \times_G X)$ then $\olp_A u \in \Omega^{0,1}(\Sigma, u^* T(P
\times_G X))$.  A {\em gauged holomorphic map} with bundle $P$ is a
pair $(A,u) \in \A(P) \times \Gamma( P \times_G X)$ satisfying $\olp_A
u = 0$.  Let $\cH(P,X)$ denote the space of gauged holomorphic maps
with bundle $P$; in general this is a singular subset of $\A(P) \times
\Gamma(P \times_G X)$.  If $J$ is integrable, then $\cH(P,X)$ admits a
formal symplectic structure depending on a choice of metric on $\g$
and area form $\Vol_\Sigma \in \Omega^2(\Sigma)$, given by as follows:
The pairing of two tangent vectors $(a_j,v_j) \in \Omega^1(\Sigma,P
\times_G \g) \oplus \Omega^0(\Sigma, u^*(P \times_G TX)), j = 0,1$ is
given by the integral over $\Sigma$
$$ (a_0,v_0),(a_1,v_1) \mapsto \int_\Sigma (a_0 \wedge a_1) + u^*
\omega ( v_0, v_1) \Vol_\Sigma $$
where the first term uses the metric on $\g$.  By formal, we mean that
each kernel of the linearized operator has a linear symplectic
structure given by the above formula, so that where $\cH(P,X)$ is
smooth it is symplectic.  The group $\G(P)$ acts on $\cH(P,X)$
preserving the formal symplectic structure and a formal moment map is
given by
$$ \cH(P,X) \to \Omega^2(\Sigma, P \times_G \g), 
\quad (A,u) \mapsto F_A + u^*(P \times_G \Phi)\Vol_\Sigma $$
where $\Omega^2(\Sigma, P \times_G \g)$ is identified with a subset of
the dual of the Lie algebra $\Omega^0(\Sigma, P \times_G \g)$ of the
group of gauge transformations via the pairing given by integration
and the metric on $\g$.  The formal symplectic quotient $M(P,X) :=
\cH(P,X) \qu \G(P) $ is the moduli space of {\em symplectic vortices}
$$ M(P,X) = \left\{ \begin{array}{c}
(A,u) \in \A(P) \times \Gamma(P \times_G X) \\ 
\olp_A u = 0  \\ \ F_A + u^*(P \times_G \Phi)\Vol_\Sigma = 0 
\end{array} \right\} / \G(P) .$$   
Define $M(\Sigma,X)$ to be the union of $M(P,X)$ over isomorphism
classes of bundles $P$.  Note the dependence on the choice of
$\Vol_\Sigma$.  In the infinite area limit the second equation becomes
$u^* (P \times_G \Phi) = 0 $ and $M(\Sigma,X)$ is then the moduli
space of holomorphic maps from $\Sigma$ to $X \qu G$.  Indeed any
solution descends to a holomorphic map to $X \qu G$.  Conversely any
holomorphic map to $X \qu G$ defines a pair $(A,u)$, by choosing a
connection on the bundle $\Phinv(0) \to X \qu G$ and taking $A$ to be
the pull-back connection.  In general one needs to compactify the
moduli space in order to define invariants but in certain
circumstances the moduli space is already compact, see for example
\cite{ci:symvortex}.  For example, J. Wehrheim \cite{jw:vi} shows that
if $\Sigma = \P^1, X = \C^n, G = S^1$ acting diagonally, then
$M(\Sigma,X)$ is diffeomorphic to $\bigcup_{d \ge 0} \P^{nd- 1} $.
Thus the moduli space of symplectic vortices is already compact while
the moduli space of maps to $X \qu G = \P^{n-1}$ has a natural
compactification, the moduli space of stable maps, whose boundary is
complicated.

Frauenfelder's thesis \cite{frau:thesis} exploited the better
compactness properties of the vortex equations to prove a version of
the Arnold-Givental conjecture.  More precisely, suppose that $L_0,L_1
\subset X$ are compact invariant Lagrangian submanifolds.  In good
cases Frauenfelder constructed a {\em gauged Floer cohomology} by
counting vortices on $\Sigma := \R \times [0,1]$.  Since any bundle
over $\Sigma$ is trivial, a symplectic vortex consists of a pair $A
\in \Omega^1(\Sigma,\g)$ of a connection $A$ on $\Sigma := \R \times
    [0,1]$ together with a map $u: \Sigma \to X$ such that
$$\olp_A u = 0, \quad u(s,j) \in L_j, j = 0,1, \forall s \in \R, 
\quad F_A +
u^* \Phi \Vol_\Sigma = 0 
 $$
modulo gauge transformation, where in this case $\Vol_\Sigma = \d s
\wedge \d t$.  The precise details will not concern us here, since we
work in a slightly different set-up.  

The gauged Floer cohomology (if everything is well-defined) is {\em
  independent of the choice of area form $\Vol_\Sigma$} by a standard
continuation argument, similar to the one giving independence of the
width of the strip in \cite{we:co}.  (In the case of gauged
Gromov-Witten invariants, the dependence on the choice of area form
was studied in Gonzalez-Woodward \cite{cross}.)  Therefore, one
expects an equivalent theory obtained by setting $\Vol_\Sigma = 0$,
the opposite limit from the one related to the Floer cohomology on the
quotient.  In this case the theory drastically simplifies: the
equation $F_A = 0$ implies that $A$ is gauge equivalent to the trivial
connection, in which case the other equation becomes the standard
Cauchy-Riemann equation.  However, since the trivial connection has
automorphism group $G$, the resulting moduli space is that of the
usual moduli space of holomorphic strips, modulo the action of $G$.
Since the gauge theory becomes somewhat trivial in this case, we call
the resulting Floer cohomology {\em quasimap Floer cohomology} in
cases where it is defined, by analogy with Givental's use of the term
{\em quasimaps}.  

\subsection{Holomorphic quasistrips} 

Having motivated the study of holomorphic disks modulo a group action,
we now develop a Floer theory for Lagrangians that are inverse images
of Lagrangians from the symplectic quotient.  Let $X$ be a Hamiltonian
$G$-manifold with $G$ acting locally freely on $\Phinv(0)$.  We say
that $L \subset X$ is {\em $G$-Lagrangian} if $\dim(L) = \dim(X)/2$
and the equivariant symplectic form vanishes on $L$, that is, the
restriction of the symplectic form and moment map vanish on $L$.  We
will always assume that the action of $G$ on $L$ is free, so that
$\ti{L}/G$ is contained in the smooth locus of $X \qu G$.  The map
$\ti{L} \mapsto \ti{L}/G$ defines a bijection between $G$-Lagrangians
in $X$ and Lagrangians in $X \qu G$.

This correspondence extends to Lagrangian branes as follows.  Suppose
that $X$ is equipped with a $G$-equivariant $N$-fold Maslov cover
$\Lag^N(X) \to \Lag(X)$.  One obtains an induced $N$-fold Maslov cover
$\Lag^N(X \qu G)$ on $X \qu G$ by taking the quotient $\Lag^N(X) |
\Phinv(0) / G$ and restricting to lifts of Lagrangian subspaces of
$T(X \qu G)$.  A {\em $G$-Lagrangian brane} is an oriented Lagrangian
submanifold equipped with a $G$-equivariant spin structure, a
$G$-equivariant flat $\Lambda$-line bundle, and a $G$-equivariant
grading, that is, a $G$-equivariant lift of the map $\ti{L} \to
\Lag(X) | \ti{L}$ to $\Lag^N(X) | \ti{L}$.  There is a one-to-one
correspondence between $G$-Lagrangian branes in $X$ and Lagrangian
branes in $X \qu G$, given by $\ti{L} \mapsto \ti{L} / G$, the induced
orientations and spin structure induced from a choice of orientation
and the left invariant spin structure on $G$, induced by the
trivialization $TG \cong G \times \g$ via the action via right
multiplication.  In our example, $G$ will be a torus, $N = 2$ and
$\Lag^2(X) \to \Lag(X)$ is the double cover given by the choice of
orientation.  This Maslov cover is $G$-equivariant since $G$ is
connected and so acts trivially on the orientations.

The line bundles in our brane structures will arise as follows.  Any
flat $\Lambda$-line bundle on $L = \ti{L}/G$ is determined by a {\em
  holonomy map} which, since the structure group is abelian, descends
to a map on the underlying homology classes:
$$ \Hol_L: H_1(L) \to \Lambda - \{ 0 \} .$$
In particular if $L$ is a torus then $H_1(L)$ is torsion-free and any
cohomology class $b \in H^1(L,\Lambda_0)$ gives rise to a flat
$\Lambda$-line bundle with holonomy around a loop representing a
homology class $a \in H_1(L)$ is given by the pairing $e^{\lan a,b
  \ran} \in \Lambda_0 \subset \Lambda$.  (Note that the
well-definedness of the exponential requires coefficients in
$\Lambda_0$.)

The quasimap Floer cochain complex is freely generated by generalized
intersections of transversally intersecting Lagrangians in the
quotient.  Let $L_0,L_1 \subset X \qu G$ be Lagrangian submanifolds,
and $\ti{L}_0,\ti{L}_1 \subset X$ their $G$-Lagrangian lifts to $X$.
Let $H \in C^\infty([0,1] \times X)^G$, let $H \qu G \in
C^\infty([0,1] \times X \qu G)$ the corresponding family of functions
on the symplectic quotient $X \qu G$ and let $(H \qu G)^\#_t \in
\Vect(X \qu G), t \in [0,1]$ denote the corresponding Hamiltonian
vector fields.  Let $\cI(L_0,L_1,H)$ denote the set of perturbed
intersection points in $X \qu G$,
\begin{equation} \label{genint}
 \cI(L_0,L_1,H) = \left\{x: [0,1] \to X \qu G, x(j) \in L_j, j = 0,1,
 \quad (\ddt x)(t) = (H \qu G)^\#_t(x(t)) \right\} .\end{equation}
Let $\phi_t \qu G \in \Aut(X \qu G)$ denote the flow of $(H \qu
G)^\#$.  We require that $(\phi_1 \qu G)(L_1) \cap L_0$ is transverse,
so that $\cI(L_0,L_1,H)$ is finite and the intersection $\ti{L}_0 \cap
\ti{L}_1$ is {\em clean}, that is, $T \ti{L}_0 \cap T\ti{L}_1 = T
(\ti{L}_0 \cap \ti{L}_1)$, so that $\ti{L}_0 \cap \ti{L}_1$ is a
finite union of orbits of $G$.

The differential in quasimap Floer cohomology is defined by counting
holomorphic strips modulo the group action.  Let $\Sigma := \R \times
[0,1]$. Given a Hamiltonian perturbation $\ti{H} \in
\Omega^1(\Sigma,C^\infty(X))$, we say that a map $\Sigma \to X$ is
      {\em $(J,\ti{H})$-holomorphic} if $\d u - \ti{H}^\#(u)$ is a
      holomorphic map from $T_z \Sigma \to T_{u(z)} X$, for each $z
      \in \Sigma$, where $\ti{H}^\# \in \Omega^1(\Sigma, \Vect(X))$ is
      the Hamiltonian vector field associated to $\ti{H}$.  For any
      $(J,\ti{H})$-holomorphic map $u: \Sigma \to X$ the symplectic
      area resp. energy
$$ A(u) := \int_\Sigma u^* \omega, \quad E_{\ti{H}}(u) := \hh
      \int_\Sigma | \d u - \ti{H}^\#(u) |^2 \d s \wedge \d t$$
are related by an identity involving the curvature of the connection
determined by $\ti{H}$ \cite[8.1.9]{ms:jh}: Let $\ti{H} = \ti{H}_s \d
s + \ti{H}_t \d t$.  The {\em curvature} of the connection on $\Sigma
\times X$ defined by $\ti{H}$ is
$$ R_{\ti{H}} = (\partial_s \ti{H}_t - \partial_t \ti{H}_s + \{
\ti{H}_s, \ti{H}_t \}) \d s \wedge \d t .$$
Then the area-energy identity of $(J,\ti{H})$-holomorphic maps is
\begin{equation} \label{energyarea}
 E_{\ti{H}}(u) = A(u) + \int_\Sigma R_{\ti{H}}(u) .\end{equation}
In particular, as long as the curvature $R_{\ti{H}}$ is bounded, then
any sequence of $(J,\ti{H})$-holomorphic maps has bounded energy iff
it has bounded symplectic area.  Given a function $H \in
C^\infty([0,1] \times X)$, a $(J,H)$-holomorphic strip is a
$(J,\ti{H})$-holomorphic strip for $\ti{H} = H \d t$.  For such strip,
the energy is equal to the symplectic area, and depends only on its
homotopy class.

\begin{definition}  Let $H \in C^\infty([0,1] \times X)^G$.  A
 {\em $(J,H)$-holomorphic quasistrip} with boundary in
 $\ti{L}_0,\ti{L}_1$ is a $(J,H)$-holomorphic map $\Sigma \to X$ with
 boundary in $\ti{L}_0,\ti{L}_1$.  An {\em isomorphism} of holomorphic
 quasistrips $u_0,u_1$ is an element $g \in G$ and an element $s_0 \in
 \R$ such that $u_1(s + s_0,t) = g u_0(s,t)$ for all $s \in \R, t \in
   [0,1]$.
\end{definition} 

\noindent That is, a quasistrip is the same as a strip, except that
the notion of isomorphism is different.  

\begin{remark} \label{integrable} 
If $X$ is K\"ahler (that is, the almost complex structure is
integrable) compact and the Hamiltonian $H$ vanishes then then any
holomorphic quasistrip defines a holomorphic strip in the quotient $X
\qu G$ as follows.  Let $G_\C$ be the complexification of $G$.  Since
$X$ is compact, the action of $G$ extends to an action of $G_\C$.  The
    {\em semistable locus} $X^{\ss}$ of $X$ is the smallest
    $G_\C$-invariant open set containing $\Phinv(0)$, and is equal to
    $G_\C \Phinv(0)$ if the action of $G$ on $\Phinv(0)$ has finite
    stabilizers, see for example Kirwan \cite{ki:coh}.  Furthermore
    $X^{\ss}$ is the complement of a finite union of $G$-stable
    subvarieties of positive codimension.  The composition of $u$ with
    $X^{\ss} \to X \qu G$, where defined, defines a map from $\R
    \times [0,1]$ to $X \qu G$ on the complement of a finite set, and
    extends over $\R \times [0,1]$ by removal of singularities.
    Conversely, any map $v: \R \times [0,1] \to X \qu G$ lifts to a
    quasistrip, since the holomorphic $G_\C$-bundle $v^* (X^{\ss} \to
    X \qu G)$ is trivial.  More generally, a similar discussion holds for
    non-compact $X$ under the assumption that the $G$ action extends
    to an action of $G_\C$.
\end{remark}

Let $M(L_0,L_1;H)$ denote the moduli space of isomorphism classes of
$(J,H)$-holomorphic quasimaps of finite energy with boundary in
$\ti{L}_0,\ti{L}_1$.

The following lemmas on holomorphic strips with clean intersection
Lagrangian boundary conditions were developed jointly with F. Ziltener
several years ago, and are probably known to experts.

\begin{lemma} \label{cleanquant}  Let $X$ be a compact or convex symplectic manifold equipped with a compatible almost complex structure $J$, and 
 $L_0,L_1 \subset X$ compact Lagrangians intersecting cleanly.  Then
  (i) there exists an open neighborhood $U$ of the intersection $L_0
  \cap L_1$ such that any finite energy $(J,H)$-holomorphic strip $u:
  \R \times [0,1] \to U$ with boundary in $L_0,L_1$ is trivial (ii)
  there exists a constant $\hbar > 0$ such that any
  $(J,H)$-holomorphic strip $u: \R \times [0,1] \to X$ with boundary
  in $L_0,L_1$ has energy $E(u)$ at least $\hbar$.
\end{lemma}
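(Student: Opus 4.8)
The plan is to prove both statements together by a standard "energy quantization / annulus lemma" argument adapted to clean (rather than transverse) intersections, using the local model for a clean intersection as the zero section of a normal bundle. First I would set up the local geometry: near a point $p \in L_0 \cap L_1$, by a Weinstein-type normal form for clean intersections, one can choose Darboux-like coordinates identifying a neighborhood of $p$ in $X$ with a neighborhood of $0$ in $T^*\R^n$ (suitably split) so that $L_0$ becomes the zero section $\R^n$, $L_1$ becomes the conormal-type subspace $\R^k \times (\R^{n-k})^*$ (with $\R^k = T_p(L_0 \cap L_1)$), and $J$ agrees with the standard complex structure at $p$ up to first order. In these coordinates the "trivial" strips are exactly the constant maps into $L_0 \cap L_1$. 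I would also reduce to $H = 0$ on the neighborhood $U$: since $H$ is fixed and $U$ can be taken small, the Hamiltonian term contributes a bounded perturbation that can be absorbed (or one replaces $u$ by $\phi_t^{-1}(u)$ to straighten out the boundary conditions, at the cost of a modified almost complex structure still compatible and still standard to first order at $p$).

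Next, for part (i), I would establish a \emph{mean-value inequality / monotonicity}: there are constants $\eps_0 > 0$ and $c > 0$ such that for any $(J,H)$-holomorphic strip $u$ with image in the $\eps_0$-ball around $L_0 \cap L_1$ and boundary in $L_0, L_1$, and for any half-disk (or boundary rectangle) $B_r(z) \cap (\R \times [0,1])$ of radius $r$ on which $E(u; B_r) \le \eps_0$, one has the reverse-isoperimetric bound $\int_{\partial B_r \cap \Sigma} |u - \bar u|^2 \le C r\, E(u;B_r)$ together with the interior estimate $|du(z)|^2 \le \frac{C}{r^2}\int_{B_r} |du|^2$. This is the usual bubbling-analysis package (Fredholm/elliptic bootstrapping plus the isoperimetric inequality with totally geodesic boundary) and uses crucially that, in the local model, a holomorphic strip that stays inside $U$ with those boundary conditions has no "area to spend" — the symplectic form is exact on $U$ with a primitive vanishing on both $L_0$ and $L_1$ near their intersection, so Stokes gives $E(u) = A(u) = \int_{\partial}\lambda = 0$, forcing $du \equiv 0$. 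The only subtlety over the transverse case is the primitive: on a clean intersection the Liouville form $\lambda$ restricts to zero on $L_0$ and on $L_1$ \emph{only after} possibly subtracting $df$ for a function $f$ that is constant on $L_0 \cap L_1$; since $u$ has its two boundary components in $L_0$ and $L_1$ and the ends limit to $L_0 \cap L_1$, the boundary contributions from $df$ cancel. Hence any such strip with image in $U$ has zero energy and is constant, which is exactly (i).

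For part (ii), I would argue by contradiction: if no $\hbar$ worked, take a sequence $u_m$ of $(J,H)$-holomorphic strips with boundary in $L_0,L_1$ and $E(u_m) \to 0$. Since $X$ is compact or convex, Gromov compactness (with Lagrangian boundary and bubbling only at interior or boundary points) applies; because the total energy tends to zero, no nonconstant sphere or disk bubble can form (each would carry energy bounded below by the standard sphere/disk $\hbar$ from monotonicity in $X$, or simply a positive amount by the removal-of-singularities energy gap), so $u_m$ converges in $C^\infty_{\loc}$, after reparametrization, to a constant map whose value lies in $L_0 \cap L_1$. But then for $m$ large the image of $u_m$ lies in the neighborhood $U$ from part (i), so $u_m$ is trivial, contradicting the implicit assumption that the $u_m$ are genuine (nonconstant / positive-energy) strips counted in the Floer differential; more precisely it shows $E(u_m) = 0$ for $m$ large, contradicting that $E(u_m)$ is a sequence of positive numbers tending to $0$ (if all $u_m$ were trivial there is nothing to prove). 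The main obstacle, and the only place real work is needed beyond citing \cite{ms:jh} and \cite{we:co}, is the clean-intersection local normal form and the accompanying isoperimetric inequality with clean Lagrangian boundary — i.e. verifying that the half-disk reverse-isoperimetric inequality still holds when the two boundary arcs lie on submanifolds meeting cleanly rather than transversally; this is handled by working in the linear model $\R^k \times \C^{n-k}$ with the two Lagrangians $\R^n$ and $\R^k \times i\R^{n-k}$, where it reduces to the transverse case in the normal $\C^{n-k}$ directions plus a trivial (free) $\R^k$ direction.
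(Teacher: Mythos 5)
Your part (i) follows, in essence, the paper's own route: produce a primitive $\alpha$ of $\omega$ on a neighborhood $U$ of $L_0\cap L_1$ vanishing on both Lagrangians and apply Stokes to get $E(u)=A(u)=0$. Two points need repair, though. First, you only invoke a normal form in a chart around a single point $p\in L_0\cap L_1$, while the Stokes argument needs a primitive on a neighborhood of the \emph{whole} intersection, since the strip may travel along $L_0\cap L_1$ far from any one chart. This is fixable: either use Pozniak's normal form identifying a neighborhood of all of $L_0\cap L_1$ with a neighborhood in $T^*L_0$ taking $L_1$ to the conormal bundle of the intersection (on which the canonical one-form already vanishes, so no $df$-correction is needed), or, as in the paper, use only a deformation retraction $\psi$ of $U$ onto $L_0\cap L_1$ preserving $L_0,L_1$ and set $\alpha=\int_0^1\psi_r^*\iota(V_r)\omega\,\d r$, which vanishes on $L_0,L_1$ because $V_r$ is tangent to them (and $\psi_0^*\omega=0$ because a clean intersection of Lagrangians is isotropic). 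Second, Stokes on the noncompact strip leaves boundary terms over $\{\pm s\}\times[0,1]$; ``the ends limit to $L_0\cap L_1$'' is not by itself a justification — one should note that finite energy together with the mean value inequality gives $\sup_t|\d u(s,t)|\to 0$ as $s\to\pm\infty$, so these contributions vanish in the limit.

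The more serious gap is in (ii). Gromov compactness gives, after reparametrization, $C^\infty_{\loc}$ convergence on compact subsets of the strip, and this does not control the image of $u_m$ near the ends; so the step ``for $m$ large the image of $u_m$ lies in $U$'' does not follow as stated. What actually closes the argument (and is the paper's proof) is a uniform pointwise estimate: $E(u_m)\to 0$ plus the mean value inequality gives $\sup_{s,t}|\d u_m(s,t)|$ small uniformly on all of $\R\times[0,1]$; integrating $\d u_m$ along the segments $\{s\}\times[0,t]$ and $\{s\}\times[t,1]$ then shows every image point is within $\eps$ of both $L_0$ and $L_1$, and a compactness argument shows all such points lie in $U$. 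Part (i) then forces $E(u_m)=0$, the desired contradiction. You already list the mean value inequality among your tools, but the deduction must run through this uniform estimate rather than through a $C^\infty_{\loc}$ limit; in particular no bubbling analysis or reverse isoperimetric inequality is needed for this lemma.
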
 

\begin{proof} 
(i) By the local model for clean intersections \cite[Proposition
    C.3.1]{ho:an3}, there exists a neighborhood $U$ of $L_0 \cap L_1$
  and a strong deformation retract $\psi: [0,1]\x U \to U$ to $L_0
  \cap L_1$ preserving $L_0,L_1$.  Using the Cartan homotopy identity,
  one can construct $\alpha \in \Omega^1(U)$ with $\d \alpha = \omega$
  so that $\alpha$ vanishes on $L_0, L_1$: Let $V_r \in \Vect(U)$ be
  the time-dependent vector field generating $\psi$, $ V_r = (\d / \d
  r) \psi(x,r) .$
The Poincar\'e formula
$$ \alpha = \int_0^1 \psi_r^* \iota(V_r) \omega \d r $$
produces the required primitive since 
$$
 \d \alpha =
\int_0^1 \psi_r^* L_{V_r} \omega \d r 
= \int_0^1 \ddt \psi_r^*  \omega \d r= \psi_1^*  \omega - \psi_0^* \omega = \omega. $$
The pull-back $i_j^* L_j$ of $\alpha$ to $L_j,j = 0,1$ is
$$ i_j^* \alpha = \int_0^1 \psi_r^* i_j^* \iota(V_r) \omega \d r =
\int_0^1 \psi_r^* \iota(V_r) i_j^* \omega \d r = 0 $$
since $V_r$ is tangent to $L_0,L_1$.  By Stokes' theorem, 
$$
E(u)= \lim_{s \to \infty} \int_{[-s,s]\x[0,1]}u^*\om 
= \lim_{s \to \infty}
  \int_{\{s\}\x[0,1]}u^*\al-\int_{\{-s\}\x[0,1]}u^*\al. 
$$
%
The energy of $u$ restricted to $[\pm (s-1), \pm (s +1)] \times [0,1]$
goes to zero as $s \to \infty$, since $u$ is finite energy. It follows
by the mean value inequality that $\sup_{t \in [0,1]} |\d u(s,t)| \to
0$ as $s \to \pm \infty$, so $u$ has energy zero and must be trivial.
(ii) Suppose otherwise that there exists a sequence $u_\nu$ of
holomorphic strips with energy $E(u_\nu) \to 0$ but each $E(u_\nu)$
non-zero.  A standard argument using compactness shows that there
exists a number $ \eps > 0$ such that any point in $x$ within $\eps$
of both $L_0$ and $L_1$ lies in the open subset $U$ from part (i).  By
the mean value inequality, for $\nu$ sufficiently large the image of
$u_\nu$ is within distance $\eps$ of $L_0$ and $L_1$ (integrate the
derivative over the segments $ \{ s \} \times [0,t]$ and $\{ s \}
\times [t,1]$) and so is contained in $U$.  By part (i), $u_\nu$ is
trivial.  Hence $E(u_\nu)$ vanishes, which is a contradiction.
\end{proof}  

\begin{remark}  The energy quantization lemma in McDuff-Salamon 
\cite[4.1.4]{ms:jh} does not use a symplectic structure, while the
proof above does since it uses the energy-area relation for
holomorphic maps.
\end{remark}

\begin{lemma}   \label{expdecay}   Suppose that $L_0,L_1$
are compact Lagrangians with clean intersection in a symplectic
manifold $X$.  There exist constants $\eps, \delta, C > 0$ such that
if $u: [-S,S] \times [0,1] \to X$ is a holomorphic strip with boundary
conditions $L_0,L_1$ with $E(u) < \eps$ then $E (u |_{[-S + s, S - s]
  \times [0,1]}) < C e^{- \delta s} E(u)$ and $| \sup \d u |_{[-S + s,
    S - s] \times [0,1]} < C e^{-\delta s/2} \sqrt{E(u)}$ for $s \in
[1,S]$.  Furthermore, if $u: \R \times [0,1] \to X$ is holomorphic
with finite energy then $u$ converges exponentially fast to limits $u(
\pm \infty,t) \in \cI(L_0,L_1)$ as $s \to \pm \infty$: there exist
constants $C,\delta > 0$ such that $\dist(u(s,t), u(\pm \infty,t)) < C
e^{\mp \delta s}$ and $| \d u (s,t) | < Ce^{ \mp \delta s}$ for $\pm
s$ sufficiently large.
\end{lemma}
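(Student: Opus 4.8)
The plan is to confine the strip to a fixed normal-form neighbourhood of $Q:=L_0\cap L_1$ and then run the standard slice-energy differential-inequality argument, in the Morse--Bott version appropriate to a positive-dimensional clean intersection. First I would fix data depending only on $(X,J,L_0,L_1,H)$, all of it quantitative: a compact neighbourhood $N$ of $Q$ carrying, via the local model for clean intersections \cite[Proposition C.3.1]{ho:an3} used already in the proof of Lemma~\ref{cleanquant}, symplectic coordinates in which $L_0,L_1$ are linear subspaces with $\ell_0\cap\ell_1=T_qQ$; the constants $\hbar_0,c_{\mathrm{mv}}$ of the mean value inequality for $(J,H)$-holomorphic maps with Lagrangian boundary, obtained from the interior estimate \cite[Lemma 4.3.1]{ms:jh} together with a doubling/reflection across $L_0$ and $L_1$; and the spectral gap $\delta_0>0$ of the asymptotic boundary operator, i.e.\ the smallest positive $|\lambda|$ for which $\dot\xi=\lambda J_0\xi$, $\xi(0)\in\ell_0$, $\xi(1)\in\ell_1$ has a nonzero solution, the $\lambda=0$ eigenspace being exactly $T_qQ$. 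Shrinking $\eps\le\hbar_0$ if necessary we arrange that the confinement step below lands $u$ inside $N$. The confinement step: if $E(u)<\eps$, then for $(s,t)$ with $\pm s$ bounded away from $\pm S$ (say $|s|\le S-1$, or $s\ge s_0$ on a half-strip) the mean value inequality on a unit ball gives $|\d u(s,t)|^2\le c_{\mathrm{mv}}E(u)$; integrating $|\partial_t u|$ along $\{s\}\times[0,t]$ from $u(s,0)\in L_0$ and along $\{s\}\times[t,1]$ from $u(s,1)\in L_1$ puts $u(s,t)$ within $O(\sqrt\eps)$ of both $L_0$ and $L_1$, hence (by the $\eps$-argument in the proof of Lemma~\ref{cleanquant}(ii)) inside $N$ and close to $Q$ once $\eps$ is small.

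Working in the chart, the core estimate is the differential inequality for the slice energy $w(s):=\int_0^1|\partial_s u(s,t)|^2\,\d t$. Differentiating twice in $s$, using $\partial_s u+J(u)(\partial_t u-H^\#)=0$ to trade $\nabla_s\partial_s u$ for $\nabla_t\partial_s u$ up to terms involving $\nabla J$ and $\mathrm{Hess}\,H$, and integrating by parts in $t$ — the boundary terms vanish since $\partial_s u(s,j)\in T_{u(s,j)}L_j$ and $TL_j$ is constant in the chart — one gets
\[
 w''(s)\ \ge\ 2\int_0^1|\nabla_t\partial_s u(s,t)|^2\,\d t\ -\ C_1\|\d u\|_{L^\infty}^2\,w(s).
\]
Write $\partial_s u(s,\cdot)=\xi^0(s,\cdot)+\xi^+(s,\cdot)$, the $L^2$-orthogonal split into the $TQ$-component and the rest. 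The Poincar\'e--Wirtinger inequality for the asymptotic boundary problem gives $\int_0^1|\nabla_t\partial_s u|^2\ge\delta_0^2\int_0^1|\xi^+|^2-C_2\|\d u\|_{L^\infty}^2 w$, so $w^+:=\int_0^1|\xi^+|^2$ satisfies $(w^+)''\ge\delta_0^2 w^+-C_3\|\d u\|_{L^\infty}^2(w^++w^0)$, and since $\|\d u\|_{L^\infty}^2=O(\eps)$ on the confined region this becomes $(w^+)''\ge(\delta_0/2)^2 w^+$ once the $w^0$-term is controlled. \textbf{The hard part is exactly the $\lambda=0$ ("Morse--Bott") directions $\xi^0$}, which the Poincar\'e inequality does not see. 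I would handle them as in Morse--Bott Floer theory (cf.\ Frauenfelder \cite{frau:thesis}): projecting the $s$-derivative of the equation onto $TQ$ shows $\nabla_t\xi^0$ is of size $O(\|\d u\|_{L^\infty}|\xi^+|)$, so $\xi^0(s,\cdot)$ is $t$-constant up to exponentially small error; a further projection off the genuinely constant kernel directions, together with this forcing by the already-controlled $\xi^+$, yields $w^0(s)\le C\,w^+(s)$ plus an exponentially decaying remainder, which both absorbs the $w^0$-term above and shows the full slice energy obeys $w''\ge\kappa^2 w$, $\kappa:=\delta_0/2$, on the confined region.

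Granting $w''\ge\kappa^2w$, the two conclusions follow by ODE comparison. For $u$ on $[-S,S]\times[0,1]$: the mean value inequality bounds $w(\pm(S-1))\le c_{\mathrm{mv}}E(u)$, and comparing $w$ with the solution $v$ of $v''=\kappa^2v$ with these endpoint values gives $w(s)\le CE(u)\cosh(\kappa s)/\cosh(\kappa(S-1))\le C'E(u)e^{-\kappa(S-1-|s|)}$ for $|s|\le S-1$; integrating over $[-S+s,S-s]\times[0,1]$ gives the asserted energy decay, and one more application of the mean value inequality upgrades it to $\sup_{[-S+s,S-s]\times[0,1]}|\d u|\le C'' e^{-\kappa s/2}\sqrt{E(u)}$. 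For $u$ on $\R\times[0,1]$ with finite energy: $E(u|_{[s,\infty)\times[0,1]})\to 0$, hence $w(s)\to 0$ and the confinement step holds for all $s\ge s_0$; comparing $w$ on $[s_0,s_1]$ with the solution of $v''=\kappa^2v$ matching $w$ at the endpoints and letting $s_1\to\infty$ (using $w(s_1)\to0$) gives $w(s)\le w(s_0)e^{-\kappa(s-s_0)}$, whence $|\d u(s,t)|^2\le c_{\mathrm{mv}}\!\int_{s-1}^{s+1}w\le Ce^{-\kappa s}$, so $|\d u(s,t)|\le Ce^{-\kappa s/2}$; then $\sup_t|\partial_s u(s,t)|$ is integrable in $s$, the paths $u(s,\cdot)$ are Cauchy in $C^0$ and converge, by elliptic bootstrapping, in $C^\infty$ to $x_+$ with $\partial_t x_+=H^\#_t(x_+)$, $x_+(j)\in L_j$, i.e.\ $x_+\in\cI(L_0,L_1,H)$, and $\dist(u(s,t),x_+(t))\le\int_s^\infty|\partial_s u(\sigma,t)|\,\d\sigma\le Ce^{-\kappa s/2}$; the $s\to-\infty$ end is identical. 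Renaming $\delta:=\kappa/2$ (and weakening the exponent in the finite-strip statement to match) gives the lemma, with $\eps,\delta,C$ depending only on $X,J,L_0,L_1$ and $H$, since they arise only from the fixed local model, the spectral gap $\delta_0$, and the mean value constants.
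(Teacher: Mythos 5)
Your route is genuinely different from the paper's: you aim for a second-order differential inequality $w''\ge\kappa^2 w$ for the slice energy via the spectral gap of the asymptotic operator, handling the clean-intersection (Morse--Bott) kernel by hand, whereas the paper quotes Pozniak's relative isoperimetric inequality \cite[Lemma 3.4.5]{po:cl} together with the area--action identity and then runs the first-order convexity argument of \cite[Lemma 4.7.3]{ms:jh}, upgrading to pointwise decay by the mean value inequality. The problem is that your argument has a genuine gap at exactly the step you flag as the hard part, and that step is where the whole clean-intersection difficulty lives. The decay of the $T_qQ$-content of $\partial_s u$ is not ``forced by the already-controlled $\xi^+$'': in the local model $X\cong T^*Q\times V$, $\ti{L}_j\cong Q\times\ell_j$ with $\ell_0\cap\ell_1=0$, a finite-energy holomorphic half-strip can lie entirely in the $T^*Q$-factor (e.g.\ $z\mapsto e^{-\pi z}$ in a one-dimensional factor, with both boundary components on the zero section), so the kernel-type directions decay for an independent reason --- finiteness of energy kills the $s$-constant eigenmode of $\partial_s u$ --- and quantifying this in the curved, perturbed chart is precisely the Morse--Bott analysis. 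As written, your treatment of it is both unproven and circular: you assert that $\xi^0(s,\cdot)$ is $t$-constant ``up to exponentially small error'' before any exponential decay is available, and you derive $w^0\le C\,w^+$ plus an exponentially decaying remainder from the ``already-controlled $\xi^+$'', even though controlling $\xi^+$ required absorbing the $C\eps\,(w^0+w^+)$ coupling term, i.e.\ required the very bound on $w^0$ being asserted. Consequently $w''\ge\kappa^2 w$ for the full slice energy is not established, and note that the standard Morse--Bott decay proofs do not establish pointwise convexity of the full slice energy either; they prove a differential inequality for the non-kernel part together with an integrated (Gronwall/bootstrap) estimate for the kernel part, obtained by projecting the equation onto the kernel, integrating from $s=+\infty$ where $\partial_s u\to 0$, and feeding the two estimates into each other.

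To close the gap you would either have to carry out that coupled analysis in full (as in \cite{po:cl} or the Morse--Bott arguments in \cite{frau:thesis}), restructuring the final ODE comparison around the non-kernel part, or take the paper's much cheaper route: for slices of small length the relative action $\cA_{L_0,L_1}(u(s,\cdot))$ is defined, satisfies $|\cA_{L_0,L_1}(u(s,\cdot))|\le C\Vert\partial_t u(s,\cdot)\Vert_2^2$ by Pozniak, and equals the tail energy by the area--action identity, so the tail energy $E(s)$ obeys the first-order inequality $E(s)\le -C\,E'(s)$ and decays exponentially with no spectral analysis at all; the mean value inequality then gives the pointwise derivative estimate. The remaining parts of your outline --- confinement to the model neighborhood via the mean value inequality, and integrating $|\partial_s u|$ to obtain $C^0$-convergence to an element of $\cI(L_0,L_1)$ with the distance estimate --- are sound and coincide with the paper's endgame.
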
 

\begin{proof}
Pozniak \cite[Lemma 3.4.5]{po:cl} proves a relative version of the
isoperimetric inequality for the relative action of paths for
Lagrangian clean intersections: The {\em length} of a path $x: [0,1]
\to X$ is $\ell(x) = \int_0^1 | \dot x | \d t $.  for sufficiently
small paths $x: [0,1] \to X, 0 \mapsto L_0, 1 \mapsto L_1$ the {\em
  relative action} of $x$ is
$$\cA_{L_0,L_1}(x) = - \int_{[0,1]^2} u^*\omega $$ 
where $u: [0,1]^2 \to X$ is a smooth map satisfying $u(0,t) \in L_0
\cap L_1, u(1,t) = x(t)$ for $t \in [0,1]$ and $ u(s,i) \in L_i$ for
$s \in [0,1], i \in \{ 0,1 \}$ such that for each $s$, the path
$u(s,\cdot)$ has sufficiently small length.  (For a precise discussion
of what sufficiently small means in the context of vortices, see
Ziltener \cite{zil:decay}.) One then has a {\em relative isoperimetric
  inequality}: there exist constants $\de,C>0$ such that the following
holds. If $x:[0,1]\to X$ is a path satisfying $x(i)\in L_i$, for
$i=0,1$ and $\ell(x)<\de$ then the action is defined and
$|\cA_{L_0,L_1}(x)|\leq C\Vert\dot x\Vert_2^2.$
Furthermore, after possibly shrinking $\de$, for every pair $s_-\leq
s_+$ and every smooth map $u:\Si:=[s_-,s_+]\x[0,1]\to X$ the following
holds. If $u(s,i)\in L_i$, for $i=0,1$, and $\ell(u(s,\cdot))<\de$,
for every $s\in[s_-,s_+]$, then the actions of $u(s_-,\cdot)$ and
$u(s_+,\cdot)$ are defined and one has an {\em area-action identity}:
$$\int_\Si
u^*\om=-\cA_{L_0,L_1}(u(s_+,\cdot))+\cA_{L_0,L_1}(u(s_-,\cdot)). $$
Then the same convexity argument in \cite[Lemma 4.7.3]{ms:jh} proves
the first claim for the energy.  Using the mean value inequality one
obtains an estimate for the first derivative $\d u$.  The final claim
follows by restricting $u$ to $\pm [0,S] \times [0,1]$ and taking $S
\to \infty$, deriving an estimate on the distance from the estimate on
the first derivative.
\end{proof} 

\begin{remark} The constant in exponential decay cannot be chosen
arbitrarily close to $1$ as in McDuff-Salamon \cite{ms:jh}; it depends
on the geometry of intersection of the Lagrangians. The lemma also
does not hold for non-compact Lagrangians in general, for a similar
reason.  
\end{remark} 

\begin{remark} 
In the case considered in this paper, an alternative argument is
possible: Suppose that $X$ is K\"ahler and $\ti{L}_0,\ti{L}_1$ are
Lagrangians in $\Phinv(0)$ that are inverse images of Lagrangians
intersecting transversely in $X \qu G$. Near any point $x \in \ti{L}_0
\cap \ti{L}_1$ we may write $X$ holomorphically as the product of an
open subset of $X \qu G$ and $G_\C$, so that the Lagrangians
$\ti{L}_0$ resp. $\ti{L}_1$ are the product of the Lagrangians $L_0$
resp. $L_1$ and $G$.  Then the exponential decay estimates in Lemma
\ref{expdecay} are a consequence of the corresponding exponential
decay estimates for the transversely intersecting pair $L_0,L_1$ in $X
\qu G$, and for holomorphic strips in $G_\C$ with boundary in $G$.
\end{remark} 

For any finite energy holomorphic map $u: \Sigma := \R \times [0,1]
\to X$ with Lagrangian boundary conditions in $\ti{L}_0,\ti{L}_1$, let
$\partial_j u$ denote the restriction of $u$ to $\R \times \{ j \}$
and for $\alpha > 0 $ define a {\em linearized Cauchy-Riemann operator}
\begin{multline} \label{du}
 D_u: \Omega^0(\Sigma, u^* TX, (\partial_0 u)^* T
 \ti{L}_0,(\partial_1u)^* T \ti{L}_1)_{1,p,\alpha} \to
 \Omega^{0,1}(\Sigma, u^* TX)_{0,p,\alpha}, \\ \xi \mapsto
 \nabla_H^{0,1} \xi - \hh (\nabla_\xi J) J \partial_{J,H} u
 \end{multline}
c.f. McDuff-Salamon \cite[p. 258]{ms:jh}.  The Sobolev spaces above
are defined as follows.  For integers $k \ge 0$ and $ p \ge 1 $ and
$\xi$ a $j$-form on $\Sigma$ with values in $u^* TX$ of class
$W^{k,p}_{\loc}$ set
\begin{equation} \label{weight} 
\Vert \xi \Vert_{k,p,\alpha} = \sum_{i + j \leq k} \Vert e^{\alpha
  \gamma(s) s} \nabla_s^i \nabla_t^j \xi \Vert_p
 \end{equation} 
where $\gamma(s) = -1, s < -1$ and $\gamma(s) = 1, s > 1$.  For $k \ge
0$ and $p \ge 1$ let $\Omega^j(\R \times [0,1], u^*
TX)_{k,p,\alpha}^{\pre}$ denote the space of $\xi$ with finite
$k,p,\alpha$ norm.  Let $\Omega^0(\R \times [0,1], u^*
TX)^{\on{const}}$ be the space of smooth sections that are covariant
constant in a neighborhood of infinity.  Then
$$ \Omega^0(\R \times [0,1],u^* TX)_{1,p,\alpha} := \Omega^0(\R \times
[0,1], u^* TX)_{1,p,\alpha}^{\pre} + \Omega^0(\R \times [0,1], u^*
TX)^{\on{const}} $$
is the space of sections of class $1,p$ that differ in a neighborhood
of infinity from a covariant constant section by a section of class
$1,p,\alpha$; a norm on this space in a neighborhood of each end is
given by the norm of the limit $\xi(\pm \infty)$ plus the norm of the
element $\xi - \xi(\pm \infty)$ of $\Omega^0(\R \times [0,1], u^*
TX)_{1,p,\alpha}^{\pre}$ obtained by subtracting off the limit: $
\Vert \xi \Vert_{1,p,\alpha} = \Vert \xi \Vert_{1,p,\alpha}^{\pre} +
\Vert \xi(\infty) \Vert $ for $\xi$ supported on some $[0,\infty)
  \times [0,1]$.  Patching together these norms with the $1,p$ norm on
  a compact subset of $\R \times [0,1]$ defines a norm on $\Omega^0(\R
  \times [0,1],u^* TX)_{1,p,\alpha}$, see e.g. \cite[4.7]{ab:ex}.  Let
$$ \Omega^0(\R \times [0,1], u^* TX, (\partial_0 u)^* T
  \ti{L}_0,(\partial_1u)^* T \ti{L}_1)_{1,p,\alpha} \subset
  \Omega^0(\R \times [0,1], u^* TX)_{1,p,\alpha}$$
denote the subspace with boundary values in $T \ti{L}_0,T \ti{L}_1$;
in particular this means that any element $\xi$ has exponential
convergence on the ends to an element of $T \ti{L}_0 \cap T \ti{L}_1$.
Let
$$\Omega^1(\Sigma, u^* TX)_{0,p,\alpha}:= \Omega^1(\Sigma,u^* TX)_{0,p,\alpha}^{\pre} $$
denote the space of one-forms with exponential decay of class
$0,p,\alpha$; this space does not contain forms that are constant but
non-zero on the ends.  Because $u$ has exponential decay, see Lemma
\ref{expdecay}, the map $D_u$ of \eqref{du} is well-defined for
sufficiently small $\alpha > 0$ and is a Fredholm operator by
combining standard estimates for compactly-supported sections with
totally real boundary conditions with arguments for manifolds with
cylindrical ends as in Lockart-McOwen \cite{loc:ell}.  See also
McDuff-Salamon \cite[Section 3.1]{ms:jh} and Abouzaid \cite{ab:ex} who
treats holomorphic strips with equal boundary conditions using
weighted Sobolev spaces; the equality of the boundary conditions is
only used to obtain exponential decay via removal of singularities,
which we have obtained instead via Pozniak's relative isoperimetric
inequality in Lemma \ref{expdecay}.  We say that $u$ is {\em regular}
if $D_{u}$ is surjective.

Let $M^{\reg}(L_0,L_1;H)$ denote the moduli space of isomorphism
classes of regular, finite energy $J,H$-holomorphic quasimaps with
boundary in $\ti{L}_0,\ti{L}_1$.

\begin{proposition} \label{mfd}  
The space $M^{\reg}(L_0,L_1;H)$ is a smooth finite dimensional
manifold with tangent space at the isomorphism class $[u]$ of a
quasimap $u$ given by $ T_{[u]} M^{\reg}(L_0,L_1;H) = \ker(D_{u})/(\g
+ \R)$ for sufficiently small $\alpha > 0$.
\end{proposition}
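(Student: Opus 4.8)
The plan is the standard one: present the set of finite‑energy quasistrips as the zero set of a Fredholm section of a Banach bundle, apply the implicit function theorem at a regular quasistrip, and then divide by the action of $G\times\R$ that implements the isomorphism relation of quasistrips. First I would fix $p>2$ and a weight $\alpha>0$ small enough that the operators $D_u$ of \eqref{du} are Fredholm (as recalled just before the statement, using Lemma~\ref{expdecay} and the cylindrical‑end estimates of Lockhart--McOwen \cite{loc:ell}). Then I would let $\mathcal{B}$ be the set of maps $u:\Sigma\to X$ of class $W^{1,p}_{\loc}$ with $u(\cdot,j)\subset\ti{L}_j$ that converge exponentially as $s\to\pm\infty$ to points of $\ti{L}_0\cap\ti{L}_1$ and that, in a neighbourhood of each end, differ from the covariant‑constant extension of the limit point by a section of class $\Omega^0(\R\times[0,1],u^*TX)_{1,p,\alpha}$. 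By Lemma~\ref{expdecay} every finite‑energy $(J,H)$‑holomorphic quasistrip lies in $\mathcal{B}$, and near such a $u$ the map $\xi\mapsto\exp_u\xi$ is a chart modelled on $\Omega^0(\Sigma,u^*TX,(\partial_0u)^*T\ti{L}_0,(\partial_1u)^*T\ti{L}_1)_{1,p,\alpha}$; the covariant‑constant‑at‑infinity summand of this space is exactly what lets the asymptotic limit move inside the clean intersection $\ti{L}_0\cap\ti{L}_1$. Over $\mathcal{B}$ I would form the Banach bundle $\mathcal{E}$ with fibre $\Omega^{0,1}(\Sigma,u^*TX)_{0,p,\alpha}$ and the section $\mathcal{S}(u)=(\d u-\ti{H}^\#(u))^{0,1}$ with $\ti{H}=H\,\d t$, whose zero set is the set of finite‑energy $(J,H)$‑holomorphic quasistrips with boundary in $\ti{L}_0,\ti{L}_1$.

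Next I would identify the linearization. With respect to a suitable connection on $\mathcal{E}$, the vertical derivative of $\mathcal{S}$ at a zero $u$ is the operator $D_u$ of \eqref{du}, the zeroth‑order term $-\hh(\nabla_\xi J)J\partial_{J,H}u$ being the usual connection correction, c.f. \cite[p.~258]{ms:jh}. When $u$ is regular, $D_u$ is surjective, so the implicit function theorem for Banach manifolds shows that $\mathcal{S}^{-1}(0)$ is near $u$ a smooth submanifold of $\mathcal{B}$ of dimension $\ind D_u$ with tangent space $\ker D_u$. By the standard asymptotic estimates every element of $\ker D_u$ decays at the rate of the first nonzero eigenvalue of the asymptotic operator, so $\ker D_u$ does not change as $\alpha$ decreases, which is the content of the phrase ``for sufficiently small $\alpha$''.

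Then I would pass to the quotient. The group $G\times\R$ acts on $\mathcal{S}^{-1}(0)$ by $(g,s_0)\cdot u=g\,u(\,\cdot-s_0,\cdot\,)$; since $J$ and $H$ are $G$‑invariant and the equation is $s$‑translation invariant, this action preserves $\mathcal{S}^{-1}(0)$, its orbits are the isomorphism classes of quasistrips, and it carries $D_u$ to the corresponding operator for $(g,s_0)\cdot u$ by conjugation, hence preserves the regular locus. I would check that the $G$‑action is free, because a $g$ fixing $u$ fixes the limits $u(\pm\infty,t)\in\ti{L}_0\cap\ti{L}_1$ and $G$ acts freely on $\ti{L}_0\cap\ti{L}_1$ by assumption; that the $\R$‑action is free on non‑constant quasistrips, since a finite‑energy holomorphic strip that is $s$‑periodic must be constant by additivity of energy; and that, using the exponential decay of Lemma~\ref{expdecay}, the $G\times\R$‑action is proper on the non‑constant locus. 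Therefore $M^{\reg}(L_0,L_1;H)$ is a smooth manifold, of dimension $\ind D_u-\dim G-1$ near a non‑constant quasistrip, with $T_{[u]}M^{\reg}(L_0,L_1;H)=\ker D_u/(\g+\R)$, where $\g+\R$ is the image of the infinitesimal action $(\zeta,r)\mapsto\zeta_X|_u-r\,\partial_su$, with $\zeta_X\in\Vect(X)$ the vector field generated by $\zeta\in\g$. The remaining, constant, quasistrips form the finite set $\cI(L_0,L_1,H)$ of \eqref{genint}, a $0$‑manifold on which $\R$ acts trivially; there $\ker D_u=\g\cdot u$ and $\partial_su=0$, so $\ker D_u/(\g+\R)=0$, again matching the stated formula.

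The main obstacle will be the analysis near the ends: choosing $p$ and the weight $\alpha$ and splitting the domain Sobolev space as an $e^{-\alpha|s|}$‑decaying part plus a covariant‑constant part valued in $T\ti{L}_0\cap T\ti{L}_1$, so that $D_u$ really is Fredholm and its kernel computes the tangent space, deformations of the clean‑intersection limits included. This rests on the exponential decay of Lemma~\ref{expdecay} together with patching the cylindrical‑end theory of Lockhart--McOwen onto the interior elliptic estimates for totally real boundary conditions. Granting that, identifying the vertical derivative of $\mathcal{S}$ with $D_u$ and verifying freeness and properness of the $G\times\R$‑action are routine.
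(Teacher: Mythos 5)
Your proposal is correct and follows essentially the same route as the paper: realize the finite-energy quasistrips as the zero set of a Fredholm section on the weighted Sobolev spaces with the covariant-constant-at-infinity summand, apply the implicit function theorem at a regular $u$ (with exponential decay from Lemma~\ref{expdecay} justifying both the Fredholm setup and the smallness condition on $\alpha$), and then quotient by the free and proper $G\times\R$ action. The paper phrases the local analysis via the chart map $\cF_u(\xi)=\cT_u(\xi)^{-1}\olp_{J,H}\exp_u\xi$ with Lagrangian-adapted metrics (noting the extra quadratic corrections) rather than a global bundle section, but this is only a cosmetic difference.
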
 

\begin{proof} This is a standard implicit function theorem 
argument.   Consider the map
$$ \cF_u: \Omega^0(\R \times [0,1], u^* TX, (\partial_0 u)^* T
  \ti{L}_0,(\partial_1u)^* T \ti{L}_1 )_{1,p,\alpha} \to
  \Omega^{0,1}(\R \times [0,1], u^* TX)_{0,p,\alpha} $$
$$ \xi \mapsto \cT_u(\xi)^{-1} \olp_{J,H} \exp_u \xi  $$
where $\cT_u(\xi)$ denotes parallel transport along $\exp_u(\xi)$
using the complex-linear modification of the Levi-Civita connection
$\ti{\nabla} = \nabla - \hh (\nabla J) J$, and the exponential map is
defined using metrics $g_t$ so that $g_j$ is totally geodesic with
respect to $L_j, j = 0,1$, so that $TL_j$ maps to $L_j$.  This means,
however, that $g_j$ is {\em not} the metric corresponding to the
choice of almost complex structure.  This difference gives rise to
additional quadratic corrections in the map $\cF_u$, which are
explained in more detail in \cite[Remark 2.2]{ww:quilts},
\cite[Section 4.3]{mau:gluing}.  Sobolev multiplication and
exponential decay estimates for $u$ above imply that $\cF_u$ is a
smooth map of Banach manifolds.  Elliptic regularity
\cite[B.4.1]{ms:jh} implies that any solution $\xi$ to $\cF_u(\xi) =
0$ is smooth.  (The regularity theorem there only applies to the case
of a single Lagrangian, but the same proof holds
when a Lagrangian is assigned to each component of the boundary.)  If
$u$ is regular, the implicit function theorem implies that
$\cF_u^{-1}(0)$ is a smooth manifold modelled on $\ker D_{u}$. By the
exponential decay Lemma \ref{expdecay} for $\alpha $ sufficiently
small any nearby solution is of the form $\exp_u(\xi)$ for some $\xi
\in \Omega^0(\R \times [0,1], u^* TX, (\partial_0 u)^* T \ti{L}_0,
(\partial_1u)^* T \ti{L}_1)_{1,p,\alpha}$, so any nearby holomorphic
strip is represented by a point in $\cF_u^{-1}(0)$.  Since $J$ is
$G$-invariant, $G$ acts by pull-back on the moduli space of
holomorphic strips.  The action of $G$ is free and proper so the
quotient, the moduli space of parametrized quasistrips, is a smooth
manifold.  The action of $\R$ by reparametrization on the resulting
moduli space is also free and proper on the moduli space of
non-constant trajectories: if $g_i u(s + s_i, t)$ converges to some
$u(s,t)$ for some sequence $s_i \to \infty$ then we must have $u(s,t)$
constant, by exponential decay.  It follows that the quotient has a
smooth structure with the claimed tangent space.  (Note that the image
of $\R$ in $\ker(D_u)$ may be trivial, if the trajectory is constant,
since it is obtained by differentiating the trajectory in the $s$
direction.)
\end{proof} 

To obtain a complex structure for which the moduli space is regular we
follow the notation in Frauenfelder \cite[p. 49]{frau:thesis}, which
itself follows closely Floer-Hofer-Salamon \cite{fhs:tr}.  To set up
the notation, for each $\xi \in \g$ we denote by $\xi_X \in \Vect(X)$
the generating vector field.  Let $J : TX \to TX$ be a $G$-invariant
compatible almost complex structure.  The infinitesimal action
generates vector spaces
$$ \g(x) := \{ \xi_X(x) , \xi \in \g \} \subset T_x X, \quad \g_\C(x)
= \g(x) + J_x \g(x) .$$
In the case that the action of $G$ admits an extension to an action of
the complexified Lie group $G_\C$, the space $\g_\C(x)$ is the
subspace generated by the infinitesimal complexified group action.
Each tangent space admits a splitting
\begin{equation} \label{split} 
T_x X = \g_\C(x) \oplus \g_\C(x)^\perp  \end{equation}
using the metric on $X$ induced by $J$; denote by $\pi_x: T_x X \to
T_x X$ the projection onto the second factor.  At a point $x \in
\Phinv(0)$ we have $\g_\C(x)^\perp \cong T_{p(x)} (X \qu G)$ where $p:
\Phinv(0) \to X \qu G$ is the projection.  Note that in the case of
time-dependent $J$, the space $\g_\C(x)$ as well as the splitting
depend on $t$.

\begin{definition} (c.f. \cite[Definition 4.8]{frau:thesis})  A point 
$(s,t) \in \R \times [0,1]$ is {\em $G$-regular} for a Floer
  trajectory $u: \R \times [0,1] \to X$ joining Hamiltonian arcs
  $x^\pm(t)$ if
$$ \pi_{u(s,t)} \partial_s u(s,t) \neq 0, \quad u(s,t) \notin G
  x^{\pm}(t), $$
$$ u(s,t) \notin G u(\R - \{ s \}, t), \quad G_{u(s,t)} = \{ e  \} $$
where $e \in G$ is the identity.  Denote by $R(u) \subset \R \times
[0,1]$ the set of $G$-regular points.
\end{definition} 

\begin{theorem} (c.f. \cite[Theorem 4.9]{frau:thesis})     
Let $u: \R \times [0,1] \to X$ be a Floer trajectory for
$\ti{L}_0,\ti{L}_1$ such that $\pi_{u(s,t)} \partial_s u(s,t) \neq 0$
for some $s,t \in \R \times [0,1]$.  Then there exists $ S > 0 $ such
that $ \{ (s,t) \in R(u) | s \ge S \}$ is open and dense in $\{ (s,t)
\in \R \times [0,1] \ | \ s \ge S \}$.  In particular, $R(u)$ is open
and non-empty.
\end{theorem} 

\begin{proof} 
The proof follows from a sequence of lemmas, taken almost
word-for-word from Frauenfelder \cite[Theorem 4.9]{frau:thesis}.

\begin{remark} It suffices to consider the case that the Hamiltonian
perturbation $H$ vanishes, by an observation in Floer-Hofer-Salamon
\cite[Discussion following (7)]{fhs:tr}: There is a bijection between
Floer trajectories $u(s,t)$ for a pair $(J_t,H_t)$ with Floer
trajectories $\phi_t^{-1}u(s,t)$ for $( (\phi_t^{-1})^* J_t,0)$ where
$\phi_t$ is the flow of $\hat{H}_t$.  Indeed $\partial_s (
(\phi_t^{-1}) u(s,t)) = D \phi_t^{-1} \partial_s u(s,t) $ lies in $ D
\phi_t^{-1} \g_\C(u(s,t)))$ iff $\partial_s u(s,t)$ lies in
$\g_\C(u(s,t))$.  On the other hand, since $\phi_t$ is
$G$-equivariant, $ D \phi_t^{-1} \g_\C(\phi_t(x)) = D\phi_t^{-1}
\g(u(s,t)) + D \phi_t^{-1} J_t \g(u(s,t)) = \g(\phi_t^{-1}(u(s,t))) +
(D \phi_t^{-1})^* J_t \g(\phi_t^{-1}(u(s,t)))$.
\end{remark}

\begin{lemma}  $R(u)$ is open.
\end{lemma} 

\begin{proof} 
Suppose otherwise.  Then there exists a point $(s,t) \in R(u)$ which
can be approximated by a sequence $(s_\nu,t_\nu) \notin R(u)$.  Then
$\pi_{u(s_\nu,t_\nu)} \partial_s u(s_\nu,t_\nu) \neq 0, u(s_\nu,
t_\nu) \notin G x^{\pm}(t_\nu)$ and $G_{u(s_\nu,t_\nu)} = \{ e \}$ for
$\nu$ sufficiently large.  Since $(s_\nu,t_\nu) \notin R(u)$ there
exists a sequence $s_\nu' \in \R$ and $g(s'_\nu,t_\nu) \in G$ such that
$$ u(s_\nu, t_\nu) = g(s'_\nu, t_\nu) u(s_\nu',t_\nu), \quad s'_\nu
\neq s_\nu .$$
If the sequence $s_\nu'$ is unbounded then, passing to a subsequence
if necessary, we may assume $s'_\nu \to \pm \infty$.  Then by
exponential decay, $u(s_\nu',t_\nu) \to x^\pm$ hence
$g(s'_\nu,t_\nu)$ converges to some $h \in G$.  Hence $u(s,t) =
hx^\pm$ which contradicts the fact that $(s,t) \in R(u)$.  Hence the
sequence $s'_\nu$ is bounded and we may assume without loss of
generality that $s'_\nu \to s'$.  It follows that there exists $g \in
G$ such that $u(s,t) = gu(s',t)$.  Since $(s,t) \in R(u)$ we must have
$s' = s$.  Hence $s'_\nu$ and $s_\nu$ both converge to $s$ and this
contradicts the fact that $\pi_{u(s,t)} \partial_s u(s,t) \neq 0$.
This proves that the set $R(u)$ is open.
\end{proof} 

Next we show that $R(u)$ is not empty, and in fact $R(u) \cap \{ |s|
\ge S \}$ is dense in $\{ | s | \ge S \}$.  It follows from the assumption that $\pi_{u(s,t)}
\partial_s u(s,t)$ is somewhere non-zero that there exists a non-empty
open subset
$$ \Sigma := \{ (s,t) \in \R
\times [0,1], \ \pi_{u(s,t)}(\partial_s u(s,t)) \neq 0, u(s,t)
\notin Gx^{\pm}, G_{u(s,t)} = \{ e \} \}.$$
It remains to show that there exists $(s,t) \in \Sigma$ such that
$u(s,t) \notin G u(\R - \{s \},t )$.  To do that we show that the set
of $G$-regular points in $\Sigma$ is dense in $\Sigma$.  Assume
otherwise so that there exists $(s_0,t_0) \in \Sigma$ with $B_\eps(s_0,t_0) \cap
R(u) = \emptyset$ for some $\eps > 0$.

\begin{lemma} \label{sosmall} There exists $\eps > 0$ so small and $S > 0$ so large so
that the following holds:
\begin{enumerate} 
\item \label{first} $u(s,t) \notin Gu(B_\eps(s_0,t_0))$ for $|s| \ge
  S$ and $|t - t_0 | \leq \eps$; and
\item \label{second} $\pi_{u(s,t)}(\partial_s u(s,t)) \neq 0$ for
  every $(s,t) \in B_\eps(s_0,t_0)$, the map $u: B_\eps(s_0,t_0) \to
  X$ is an embedding and $u( \ol{B}_\eps(s_0,t_0 ) )\cap g u(
  \ol{B}_\eps(s_0,t_0 )) = \emptyset $ for every $g \in G \backslash
  \{ e \}$.
\end{enumerate} 
\end{lemma} 

\begin{proof} 
The condition $u(s,t) \notin Gu(B_\eps(s_0,t_0))$ for $|s| \ge S$ and
$|t - t_0 | \leq \eps$ is achievable since $u(s_0,t_0) \notin G
x^\pm$.  The condition $\pi_{u(s,t)}(\partial_s u(s,t)) \neq 0$ is
achievable since it is open and $\pi_{u(s_0,t_0)}(\partial_s
u(s_0,t_0)) \neq 0$.  The latter condition implies that $\d u$ is
injective and hence $u$ is an embedding near $(s_0,t_0)$.  Finally the
condition $\pi_{u(s,t)}(\partial_s u(s,t)) \neq 0$ implies that $u(
\cdot, t)$ is transverse to the $G$-orbits and so $u( \ol{B}_\eps(s_0,
t_0) ) \cap g u( \ol{B}_\eps(s_0, t_0 )) = \emptyset $ for every $g \in
G \backslash \{ e \}$ for $\eps$ sufficiently small.
\end{proof} 

The condition $B_\eps(s_0,t_0) \cap R(u) = \emptyset$ means that for
all $(s,t) \in B_\eps(s_0,t_0)$ there exists an $s' \in \R$ such that
$u(s,t) \in Gu(s',t)$ and $s' \neq s$.  

\begin{lemma}  There exists a point $(\ti{s}_0,\ti{t}_0) \in
B_\eps(s_0,t_0)$ such that the set
$$ C:= \{ s \in \R: u(s, \ti{t}_0) \in Gu (\ti{s}_0, \ti{t}_0) \} $$
is finite and for every $s \in C$ we have $\rank(\d u (s, \ti{t}_0)) =
2$ and $ \pi_{u(s,\ti{t}_0)} (\partial_s u ( s, \ti{t}_0)) \neq 0 $.
\end{lemma} 

\begin{proof}  Choose $T \subset M$ a $G$-equivariant tubular
neighborhood of $u(s_0,t_0)$ such that $G$ acts freely on $T$.  Define
the function 
$$\ti{u}: u^{-1}(T) \to N:= T/G, \quad (s,t) \mapsto G u(s,t) .$$
Since $\ti{u}$ is an immersion at $(s_0,t_0)$, by shrinking $\eps$ we
may assume without loss of generality that $\ti{u}(B_\eps(s_0,t_0))
\subset N$ and there exists an open set $\ti{u}(B_\eps(s_0,t_0))
\subset V \subset N$ and a chart $\phi: V \to \R^m$ such that
$$ \phi(\ti{u}(B_\eps(s_0,t_0))) \subset \R^2 \times \{ 0 \} \subset
\R^2 \times \R^{m-2} \cong \R^m .$$
Here $m:= \dim(X) - \dim(G)$ is the dimension of $N$.  It follows
from \eqref{first} of Lemma \ref{sosmall} that we can assume without
loss of generality that there exists a compact set $K$ such that
$\ti{u}^{-1}(V) \subset K \subset \R \times [0,1]$.  Abbreviate
$$ A:= \ti{u}(B_\eps(s_0,t_0)) = \ti{u}(u^{-1}(u(B_\eps(s_0,t_0)))).$$
Let $\rho: \R^m \to \R^m$ be the linear projection on $\R^2 \times \{
0 \}^{m-2} \subset \R^m$.  Define
$$ \ti{V} := \phi^{-1} 
 \circ \rho
|_{\phi(V)}^{-1} (\phi(A)) \subset V .$$
Note that $\ti{V}$ and hence $B := \ti{u}^{-1}(\ti{V})$ are open.  Moreover, 
$ B \subset K \subset \R \times [0,1] .$
We define further a map 
$$ v: B \to A, \quad v: = (\phi |_{B_\eps(s_0,t_0)}) ^{-1}
 \circ \rho \circ \phi \circ \ti{u} .$$
Observe that $u^{-1} (u(B_\eps(s_0,t_0))) \subset B$. Also, the maps $
v$ and $\ti{u}$ have the same restriction to
${u^{-1}(u(B_\eps(s_0,t_0)))}$.  Define the set
$$ C(\ti{u}) := \{ z \in B_\eps(s_0,t_0) : \# \{ \ti{z} \in B: \ti{u}(z) =
\ti{u}(\ti{z}) \} = \infty \} \subset B_\eps(s_0,t_0) .$$
Because $B$ is contained in the compact set $K$ we may write 
$$ C(\ti{u}) = \{ z \in B_\eps(s_0,t_0) : \exists \{ z_\nu \}_{\nu =
  1}^\infty, z_\nu \neq z, z_\nu \to \ti{z} \in B, \ti{u}(z_\nu) =
\ti{u}(z) = \ti{u}(\ti{z}) \} .$$
It follows that $\ti{u}(C(\ti{u}))$ is contained in the set of
critical points of $\ti{u}$.  Now the formula 
$$ \d v = (\d \phi)^{-1} \circ  \d \rho \circ \d \phi \circ \d \ti{u} $$
implies 
$ \ti{u} (C(\ti{u})) \subset C_v(B) $
where $C_v(B)$ denotes the set of critical values of $v$ in $A$.  By
Sard's theorem, the set
$  (A \backslash \ti{u}(C(\ti{u})) \supset (A \backslash C_v(B)) $
is dense in $A$ and in particular nonempty.  Choose $q \in A
\backslash \ti{u}(C(\ti{u}))$ and define
$$ (\ti{s}_0,\ti{t}_0) := \ti{u}^{-1}(q) \cap B_\eps(s_0,t_0) .$$
Then $(\ti{s}_0,\ti{t}_0)$ has the required properties.  This proves
the Lemma. \end{proof}
        
We investigate the failure of $G$-regularity  in more detail.  Fix a
point $(s_0,t_0) = (\ti{s}_0,\ti{t}_0)$ given by the Lemma.  Let
$s_1,\ldots,s_N \in [-S,S]$ be the points with
$ \ti{u}(s_0,t_0) = \ti{u}(s_1,t_0) = \ldots = \ti{u}(s_N, t_0) .$
Let $F_\delta$ be the set of domain values which fail to be
$G$-injective with one point near $(s_0,t_0)$:
$$ F_\delta := \{ (s',t) \in \R \times [0,1] \ | \ \exists (s,t) \in B_{2\delta}(s_0,t_0) 
\ | \ \ti{u}(s,t) = \ti{u}(s',t) \} \backslash B_{2\delta(s_0,t_0)} .$$

\begin{lemma}\label{rd} For every constant $r > 0 $ there exists a $\delta > 0$ such that 
$ F_\delta \subset \bigcup_{j=1}^N B_r (s_j , t_0) .$
\end{lemma} 

\begin{proof}
Otherwise, there would exist $\rho > 0 $ and a sequence $(s_\nu, t_\nu) \to (s_0, t_0)$ with 
$s'_\nu \neq s_\nu$ and $\ti{u}(s_\nu,t_\nu) = \ti{u}(s_\nu',t_\nu)$ such that 
$ (s'_\nu, t_\nu) \notin B_\rho(s_j,t_0)  $
for every $j \ge 1$.  By \eqref{second} of Lemma \ref{sosmall}, there
exists $\eps' > 0$ such that $|s_\nu - s'_\nu| > \eps'$.  By
\eqref{first} of Lemma \ref{sosmall}, we have $|s_\nu'| \leq S$.
Hence the sequence $s'_\nu$ has an accumulation point $s'$ which must
be distinct from all the points $s_0,\ldots,s_N$ but satisfies
$\ti{u}(s',t_0) = \ti{u}(s_0,t_0)$.  This contradiction proves the
claim in the Lemma. \end{proof} 

Locally the set of domain values can be partitioned according to
failure of $G$-regularity  as follows.  Fix an $r > 0$ and choose
$\delta$ as in Lemma \ref{rd}.  Let $\Sigma_j$ be the set of domain values for
which the value at a point near $(s_0,t_0)$ is repeated near
$(s_j,t_0)$, that is,
$$ \Sigma_j := \{ (s,t) \in \ol{B}_\delta(s_0,t_0) \ | \ 
\exists (s',t) \in \ol{B}_r(s_j,t_0), \ \ti{u}(s',t) = \ti{u}(s,t) \} $$
for $j = 1,\dots, N$.  The sets $\Sigma_j$ are closed and 
$ \ol{B}_\delta(s_0,t_0) = \Sigma_1 \cup \ldots \cup \Sigma_N .$
Hence at least one of the sets $\Sigma_j$ has a nonempty interior.  Assume
without loss of generality that $\on{int}(\Sigma_1) \neq \emptyset$.  Choose an open
set $U \subset \Sigma_1$ and note that 
$ U \cap B_{2r}(s_1,t_0) = \emptyset .$ Furthermore, $u:
B_{2r}(s_1,t_0) \to X$ is an embedding for $r$ sufficiently small.
Since $u(\cdot, t)$ is transverse to the $G$-orbits at $s_1$, we have
$$u(\ol{B}_{2r}(s_1,t)) \cap gu( \ol{B}_{2r}(s_1,t) ) = \emptyset,
\ \quad \forall g \in G \backslash \{ e \}$$
provided $r > 0$ was chosen sufficiently small.  On the other hand it
follows from the definition of $\Sigma_1$ that for every $(s,t) \in U$
there exists an $s' \in \R$ such that $(s',t) \in B_{2r}(s_1,t_0)$ and
$\ti{u}(s,t) = \ti{u}(s',t)$.

We show that there exist two regions of the domain on which the map
is, up to gauge transformation, related by a diffeomorphism.  Fix a
point $(\hat{s}_0,\hat{t}_0) \in U$.  Let $\sigma := \hat{s}_0' -
\hat{s}_0 \neq 0$.  Define
$$ w: \ti{W}:= B_{2r}(s_1 - \sigma,t_0) \to X, \quad (s,t)
\mapsto u(s + \sigma,t)  $$
and similarly $\ti{w} = \ti{u}(u + \sigma,t)$.  Note that $\ti{w}:
\ti{W} \to N$ is an embedding for $r$ sufficiently small.  Moreover,
$
\ti{w}(\hat{s}_0, \hat{t}_0) = \ti{u}(\hat{s}_0, \hat{t}_0)$ and $\ti{u}(U) \subset \ti{w}(\ti{W}) .$ 
Define
$$ W:= \ti{w}^{-1}(\ti{u}(U)) .$$
Then $\ti{u}^{-1} \circ \ti{w}: W \to U$ is a diffeomorphism.
Moreover, our assumptions assert that this map takes the form
$\ti{u}^{-1} \circ \ti{w}(s,t)  =: (\kappa(s,t),t)$.
Differentiating the formula $ \ti{w}(s,t) = \ti{u}(\kappa(s,t),t)$ we
obtain
\begin{eqnarray*}  0 &=& \pi_{w(s,t)} \partial_s w(s,t) + J(t,w(s,t)) \pi_{w(s,t)} 
\partial_t w(s,t) \\
&=& \pi_{u(\kappa,t)}  ( \partial_\kappa u(\kappa,t) \partial_s \kappa )  + J(t, u(\kappa,t))
  \pi_{u(\kappa,t)}  ( \partial_\kappa u(\kappa,t) \partial_t \kappa + \partial_t
  u(\kappa,t)) \\
 &=& 
\pi_{u(\kappa,t)}  \partial_\kappa u(\kappa,t) \partial_s \kappa +
  \pi_{u(\kappa,t)} (\partial_t u(\kappa,t) \partial_t \kappa - \partial_\kappa
  u(\kappa,t)) \\ &=& \pi_{u(\kappa,t)}  (\partial_\kappa u(\kappa,t) (\partial_s \kappa
  - 1)) + \pi_{u(\kappa,t)}  \partial_t u(\kappa,t) \partial_t \kappa.
\end{eqnarray*} 
Since $\pi_{u(\kappa,t)} \partial_\kappa u(\kappa,t)$ and
$\pi_{u(\kappa,t)} \partial_t u (\kappa,t)$ are linearly independent
we deduce that $\partial_s \kappa = 1$ and $\partial_t \kappa = 0$.
Hence $\kappa(s,t) = s + \hat{\sigma}$ for some $\hat{\sigma} \in \R$.
Since $\ti{u}(\hat{s}_0, \hat{t}_0) = \ti{w}(\hat{s}_0,\hat{t}_0)$, we
obtain $\hat{\sigma} = 0$ and hence $\kappa(s,t) = s$. This implies
that $\ti{w}$ and $\ti{u}$ agree in the neighborhood $U = W$ of
$(\hat{s}_0, \hat{t}_0)$.  So $w(s,t) = g(s,t) u(s,t)$ for some $g: U
\to G$.  By holomorphicity $J(t,u(s,t)) (\partial_t g)_X (u(s,t)) =
(\partial_s g)_X (u(s,t)) $, which is impossible unless $g$ is
constant.  By unique continuation, $g u(s,t) = u(s + \sigma,t)$ for
any $s,t$.  By induction $ g^ku(s,t)= u(s + k\sigma,t)$ for any $k \ge
0$.  Without loss of generality $\sigma > 0$, and since $u(s +
k\sigma,t)$ converges for $s \to \infty$ to $x^+$ this implies that
$u$ is constant in $s$.  This contradiction completes the proof of the
Theorem. \end{proof}

It remains to show that we may assume that any Floer trajectory has somewhere 
non-trivial differential modulo the complexified infinitesimal group action.  For
this we use the following modification of a result of Xu \cite{xu:gf},
which says that for a suitable lift of a Hamiltonian for the
symplectic quotient, any Floer trajectory has this property.
It suffices to consider the case that the Hamiltonian perturbation
on $X \qu G$ vanishes.  

\begin{theorem}   Suppose that $\dim(X \qu G) > 0$, 
$L_0,L_1 \subset X \qu G$ are transversally-intersecting Lagrangians
  and $(J_t) \in \J(X)^G$ is an invariant time-dependent almost
  complex structure.  There exists $H \in C^\infty([0,1] \times X)^G$
  equal to $0$ on $\Phinv(0)$ such that for every Floer trajectory $u:
  \R \times [0,1] \to X$ for $H$, there exists a point $(s,t) \in \R
  \times [0,1]$ with $\pi_{u(s,t)} \d u(s,t) \neq 0$.
\end{theorem} 

\begin{proof} 
Let $H \in C^\infty([0,1] \times X)^G$. Recall that any Floer
trajectory may be considered a gradient trajectory for the action
functional on the universal cover $\ti{P}(L_0,L_1)$ of the space of
paths $P(L_0,L_1)$ from $L_0$ to $L_1$: for $\ti{x} \in
\ti{P}(L_0,L_1)$ covering $x$,
$$ \A_H(\ti{x}) = - \int_{[0,1]^2} w^* \omega - \int_{[0,1]} x^* H \d
\t
$$
where $w: [0,1]^2 \to X$ is a homotopy of $x$ to a base point
determined by the choice of lift $\ti{x}$.  The Hessian of the action
functional is independent of the choice of lift and corresponds to the
linear map
\begin{equation} \label{hess} Q: \Omega^0(x^* TX) \mapsto \Omega^0(x^* TX), \quad \xi \mapsto J_t
(\nabla_t \xi - \nabla_\xi \hat{H}_t) .\end{equation}
According to a result of Robbin-Salamon \cite[Theorem B]{rs:asym}, any
Floer trajectory $u(s,t)$ is asymptotic to $\exp( - \lambda s) \xi(t)$
where $\xi$ is an eigenvector of the Hessian \eqref{hess} with
non-zero eigenvalue $\lambda$, in the sense that
$ \xi(t) = \lim_{s \to \infty} e^{\lambda s} u(s,t) .$
Furthermore there exist constants $s_0,c > 0$ such that $ (1/c)
e^{-\lambda s} \leq | \partial_s u | \leq ce^{- \lambda s}$ for all
$s$ such that $|s| \ge s_0$.  

For a carefully chosen Hamiltonian all of the eigenvectors have the
desired property.  Let $Gx_1,\ldots, Gx_s = \phi_{1,H} \ti{L}_0 \cap
\ti{L}_1 $ be the finite set of orbits lifting the intersection points
$\ol{x}_1,\ldots, \ol{x}_s$ of $\phi_{H,1} L_0 \cap L_1$.  The map
\begin{equation} \label{arise} \g \times ( T_{x_i(0)} (X \qu G) \times \g) \to X, \quad (\zeta,
v,\theta) \mapsto \exp(\zeta) \exp_{x_i} \left(v + J_t(x_i)
\theta_X(x_i) \right) \end{equation}
 a local diffeomorphism onto its image on a neighborhood of $0$.  We
 let $v_1,\ldots,v_k$ resp. $\theta_1,\ldots,\theta_d$
 resp. $\zeta_1,\ldots,\zeta_d$ be the coordinates near $x_i$ arising
 from \eqref{arise} with respect to orthonormal bases; these
 coordinates are invariant under the infinitesimal action of $\g$ and
 so extend to $G$-invariant functions in a neighborhood of $Gx_i$.
Choose cutoff-functions
$\rho_1,\ldots,\rho_s \in C^\infty(X)^G$ equal to one on a
neighborhood of $Gx_1,\ldots,Gx_s$ and with disjoint support.  Let
$\exp_{x_i}: T_{x_i} X \to X$ denote geodesic exponentiation. 

We first deal with the case that $d := \dim(G) \leq \dim(X \qu G)
=:2k$.  Define an invariant time-dependent function $F_i $ in a
neighborhood of $x_i$ by
$$ F_i(v,\zeta,\theta) = \rho_i \sum_{l=1}^d \theta_l v_l .$$
Take the Hamiltonian perturbation to be the sum of the functions
above, $ H = \sum_{i=1}^s F_i .$ We claim that the Hessian for $H$ in
\eqref{hess} has no eigenvectors which map to zero under the
projection to the tangent space to the symplectic quotient.  Indeed
suppose, by way of contradiction, there exists an eigenvector $\xi(t)$
with $\pi_{x_i} \xi(t) = 0$ for all $t \in [0,1]$ with eigenvalue
$\lambda$.  Then $\xi$ has the form $(0, \zeta(t),\theta(t))$ for some
functions $\zeta: [0,1] \to \g, \theta: [0,1] \to \g$.  The
eigenvector equation
$ J_t ( \nabla_t \xi - \nabla_\xi \hat{F_i} ) = \lambda \xi $
gives 
\begin{equation} \label{writing}
 J_t \left( {\zeta'(t)_X} + J_t {\theta'(t)_X} + 
[J_t {\theta(t)_X}, -
  \hat{F_i}] \right)
= \lambda {\zeta(t)_X}  + \lambda J_t {\theta(t)_X} .\end{equation} 
Discounting terms that lie in $\g_\C(x_i)$ one obtains that $[J_t
  {\theta(t)_X}, - \hat{F_i}]$ vanishes.  This implies that
$\theta(t)$ vanishes as well, by the explicit choice of $F_i$ above.
So
$ J_t {\zeta'(t)_X} = \lambda {\zeta(t)_X} .$
Since $\lambda\neq 0$, ${\zeta(t)_X}$ vanishes.  Since the action is
locally free near $x_i$, $\zeta$ vanishes as well.  This shows that
there do not exist eigenvectors with values contained in $\g_\C(x_i)$
for all $t \in [0,1]$.

Now we deal with the case $d > 2k$, in which we must choose the
Hamiltonian in a more complicated way.  Given a family of matrices
$a_{jl}(t)$ for $1 \leq l \leq d, 1 \leq j \leq 2k$, define using the
coordinates $v,\theta,\zeta$ above a function $F_i \in C^\infty([0,1] \times X)^G$
given by
$$ F_i(t,v,\theta,\zeta) = \rho_i \sum_{1 \leq l \leq d} \sum_{ 1 \leq j \leq
  2k} a_{jl}(t) \theta_l v_j .$$
Consider the map
\begin{equation} \label{imageof}
Q_{i,t}: \g \to T_{x_i}X, \quad \zeta \mapsto J_t[J_t \zeta_X,
  \hat{F}_i](x_i) .
\end{equation}
Its image satisfies 
\begin{equation} \label{image} \on{Im}(Q_{i,t}) \subset T_{\ol{x}_i} (X \qu G) \subset T_{x_i} X
\end{equation} 
in the splitting \eqref{split}, where $\ol{x}_i$ is the image of $x_i$
in $X \qu G$.  The matrix of $Q_{i,t}$ with respect to the given bases
of $\g, T_{\ol{x}_i}(X \qu G)$ is $a_{jl}(t)$.  Choose the matrices
$(a_{jl}(t))$ so that the kernel of \eqref{imageof} is spanned by
$\eps_l + \cos(2l \pi t) \eps_d, \ l = 1,\ldots, d - 2k .$
Suppose that $\xi (t) = {\zeta(t)_X} + J_t {\theta(t)_X}$ is an
eigenvector for the Hessian with non-zero eigenvalue $\lambda$.  Then
by \eqref{writing} and \eqref{image}
$ \theta(t) \in \on{span} \{ \eps_l(t) \ | \ l = 1,\ldots, d - 2k \}
.$
Hence there exist functions $\theta_l$ such that 
\begin{equation}\label{hence}
 \theta(t) = \sum_{l=1}^{d - 2k} \theta_l(t) (\eps_l + \cos(2 \pi l
t) \eps_d) .\end{equation}
The eigenvalue equation \eqref{writing} gives
$ \zeta'(t) = \lambda \theta(t), \  \theta'(t) = - \lambda \zeta(t) . $
Together with the Lagrangian boundary conditions this implies
$ \theta''(t) = - \lambda^2 \theta(t), \  \theta(0) = \theta(1) =
0 .$
Hence there exist $\alpha \in \g$ such that $\theta(t) = \alpha \sin
(2 \pi \lambda t) .$ By pairing \eqref{hence}
with $\eps_l$ we obtain 
$ \theta_l(t) = \lan \alpha, \eps_l \ran \sin(2 \pi \lambda  t), \ 
l = 1,\ldots, d - 2k .$
Pairing \eqref{hence} with $\eps_d$ gives
\begin{equation} \label{compare} \lan \alpha,\eps_d \ran \sin(2 \pi \lambda  t) 
= \sum_{l=1}^{d - 2k} 
 \lan \alpha, \eps_l \ran \sin(2 \pi \lambda  t) \cos(2 \pi l t)
.\end{equation}
Comparing Fourier coefficients in \eqref{compare} implies that all
coefficients $\lan \alpha, \eps_l \ran$ vanish.  It follows that
$\theta(t)$ and also $\zeta(t)$ vanish.
\end{proof}

We now show that for a generic choice of almost complex structure, the
moduli spaces of Floer trajectories are smooth manifolds.  The proof
is based on the Sard-Smale theorem.  We denote by
$$D_u: \Omega^0(\R \times [0,1], u^*TX; u_{\R \times \{ 0, 1\}}^* TL_0
\sqcup TL_1) \to \Omega^{0,1}(\R \times [0,1],u^* TX)$$
the linearized Cauchy-Riemann operator associated to the almost
complex structure $J$, acting on sections of the pull-back bundle
$u^*TX$ with boundary conditions in $TL_0,TL_1$.

\begin{theorem}    There exists a comeager subset $\J^{*} \subset
\J$ consisting of those $J \in \J$ such that $D_u$ is surjective for
every Floer trajectory $u$ with $\pi_{u(s,t)} \partial_s u$ not
identically zero.
\end{theorem}

\begin{proof}  Let $\cB = \Map_{k,p,\alpha}(\R \times [0,1],X,L_0,L_1)$ 
denote maps locally of Sobolev class $k,p$, for some $k \ge 1, p > 2$,
completed with respect to a metric with exponential decay with weight
$\alpha$ with boundary in $L_0,L_1$.  Let $\cE$ denote the vector
bundle over $\cB$ with fiber $\cE_u = T^{*,0,1} (\R \times [0,1])
\otimes u^* TX $ and
$$ \olp: \cB \to \cE , \quad u \mapsto \olp_J u  $$
the section given by the Cauchy-Riemann operator.  Let $\J^\ell$
denote the space of invariant compatible almost complex structures of
class $C^\ell$ for $\ell \ge 1$ and consider the space
$$ \widetilde{\M\W}^{\univ,\ell}(L_0,L_1) := \{ (u,J) \in \cB \times
\J^\ell : \olp_J u = 0 \} .$$
The universal moduli space
$$ \M\W^{\univ,\ell}(L_0,L_1) :=
\widetilde{\M\W}^{\univ,\ell}(L_0,L_1)/G $$
is a separable Banach manifold of class $C^l$.  To see this, one
verifies that the map
$$ D_{u,J}: T_u \cB \times T_J \J^\ell \to \cE_u, \quad 
(\zeta,Y) \mapsto  D_u(\zeta) + Y(u) \partial_t u  $$
is surjective.  Since $D_u$ is a Fredholm operator, it has closed
range and finite dimensional kernel.  Hence $D_{u,J}$ has closed range
and finite dimensional cokernel and it only remains to prove that its
range is dense.  To see this, let $\eta \in W^{-k+1,q}_{-\alpha}(\R
\times [0,1],u^*TX)$ for $1/p + 1/q = 1$ such that $\eta$ vanishes on
the range of $D_{u,J}$, that is,
\begin{equation} \label{adj}
\int_{\R \times [0,1]} \langle \eta, D_u \zeta \rangle \d s \d t = 0 ,
\quad \int_{\R \times [0,1]} \langle \eta, Y(u) \partial_t u \rangle
\d s \d t = 0 \end{equation}
for every $\zeta \in T_u \cB$ and for every $Y \in T_J \J^\ell$.
Equation \eqref{adj} says that $D_u^* \eta = 0$ where $D_u^*$ is the
formal adjoint of $D_u$.  An argument using elliptic regularity shows
that $\eta \in W_{loc}^{\ell+1,p}(\R \times [0,1],u^* TX)$. Let $(s,t)
\in \R \times [0,1]$ be a $G$-regular point for $u$.  We claim that
$\eta(s,t) = 0$.  Otherwise, one could construct as in \cite{fhs:tr} a
$Y \in T_J \J^\ell$ with small support around $(t, u(s,t))$ such that
$$ \int_{\R \times [0,1]}  \lan \eta, Y(u) \partial_t u \rangle \d s \d t > 0 $$
contradicting the second equation in \eqref{adj}.  This implies that
$\eta$ vanishes at every $G$-regular point.  Unique continuation for
the first order elliptic operator $D_u^*$ implies that $\zeta = 0$.
Hence $D_{u,J}$ is onto for every $(u,J) \in \cB \times \J^\ell$.  It
follows from the implicit function theorem that
$\M\W^{\univ,\ell}(L_0,L_1)$ is a Banach manifold of class $C^q$ for
any $q > \ell - k$.  The projection
$ p^\ell : \M\W^{\univ,\ell}(L_0,L_1) \to \J^\ell $
is a Fredholm map since the kernel and cokernel of $Dp^\ell$ at
$(u,J)$ may be identified with that of $D_u$.  By the Sard-Smale
theorem, after restricting to maps of bounded index, for $\ell$
sufficiently large the set $\J^{\ell,*}$ of regular values of $p^\ell$
is a countable intersection of open and dense sets in $\J^\ell$.  Note
that $J \in \J^\ell$ is a regular value of $p^\ell$ exactly if $D_u$
is surjective for every $u \in \cF_J^{-1}(0)$.  A standard Taubes
argument (see \cite[Chapter 3]{ms:jh}) implies that the space of
smooth almost complex structures such that the moduli space is regular
is also comeager.
\end{proof} 

\subsection{Compactification} 

In general, a sequence of quasistrips can have bubbling or breaking
behavior, namely disk bubbling with boundary values in
$\ti{L}_0,\ti{L}_1$, sphere bubbling, and breaking of Floer
trajectories.  Here we restrict to the case that $(X,\omega)$ is {\em
  aspherical}, that is, $\int_{S^2} u^* \omega= 0$ for any $u: S^2 \to
X$, which rules out sphere bubbling.

Recall that a {\em nodal disk} $\Sigma$ is a contractible space
obtained from a collection $D_1,\ldots, D_k$ of disks by identifying
points in a distinct set $ \{ \{w_1^+,w_1^- \} ,\ldots, \{ w_m^+,w_m^-
\} \}$ of {\em nodes} on the boundary.  An {\em isomorphism} of nodal
disks $\Sigma, \Sigma'$ is a homeomorphism $\Sigma \to \Sigma'$
holomorphic on each disk component.  A {\em nodal strip} is a nodal
disk $\Sigma$ with $2$ markings $z_+,z_-$ on the boundary, distinct
from each other and the nodal points. For each component between the
markings $z_-$ and $z_+$, the complement of the markings and the nodes
is biholomorphic to a strip $\R \times [0,1]$ uniquely up to
translation.  We call these components the {\em strip components} and
the other components {\em disk bubbles}.  See Figure \ref{nodalstrip},
where the vertical dotted line represents a node connecting strip
components.  The {\em combinatorial type} of a nodal strip is the
ribbon tree $\Gamma$ with two semi-infinite edges corresponding to
$z_\pm$, obtained by replacing each disk or strip component with a
vertex, each node with a finite edge, and each marking with a
semiinfinite edge, with the ribbon structure (cyclic order of edges at
each vertex) given by the ordering of the nodes and markings around
the boundary of each strip or disk component.

\begin{figure}[ht]
\includegraphics[height=1.5in]{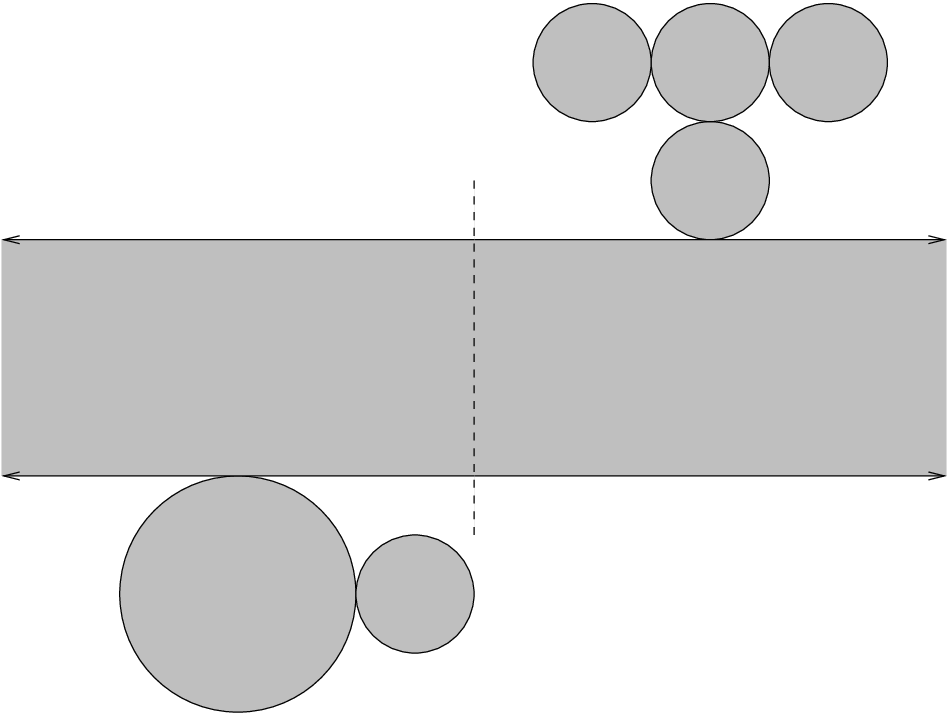}
\caption{A nodal strip}
\label{nodalstrip}
\end{figure} 

For any nodal strip $\Sigma$ let $\ti{\Sigma}$ denote the disjoint
union of the components, so that $\Sigma$ is obtained by identifying
pairs of points $w_j^\pm \in \ti{\Sigma}, j = 1,\ldots, m$.  The nodes
are of three types: they either connect strip components, disk
components, or a disk component to a strip component.  If $\Gamma$ is
the combinatorial type of $\Sigma$, let $\Def_\Gamma(\Sigma)$ denote
the direct sum of the tangent spaces $T_{w_j^\pm} \partial \Sigma$
over nodes that do {\em not} connect strip components.  Each element
$\zeta$ of $\Def_\Gamma(\Sigma)$ represents a deformation
$w^\pm_j(\zeta)$ of the attaching points used to construct $\Sigma$,
and so gives rise to a deformed nodal strip $\Sigma^\zeta$ for
sufficiently small $\zeta$.  A {\em collection of gluing parameters}
is an $m$-tuple $\delta = (\delta_1,\ldots, \delta_m)$ of non-negative
real numbers.  For any collection $\delta$, we denote by
$\Sigma^\delta$ the nodal strip obtained by removing half-disks in a
neighborhood of attaching nodes and gluing together via the map
$z_{+,j} \sim \delta_j/z_{-,j}, j = 1,\ldots, m$ where $z_{\pm,j}$ are
the coordinates on the half-disks near $w_j^\pm$; if the disks
represent strip components then this means that the strips are glued
together using a neck of length $-\log(\delta_j)$, each of which we
identify with a strip $[-2S_j,2S_j] \times [0,1]$.  The length $S_j$
is determined by radii of the small half-balls $\eps_j^+,\eps_j^-$
used in gluing and the gluing parameter $\delta_j$ by the relation $
4S_j = \log(\eps_j^- \eps_j^+/\delta_j)$.  The {\em deformation space}
$\Def(\Sigma)$ is the sum of the deformations $\Def_\Gamma(\Sigma)$
preserving the combinatorial type and the gluing parameters
$$ \Def(\Sigma) := \Def_\Gamma(\Sigma) \times [0,\infty)^m $$
with the last factor representing the deformation parameters.  For any
sufficiently small $(\zeta,\delta) \in \Def(\Sigma)$ we denote by
$\Sigma^{\zeta,\delta}$ the nodal strip obtained by applying the
gluing construction with parameters $\delta$ to the deformed nodal
strip $\Sigma^\zeta$.

\begin{definition}  A {\em nodal $(J,H)$-holomorphic quasistrip} 
with Lagrangian boundary conditions $L_0,L_1$ consists of a nodal
strip $\Sigma$ together with a continuous map $u : \Sigma \to X$ with
boundary in $\ti{L}_0$ resp.  $\ti{L}_1$ such that $u$ is
$(J,H)$-holomorphic with finite energy on each strip component and
$J$-holomorphic on any other disk component.  An {\em isomorphism} of
nodal holomorphic quasistrips $u_j : \Sigma_j \to X$ is an isomorphism
$\phi: \Sigma_0 \to \Sigma_1$ and an element $g \in G$ such that $gu_0
= \phi^* u_1$.  A nodal holomorphic quasistrip is {\em stable} if each
constant strip component has at least one nodal point on the boundary,
and each constant disk component has at least three nodal points on
the boundary, or equivalently, there are no non-trivial automorphisms.
\end{definition} 

The {\em energy} of a nodal holomorphic quasistrip is the sum of the
energies of the strip components and the energies of the holomorphic
disk components. Let $\ol{M}(L_0,L_1;H)$ denote the moduli space of
isomorphism classes of stable holomorphic quasistrips of finite
energy.  There is a natural notion of {\em Gromov convergence} of a
sequence of stable holomorphic quasistrips, generalizing the usual
notion for holomorphic maps but incorporating the action of $G$, which
we will not spell out.  

One possible set-up which guarantees compactness not only of spaces of
holomorphic maps with bounded energy but also of symplectic vortices
is that of Cieliebak-Mundet-Gaio-Salamon \cite{ci:symvortex} or
Frauenfelder \cite{frau:thesis}:

\begin{definition}  Let $X$ be a Hamiltonian $G$-manifold. 
A {\em convex structure} on $X$ is a pair $(f,J)$ where $J \in
\J(X)^G$ and $f \in C^\infty(X)^G$ satisfies
\begin{equation} \label{convex}  \langle \nabla_\xi \nabla f(x), \xi \rangle 
\ge 0, \quad \d f(x) J_x \Phi(x)^\# (x) \ge 0 \end{equation}
for every $x \in X$ and $\xi \in T_x X$ outside of a compact subset of $X$.
\end{definition} 

For example, if $X$ is a Hermitian vector space and $G$ is a torus
acting with proper moment map then the standard complex structure $J$
together with the function $f(z) = |z|^2/2$ defines a convex
structure.  In our situation, it suffices for the first equation in
\eqref{convex} to hold, by a standard argument involving the maximum
principle.

\begin{theorem} \label{trajcompact} 
If $X$ is compact or convex and $\ti{L}_0, \ti{L}_1 \subset X$ are
compact Lagrangian submanifolds with clean intersection, then any
sequence of stable holomorphic quasistrips with bounded energy has a
convergent subsequence.
\end{theorem}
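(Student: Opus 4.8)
The plan is to run a Gromov compactness argument adapted to the three features that distinguish this situation from the standard one: the possible non-compactness of $X$, handled via the convex structure; the action of $G$, handled via its compactness; and the clean (rather than transverse) intersection of the boundary Lagrangians, handled via the lemmas proved above. Throughout I fix the Hamiltonian perturbation $H$, which we may take to have compact support, so that the area--energy identity \eqref{energyarea} and the maximum principle apply. First I would establish a priori $C^0$ bounds. If $X$ is compact there is nothing to do. If $X$ is convex, let $(f,J)$ be a convex structure; since $J\in\J(X)^G$ is the fixed almost complex structure, the first inequality in \eqref{convex} makes $f\circ u$ subharmonic outside the compact set $K$ where \eqref{convex} may fail, and since the boundary values of any quasistrip $u$ lie in the compact set $\ti L_0\cup \ti L_1$, the maximum principle forces the image of every quasistrip in the sequence to lie in a fixed sublevel set $\{f\le c\}$, with $c$ depending only on $\ti L_0,\ti L_1,K$ and the energy bound. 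From here on the geometry is effectively compact.

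Next I would carry out the bubbling and breaking analysis. Choose representatives $u_\nu:\Sigma_\nu\to X$ of the given stable quasistrips with $E(u_\nu)\le C$, and apply the usual rescaling (Hofer) argument on compact subsets of the strip and disk components: either $|\d u_\nu|$ is uniformly bounded on compacta, or one extracts a bubble, which is $J$-holomorphic since the Hamiltonian term is scaled away in the rescaling. Asphericity rules out sphere bubbles: a bubble at an interior point would be a nonconstant $J$-holomorphic sphere, whose energy equals its area and hence vanishes by asphericity, a contradiction. So all bubbles are disk bubbles attached along the boundary, with boundary values on $\ti L_0$ or $\ti L_1$; each such nonconstant bubble carries energy at least some $\hbar_{\mathrm{disk}}>0$ by a monotonicity argument for $J$-holomorphic disks with boundary on a compact Lagrangian, and each nonconstant strip component carries energy at least the constant $\hbar$ of Lemma \ref{cleanquant}(ii). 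Since $E(u_\nu)\le C$ this bounds the number of nonconstant components, and hence, by stability, the total number of components of the limit, so after passing to a subsequence the combinatorial type is fixed.

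Then I would extract the limit and verify that no energy is lost. Away from the finitely many bubble points and from the ends of the strip components, elliptic bootstrapping gives, after passing to a subsequence and acting on each component by suitable $g_\nu\in G$ and an $\R$-translation, $C^\infty_{\mathrm{loc}}$-convergence to $J$-holomorphic (respectively $(J,H)$-holomorphic on strip components) maps; the gauge elements $g_\nu$ subconverge because $G$ is compact, which is precisely where the $G$-action enters the notion of Gromov convergence. No energy escapes: near each bubble point the standard ``no energy in thin necks below $\hbar$'' estimate applies, again using asphericity to suppress the spherical contribution, while at the ends of the strip components the exponential decay of Lemma \ref{expdecay} shows the energy tails tend to zero uniformly, so each limiting strip component has finite energy and converges at its ends to a $G$-orbit in $\ti L_0\cap \ti L_1$. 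Matching these limit orbits across the configuration, collapsing the unstable ghost components and recording the resulting nodes, yields a stable nodal holomorphic quasistrip whose total energy equals $\lim_\nu E(u_\nu)$; this is the Gromov limit.

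The main obstacle is the analysis at the ends of the strip components under the clean intersection hypothesis, since the classical strip compactness theory is written for transverse intersections, where the limiting endpoints are isolated. When energy concentrates at an end $s\to\pm\infty$, one must show by a soft-rescaling argument that it reorganizes into a finite chain of broken strip components, each with well-defined endpoints lying in a $G$-orbit of $\ti L_0\cap\ti L_1$, together with possibly further disk bubbles, and one must exclude pathologies such as an infinite chain of vanishing-energy components. The tools are exactly the local model for Lagrangian clean intersections and the relative isoperimetric inequality underlying Lemma \ref{expdecay} (Pozniak, Ziltener), together with the energy gap of Lemma \ref{cleanquant}(ii): the gap prevents infinite breaking and the local model pins down the possible limiting configurations. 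Once these inputs are in place, the remaining bookkeeping is a routine adaptation of the arguments in McDuff--Salamon.
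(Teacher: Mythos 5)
Your argument follows the same route as the paper: Gromov compactness in the style of McDuff--Salamon for the disk bubbling, with the clean-intersection energy gap of Lemma \ref{cleanquant} and the exponential decay of Lemma \ref{expdecay} doing the work at the strip ends (where the paper says they are used to show that bubbles connect), together with the convexity/maximum principle for $C^0$ bounds and compactness of $G$ for the gauge elements, all under the section's standing asphericity assumption. This matches the paper's (much terser) proof, so no changes are needed.
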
 

\begin{proof}  This is a version of Gromov compactness
which combines arguments in McDuff-Salamon \cite[Chapter 4]{ms:jh}
(which proves energy quantization for holomorphic disks with
Lagrangian boundary conditions) with energy quantization for
holomorphic strips with clean intersection Lagrangian boundary
conditions Lemma \ref{cleanquant} and exponential decay estimates
explained above in Lemma \ref{expdecay} for Floer trajectories for
clean intersection, used to show that bubbles connect.
  \end{proof}

With a little more work as in \cite{ms:jh}, one can show that for any
$C > 0$ the subset of the moduli space $\ol{M}(L_0,L_1,H)$ with energy
at most $C$ is compact; this requires showing that in the topology
defined by Gromov convergence, convergence is equivalent to that of
Gromov convergence.  However, we will not need this result.

We denote by $M_\Gamma(L_0,L_1;H)$ the stratum of stable holomorphic
quasistrips of combinatorial type $\Gamma$.  Near the equivalence
class defined by a map $u: \Sigma \to X$ the space
$M_\Gamma(L_0,L_1;H)$ is a quotient of the zero set of the Fredholm
map on weighted Sobolev spaces
\begin{multline}
 \cF_u: \Def_\Gamma(\Sigma) \times \Omega^0(\ti{\Sigma},u^*TX,
(\partial_0 u)^* T \ti{L}_0, (\partial_1 u)^* T \ti{L}_1)_{1,p,\alpha}
\\
\to \Omega^{0,1}(\ti{\Sigma},u^* TX)_{0,p,\alpha} \oplus
\bigoplus_{j=1}^m T_{u(w^\pm_j)} I_{\delta(j)}\end{multline}
where $I_{\delta(j)}$ is either $ \ti{L}_0$, for a node attaching to
the bottom of the strip, $ \ti{L}_1$, for a node attaching to the top
of the strip, or $\ti{L}_0 \cap \ti{L}_1$, for a node connecting two
strip components, and the map $\cF_u$ is given by
$ (\zeta,\xi) \mapsto \cT_u(\xi)^{-1} \olp_{J,H} \exp_u \xi $
on the strip components,  
$ (\zeta,\xi) \mapsto \cT_u(\xi)^{-1} \olp_J \exp_u \xi $
on the disk components and the differences
\begin{equation} \label{diffs}
 (\zeta,\xi) \mapsto \left( \exp_{u(w_j^\pm)}^{-1}
  \exp_{u(w_j^+(\zeta))} (\xi( w_j^+(\zeta))) - \exp_{u(w_j^\pm)}^{-1}
  \exp_{u(w_j^-(\zeta))} (\xi( w_j^-(\zeta)))
  \right)_{j=1}^m \end{equation}
at the nodes, where the position of the nodes $w_j^\pm(\zeta)$ depends
on the deformation parameter $\zeta$.  Here weighted Sobolev spaces
with weight $\alpha$ are used on the strip components and ordinary
Sobolev spaces on the disk components.  $M_{\Gamma}(L_0,L_1;H)$ is
given near $u: \Sigma \to X$ as the zero set of $\cF_u$, quotiented by
the action of $G \times \Aut(\Sigma)$.  Define the {\em linearized
  operator}
\begin{multline}
 D_u: \Def_\Gamma(\Sigma) \times
\Omega^0(\ti{\Sigma},u^*TX, 
(\partial_0 u)^* T \ti{L}_0, (\partial_1 u)^* T \ti{L}_1)_{1,p,\alpha} \\
\to
\Omega^{0,1}(\ti{\Sigma},u^* TX)_{0,p,\alpha} \oplus \bigoplus_{j=1}^m
T_{u(w^\pm_j)} I_{\delta(j)}\end{multline}
given by the linearized Cauchy-Riemann operator on the disk components
and the differential of the evaluation at the nodes,
\begin{equation}  \label{thirddif} 
 (\zeta,\xi) \mapsto \prod_{j=1}^m \xi( w_j^+(0)) - \xi( w_j^-(0)) +
  \ddt |_{t = 0} u(w_j^+(t\zeta_j^+)) - \ddt |_{t = 0}
  u(w_j^-(t\zeta_j^-)) .\end{equation}
We say that $u$ is {\em regular} if $D_u$ is surjective.  Let
$M_\Gamma^{\reg}(L_0,L_1;H)$ denote the locus of regular stable
holomorphic quasistrips of combinatorial type $\Gamma$.

\begin{theorem}  \label{trajreg}
$M_\Gamma^{\reg}(L_0,L_1;H)$ is a smooth manifold with tangent space
  at $[u]$ isomorphic to the quotient of $\ker(D_u)$ by $\aut(\Sigma)
  \times \g$.
\end{theorem}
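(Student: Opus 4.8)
The plan is to imitate the proof of Proposition \ref{mfd}, working now with the Fredholm map $\cF_u$ on the stratum of fixed combinatorial type $\Gamma$ in place of a single strip. First I would check that $\cF_u$ is a smooth map of Banach manifolds: on each strip component this is exactly the verification from Proposition \ref{mfd} (Sobolev multiplication together with the exponential decay estimate of Lemma \ref{expdecay} for the weighted norms); on each disk component it is the standard fact for holomorphic disks with totally real boundary conditions in McDuff--Salamon \cite{ms:jh}; and the node-matching terms \eqref{diffs} are finite-dimensional and manifestly smooth in $(\zeta,\xi)$, since the attaching points $w_j^\pm(\zeta)$ depend smoothly on $\zeta \in \Def_\Gamma(\Sigma)$. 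Differentiating $\cF_u$ at $\xi = 0$, $\zeta = 0$ produces $D_u$; the quadratic corrections coming from using metrics that make the Lagrangians totally geodesic rather than the one compatible with $J$ do not affect the linearization, cf. \cite[Remark 2.2]{ww:quilts}.

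Next, since $u$ is regular, that is $D_u$ is surjective, the implicit function theorem for Banach manifolds gives that $\cF_u^{-1}(0)$ is a smooth submanifold modelled on $\ker(D_u)$; elliptic regularity \cite[B.4.1]{ms:jh}, applied componentwise, shows every solution is smooth. By the exponential decay Lemma \ref{expdecay} on the strip components, together with the usual local structure of the space of holomorphic disks on the disk bubbles, for $\alpha > 0$ sufficiently small every nearby stable holomorphic quasistrip of combinatorial type $\Gamma$ is of the form $\exp_u \xi$ on the deformed domain $\Sigma^\zeta$ for some $(\zeta,\xi)$ in the domain of $\cF_u$. Hence $\cF_u^{-1}(0)$ is a local slice for $M_\Gamma(L_0,L_1;H)$ before quotienting by the symmetries.

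Finally I would pass to the quotient by $G \times \Aut(\Sigma)$. Here $\Aut(\Sigma)$ is the finite-dimensional automorphism group of the nodal domain, namely translations along each strip component and Möbius automorphisms of each disk component fixing its nodes and markings; stability of $\Sigma$ guarantees that $\Aut(\Sigma)$ acts with discrete stabilizers and freely on the non-constant components, while $G$ acts freely and properly because it acts freely on $\ti{L}_0,\ti{L}_1$ and hence on the image of any component meeting the boundary (a boundary component contained in a single $G$-orbit must be constant, which is excluded by stability). Since the combined action of $G \times \Aut(\Sigma)$ on the regular locus is free and proper, the quotient $M_\Gamma^{\reg}(L_0,L_1;H) = \cF_u^{-1}(0)/(G \times \Aut(\Sigma))$ is a smooth manifold whose tangent space at $[u]$ is $\ker(D_u)/(\g + \aut(\Sigma))$, where $\aut(\Sigma)$ enters by differentiating the $\Aut(\Sigma)$-action — this image may be a proper quotient of $\aut(\Sigma)$ if some components are constant, exactly as the $\R$-summand degenerated in Proposition \ref{mfd}.

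The main obstacle is establishing that the combined group action is free and proper on the stable, regular locus — in particular controlling automorphisms of constant disk and strip components and the interplay between the $G$-action and the reparametrization action — and verifying that the linearization of the $\Aut(\Sigma)$-action, which involves differentiating the node positions inside $\Def_\Gamma(\Sigma)$ as in \eqref{thirddif}, precisely accounts for the $\aut(\Sigma)$ appearing in the tangent space. The analytic ingredients (Fredholm property, implicit function theorem, elliptic regularity) are a routine adaptation of Proposition \ref{mfd}.
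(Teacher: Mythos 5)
Your overall strategy is the same as the paper's: apply the implicit function theorem to $\cF_u$ to obtain a smooth local slice modelled on $\ker(D_u)$, then quotient by $G \times \Aut(\Sigma)$. The analytic part (smoothness of $\cF_u$, elliptic regularity, exponential decay from Lemma \ref{expdecay} giving that nearby solutions lie in the chart) is indeed a routine adaptation of Proposition \ref{mfd}, and the paper disposes of it in one sentence.

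However, the step you defer as ``the main obstacle'' is precisely the content of the paper's proof, and your partial arguments do not close it. Freeness of the $G$-action on maps (which does follow from freeness of $G$ on $\ti{L}_0,\ti{L}_1$ by evaluating at any boundary point) together with freeness of $\Aut(\Sigma)$ on non-constant components does not give freeness, let alone properness, of the action of $\Aut(\Sigma)$ on $\cF_u^{-1}(0)/G$: one must rule out twisted symmetries $\phi^* u = g u$ with both $\phi \in \Aut(\Sigma)$ and $g \in G$ non-trivial. The paper handles exactly this, working componentwise (each component is a disk with one or two boundary special points, and no automorphism permutes components): the restriction of a non-trivial translation or dilation $\phi$ to the boundary fixes at least one point, so for any boundary point $z$ the orbit $\phi^n(z)$ converges to some $z_\infty$; then $g^{-n} u(z) = u(\phi^n(z)) \to u(z_\infty) \in \ti{L}_k$, which forces $g$ to fix a point of $\ti{L}_k$, hence $g$ is the identity by freeness of $G$ on $\ti{L}_k$; then $\phi^* u = u$ with $\phi$ a non-trivial translation or dilation forces $u$ constant, which stability excludes. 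A parallel limiting argument with sequences $\phi_n \to \infty$ in $\Aut(\Sigma)$ and $g_n \in G$ gives properness. Without an argument of this kind your assertion that the combined action is free and proper is unsupported, so the quotient step, and with it the identification of the tangent space with $\ker(D_u)/(\g + \aut(\Sigma))$, is not yet established. (Your parenthetical that a boundary component contained in a single $G$-orbit is excluded by stability is also beside the point: freeness of the $G$-action on maps is immediate from evaluation at a boundary point, and the real issue is the interaction of $G$ with $\Aut(\Sigma)$ just described.)
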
 

\begin{proof}  The zero set $\cF_u^{-1}(0)$ is a smooth
manifold with tangent space $\ker(D_u)$ at $0$ by the implicit
function theorem for Banach manifolds.  Since $G$ acts freely on
$\ti{L}_0,\ti{L}_0$ and is compact, it acts properly on
$\cF_u^{-1}(0)$ and the quotient $\cF_u^{-1}(0)/G$ is a smooth
manifold. It remains to check that $\Aut(\Sigma)$ acts freely and
properly on $\cF_u^{-1}(0)/G$.  It suffices to consider the case that
$\Sigma$ is a disk with one or two markings, by considering each
component separately.  (There are no automorphisms permuting
components, because there are no automorphisms of a disk permuting the
points on the boundary.)  The automorphism group of any disk $\Sigma$
with one resp. two markings on the boundary can naturally be
identified with the group of translations and dilations
resp. dilations.  Suppose that $\phi$ is a non-trivial automorphism
fixing $Gu$, and let $\psi$ denote its (non-trivial) restriction to
the boundary.  Since $\psi$ fixes as least one point, for any other
point $z \in \partial \Sigma$ the sequence $\phi^n (z)$ converges to
some limit $z_\infty$.  Then $g^{-n} u(z) = u(\phi^n(z)) \to
u(z_\infty)$ for some element $g \in G$.  Since $G$ acts freely on
$\ti{L}_k$, $g$ is the identity.  If $\phi$ is a non-trivial
translation or dilation and $\phi^*u = u$ then $u$ must be constant,
so $\Aut(\Sigma)$ acts freely.  Similarly if $g_n \phi_n^* u$
converges to some map $v$ for some sequence $\phi_n \in \Aut(\Sigma)$
of automorphisms going to infinity and some sequence $g_n \in G$, then
$u,v$ are constant.  This shows that $\Aut(\Sigma)$ acts properly.
\end{proof} 

We denote by $M_\Gamma(L_0,L_1;H)_d$ the subset of stable
holomorphic quasistrips $u$ with $\Ind(D_u) - \dim(\aut(\Sigma)) -
\dim(\g) = d$.  Thus if $u$ is regular, then the
$M_\Gamma(L_0,L_1;H)_d$ of stable holomorphic quasistrips with the
same combinatorial type $\Gamma$ as $u$ is a smooth manifold of
dimension $d$ in a neighborhood of $u$, by Theorem \ref{trajreg}.

Let $M_1(\ti{L}_j)$ denote the moduli space of holomorphic disks $u:
(D,\partial D) \to (X,\ti{L}_j)$, modulo automorphisms fixing a point
$1 \in \partial D$.  For any such $u$ we denote by $D_u$ its usual
linearized Cauchy-Riemann operator, and by $M_1(\ti{L}_j)_d$ the
subset of $M_1(\ti{L}_j)_d$ with $\Ind(D_u) = d + 2$.  For $k = 0,1$
we denote by $M_{k,1}(\ti{L}_0,\ti{L}_1;H)$ the moduli space of
$(J,H)$-holomorphic strips for $\ti{L}_0,\ti{L}_1,H$ with a single
marking on the boundary component $\R \times \{ k \} \subset \R \times
[0,1]$, and $M_{k,1}(\ti{L}_0,\ti{L}_1;H)_d$ the subset whose quotient
$M_{k,1}(\ti{L}_0,\ti{L}_1;H)_d/G$ has formal dimension $d$.  

 \begin{theorem}  \label{bound} If $X$ is convex aspherical and $J,H$ are such 
that every non-constant stable holomorphic quasistrip with index $1$
or $0$ is regular then the one-dimensional component
$\ol{M}(L_0,L_1;H)_1$ is a compact one-manifold with boundary given by
\begin{multline} \label{boundl01}
\partial \ol{M}(L_0,L_1;H)_1 
= M(L_0,L_1;H)_0 \times_{\cI(L_0,L_1)}
M(L_0,L_1;H)_0 \\ \cup \bigcup_{k = 0,1}
(M_{k,1}(\ti{L}_0,\ti{L}_1;H)_{0} \times_{\ti{L}_k} M_1(\ti{L}_k)_0)/G
. \end{multline}
\end{theorem}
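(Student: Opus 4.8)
The plan is to run the standard analysis of one-dimensional moduli spaces of Floer trajectories, adapted to quasistrips (so that everything is carried out $G$-equivariantly) and to clean rather than transverse intersections, assembling the compactness Theorem~\ref{trajcompact}, the manifold statements Proposition~\ref{mfd} and Theorem~\ref{trajreg}, the energy quantization and exponential decay Lemmas~\ref{cleanquant} and~\ref{expdecay}, and a gluing argument. First, since the curvature of the connection determined by $\ti H = H\,\d t$ vanishes, the area--energy identity~\eqref{energyarea} reads $E(u)=A(u)$, so the energy of a quasistrip depends only on its relative homotopy class; it therefore suffices to treat each energy level (equivalently, each pair of limiting intersection points $p,q\in\cI(L_0,L_1)$ together with a homotopy class) separately, where only finitely many combinatorial types occur by energy quantization. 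On the open stratum $M(L_0,L_1;H)_1$ of honest (one component, no nodes) index-one quasistrips, regularity is part of the hypothesis, so by Proposition~\ref{mfd} this stratum is a smooth one-manifold.

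Next I would identify the boundary. Given a sequence in $M(L_0,L_1;H)_1$, Theorem~\ref{trajcompact} produces, after passing to a subsequence, a Gromov limit which is a stable holomorphic quasistrip. Asphericity of $X$ rules out sphere bubbles; the exponential decay estimates of Lemma~\ref{expdecay} prevent energy concentration as $s\to\pm\infty$, so the limit has well-defined limiting intersection points and every disk bubble is attached at a finite point of $\R\times\{0\}$ or $\R\times\{1\}$; and Lemma~\ref{cleanquant} gives a uniform threshold $\hbar>0$ below which strip and disk components are constant. Since, by the regularity hypothesis, each nonconstant extra component lowers the formal dimension by at least one --- a broken Floer quasistrip through the lost $\R$-translation together with the matching over $\cI(L_0,L_1)$, a disk bubble through the matching over $\ti L_k$ --- the only codimension-one degenerations surviving in the one-dimensional component are: (a) the quasistrip breaks into two index-zero quasistrips whose ends agree at some $r\in\cI(L_0,L_1)$, giving the fibre product $M(L_0,L_1;H)_0\times_{\cI(L_0,L_1)}M(L_0,L_1;H)_0$; and (b) for some $k\in\{0,1\}$, an index-zero disk bubble from $M_1(\ti L_k)_0$ attaches at an interior boundary point of an index-zero once-marked quasistrip from $M_{k,1}(\ti L_0,\ti L_1;H)_0$, the attaching points matched over $\ti L_k$, the whole configuration taken modulo $G$. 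Limit configurations with three or more components, or with a constant component carrying too few special points, have formal index at most $-1$ and so do not occur, which gives the inclusion of $\partial\ol{M}(L_0,L_1;H)_1$ in the right-hand side of~\eqref{boundl01}.

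The converse direction is the gluing step: each configuration listed in~\eqref{boundl01} is a genuine endpoint of a one-manifold in $\ol{M}(L_0,L_1;H)_1$. For type (a) I would preglue the two index-zero quasistrips along the matched end with a neck of length $-\log\delta$, $\delta\in[0,\eps)$, and correct it to a true solution via the implicit function theorem, working in the weighted Sobolev spaces with norm~\eqref{weight} and with the Fredholm operator $D_u$ of~\eqref{du}; there is a single real gluing parameter, and the glued linearized operator is surjective, so one obtains a chart $[0,\eps)\hookrightarrow\ol{M}(L_0,L_1;H)_1$ sending $0$ to the given broken configuration. For type (b) the disk bubble is glued to the once-marked quasistrip across the boundary node, again with one gluing parameter. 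All pregluing, linear estimates and Newton corrections are performed $G$-equivariantly; since $G$ acts freely and properly on $\ti L_0,\ti L_1$, and hence on the relevant solution spaces, the construction descends to the quotient exactly as in the proofs of Proposition~\ref{mfd} and Theorem~\ref{trajreg}. A standard argument then shows these glued families exhaust a neighbourhood of each boundary stratum, so that, together with the compactness from Theorem~\ref{trajcompact}, $\ol{M}(L_0,L_1;H)_1$ is a compact one-manifold with boundary~\eqref{boundl01}.

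I expect the main obstacle to be the gluing in case (a): gluing two Floer quasistrips across a point of $\cI(L_0,L_1)$ whose preimage in $X$ is a clean, not transverse, intersection of $\ti L_0$ and $\ti L_1$. This requires the clean-intersection package --- the exponential convergence of Lemma~\ref{expdecay}, the weighted norms~\eqref{weight}, and Fredholmness of $D_u$ for sufficiently small $\alpha$ --- in place of the textbook transverse gluing, along with the bookkeeping needed to pass to the $G$-quotient; the other customary source of difficulty, sphere bubbling, is simply absent here by asphericity.
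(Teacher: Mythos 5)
Your proposal is correct and follows essentially the same route as the paper: compactness from Theorem \ref{trajcompact} combined with index counting to isolate the two codimension-one degenerations, then a gluing argument at the clean intersection built on the exponential decay of Lemma \ref{expdecay}, weighted Sobolev spaces, an approximate solution corrected by the implicit function theorem with a single gluing parameter, all carried out $G$-equivariantly, with surjectivity of the glued families onto a neighborhood of each boundary stratum. The only point the paper spells out that you leave implicit is the $\delta$-dependent norm on the glued neck (splitting off the covariant-constant part at the neck midpoint, tangent to $T\ti{L}_0\cap T\ti{L}_1$) needed for uniform error, right-inverse, and quadratic estimates, but this is exactly the "clean-intersection package" you flag as the main obstacle.
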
 

\begin{proof} Except for the quotient by the group action and the clean
rather than transversal intersection, this is a standard combination
of compactness and gluing theorems, c.f. Oh \cite{oh:fl1}.
Compactness was proved in Theorem \ref{trajcompact}.  We sketch the
proof of the gluing theorem for clean intersections which follows
Fukaya et al \cite{fooo} and Abouzaid \cite{ab:ex}, who deal with
holomorphic strips with equal Lagrangian boundary conditions $\ti{L}_0
= \ti{L}_1$.  However, equality of the Lagrangian boundary in these
references is only used to obtain exponential decay, which we proved
in Lemma \ref{expdecay}.  Let $\delta = (\delta_1,\ldots, \delta_m)$
be a collection of gluing parameters, and $\Sigma^\delta$ the glued
strip.  We assume in order to simplify notation that all gluing
parameters are non-zero.  We denote by $\nu^\pm_j: \pm [-\infty,2S_j]
\times [0,1] \to \Sigma$ the embeddings given by the local coordinates
near the nodes, so that $ \{ 0 \} \times [0,1]$ maps to the
half-circle of radius $\sqrt{\delta_j}$.  (That is, take the logarithm
of the local coordinate and shift by $\log(\delta_j)/2$.)  The glued
surface is obtained by cutting off $\nu_j^\pm( \pm [2S_j,\infty)
  \times [0,1])$ and identifying the remaining finite strips.  Let
  $\nu_j: [-2S_j, 2S_j] \times[0,1] \to \Sigma^\delta$ be the
  coordinates described above on the $j$-th neck given by this
  procedure.  Using cutoff functions one defines an {\em approximate
    trajectory} $ G^{\on{approx}}_\delta u: \Sigma \to X $ given by
  $u$ away from the neck, and on the $j$-th neck as follows: Let
  $\rho: \R \to \R$ be a cutoff function with $\rho(s) = 1$ for $s \ge
  1$ and $\rho(s) = 0$ for $s \leq 0$, and $\xi_j^\pm \in \Omega^0(
  \pm [-\infty, 2S_j], u^* TX)$ are defined by requiring $
  \exp_{u(w_j)^{\pm}}( \xi_j^\pm) = (\nu^\pm_j)^* u$ on the component
  containing $w_j^\pm$.  Then we define
$$ \nu_j^* G^{\on{approx}}_\delta u 
(s,t) = \exp_{u(w_j)^{\pm}}( \rho(
1 - s) \xi_j^-(s,t) + \rho(s + 1) \xi_j^+(s,t)) .$$
An {\em approximate right inverse} to $D_{G^{\on{approx}}_\delta u}$
is constructed on Sobolev spaces obtained by weighting the standard
Sobolev norm on the neck.  More precisely, choose a function $\kappa :
\Sigma^\delta \to [1,\infty)$ such that $\kappa$ is the previously 
defined weight function \eqref{weight} outside of the
  neck and $\kappa(s,t)= \exp ((2S_j - |s|) p \alpha)$ on the neck
  $\nu_j$.  Then
$$ \Vert \eta \Vert_{0,p,\delta,\alpha}^p = \int_{\Sigma^\delta} \kappa
  \Vert \eta \Vert^p $$
defines a $\delta$-dependent norm on $\Omega^{0,1}(\Sigma^\delta, u^*
TX)$.  Similarly for any $\xi \in \Omega^0(\Sigma^\delta, u^* TX)$ we
define a norm as follows: Let $z_j \in \Sigma^\delta$ be the point in
the middle of the neck given by $\nu_j(0,1/2)$.  Let $\xi^{\on{const}}_j$
denote the section obtained by parallel transport of $\xi(z_j)$ using
the Levi-Civita connection from $u(0,t)$ to $u(s,t)$. Let
$$\Vert \xi \Vert_{1,p,\delta,\alpha}^p = \Vert \xi - \xi^{\on{const}}
\Vert_{0,p,\delta,\alpha}^p + \Vert \nabla( \xi - \xi^{\on{const}})
\Vert_{0,p,\delta,\alpha}^p + \Vert \xi^{\on{const}}(z) \Vert^p .$$
Then a patching argument \cite[(5.50)]{ab:ex} constructs from a
one-form $\eta$ on $\Sigma^\delta$ a zero-form $\xi = T^\delta_u \eta$
which satisfies $\Vert D_{G^{\on{approx}}_\delta u} T^\delta_u \eta -
\eta \Vert_{0,p,\delta,\alpha} \leq \sum_j e^{ -2 S_j \alpha} \Vert
\eta \Vert_{0,p,\delta,\alpha}$.  We denote by $\Omega^0(\Sigma, u^*
TX, (\partial_0)u^* T\ti{L}_0, (\partial_1 u)^* T
\ti{L}_1)_{1,p,\alpha,\delta}$ the subset of $\Omega^0(\Sigma, u^*
TX)_{1,p,\alpha,\delta}$ with boundary values on $\ti{L}_0,\ti{L}_1$.
The explicit formula for the patching of the element $\xi$ obtained by
applying the right inverse to $D_u$ is in terms of the coordinates
given by $\nu_j$
\begin{multline}
 \nu_j^* \xi^\delta(s,t) = \cT \xi(w_j^\pm) + \rho(S_j - s) ( \cT
 \xi_-(s,t) - \cT \xi(w_j^\pm)) \\ + \rho(s + S_j) ( \cT \xi_+(s,t) -
 \cT \xi(w_j^\pm) ) \end{multline}
where $\cT$ denotes suitable parallel transports. The proof that it is
an approximate right inverse requires uniform quadratic estimates, see
\cite[p. 89-96]{ab:ex}, that are the same in this situation as in
\cite{ab:ex} (except for the additional corrections discussed in
\cite[Section 4.3]{mau:gluing}) and we will not reproduce here; these
depend on the fact that the weighted Sobolev norms introduced above
control the usual Sobolev norms, and these in turn control by $C^0$
norm by estimates that are uniform in the gluing parameter $\delta$,
because the cone angle of the metric on $\Sigma^\delta$ is uniformly
bounded from below.  Because $ u $ decays exponentially on the ends by
some constant $\alpha_0$ given by Lemma \ref{expdecay}, there exists a
constant $C$ so that
$$\Vert \cF_{G^{\on{approx}}_\delta u }(0) \Vert_{0,p,\delta,\alpha} <
C e^{- S (\alpha_0 - \alpha)} .$$
The implicit function theorem then produces for $0 < \alpha <
\alpha_0$ and $S$ sufficiently large, a solution $(\zeta,\xi)$ to
$\cF_{G^{\on{approx}}_\delta(u)} (\xi) =0 $ with $\zeta \in
\Def(\Sigma^\delta)$ in the image of $\Def_\Gamma(\Sigma)$, and we set
$G_\delta(u) = \exp_{G^{\on{approx}}_\delta(u)}(\xi)$.  One expects
that the map $(\delta,u) \mapsto G_\delta(u)$ is actually injective,
but this is not needed: rather one shows that each configuration on
the right-hand side of \eqref{boundl01} is the limit of a unique
one-parameter family of elements of $M(L_0,L_1;H)_1$, using the
uniqueness of the solution given by the implicit function theorem.
\end{proof} 

Orientations on the moduli spaces of stable quasistrips can be
constructed as follows.  
Recall \cite{fooo},\cite{orient} that a {\em
  relative spin structure} on an oriented Lagrangian submanifold $L
\subset X$ is a lift of the class of $TL$ defined in the first
relative \v{C}ech cohomology group for the inclusion $i: L \to X$ with
values in $\on{SO}(\dim(L))$ to first relative \v{C}ech cohomology
with values in $\on{Spin}(\dim(L)))$.  The set of such objects
naturally forms a category, equivalent to the category of
trivializations of the image of the second Stiefel-Whitney class
$w_2(TL)$ in the relative cohomology $H^2(X,L;\Z_2)$.  In particular,
the set of isomorphism classes $\on{Spin}(TL,X)$ of relative spin
structures is non-empty iff $w_2(TL) \in H^2(L;\Z_2)$ is in the image
of $H^2(X;\Z_2)$, and if non-empty has a faithful transitive action of
$H^1(X,L;\Z_2)$.  Any relative spin structure on $L$ induces
orientations on the moduli spaces of stable holomorphic quasistrips as
in, for example, \cite{orient}, by deforming each linearized operator
so that index is identified with a complex vector space plus a tangent
space to the Lagrangian; the relative spin structure implies that the
resulting orientation is independent of the choice of deformation.

\begin{proposition} \label{orient} 
Relative spin structures on $L_0,L_1$ induce orientations on the
regular parts of the moduli spaces so that the inclusion of the
components in \eqref{boundl01} is orientation preserving for the
broken strips, and orientation preserving resp. reversing for bubbles
in $L_0$ resp. $L_1$.
\end{proposition}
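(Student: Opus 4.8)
The plan is to follow the determinant-line approach to orientations of Fukaya-Oh-Ohta-Ono \cite{fooo} and \cite{orient}, adapting it to the quotient by $\g$ and to the clean-intersection ends. To each regular stable holomorphic quasistrip $u:\Sigma\to X$ one associates the determinant line $\det(D_u)$ of its linearized operator, that is, the top exterior power of $\ker(D_u)$ tensored with the dual of the top exterior power of $\coker(D_u)$; for regular $u$ the cokernel vanishes, so $\det(D_u)$ is just the top exterior power of $\ker(D_u)$. By Proposition \ref{mfd} and Theorem \ref{trajreg}, $\ker(D_u)\cong T_{[u]}M\oplus(\g\oplus\R)$, the $\R$ summand being absent on constant components, so an orientation of $\det(D_u)$ together with fixed orientations of $\g$ and of $\R$ orients the moduli space. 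I would orient $\g$ by the basis of $\t$ implicit in the left-invariant spin structure on the torus $G$, and $\R$ by the translation vector field $\partial_s$ in the direction of increasing $s$. A relative spin structure on $\ti{L}_j$ trivializes the restriction of the orientation bundle over the space of linearized operators with boundary condition $T\ti{L}_j$ by the usual argument: one homotopes $D_u$ through Fredholm operators to a direct sum of a complex-linear operator and a model operator whose index at the ends is a copy of $T\ti{L}_0\cap T\ti{L}_1$, and the relative spin condition makes the resulting orientation independent of the homotopy, exactly as in \cite{orient}. Since $G$ is a connected torus it acts trivially on the determinant lines, so these orientations descend to $\ol{M}(L_0,L_1;H)$.

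The next step is the determinant-line gluing isomorphism. For $u$ obtained from components $u_1,u_2$ by the gluing of Theorem \ref{bound} there is a canonical isomorphism $\det(D_u)\cong\det(D_{u_1})\otimes\det(D_{u_2})$, varying continuously with the gluing parameter, together with a one-dimensional factor $\Def_\Gamma(\Sigma)$ or $[0,\infty)$ recording the node: concretely, the approximate right inverse constructed in the proof of Theorem \ref{bound} identifies $\ker(D_u)$ with the fibre product of $\ker(D_{u_1})$ and $\ker(D_{u_2})$ over the matching space at the node, plus the node-smoothing parameter. The boundary stratum of $\ol{M}(L_0,L_1;H)_1$ is approached as the neck length $S\to\infty$ (equivalently $\delta\to 0$), and I would compare the boundary orientation of the compactified $1$-manifold (outward normal first) with the product orientation supplied by this isomorphism.

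The sign bookkeeping then splits into the two families in \eqref{boundl01}. For a broken trajectory $u=u_1\#u_2$ with $u_1,u_2\in M(L_0,L_1;H)_0$ the gluing node connects two strip components, hence contributes no deformation parameter (by definition $\Def_\Gamma(\Sigma)$ sums only over nodes not connecting strip components); the matching space is the oriented vector space $T\ti{L}_0\cap T\ti{L}_1$, the single transverse degree of freedom is the gluing parameter in $[0,\infty)$, oriented toward the interior, and the standard computation of \cite{oh:fl1}, \cite{fooo} then gives that the inclusion of the fibre product into $\partial\ol{M}(L_0,L_1;H)_1$ is orientation preserving with the conventions above. For a disk bubble $v\in M_1(\ti{L}_k)_0$ attached to $u\in M_{k,1}(\ti{L}_0,\ti{L}_1;H)_0$ along the boundary arc $\R\times\{k\}$ the node is of disk-to-strip type, so the transverse parameter is pinned down by the tangent line $T_w\partial\Sigma$ to the strip boundary at the attaching point, whose orientation is the boundary orientation induced by the fixed orientation on $\Sigma=\R\times[0,1]$, namely $+\partial_s$ along $\R\times\{1\}$ but $-\partial_s$ along $\R\times\{0\}$. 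Pushing this single sign through the determinant-line gluing isomorphism and comparing with the boundary orientation of $\ol{M}(L_0,L_1;H)_1$, one finds the inclusion orientation preserving for bubbles in $L_0$ and orientation reversing for bubbles in $L_1$, which is the assertion. (The index shift $\Ind(D_v)=2$ for a $0$-dimensional $M_1(\ti{L}_k)$, coming from the two-dimensional automorphism group of the once-marked disk, enters only through which complex summand is split off in the deformation-to-complex-operator step and does not affect the outcome.)

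I expect the genuine difficulty to be exactly this last computation for disk bubbles: several conventions must be tracked at once — the ordering $(\Def_\Gamma(\Sigma),\xi)$ in the domain of $\cF_u$, the chosen orientations of $\g$ and $\R$ and their order in the quotient, the top-versus-bottom position of the attaching node, the position of the rigidifying marked point $1\in\partial D$ on the bubble, and the outward-normal convention — and one must show that everything except the top/bottom distinction cancels, leaving the single sign $(-1)$ on the $L_1$ side. This is the source of the familiar sign $(-1)^\bullet$ in the $A_\infty$ relations and the argument is in essence that of \cite{fooo}, \cite{orient}. The only points not already there are the quotient by $\g$, harmless since $G$ is a connected torus and therefore acts trivially on all the determinant lines, and the replacement of transverse by clean intersection at the ends, harmless because $T\ti{L}_0\cap T\ti{L}_1$ inherits an orientation from the spin structures and, by the exponential decay of Lemma \ref{expdecay}, the weighted Fredholm theory used throughout behaves just as in the transverse case.
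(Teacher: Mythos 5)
Your proposal follows the same route the paper intends: the paper's proof of Proposition \ref{orient} is in fact omitted, with a pointer to the determinant-line/deformation argument of \cite{orient} (deform each linearized operator so that its index is identified with a complex vector space plus a Lagrangian tangent space, with the relative spin structure guaranteeing independence of the deformation), which is exactly the scheme you spell out, together with the correct supplementary observations that the connected torus $G$ acts trivially on the determinant lines and that the clean-intersection ends are handled by the weighted Fredholm setup and the exponential decay of Lemma \ref{expdecay}. One convention-level caution: with the complex orientation $\d s \wedge \d t$ on $\R\times[0,1]$ the induced boundary orientation is $+\partial_s$ along $\R\times\{0\}$ and $-\partial_s$ along $\R\times\{1\}$, the reverse of what you wrote, so in carrying out the final sign computation for disk bubbles you would need to track this carefully, although it does not change the overall strategy, which coincides with the paper's.
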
 

The proof is similar to that for nodal disks and strips given in
\cite{orient}, and omitted.  We denote by
$$\eps: M(L_0,L_1;H)_0 \to \{ \pm 1 \}$$ 
the map comparing the constructed orientations with the canonical
orientation of a point and by $\eps(u)$ its value at $[u] \in
M(L_0,L_1;H)_0$.

\subsection{Quasimap Floer cohomology} 

For any $x_\pm \in \cI(L_0,L_1;H)$ we denote by $\ti{x}_\pm \subset
\Map(I, X)$ the set of trajectories of $H^\#_t$ covering $x_\pm: I \to
X \qu G$ (any two lifts are related by the $G$-action) and by
$M(x_+,x_-) := M(x_+,x_-,L_0,L_1;H)$ the subset of $M(L_0,L_1;H)$ the
set of $(J,H)$-holomorphic quasistrips such that
$$ (t \mapsto \lim_{s \to \pm \infty} u(s,t)) \in \ti{x}_\pm .$$
For each $x \in \cI(L_0,L_1;H)$ we fix a trivialization of the
$\Lambda$-line bundle along $x$ and denote by
$$ \Hol_{L_0,L_1} : M(L_0,L_1;H) \to \Lambda $$
the product of parallel transport maps, that is, parallel transport in
$L_0$ from $x_+$ to $x_-$ along $u |_{\R \times \{ 0 \}}$ and then
parallel transport from $x_-$ to $x_+$ along $u |_{\R \times \{ 1
  \}}$; we think of this as a holonomy around the loop in $L_0 \cup
L_1$, although it is not because of the Hamiltonian
perturbation.  Let $CQF(L_0,L_1;H)$ the chain complex generated by
$\cI(L_0,L_1;H)$,
$$ CQF(L_0,L_1;H) = \bigoplus_{x \in \cI(L_0,L_1;H)} \Lambda \bra{x} .$$
Define 
$$ \partial_{L_0,L_1,H} \bra{x_+} = \sum_{[u] \in M(x_+,x_-)_0}
\eps(u) \Hol_{L_0,L_1}(u) q^{A(u)} \bra{x_-} .$$
For $k = 0,1$ we denote by $\mu_0(L_k)$ the count
$$ \mu_0(L_k) = \sum_{[u] \in M_1(\ti{L}_k,\ti{x}_k)_0} \eps(u)
q^{A(u)} \Hol_{L_k}(u)$$
of holomorphic disks $u:D \to X$ with boundary in $\ti{L}_k$ passing
through a generic point $\ti{x}_k \in \ti{L}_k$, where $\eps(u) \in \{
\pm 1 \}$ is the orientation induced by the relative spin structure;
assuming every such disk is regular these disks are all Maslov index
two.

\begin{theorem} \label{qfh} (c.f. \cite[Section 5.2]{frau:thesis}) 
Suppose that $X$ is convex aspherical, $L_0,L_1,J,H$ are such that
every stable holomorphic quasistrip is regular and every non-trivial
holomorphic disk has positive Maslov index.  Then
$\partial_{L_0,L_1,H}^2 = \mu_0(L_1)- \mu_0(L_0) $.
\end{theorem}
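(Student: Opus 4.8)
The plan is the usual one for showing that a Floer-type differential squares to the bubbling obstruction: compute $\partial_{L_0,L_1,H}^2$ as a signed count of broken index-zero quasistrips, recognize that count as one stratum of the oriented boundary of the compact oriented $1$-manifold $\ol{M}(L_0,L_1;H)_1$, and read off the remaining boundary strata from Theorem \ref{bound}. First I would unwind the definition of $\partial_{L_0,L_1,H}$: the coefficient of $\bra{x_-}$ in $\partial_{L_0,L_1,H}^2\bra{x_+}$ is $\sum_{x_0}\sum_{[u_1]\in M(x_+,x_0)_0}\sum_{[u_2]\in M(x_0,x_-)_0}\eps(u_1)\eps(u_2)\,\Hol_{L_0,L_1}(u_1)\Hol_{L_0,L_1}(u_2)\,q^{A(u_1)+A(u_2)}$. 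Since parallel transport is multiplicative under concatenation of boundary paths, and the symplectic area $A$ and the relevant homotopy class (hence $\Hol_{L_0,L_1}$) are additive under breaking, this is exactly the $\eps$-weighted count of the fibre product $M(L_0,L_1;H)_0\times_{\cI(L_0,L_1)}M(L_0,L_1;H)_0$ appearing in \eqref{boundl01}, with the coefficient $q^A\Hol_{L_0,L_1}$ carried along (this coefficient is constant on each connected component of every moduli space, since $A$ and $\Hol_{L_0,L_1}$ depend only on a homotopy class).

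By hypothesis every stable holomorphic quasistrip is regular, so Theorems \ref{mfd} and \ref{trajreg} make each moduli space in \eqref{boundl01} a smooth manifold of the expected dimension, asphericity excludes sphere bubbling, convexity supplies the a priori estimates, and Theorem \ref{trajcompact} together with the gluing analysis of Theorem \ref{bound} exhibits $\ol{M}(L_0,L_1;H)_1$ as a compact oriented $1$-manifold whose boundary is the broken-strip stratum above together with the disk-bubble strata $(M_{k,1}(\ti{L}_0,\ti{L}_1;H)_0\times_{\ti{L}_k}M_1(\ti{L}_k)_0)/G$ for $k=0,1$. The signed count of the boundary points of a compact oriented $1$-manifold vanishes; keeping the component-wise constant weight $q^A\Hol_{L_0,L_1}$ and inserting the signs supplied by Proposition \ref{orient} — the broken-strip inclusion orientation preserving, the $k=0$ bubble inclusion orientation preserving, the $k=1$ bubble inclusion orientation reversing — this becomes an identity of the form $\partial_{L_0,L_1,H}^2 + (L_0\text{-bubble term}) - (L_1\text{-bubble term}) = 0$, the two bubble terms entering with opposite signs.

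It remains to identify the $L_k$-bubble term with $\mu_0(L_k)\cdot\on{id}$. For dimension reasons $M_{k,1}(\ti{L}_0,\ti{L}_1;H)_0$ contains no nonconstant strips (a nonconstant marked strip of that formal dimension would force the underlying unmarked strip to have negative expected dimension modulo $G$ and translation), so it consists of the constant ghost strips over the finitely many points of $\cI(L_0,L_1;H)$. Hence a configuration in $(M_{k,1}(\ti{L}_0,\ti{L}_1;H)_0\times_{\ti{L}_k}M_1(\ti{L}_k)_0)/G$ is a ghost strip at some $x\in\cI(L_0,L_1;H)$ carrying a Maslov-two disk bubble on $\ti{L}_k$ through (a lift of) $x$, and in particular has $x_+=x_-=x$; so the $L_k$-bubble term acts on $CQF(L_0,L_1;H)$ as multiplication by the scalar $\sum\eps(u)\,q^{A(u)}\Hol_{L_k}(u)$ summed over Maslov-two disks on $\ti{L}_k$ through that point. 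Since the weighted count of Maslov-two holomorphic disks with boundary on $\ti{L}_k$ through a prescribed point of $\ti{L}_k$ is independent of the point — the evaluation map from the moduli of Maslov-two disks with one boundary marking to $\ti{L}_k$ has a well-defined weighted degree, a cobordism invariant — it equals the generic count $\mu_0(L_k)$ of the statement. Assembling the three terms yields $\partial_{L_0,L_1,H}^2 + \mu_0(L_0) - \mu_0(L_1) = 0$, i.e. $\partial_{L_0,L_1,H}^2 = \mu_0(L_1)-\mu_0(L_0)$.

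The genuinely hard analytic input — compactness and, especially, the gluing theorem for holomorphic strips and quasistrips with clean-intersection boundary — is already packaged in Theorems \ref{trajcompact} and \ref{bound}, so relative to those the main obstacle here is the bookkeeping of the last two paragraphs: verifying that the disk-bubble strata contribute a multiple of $\on{id}$ (equivalently that such broken configurations have coinciding limit intersection points, so the bubble sits over an endpoint), identifying the resulting scalar with the point-independent disk count $\mu_0(L_k)$, and tracking the orientation conventions of Proposition \ref{orient} consistently through the boundary-orientation identity so the two bubble terms really enter with opposite signs. I expect the orientation/sign bookkeeping to be the most delicate point.
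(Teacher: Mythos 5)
Your proof is correct and follows essentially the same route as the paper, whose own argument is simply to invoke the boundary description of $\ol{M}(L_0,L_1;H)_1$ from Theorem \ref{bound}, additivity of the energy, multiplicativity of the holonomies, and the orientation signs of Proposition \ref{orient}. Your additional bookkeeping (the bubble strata carry constant ghost strips, hence act diagonally, and the Maslov-two disk count through the resulting point agrees with the generic-point count defining $\mu_0(L_k)$) just makes explicit what the paper leaves implicit.
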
 

\begin{proof} By Theorem \ref{bound}, the additivity of the 
energy, the multiplicativity of the holonomies, and orientations
Proposition \ref{orient}.
\end{proof} 

If $\mu_0(L_1) = \mu_0(L_0) $ then the {\em quasimap Floer cohomology}
is 
$$HQF(L_0,L_1) := HQF(L_0,L_1;H) := H(\partial_{L_0,L_1,H}) ;$$
if $L_0,L_1$ are equipped with $N$-Maslov gradings then this is
$\Z_{2N}$-graded group.  In our situation we will only have $N=1$, so
our Floer cohomologies will be only $\Z_2$-graded.

\begin{theorem}  In the situation of Theorem \ref{qfh},
if $J$ is such that every non-trivial stable holomorphic disk is
regular and has positive Maslov index, and $H_0,H_1 \in
C^\infty_c([0,1] \times X)^G$ are two Hamiltonians so that every
stable holomorphic quasistrip is regular, then $HQF(L_0,L_1;H_0)$ is
isomorphic to $HQF(L_0,L_1;H_1)$.  If $L_1 \cap \phi(L_0)$ is empty
for some Hamiltonian diffeomorphism $\phi$ on $X \qu G$ then
$HQF(L_0,L_1;H)$ is trivial for any $H$.
\end{theorem}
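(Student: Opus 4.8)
The plan is to establish both assertions by the continuation-map method of Floer theory, carried out for holomorphic quasistrips in exact parallel with the construction of $\partial_{L_0,L_1,H}$ and the proof of Theorem \ref{qfh}. With $J$ fixed, choose a homotopy $s \mapsto H_s$ of $G$-invariant compactly supported Hamiltonians equal to $H_0$ for $s \ll 0$ and to $H_1$ for $s \gg 0$, generic rel endpoints. A \emph{continuation quasistrip} is a map $u : \R \times [0,1] \to X$ with boundary in $\ti{L}_0,\ti{L}_1$ satisfying $\olp_{J,\ti{H}}u = 0$ for the $s$-dependent perturbation $\ti{H} = H_s\,\d t$, taken modulo the $G$-action alone (there is now no $\R$-translation symmetry). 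The curvature $R_{\ti{H}} = (\partial_s H_s)\,\d s \wedge \d t$ is bounded since the homotopy is eventually constant, so by the area-energy identity \eqref{energyarea} the energy of $u$ is controlled by its symplectic area and a fixed constant; within a fixed homotopy class relative to the limit intersection points this bounds the energy, and --- using convexity to prevent escape to infinity via the maximum principle and asphericity to rule out sphere bubbling --- one obtains Gromov compactness for these moduli spaces exactly as in Theorem \ref{trajcompact}, the quotient by $G$ and the clean intersections at the ends being handled through the exponential-decay Lemma \ref{expdecay} as in Theorems \ref{trajreg} and \ref{bound}. For a generic homotopy the regular index-zero component $M^{\on{cont}}(L_0,L_1;H_s)_0$ is finite, by a Sard--Smale argument of the type of Lemmas \ref{somewhere}--\ref{intreg}.

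Counting the elements of $M^{\on{cont}}(L_0,L_1;H_s)_0$ with the induced signs, the holonomies $\Hol_{L_0,L_1}$, and the weights $q^{A(u)}$ defines a map $\Phi : CQF(L_0,L_1;H_0) \to CQF(L_0,L_1;H_1)$. Its chain-map property follows by examining the boundary of the one-dimensional moduli space $\ol{M}^{\on{cont}}(L_0,L_1;H_s)_1$: by compactness together with the gluing construction of Theorem \ref{bound}, this boundary consists of a continuation quasistrip broken off an $H_0$- or $H_1$-Floer trajectory --- contributing $\partial_{L_0,L_1,H_1}\circ\Phi - \Phi\circ\partial_{L_0,L_1,H_0}$ --- together with disk bubbles in $\ti{L}_0$ or $\ti{L}_1$, contributing $\mu_0(L_1) - \mu_0(L_0)$. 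Since $HQF$ is only defined when $\mu_0(L_0) = \mu_0(L_1)$, the disk-bubble terms cancel and $\Phi$ is a chain map. The standard homotopy-of-homotopies argument --- counting index-zero configurations in a generic one-parameter family of homotopies interpolating between the concatenation of the $H_0 \rightsquigarrow H_1$ and $H_1 \rightsquigarrow H_0$ homotopies and the constant homotopy, where once more the disk-bubble contributions cancel --- then shows that $\Phi_{H_1,H_0}\circ\Phi_{H_0,H_1}$ and $\Phi_{H_0,H_1}\circ\Phi_{H_1,H_0}$ are chain homotopic to the identities. Hence $\Phi$ is a quasi-isomorphism and $HQF(L_0,L_1;H_0) \cong HQF(L_0,L_1;H_1)$. (Since $N = 1$ here, all complexes are merely $\Z_2$-graded and there is no grading obstruction.)

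For the second assertion it suffices, granted the first, to exhibit one admissible Hamiltonian for which the complex vanishes. Let $K\qu G \in C^\infty_c([0,1]\times X\qu G)$ generate a Hamiltonian isotopy of $X\qu G$ whose time-one map is $\phi$; pulling back by $\pi : \Phinv(0) \to X\qu G$, extending $G$-invariantly over a tubular neighborhood of $\Phinv(0)$, and cutting off away from $\Phinv(0)$ produces $H \in C^\infty_c([0,1]\times X)^G$. Because $H$ is $G$-invariant its Hamiltonian flow preserves $\Phinv(0)$ and there descends to the isotopy of $X\qu G$ generated by $K\qu G = H\qu G$, whose time-one map is $\phi$. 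Consequently $\cI(L_0,L_1;H)$, the set of flow lines of $(H\qu G)^\#_t$ from $L_0$ to $L_1$, is in bijection with $L_0 \cap \phi^{-1}(L_1) = \phi^{-1}(L_1 \cap \phi(L_0)) = \emptyset$. Thus $CQF(L_0,L_1;H) = 0$; this complex has clean (empty) intersection and no stable quasistrips, hence is admissible and computes $HQF(L_0,L_1)$, which is therefore trivial. By the first assertion the same holds for every admissible $H'$, the standing hypothesis $\mu_0(L_0) = \mu_0(L_1)$ being what makes $HQF$ defined in the first place.

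The principal obstacle lies in the compactness input of the first paragraph: one must know a priori that the $s$-dependent continuation quasistrips --- and, in the homotopy-of-homotopies step, the two-parameter families --- have bounded energy, which rests on the boundedness of the connection curvature $R_{\ti{H}}$ (guaranteed by taking the interpolating homotopies eventually constant) together with the area-energy identity \eqref{energyarea} and the convexity and asphericity of $X$, exactly as in Theorem \ref{trajcompact}. Once compactness is in hand, the quotient by $G$ and the clean-intersection boundary conditions contribute nothing new, being treated verbatim as in Theorems \ref{trajcompact}, \ref{trajreg}, and \ref{bound}, while the somewhere-injectivity needed for transversality of the continuation moduli spaces is supplied as in Lemmas \ref{somewhere}--\ref{intreg}.
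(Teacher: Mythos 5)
Your argument is correct and takes essentially the same route as the paper: the paper proves the first claim by exactly this continuation argument counting $(J,\ti{H})$-holomorphic quasistrips for a generic homotopy $\ti{H}$ between $H_0 \d t$ and $H_1 \d t$ (with the details omitted there and deferred to the more general \ainfty bimodule argument of Section \ref{bimoduleiso}), and the second claim by lifting the displacing Hamiltonian from $X \qu G$ so that $\cI(L_0,L_1;H) = \emptyset$ and the complex vanishes. The only caveat, recorded in the paper in a remark immediately after the theorem, is that the curvature term in the energy-area identity \eqref{energyarea} can make $A(u)$ negative for continuation strips, so your map $\Phi$ is defined only over $\Lambda$ and not over $\Lambda_0$ --- consistent with the theorem as stated, since $CQF(L_0,L_1;H)$ is a $\Lambda$-module.
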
  

\begin{proof}  The first statement is a standard continuation argument counting
$(J,\ti{H})$-holomorphic strips where $\ti{H}$ is a generic homotopy
  between $H_0 \d t $ and $H_1 \d t$.  Since we give a more general
  argument in the \ainfty setting in Section \ref{bimoduleiso}, we
  omit the proof.  To prove the second statement, let $H_0 \in
  C^\infty_c([0,1] \times X \qu G) $ be a time-dependent Hamiltonian
  whose flow $\phi$ satisfies $L_1 \cap \phi(L_0) = \emptyset$.  If $H
  \in C^\infty_c([0,1] \times X)^G$ is any lift of $H_0$ then we have
  $\cI(L_0,L_1;H) = \emptyset$ hence $HQF(L_0,L_1;H)$ vanishes.
\end{proof} 

The homotopy argument works with $\Lambda$ coefficients but not
$\Lambda_0$-coefficients: because of the energy-area identity
\eqref{energyarea}, the symplectic area of a $(J,\ti{H})$-strip is
possibly negative.  That is, quasimap Floer cohomology, as well as
Floer cohomology, is defined using $\Lambda_0$ coefficients but as
such is not an invariant of Hamiltonian isotopy.

Notice that we have not said anything about independence from $J$.
There is probably no hope of achieving regularity for an arbitrary
family of almost complex structures, and so the independence from $J$
falls outside of the techniques considered in this paper.  

\section{Quasimap \ainfty algebra for a Lagrangian} 
\label{algebra}

In order to understand the structure of the Floer cohomology groups
defined in the previous section, it is helpful to understand how they
arise via \ainfty algebras and bimodules.  This will take the next
several sections; in this section we use a standard trick which
combines Morse theory and Floer cohomology via {\em treed disks}, and
which in particular gives an \ainfty algebra associated to a
Lagrangian without using Kuranishi structures if every stable
holomorphic disk is regular and positive Maslov index.  While this is
well-known (see for example Seidel \cite{seidel:genustwo}) there is
unfortunately no complete writeup in the literature, and the version
we need replaces disks in the quotient with quasidisks.  We begin with
some generalities about \ainfty algebra.

\subsection{\ainfty algebras}

An {\em \ainfty algebra} over $\Lambda$ consists of a $\Z$-graded
$\Lambda$-module $A$ and a sequence of {\em higher composition maps}
$(\mu_n: A^{\otimes n} \to A[2-n])_{ n \ge 0} $
satisfying the {\em \ainfty associativity relation}
\begin{equation} \label{assoc}
  0 = \sum_{i + j \leq n} (-1)^{\aleph}
\mu_{n-i+1}(a_1,\ldots,a_{j},\mu_i(a_{j+1},\ldots,a_{j+i}),
a_{j+i+1},\ldots,a_n) \end{equation}
where $a_1,\ldots,a_n$ are homogeneous elements of degree
$|a_1|,\ldots,|a_n|$,
\begin{equation} \label{aleph}
 \aleph = \sum_{k=1}^j (|a_k| - 1) \end{equation}
is the sum of the {\em reduced degrees} of the elements to the left of
the inner operation, see Seidel \cite{seidel:sub}, and we adopt the
standard convention of writing commas instead of tensor products to
save space.  The map $\mu_0: \Lambda \to A$ is the {\em curvature} of
$A$ and is determined by a single element $\mu_0(1) \in \Lambda$.  If
$\mu_0 = 0$ then $A$ is {\em flat}. The signs in the \ainfty
associativity relation only depend on the induced $\Z_2$-grading, and
this means that one can replace the assumption of a $\Z$-grading with
a $\Z_2$-grading, which will be the case in our example.  We also work
with \ainfty algebras defined over the Novikov ring $\Lambda_0$, which
means that $A$ is a $\Lambda_0$-module and the maps $\mu_n$ are
$\Lambda_0$-module morphisms.

A {\em morphism} of \ainfty algebras $(A_1,
(\mu_{1,i})),(A_2,(\mu_{2,i}))$ is a collection
$ (\phi_n :A_1^{\otimes n} \to A_2 )_{n \ge 0} $
satisfying a relation 
\begin{multline}\label{ainftymorphism} 0 = \sum_{i,j} (-1)^\aleph \phi_{n- j  + 1}( a_1,\ldots, a_i,
\mu_{j,1}(a_{i+1},\ldots,a_{i+j}),a_{i+j+1},\ldots, a_n) +
\\ \sum_{i_1,\ldots,i_r} \mu_{r,2}(\phi_{i_1}(a_1,\ldots,a_{i_1}),
\ldots, \phi_{i_r}(a_{i_1 + \ldots + i_{r-1} + 1},\ldots, a_n)) .\end{multline}
The map $\phi_0$ is the {\em curvature} of the morphism; if $\phi_0 =
0$ then the morphism is {\em flat}.

\subsection{Treed disks} 
\label{associahedron}

Stasheff's {\em associahedron} $K_n$ is a cell complex whose vertices
correspond to total bracketings of $n$ variables $x_1,\ldots,x_n$
\cite{st:ho}.  For example, $K_3$ is an interval, with vertices
corresponding to the expressions $(x_1x_2)x_3$ and $x_1(x_2x_3)$.  The
associahedron has several geometric realizations.  The first,
well-known description is as the moduli space of stable marked nodal
disks.  A nodal disk is {\em stable} if it has no automorphisms, or
equivalently, each disk component has at least three nodal or marked
points.  Let $\ol{M}_n$ denote the moduli space of stable nodal
$n+1$-marked disks.

A second and older description of the associahedron due to
Boardman-Vogt \cite{boardman:hom} is the moduli space of stable ribbon
metric trees: A {\em ribbon metric tree} is a ribbon tree 
$$\Gamma =
(V(\Gamma),E(\Gamma),O(\Gamma))$$ 
consisting of a ribbon tree $(V(\Gamma),E(\Gamma))$ equipped with a
labelling of the semiinfinite edges by integers $0,\ldots, n$, a
cyclic ordering $O(\Gamma)$ of the edges at each vertex together with
a {\em metric} $l: E(\Gamma) \to [0,\infty]$.  The semiinfinite edges
are required to have infinite length.  Two metric trees are {\em
  isomorphic} if the ribbon metric trees obtained by collapsing all
edges of length zero are isomorphic, that is, there is a bijection
between the vertex sets preserving the edges, the cyclic orderings,
and their lengths.  Equivalently, a metric tree can be taken to be a
tree with a metric on each one-dimensional segment, and an isomorphism
of metric trees is required to preserve the metric on each edge.  A
metric ribbon tree is {\em stable} if each vertex has valence at least
two, any edge containing a vertex of valence two has infinite length,
and any edge of infinite length contains at least one vertex of
valence three.  (This is a somewhat obscure way of saying that we
allow an edge of finite length to degenerate to a broken edge of
infinite length; the standard method would just be to allow edges of
infinite length but this convention would cause notational problems
when we talk about gradient trees later on.)  There is a natural
topology on the set of isomorphism classes $\ol{W}_n$ of stable ribbon
metric trees with $n + 1$ leaves, which allows each edge with length
approaching infinite to degenerate to a broken edge, that is, a pair
of infinite length edges joined by a vertex.  We refer to
\cite{mau:mult} for a review.  The edges of each tree are oriented so
that the positive orientation is in the direction of the zero-th
semiinfinite edge.

Both $\ol{W}_n$ and $\ol{M}_n$ have natural structures as manifolds
with corners, isomorphic as such to Stasheff's associahedron $K_n$.
In the case of $\ol{M}_n$ the boundary components have interiors
consisting of configurations with exactly two components containing
prescribed markings.  In the case of $\ol{W}_n$ the boundary
components have interiors whose trees have a single edge of infinite
length.  The cell structure of $\ol{M}_n$ is identical to that of
$K_n$ while the cell structure of $\ol{W}_n$ is not.

The following definition combines the two constructions: A {\em treed
  disk} is a collection of a collection $D_1,\ldots, D_k$ of
holomorphic disks, segments-with-metric $S_i$ which for simplicity we
take to be intervals $S_i = [\eps_i^-,\eps_i^+], i = 1,\ldots, l$
where $\eps_i^- < \eps_i^+$ and $\eps_i^\pm \in [-\infty,\infty]$,
together with a collection 
$$\{ \{ w_1^+,w_1^-\},\ldots, \{ w_m^+,w_m^-\} \}$$ 
of {\em nodes} in the disjoint union $\partial D_1 \sqcup \ldots
\sqcup \partial D_k \sqcup \partial S_1 \sqcup \ldots \sqcup \partial
S_l $ such that the set $w_1^+,\ldots,w_m^-$ is distinct and {\em
  markings} $z_1,\ldots, z_n$ contained in $\partial S_1 \sqcup \ldots
\sqcup \partial S_l $ disjoint from the nodes.  Gluing the nodes gives
a topological space
$$ \Sigma := \left( D_1 \sqcup \ldots \sqcup D_k \sqcup S_1 \sqcup \ldots
\sqcup S_l \right) / (w_j^+ \sim w_j^-)_{j=1}^m $$
which is required to be contractible. The ordering of the markings is
required to agree with the ordering of the leaves of the underlying
ribbon tree, that is, the tree obtained by collapsing the disks to
vertices and taking the ribbon structure induced from the ordering of
the nodes on the boundary of the disks.  We say that a node $w_j^\pm =
\eps_j^\pm$ contained in the boundary of $S_j = [\eps_j^-,\eps_j^+]$
is {\em infinite} if $\eps_j^\pm = \pm \infty$, and {\em finite}
otherwise.  An {\em isomorphism} of treed nodal disks $\Sigma,
\Sigma'$ is a homeomorphism $\Sigma \to \Sigma'$ holomorphic on each
disk component and preserving the metric on each line segment, mapping
the markings $z_1,\ldots, z_n$ to $z_1',\ldots,z_n'$.  A treed nodal
disk is {\em stable} if it has no automorphisms, or equivalently, each
disk component has at least three nodal or marked points, and in
addition each node connecting two line segments is infinite.  More
explicitly, this means that each disk component has at least three
nodal points, and each sequence of line segments connecting two disk
components is a {\em broken line segment}: all nodes of finite type
attach to disk components.  Let $\ol{MW}_n$ denote the moduli space of
connected stable tree disks with $n+1$ semiinfinite edges.  See Figure
\ref{MW} for the example $n = 3$.
\begin{figure}[ht]
\includegraphics[height=.7in]{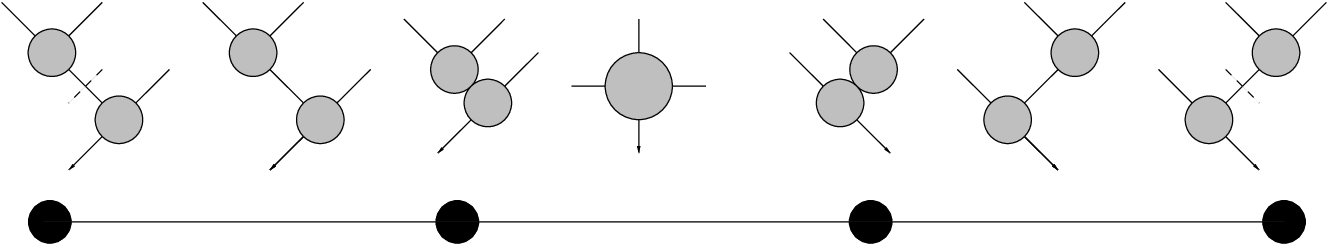}
\caption{Moduli space of stable treed disks}
\label{MW}
\end{figure} 
\noindent The combinatorial type of a treed disk is a ribbon tree $\Gamma =
(V(\Gamma),E(\Gamma),O(\Gamma))$ with a partition of the set of
vertices $V(\Gamma)$ into two subsets $V^1(\Gamma),V^2(\Gamma)$,
corresponding to the two-dimensional and one-dimensional components
being glued together.  We denote by $MW_{\Gamma,n}$ the subset of
$\ol{MW}_n$ of combinatorial type $\Gamma$.  A codimension one stratum
$MW_{\Gamma,n}$ is a {\em true boundary component} if it consists of
points in the topological boundary of $\ol{MW}_n$, considered as a
topological manifold with boundary, and is a {\em fake boundary
  component} otherwise.  For any (not necessarily stable) treed disk
$\Sigma$ we denote by $\ti{\Sigma}$ denote the disjoint union of the
disk components of $\Sigma$, together with a copy of the real line for
each one-dimensional component of $\Sigma$.  Denote by
$\Def_\Gamma(\Sigma)$ the space of infinitesimal deformations of
$\Sigma$ preserving the combinatorial type, that is, the direct sum of
the tangent spaces of the boundaries of the disks or tangent spaces to
the lines at the nodes and markings (representing deformations of the
attaching points and markings)
\begin{equation} \label{defsig}
 \Def_\Gamma(\Sigma) = \bigoplus_{j=1,\pm}^m T_{w^\pm_i} \partial
 \ti{\Sigma} \oplus \bigoplus_{j=1}^n T_{z_i} \partial \ti{\Sigma}
\end{equation}
where in the case of one-dimensional components we define the boundary
to be the entire component.  Over a neighborhood of $0$ in
$\Def_\Gamma(\Sigma)$ we have a family of treed disks, given by
varying the attaching points using a collection of exponential maps
$T_{w^\pm_i} \partial \ti{\Sigma} \to \partial \Sigma$ defined using a
metric on $\partial \ti{\Sigma}$.  This is a smoothly trivial family
in the sense that the fibers are all smoothly diffeomorphic, and any
choice of identification identifies the deformations with deformations
of the complex structure on the disk components and deformations of
the metric on the line segments, but our construction of the gluing
map will not use such a trivialization.  Let $\Aut(\Sigma) =
\Aut(\ti{\Sigma})$ denote the product of the automorphism groups
$SL(2,\C)$ for each disk component together the automorphism groups
$\R$ of each one-dimensional component, and $\aut(\Sigma)$ the Lie
algebra of $\Aut(\Sigma)$.  The action of $\Aut(\Sigma)$ on
$\ti{\Sigma}$ induces a map $\aut(\Sigma) \to \Def_\Gamma(\Sigma)$.
If $\Sigma$ is stable, then a neighborhood of $[\Sigma]$ in
$MW_{\Gamma,n}$ is homeomorphic to a neighborhood of $0$ in
$\Def_\Gamma(\Sigma)/\aut(\Sigma)$.  The full deformation space is
obtained by taking the direct sum
$$ \Def(\Sigma) = \Def_\Gamma(\Sigma) \oplus ((-\infty,0) \cup \{ 0 \}
\cup (0,\infty))^z \oplus [0,\infty)^i $$
where $z$ is the number of edges of zero length (that is, nodes
connecting disks) and $i$ is the number of infinite nodes connecting
segments of infinite length.  A neighborhood of $[\Sigma]$ in
$\ol{MW}_n$ is isomorphic as a cell complex to a neighborhood of $0$
in $\Def(\Sigma)/\aut(\Sigma)$, by a map $(\zeta,\delta) \mapsto
[\Sigma^{\zeta,\delta}]$ obtained by combining the deformations above
with, in the case of a negative gluing parameter associated to an edge
of zero length, carrying out the gluing procedure for disks mentioned
above, see e.g. \cite{mau:mult}.  Thus in particular the true boundary
components of $\ol{MW}_n$ correspond to configurations with an edge of
infinite length.

$\ol{MW}_n$ admits {\em two} natural forgetful maps, one which forgets
the disk components $f_M: \ol{MW}_n \to \ol{W}_n$ and takes the ribbon
structure to be the one induced from the ordering of the points on the
boundaries of the disks, and one that forgets the edges $f_W:
\ol{MW}_n \to \ol{M}_n$. The fiber of $f_M$ over each stratum
$M_{n,\Gamma}$ is a product of intervals $[0,\infty]$ corresponding to
the lengths of the edges, while the fiber over a stratum
$W_{n,\Gamma}$ is a product of associahedra corresponding to the
valences at the vertices.  The product $f_M \times f_W: \ol{MW}_n \to
\ol{M}_n \times \ol{W}_n$ defines an injection into a product of
compact spaces, so that $\ol{MW}_n$ has a natural topology for which
it is compact.  With respect to this topology each stratum
$MW_{\Gamma,n}$ has a neighborhood in $\ol{MW}_n$ homeomorphic to the
product of $MW_{\Gamma,n}$ with a product of intervals $[0,\infty)$,
  one for each node connecting segments of infinite length, together
  with a product of intervals $(-\infty,\infty)$, one for each node
  connecting disk components.  From the first forgetful map one sees
  that $\ol{MW}_n$ is again homeomorphic to Stasheff's associahedron,
  but with a more refined cell structure.

The strata of $\ol{MW}_n$ can be oriented as follows.  First, $M_n$ is
oriented via the identification with $\{ 0 < z_2 < z_2 < \ldots <
z_{n-1} < 1 \} \subset \R^{n-2}$ which maps the first marked point to
$\infty$, the second to $0$, the last to $1$, and the remaining points
to the interval $(0,1) \cong \R$.  The orientations of the boundary
strata can be determined as follows.  Any stratum $M_{\Gamma,n}
\subset \ol{M}_n$ corresponding to a tree with $m$ nodes has a
neighborhood homeomorphic to a neighborhood of $M_{\Gamma,n}$ in
$M_{\Gamma,n} \times [0,\infty)^m$.  In the case $m = 1$ so that
  $M_{\Gamma,n} \cong M_i \times M_{n-i+1}$ for some $i$, this
  homeomorphism has an inverse given by a {\em gluing map} of the form
\begin{multline} \label{gluemap}
 \R \times M_i \times M_{n-i+1},
   (\delta,(w_2,\ldots,w_{i-1}), (
   z_2,\ldots,z_{n-i})) \to
   \\ (z_2,\ldots,z_j,z_j + \delta w_2,\ldots, z_j +
   \delta w_{i-1}, z_j + \delta,z_{j+1},\ldots, z_{n-i}) .\end{multline}
It follows that the inclusion $M_i \times M_{n-i+1} \to M_n$ has
orientation sign $ij + 1 - j - i $.  Next, each stratum
$MW_{\Gamma,n}$ corresponding to a combinatorial type $\Gamma$ of
marked stable disks is isomorphic to
\begin{equation} \label{orienttreed}
 M_{\Gamma,n} \times \prod_{e \in E_{< \infty}(\Gamma)}
 \R \end{equation}
where $M_{\Gamma,n}$ is the corresponding stratum of $\ol{M}_n$ and
$E_{< \infty}(\Gamma)$ is the set of finite edges.  Here the order
is taken first by distance to the root edge of $\Gamma$, and then by
cyclic order determined by the ribbon structure on $\Gamma$.  Thus
$MW_{\Gamma,n}$ inherits an orientation from that on $M_{\Gamma,n}$,
which in turn is induced by an orientation on $\ol{M}_n$ induced from
that on $M_n$.  On the other hand, the boundary of $\ol{MW}_n$ may be
identified with the union over types $\Gamma$ of trees with one
infinite edge of the product $\ol{MW}_{n-i+ 1} \times \ol{MW}_i$ where
$n+1$ is the number of semi-infinite edges of the component tree not
containing the root.  Thus
$$ \partial \ol{MW}_n \cong \bigcup_{i+j \leq n+1} MW_{n-i+ 1}
\times MW_i $$
where the sum is over edges $i$ of the tree with $n-i + 1$ leaves
other than the root.  Then by construction, the inclusion of $MW_{n-i
  + 1} \times MW_i $ has sign that of the inclusion of $M_{n-i + 1}
\times M_n$.

Over $\ol{MW}_n$ there is a {\em universal treed disk} $\ol{UMW}_n \to
\ol{MW}_n$ with the property that the fiber over $[\Sigma]$ is
isomorphic to the treed disk $\Sigma$. This is again a cell complex,
with cells indexed by pairs consisting of a combinatorial type of tree
disk together with a cell in the corresponding treed disk.

\subsection{Unstable trees} 

Standard perturbation schemes for Fukaya categories, as in Seidel
\cite{se:bo}, use systems of perturbations depending on the underlying
stable disk.  We need a different perturbation scheme which depends on
the underlying (possibly unstable) tree, similar to that used for the
construction of the Morse \ainfty category via perturbations in
e.g. Abouzaid \cite[Section 2.2]{ab:top}, which has non-vanishing
perturbations on edges that would normally be collapsed by the
forgetful morphism to the moduli space of stable trees.  This section
may be skipped at first reading.

Let $\ol{W}_{n,v}$ denote the moduli space of connected rooted metric
ribbon trees with $n+1$ leaves, at least one vertex, and at most $v$
vertices, and edges of possibly zero or infinite length, where
equivalence is generated by collapsing edges of zero length.  Given a
fixed underlying ribbon tree $\Gamma$, the corresponding stratum
$W_{n,v,\Gamma}$ is homeomorphic to a product of intervals
$(0,\infty)$ describing the lengths of the edges, and so the
decomposition
$$\ol{W}_{n,v} = \bigcup_\Gamma W_{n,v,\Gamma} $$ 
gives $\ol{W}_{n,v}$ the structure of a finite cell complex, hence a
compact Hausdorff space.  A neighborhood of $W_{n,v,\Gamma}$ is
homeomorphic to a neighborhood of $W_{n,v,\Gamma} \times \{ 0 \}$ in
the product of $W_{n,v,\Gamma}$ with a subset of $\prod_{v \in
  V(\Gamma)} W_{n(v),v}$ with at most $v$ vertices in total, where
$n(v)$ is the valence of $v$; the homeomorphism is obtained by
replacing each vertex $v$ by the corresponding tree.
Over $\ol{W}_{n,v}$ there is a {\em universal tree} $\ol{UW}_{n,v} \to
\ol{W}_{n,v}$ with the property that the fiber over $[\Sigma]$ is
isomorphic to the treed disk $\Sigma$. The moduli space $\ol{W}_{1,3}$
is the square with edges identified shown in Figure \ref{m30}.

\begin{figure}[ht]
\includegraphics[height=1.5in]{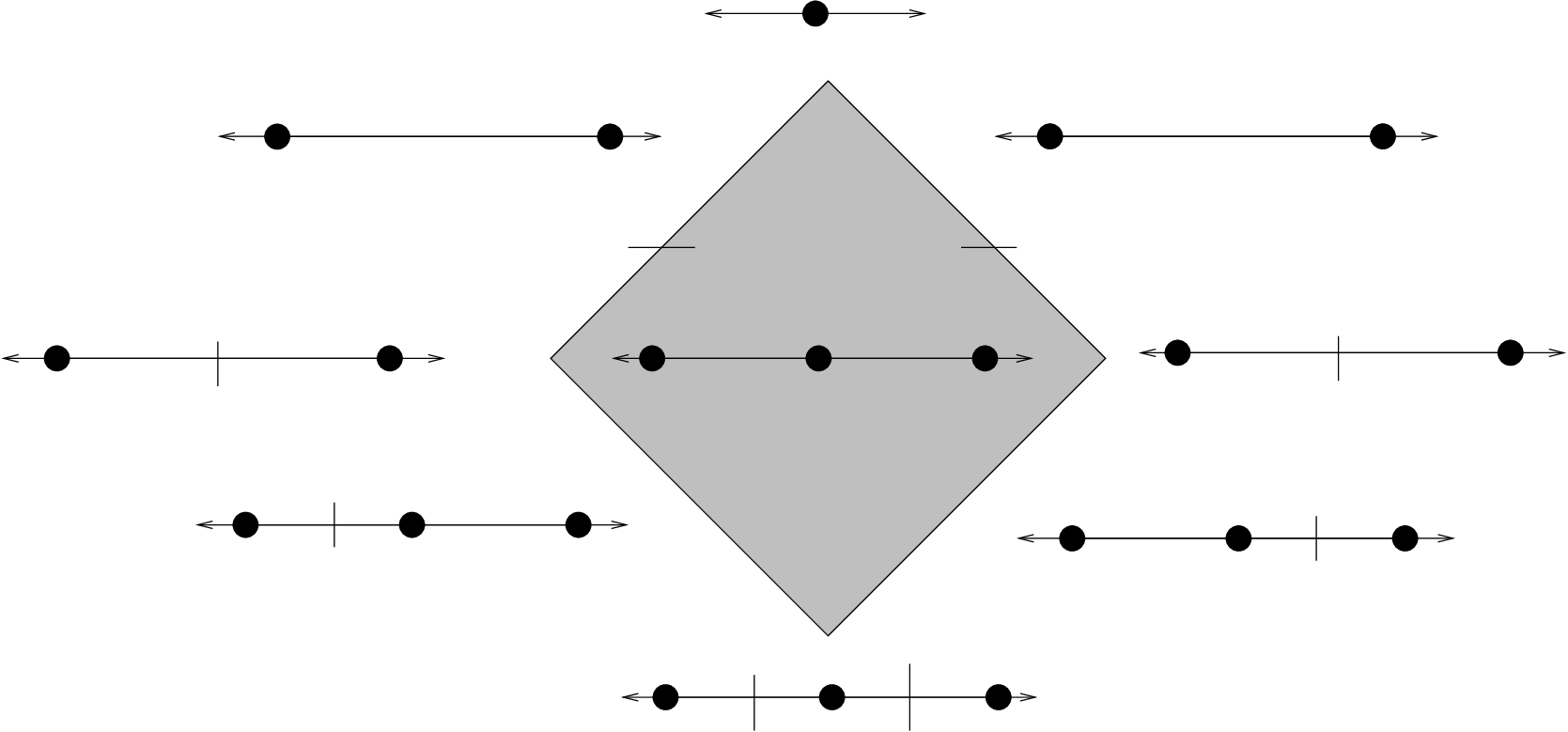}
\caption{Strata of $\ol{W}_{1,3}$}
\label{m30}
\end{figure} 
\noindent Note that $\ol{W}_{1,3}$ is not combinatorially a manifold with
corners.  

\subsection{Holomorphic treed quasidisks} 
\label{quasidisks}

Let $X$ be a Hamiltonian $G$-manifold equipped with a compatible
almost complex structure $J \in \J(X)^G$, as above.  Let $L \subset X
\qu G$ be a compact oriented Lagrangian submanifold equipped with a
brane structure, that is, a relative spin structure and grading.  Let
$(B: TL^{\otimes 2} \to \R, F: L \to \R)$ be a Morse-Smale pair, that
is, $B$ is a Riemannian metric, $F$ is Morse, and for each pair of
critical points $x_\pm \in \crit(F)$, the intersection of the stable
resp. unstable manifolds $W^+(x_+) \cap W^-(x_-)$ is transverse.  In
particular, the moduli space of gradient trajectories from $x_-$ to
$x_+$ is naturally isomorphic to $(W^+(x_+) \cap W^-(x_-) )/\R$, which
is a smooth finite-dimensional manifold.  The stable resp. unstable
manifolds $W^\pm(x)$ of $x \in \crit(F)$ are diffeomorphic to the
positive resp. negative parts $T^\pm _xL$ of the tangent space $T_x L$
with respect to the Hessian of $F$; we denote by $I(x) = \dim(T^-_x
L)$ the {\em index} of $x$ so that the dimension of the space of
gradient trajectories $(W^+(x_+) \cap W^-(x_-) )/\R$ is $I(x_+) -
I(x_-) - 1$.  For simplicity, we assume that $L$ is connected and
there is a unique critical point $x_{\min} \in \crit(F)$ of index $0$. In
addition, we will need to lift gradient trajectories of $F$ in $L$ to
the pre-image $\ti{L}$ in $X$.  For this purpose we assume that we
have fixed a $G$-invariant metric on $\ti{L}$ so that the induced
metric on $F$ is obtained from the quotient $F = \ti{F}/G$.  We may
then identify the gradient trajectories of $F$ on $L$ with those of
its $G$-invariant lift $\ti{F}$ on $\ti{L}$, up to the action of $G$.

\begin{definition} A {\em holomorphic treed quasidisk} for $L$ 
consists of a treed disk $\Sigma$ together with a map $u: \Sigma \to
X$ such that on each disk component in $\Sigma$, $u$ is holomorphic
and maps the boundary to $\ti{L}$, and on each edge in $\Sigma$, $u$
is a gradient flow line of the function $\ti{F}: \ti{L} \to \R$.  An
{\em isomorphism} of treed quasidisks $u_j: \Sigma_j \to X, j = 0,1$
consists of an isomorphism $\phi: \Sigma_0 \to \Sigma_1$ and an
element $g \in G$ such that $\phi^* u_1 = g u_0$.  A holomorphic treed
quasidisk is {\em stable} if $u$ has no automorphisms (equivalently,
any disk on which $u$ is constant has at least three marked or nodal
points and $u$ is non-constant on each segment of each broken line)
and if two line segments are joined at a node, then the node maps to a
critical point of $F$.  (This means that any node of finite type is an
attaching point to a disk.)
\end{definition} 

Because the semiinfinite edges have infinite length, the limit of $u$
along the $j$-th semiinfinite edge maps to a critical point $x_j \in
\crit(F)$ under the projection $\ti{L} \to L$.  A picture of a
holomorphic treed disk is shown in Figure \ref{holtreed}.

\begin{figure}[ht] 
\begin{picture}(0,0)%
\includegraphics{coupled.pstex}%
\end{picture}%
\setlength{\unitlength}{4144sp}%
\begingroup\makeatletter\ifx\SetFigFontNFSS\undefined%
\gdef\SetFigFontNFSS#1#2#3#4#5{%
  \reset@font\fontsize{#1}{#2pt}%
  \fontfamily{#3}\fontseries{#4}\fontshape{#5}%
  \selectfont}%
\fi\endgroup%
\begin{picture}(3533,2694)(1606,-4456)
\put(1621,-3182){\makebox(0,0)[lb]{$x_0$}}%
\put(2613,-1847){\makebox(0,0)[lb]{$x_1$}}%
\put(4902,-3030){\makebox(0,0)[lb]{$x_2$}}%
\put(4902,-4441){\makebox(0,0)[lb]{$x_3$}}%
\put(4139,-4441){\makebox(0,0)[lb]{$x_4$}}%
\end{picture}%
\caption{A holomorphic treed disk}
\label{holtreed}
\end{figure} 

Our perturbation scheme depends on choosing generic perturbations of
the gradient flow equations on the edges, as in the construction of
the Morse \ainfty algebra by perturbation.  For any treed disk
$\Sigma$ and $v$ sufficiently large we denote by $\Sigma_{(1)} \in
\ol{W}_{n,v}$ the (possibly unstable) metric ribbon tree obtained by collapsing the disk
components of $\Sigma$ to vertices.  Let $F_{n,v} \in
C^\infty(UW_{n,v} \times L)$ resp.  $B_{n,v} \in C^\infty( UW_{n,v}
\times TL^{\otimes 2})$ be a function equal to $F$ resp. metric equal
to $B$ on the complement of a compact subset of the union of open
edges.  An {\em $(F_{n,v},B_{n,v})$-perturbed treed disk} is a treed
nodal disk $\Sigma$ together with a continuous map $u: \Sigma \to X$
such that $u$ satisfies the Cauchy-Riemann equation on the disks and
the gradient equation
\begin{equation} \label{Fpert}
 (\ddt u)(t) = \grad(\ti{F}_{n,v}(\Sigma_{(1)},t,u(t))) \end{equation}
for the lift $\ti{F}_{n,v}$ of $F_{n,v}$ from $L$ to $\ti{L}$ at the
points $t$ in the segments of $\Sigma$, which are identified with the
segments of $\Sigma_{(1)}$.  That is, $(\Sigma_{(1)},t)$ represents a
point on the universal tree $\ol{UW}_{n,v}$.  The parameters $t$ on
the segments are well-defined up to translation, because of the choice
of metric.  The {\em energy} of a holomorphic treed disk is the sum of
the energies of the disk components. In particular, there is no
contribution to the energy from the gradient flow lines.  Given a
bound on the energy, we obtain a bound on the number of disk
components in any holomorphic treed disk by energy quantization for
disks, hence a bound on the number of vertices in the underlying tree.
We always suppose that $v = v(E)$ has been chosen sufficiently large
so that if $u: \Sigma \to X$ is a holomorphic treed disk of energy at
most $E$ then $\Sigma_{(1)}$ has at most $v$ vertices, and so defines
a point in $\ol{W}_{n,v}$.

\begin{theorem} \label{compactthm}  If $X$ is aspherical convex and $L$ is
compact, then any sequence of stable holomorphic treed disks with
bounded energy and number of semiinfinite edges has a convergent
subsequence.
\end{theorem}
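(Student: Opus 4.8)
The plan is to deduce this from the standard Gromov compactness for $J$-holomorphic disks with totally real boundary, together with the compactness theory for (perturbed) negative gradient flow lines, the combinatorics being controlled by energy quantization. I would work with representatives in $X$; since $G$ is compact and acts freely on $\ti L$, a Gromov limit of representatives descends to a limit of isomorphism classes, so the quotient by $G$ is harmless and may be ignored until the very end. First I would establish a uniform $C^0$ bound: because $X$ carries a convex structure $(f,J)$ and $\ti L$ is compact, the maximum principle applied to $f\circ u$ on the disk components and to $f$ along the perturbed gradient lines shows that the images of all maps in the sequence lie in a fixed compact subset $X'\subset X$, reducing the analysis to a compact target. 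Next, asphericity of $(X,\omega)$ together with the energy--area identity forces any sphere bubble to be constant, so no sphere bubbling occurs and the only degenerations are boundary disk bubbling and breaking or collapsing of edges. Energy quantization for nonconstant $J$-holomorphic disks with boundary in $\ti L$ (McDuff--Salamon \cite[Chapter 4]{ms:jh}) gives a lower bound $\hbar>0$ for the energy of each nonconstant disk component; with the assumed energy bound this caps the number of nonconstant disk components, and stability caps the number of constant ones. Combined with the bound on the number of semiinfinite edges this bounds the number of vertices of the underlying tree $\Sigma_{(1)}$ by a fixed $v$, so $\Sigma_{(1)}\in\ol W_{n,v}$, a compact space; passing to a subsequence, the underlying trees converge in $\ol W_{n,v}$ and we may assume the combinatorial type of $\Sigma_{(1)}$ is constant along the subsequence, with edge lengths converging in $[0,\infty]$.

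With the combinatorics fixed, I would run the usual extraction argument component by component. On the parts of the domain where $|du|$ stays bounded, Arzel\`a--Ascoli together with elliptic bootstrapping on the disk components and Gronwall's inequality on the edges (using that $F_{n,v}$, hence its lift $\ti F_{n,v}$, is fixed outside a compact subset of the edges and that the perturbation domain $\ol{UW}_{n,v}$ is compact) yields $C^\infty_{\loc}$ convergence to a limiting treed map. At points where $|du|$ blows up on a disk component one rescales in the standard way, producing boundary disk bubbles attached along $\ti L$; on edges whose length tends to $\infty$ one extracts broken gradient trajectories through the critical points of $F$; edges whose length tends to $0$ collapse, bringing disk components together at a node. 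No energy is lost by the usual ``no neck'' argument for disks (McDuff--Salamon \cite{ms:jh}, Fukaya et al \cite{fooo}) and by the standard Morse-theoretic argument on the gradient parts. The incidence conditions at the nodes --- a disk boundary point equal to an endpoint of a gradient segment, two segments meeting at a common critical point, and so on --- pass to the limit by continuity of evaluation, and one checks the limit is stable: each newly constant disk component carries at least the node at which it was created plus the special points produced by rescaling, and each gradient segment on which the limit is constant can only connect to disks, which is precisely the stability condition for treed disks. Reparametrizing so the limiting domain is presented as an element of $\ol{MW}_n$ and quotienting by the (freely, properly acting) compact group $G$ then gives the desired convergent subsequence.

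The main obstacle is the bookkeeping at nodes joining disk bubbles to gradient segments: one must carry out the soft-rescaling argument carefully to see that bubbles ``connect'' --- that no energy or length escapes between a bubble and the principal component --- and that the limit is a \emph{stable} treed disk of a combinatorial type lying in $\ol{MW}_n$, rather than in some a priori larger compactification. Since $\ol W_{n,v}$ is not combinatorially a manifold with corners, one cannot rely on a clean stratified structure and must argue at the level of topological spaces; but the product embedding $f_M\times f_W\colon \ol{MW}_n\to\ol M_n\times\ol W_n$ shows $\ol{MW}_n$ is compact, and it suffices to identify the Gromov limit with a point of $\ol{MW}_n$. All of these ingredients are variants of arguments in \cite{ms:jh}, \cite{fooo}, \cite{ab:ex}, \cite{se:bo}; the only genuinely new features are the presence of the treed structure and the (easy) quotient by $G$.
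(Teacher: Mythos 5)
Your proposal is correct and follows essentially the same route as the paper: bound the combinatorial complexity (number of disk components via energy quantization, number of gradient segments via the Morse--Smale/compactness of the tree moduli), pass to a subsequence of fixed combinatorial type, then apply Gromov compactness on the disk components and gradient-tree compactness on the edges and reassemble the limit at the attaching points. The paper's proof is just a terser version of this argument, leaving the $C^0$ bound from convexity, the absence of sphere bubbling, and the stability of the limit implicit.
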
 

\begin{proof}  The number of segments in any broken gradient trajectory
is bounded by $\dim(X)$ by the Morse-Smale condition.  Furthermore the
number of disk components is bounded by the energy divided by the
constant in energy quantization for holomorphic disks, proved in
McDuff-Salamon \cite[Proposition 4.1.4]{ms:jh}. It follows that the
number of possible combinatorial types is finite, and so we may assume
after passing to a subsequence that the combinatorial type is
constant.

By Gromov compactness for marked disks, each sequence of disk
components converges to a nodal disk, after passing to a subsequence.
Similarly after passing to a subsequence the sequence of gradient
trees converges to a limiting gradient tree.  The limiting treed disk
is obtained by attaching the limiting gradient segments to their
attaching points in the limiting holomorphic disks, treating the
attaching points as markings. \end{proof}

For each homology class $\mdeg \in H_2(X,\ti{L})$ let $v(\mdeg)$ be
such that any holomorphic treed disk $u : \Sigma \to X$ has at most
$v(\mdeg)$ vertices or disk components.  Choose a function
$F_{n,v(\mdeg)} \in C^\infty({UW}_{n,v(\mdeg)} \times L)$ and let
${MW}_n(L,\mdeg)$ denote the moduli space of stable perturbed
holomorphic treed quasidisks with boundary in $L$ of class $\mdeg$
using the perturbation $F_{n,v(\mdeg)}$.  Given two moduli spaces
${MW}_{n-i+1}(L,\mdeg), {MW}_i(L,\mdeg')$ defined using perturbations
$F_{n-i+1,v(\mdeg)}, F_{i,v(\mdeg')}$, a gluing procedure produces a
perturbation system for ${MW}_n(L,\mdeg+\mdeg')$ in a neighborhood of
${MW}_{n-i+1}(L,\mdeg) \times_L {MW}_i(L,\mdeg')$, with the function
on the collapsing segment equal to $F$ in a neighborhood of the
infinite node in the case that the length goes to infinity.
Similarly, for any stratum $\Gamma$ corresponding to an edge of zero
length, any perturbation datum for $MW_{\Gamma,n}(L)$ produces a
perturbation datum in a neighborhood by taking the perturbation to
vanish on the edge whose length is going to zero.  We say that a
system of perturbations $F_* = ( F_{n,v(\mdeg)})_{n \ge 0, \mdeg \in
  H_2(X,\ti{L})}$ has been {\em chosen compatibly} if the function
$F_{n,v(\mdeg'')}$ is equal to the function induced by gluing in a
neighborhood of the image of ${UW}_{n-i+1,v(\mdeg)} \times
{UW}_{i,v(\mdeg')}$ in ${UW}_{n,v}$ whenever $\mdeg'' = \mdeg +
\mdeg'$, and similarly in the case of perturbing an edge length from
zero.

Each stratum $MW_{\Gamma,n}(L)$ is given locally as the quotient of a
zero set of a Fredholm map of Banach manifolds.  For each $\zeta \in
\Def_\Gamma(\Sigma)$ let $(w^\pm_j(\zeta) \in \ti{\Sigma})_{j=1}^m$
denote the corresponding nodes.  Let $\ti{\Sigma}_{(d)}, d=1,2$ the
union of $1$ resp. $2$-dimensional components of $\ti{\Sigma}$, and by
$u_d, d = 1,2$ the restriction of $u$ to $\ti{\Sigma}_d$.  Let
\begin{multline} \cF_u: \Def_\Gamma(\Sigma) \oplus 
\Omega^0(\ti{\Sigma}_{(2)},(u_2)^*TX, (\partial u_2)^* T\ti{L})_{1,p}
\oplus \Omega^0(\ti{\Sigma}_{(1)}, (u_1)^* T \ti{L})_{1,p,\alpha} \\ \to
\Omega^{0,1}(\ti{\Sigma}_{(2)},(u_2)^* TX)_{0,p} \oplus
\Omega^1(\ti{\Sigma}_{(1)}, (u_1)^* T \ti{L})_{0,p,\alpha} \oplus
\bigoplus_{j=1}^m T_{u(w^\pm_j)} \ti{L} \end{multline}
denote the map given by
$ (\zeta,\xi) \mapsto \cT_u(\xi)^{-1} \olp_J \exp_u \xi $
on the components $\ti{\Sigma}_{(2)}$ of dimension $2$, the gradient
operator
$$ (\zeta,\xi) \mapsto \cT_u(\xi)^{-1}( \ddt \exp_u(\xi) -
\grad(\ti{F}_{n,v})(\Sigma_{(1)}(\zeta),t,\exp_u(\xi))) \d t $$
on the components $\ti{\Sigma}_{(1)}$ of dimension 1, together with
the differences in \eqref{diffs} where the position of the nodes
$w_j^\pm(\zeta)$ depends on the deformation parameter $\zeta$; the
weighted Sobolev spaces are defined as in \eqref{weight} but on $\R$
rather than $\R \times [0,1]$.  The space $MW_{\Gamma,n}(L)$ is given
near $u: \Sigma \to X$ as the zero set of $\cF_u$, quotiented by the
action of $G \times \Aut(\Sigma)$.  The {\em linearized operator}
associated to $\cF_u$ is the operator formed by combining \eqref{du}
with the linearized gradient operator
$$ (\zeta,\xi) \mapsto (\nabla_t \xi - \nabla_\xi \grad(\ti{F}_{n,v}))
\d t $$
on the one-dimensional components, and finally for the third factor
the differential of the third component of $\cF_u$:
\begin{equation}  \label{third} 
 (\zeta,\xi) \mapsto \prod_{j=1}^m \xi( w_j^+(0)) - \xi( w_j^-(0)) +
  \ddt |_{t = 0} u(w_j^+(t\zeta_j^+)) - \ddt |_{t = 0}
  u(w_j^-(t\zeta_j^-)) .\end{equation}
The first two components depend only on $\xi \in \Omega^0(\Sigma,u^*
TX)$.  We say that $u$ is {\em regular} if $D_u$ is surjective.  Let
$MW_{\Gamma,n}^{\reg}(L)$ denote the locus of regular maps of type
$\Gamma$.

\begin{theorem}  \label{mwsmooth}
$MW_{\Gamma,n}^{\reg}(L)$ is a smooth manifold in a neighborhood of
  $[u]$ with tangent space at $[u]$ given by the quotient of
  $\ker(D_u)$ by the subspace generated by the Lie algebra $\g$ and
  the infinitesimal automorphisms $\aut(\Sigma)$ of $\Sigma$.
\end{theorem}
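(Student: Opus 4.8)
The plan is to run the implicit function theorem argument already used for Proposition~\ref{mfd} and Theorem~\ref{trajreg}, now applied to the mixed Cauchy--Riemann/gradient-flow operator $\cF_u$ attached to a holomorphic treed quasidisk, and then to take the quotient by the two group actions. First I would check that $\cF_u$ is a smooth map of Banach manifolds: on the two-dimensional components this is the standard verification using Sobolev multiplication and elliptic bootstrapping \cite[B.4.1]{ms:jh}; on the one-dimensional components it is the analogous statement for the perturbed gradient flow of $\ti F_{n,v}$ in the weighted Sobolev spaces defined as in \eqref{weight} but on $\R$, whose well-definedness uses the exponential convergence of $u$ along the semiinfinite edges to a critical point (a consequence of the Morse--Smale condition); and the node-matching terms \eqref{diffs} are manifestly smooth in $(\zeta,\xi)$. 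The linearized operator $D_u$ is Fredholm, being a direct sum of a Fredholm Cauchy--Riemann operator with totally real boundary conditions (as discussed after \eqref{du}), Fredholm linearized gradient operators on $\R$, and the finite-rank evaluation-difference map \eqref{third}. If $u$ is regular, so that $D_u$ is surjective, then $\ker D_u$ is finite dimensional and topologically complemented, and the implicit function theorem for Banach manifolds shows $\cF_u^{-1}(0)$ is a smooth finite-dimensional manifold with tangent space $\ker D_u$ at the point corresponding to $u$.

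Next I would pass to the quotient by $G\times\Aut(\Sigma)$ in two stages, exactly as in Theorem~\ref{trajreg}. Since $J\in\J(X)^G$ and $\ti F_{n,v}$ is the $G$-invariant lift of a function on $L$, the group $G$ acts on $\cF_u^{-1}(0)$ by pullback; because $G$ acts freely on $\ti L$ (hence on the boundary data and on the limits along the edges) and $G$ is compact, this action is free and proper, so $\cF_u^{-1}(0)/G$ is a smooth manifold. For the residual $\Aut(\Sigma)$-action it suffices to treat each component separately, since no automorphism permutes components (disk boundaries carry an ordering of their special points, segments an orientation towards the root). On a disk component, $\Aut$ is the group of translations and dilations fixing the distinguished boundary point, or of dilations fixing two such points; the argument of Theorem~\ref{trajreg} applies verbatim --- if $\phi^n u(z) = g^{-n} u(z)$ converges then freeness of $G$ on $\ti L$ forces $g = 1$, and a nontrivial translation or dilation fixing $u$ forces $u$ to be constant on that disk, while properness follows by the same limiting estimate. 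On a one-dimensional component, $\Aut$ is the translation group $\R$ of the parameter; by stability of the treed quasidisk, $u$ is non-constant along every segment of every broken line, so no nonzero translation fixes it, and properness again follows from exponential decay of the gradient trajectory near its critical endpoints. Hence $G\times\Aut(\Sigma)$ acts freely and properly on $\cF_u^{-1}(0)$, the quotient $MW_{\Gamma,n}^{\reg}(L)$ is a smooth manifold near $[u]$, and its tangent space is $\ker(D_u)$ modulo the subspace spanned by the images of $\g$ and $\aut(\Sigma)$.

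The step I expect to be the main obstacle is the bookkeeping for the $\Aut(\Sigma)$-action on the one-dimensional pieces: one must check that the stability condition for a holomorphic treed quasidisk --- $u$ non-constant on each segment of each broken line, and finite-type nodes attaching only to disks --- is precisely what rules out nontrivial stabilizers, and that the weighted Sobolev setup on the segments is compatible with the implicit function theorem, in the sense that every nearby solution is of the form $\exp_u\xi$ with $\xi$ in the weighted space (which uses the exponential decay of gradient trajectories near their limits, the analogue of Lemma~\ref{expdecay} for the Morse flow). The analytic inputs on the two-dimensional components and the Fredholm property are routine and can be quoted from \cite{ms:jh} and \cite{ab:ex}; the only new feature is combining them with the gradient-flow factors and the $G\times\Aut(\Sigma)$-quotient.
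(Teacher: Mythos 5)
Your argument is correct and follows essentially the same route as the paper: establish smoothness of $\cF_u$ and the Fredholm property of $D_u$ (the Cauchy--Riemann and gradient pieces plus the finite-dimensional deformation/matching terms), apply the implicit function theorem for Banach manifolds, and then quotient by $G\times\Aut(\Sigma)$ using freeness and properness exactly as in Theorem~\ref{trajreg}. The extra bookkeeping you supply for the $\Aut(\Sigma)$-action on the one-dimensional components via the stability condition is consistent with the paper's (more terse) proof.
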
 

\begin{proof}   An argument similar to that used in the proof of 
Theorem \ref{trajreg} shows that $\cF_u$ is a smooth map of Banach
manifolds.  The linearized Cauchy-Riemann resp. gradient operator is
elliptic so the first two components of $D_u$ define a Fredholm
operator.  Then $D_u$ is itself Fredholm, since $\Def_\Gamma(\Sigma)$
is finite dimensional.  The claim of the theorem then follows by the
implicit function theorem for Banach manifolds, and properness of the
action of $\Aut(\Sigma) \times G$, which holds for the same reasons as
in the proof of Theorem \ref{trajreg}.
\end{proof} 

\noindent Denote the component of $\ol{MW}_n(L)$ with homotopy class
$\mdeg \in \pi_2(X,\ti{L})$ by $\ol{MW}_n(L,\mdeg)$.  We suppose that
a system $B_{n,v}$ of tree-dependent metrics on $L$, agreeing with the
given metrics on the ends, has been fixed.

\begin{theorem} \label{regpert} 
Suppose that $F_{n,v(\mdeg)}^0 \in C^\infty(UW_{n,v(\mdeg)} \times L)$
is such that every treed disk with boundary in $\ti{L}$ is regular,
over an open neighborhood $V$ of the boundary $\partial
W_{n,v(\mdeg)}$.  Let $\rho \in C^\infty_c( UW_{n,v(\mdeg)} )$ be a
compactly supported function which on the complement of $V$ is
non-vanishing somewhere on each edge.  Then there exists a comeager
subset $\PP^{\reg}(L) \subset \PP(L) := C^\infty_b(UW_{n,v(\mdeg)}
\times L)$ with the property that if $F_{n,v(\mdeg)}^1 \in
\PP^{\reg}(L)$ and $F_{n,v(\mdeg)} = F_{n,v(\mdeg)}^0 + \rho
F_{n,v(\mdeg)}^1$ then every perturbed holomorphic treed quasidisk of
class $\mdeg$ is regular.
 \end{theorem}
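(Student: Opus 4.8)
The plan is a Sard--Smale transversality argument carried out one combinatorial type at a time, perturbing only the Morse function along the edges of the tree and leaving the disk components (and the almost complex structure) untouched. Fix a combinatorial type $\Gamma$ of stable treed quasidisk with $n+1$ semiinfinite edges and disk class $\mdeg$. Since the energy of any class-$\mdeg$ configuration is fixed, energy quantization for holomorphic disks (as used in Theorem~\ref{compactthm}) bounds the number of disk components, hence the number of tree vertices, so only finitely many such $\Gamma$ occur and it suffices to treat each one. For $l$ large, let $\PP^l(L)$ be the $C^l_b$-analogue of the space $\PP(L)$ of smooth perturbations, and form the universal moduli space
$$ MW^{\on{univ}}_{\Gamma,n}(L) \;=\; \bigl\{\, (F^1,u) \;:\; F^1 \in \PP^l(L),\ \ u \in MW_{\Gamma,n}(L,\mdeg) \text{ for the perturbation } F^0 + \rho F^1 \,\bigr\}, $$
which near $(F^1,u)$ is the zero set of the Fredholm section of a Banach bundle over $\PP^l(L) \times (\text{maps})$ obtained by adjoining the $\PP^l(L)$ factor to $\cF_u$, with a residual action of $G \times \Aut(\Sigma)$ as in Theorem~\ref{mwsmooth}. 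Its linearization at $(F^1,u)$ sends $(h,\xi)$ to $D_u\xi - \rho\,\grad\ti h(u)\,\d t$ on the one-dimensional (edge) components and to $D_u\xi$ on the disk components and the node-evaluation factors, where $\ti h$ is the $G$-invariant lift of $h$ to $\ti L$.

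The crux is surjectivity of this universal linearization. Because $X$ is aspherical and every holomorphic disk with boundary in $\ti L$ is regular --- by the Cho--Oh classification of Theorem~\ref{blaschke} for non-constant disks, and trivially for constant ones --- the cokernel of $D_u$ is concentrated on the edge components together with the node-evaluation factors, where $D_u$ is essentially the linearized operator of a broken gradient tree of $\ti F$ on $\ti L$ coupled, through the evaluation conditions, to the (regular) disk-component operators. Along each edge the directions tangent to the $G$-orbits decouple from the rest: by $G$-invariance of $\ti F$ and of the chosen metric, $D_u$ carries such a vertical section to (essentially) its $t$-derivative, and in the weighted Sobolev spaces this contributes exactly the infinitesimal $\g$-action to the kernel and nothing to the cokernel, so these directions pose no obstruction. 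What remains is surjectivity of the linearized Morse gradient-tree operator of $F$ on $L$ together with the matching at the nodes, which is the standard Morse--$A_\infty$ transversality achieved by a tree-dependent perturbation of $F$: a horizontal cokernel element solves the adjoint linear ODE along each edge, so it is nowhere zero along that edge unless identically zero there (ODE uniqueness), and it is orthogonal to every perturbation $\grad\ti h$ coming from a $G$-invariant $h$ on $L$; since $\rho$ is somewhere non-vanishing on each edge off $V$, and such $h$ realize an arbitrary horizontal perturbation of $\grad\ti F$ on the portion of the edge where $\rho \neq 0$, the cokernel element must vanish there and hence along the whole edge, and the node-matching relations then propagate the vanishing onto the regular disk components. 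Thus $MW^{\on{univ}}_{\Gamma,n}(L)$ is a $C^l$ Banach manifold whose tangent space is the kernel of the universal linearization modulo $\g \oplus \aut(\Sigma)$, as in Theorem~\ref{mwsmooth}.

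It remains to apply Sard--Smale and pass to smooth perturbations. The projection $MW^{\on{univ}}_{\Gamma,n}(L) \to \PP^l(L)$ is Fredholm of index $\dim MW_{\Gamma,n}(L,\mdeg)$; restricting to the locus of index at most $d$ (and taking $l > d$; since only moduli spaces of dimension at most one enter the $A_\infty$ constructions, $d \le 1$ suffices), the Sard--Smale theorem gives a comeager set of regular values of the projection, for which every class-$\mdeg$ treed quasidisk of type $\Gamma$ is regular, and intersecting over the finitely many relevant $\Gamma$ produces a comeager $\PP^{l,\reg}(L) \subset \PP^l(L)$. The passage from $C^l$ to $C^\infty$ is the exhaustion argument of Lemma~\ref{intreg}: for $C>0$ let $\PP^{C,\reg}(L) \subset \PP(L)$ be the smooth perturbations making every class-$\mdeg$ treed quasidisk $u$ with $\sup|\d u\,e^{\alpha|s|}| \le C$ regular; by the compactness Theorem~\ref{compactthm} together with exponential decay these sets are open and dense and exhaust $MW_n(L,\mdeg)$ as $C \to \infty$, so $\PP^{\reg}(L) = \bigcap_{C} \PP^{C,\reg}(L)$ is comeager. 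Finally, since $\rho$ vanishes on $V$, the perturbation $F^0 + \rho F^1$ coincides with $F^0$ over $V$ and hence remains regular there by hypothesis, so $F_{n,v(\mdeg)} = F^0 + \rho F^1$ with $F^1 \in \PP^{\reg}(L)$ has the stated property. The main obstacle is the surjectivity step of the second paragraph, and within it the careful decoupling of the $G$-orbit directions along the edges, since a $G$-invariant perturbation of $F$ on $L$ only moves $\grad\ti F$ horizontally on $\ti L$; the automatic regularity of the disk components --- the simplification afforded by $X$ having no holomorphic spheres and by the explicit Cho--Oh classification --- is what confines the remaining obstruction to the one-dimensional part and makes this argument go through.
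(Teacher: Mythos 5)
Your proposal follows essentially the same route as the paper's proof: a universal moduli space over $C^l_b$ perturbations of the edge equations, surjectivity of the universal linearization using regularity of the disk components and the freedom of domain-dependent perturbations on the portions of the edges where $\rho \neq 0$, Sard--Smale for the (bounded) index, and the same $C^l$-to-$C^\infty$ density/exhaustion argument as in Lemma \ref{intreg}. The only presentational difference is how the $\g$-orbit (vertical) directions at the nodes are absorbed---the paper makes this explicit as an induction over the tree beginning with the semi-infinite edges, while you decouple the vertical directions along each edge and propagate the vanishing of a cokernel element through the node-matching conditions---but these amount to the same mechanism.
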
 

\begin{proof}   The reader may wish to compare with the argument in 
Seidel's book \cite{se:bo} of the Fukaya \ainfty algebra in the exact
case, or in the construction of the Morse \ainfty algebra, see
Abouzaid \cite[Section 2.2]{ab:top}.  The {\em universal moduli space}
\begin{multline} 
MW^{\on{univ}}_n(L) = \{ (F_{n,v(\mdeg)}, u, (w_j^\pm)_{j=1}^m) \in
C^l_b(UW_{n,v(\mdeg)} \times L) \times \Map(\ti{\Sigma}, X,
\ti{L})_{1,p} \times \\  (\partial \ti{\Sigma}_{(2)} \cup
\ti{\Sigma}_{(1)})^{2m} \ |  \olp u = 0, \ \eqref{Fpert} \ ,
\ u(w_j^+) = u(w_j^-), j = 1,\ldots, m \} \end{multline}
is a smooth Banach manifold in a neighborhood of any tree disk, since
the linearized operators on the disk components are surjective by
definition, the linearized operators on the one-dimensional components
of $\ti{\Sigma}$ are surjective, and the evaluation maps at the nodes
are transversal.  To explain the last point, for each segment $S_i =
[\eps_i^-,\eps_i^+]$ we can find a perturbation $f_{n,v(\mdeg)}$ whose
image under the linearized operator on $S_i$ has arbitrary end points
in $T_{u(\eps_i)^\pm} L$, and hence the image of $T_{u(\eps_i^-)} L
\oplus T_{u(\eps_i^+)}L \oplus \g$ in $T_{u(\eps_i^-)} \ti{L} \oplus
T_{u(\eps_i^+)} \ti{L}$ is contained in the image of the linearized
operator; the first factor is omitted if the segment is a
semi-infinite edge of the tree.  Keeping in mind that each node is
contained in two components, the extra factor of $\g$ is also in the
image by an inductive argument, beginning with the semi-infinite edges
corresponding to the markings $z_1,\ldots,z_n$.  The claim now follows
from Sard-Smale applied to each component with fixed index for
sufficiently large $l$, and a Taubes-type argument to pass to smooth
perturbations.
 \end{proof}

It follows from Theorem \ref{regpert} that we can construct
perturbations $F_{n,v(\mdeg)}$ so that every treed disk is regular, by
induction on $n$.  We denote by
$$MW_n(L)_d = \{ [u] \in MW_n(L) | \Ind(D_u) - \dim(\on{aut}(\Sigma)
\oplus \g) = d  \} $$  
the component of dimension $d$.  


\begin{theorem} \label{mwbound} Suppose that $X$ is aspherical convex and 
compatible perturbations have been chosen so that every holomorphic
treed disk is regular.  Then $\ol{MW}_n(L)_1$ has the structure of a
compact one-manifold with boundary
\begin{equation} \label{boundary} \partial \ol{MW}_n(L)_1 = \cup_{k + i \leq n} 
 \left( {MW}_{n-i + 1}(L)_0 \times_{\ev_k \times \ev_0} {MW}_i(L)_0 \right)
.\end{equation}
\end{theorem}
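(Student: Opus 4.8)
The plan is to repeat, in the treed-disk setting, the compactness-and-gluing argument already used for Theorem~\ref{bound}. Compactness is the easy half: asphericity of $(X,\omega)$ rules out sphere bubbling, and $\ol{MW}_n(L)_1$ breaks into finitely many pieces of uniformly bounded energy (one for each of the finitely many classes $\mdeg$ of index one and nonnegative area), so Theorem~\ref{compactthm} gives that every sequence in $\ol{MW}_n(L)_1$ has a Gromov-convergent subsequence; once the local manifold structure below is in place this is genuine compactness.

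For the local structure there are three kinds of points. First, at $[u]$ in the top stratum (a single disk component carrying a nonconstant holomorphic map, with $n$ gradient segments attached and the asymptotic critical points generic), the hypothesis that every treed disk is regular together with Theorem~\ref{mwsmooth} gives a chart identifying a neighborhood with an open interval. Second, at $[u]$ whose domain has an internal edge of \emph{finite} length --- the typical case being a disk bubble joined at a zero-length node --- the corresponding stratum $MW_{\Gamma,n}^{\reg}(L)$ is a smooth $0$-manifold by Theorem~\ref{mwsmooth}, and the two-sided deformation parameter recorded in $\Def(\Sigma)$ (the factor $((-\infty,0)\cup\{0\}\cup(0,\infty))$ for a disk--disk node, interpolating between a short positive-length edge and the smoothed single disk, or $(0,\infty)$ for a finite internal edge that one lets shrink) together with the disk-and-segment gluing construction of \cite{mau:mult},\cite{ab:ex} shows that $[u]$ is an \emph{interior} point of the $1$-manifold. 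Third, at $[u]$ whose domain has exactly one internal edge of \emph{infinite} length, the deformation parameter is one-sided ($[0,\infty)$) and the gluing theorem produces a half-open interval; these are the boundary points.

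It remains to identify the boundary. Cutting a configuration with one infinite internal edge at that edge exhibits it as a pair: the edge is a broken gradient trajectory whose two ends both limit to a critical point $x\in\crit(F)$, so one obtains a treed disk with $n-i+1$ semiinfinite edges whose $k$-th input leaf limits to $x$, together with a treed disk with $i$ semiinfinite edges whose output leaf limits to $x$. Index additivity across the neck, with both pieces regular, forces each piece to lie in its $0$-dimensional stratum, so the set of such cut configurations is exactly $\bigcup_{k+i\le n} MW_{n-i+1}(L)_0\times_{\ev_k\times\ev_0}MW_i(L)_0$; the summand $i=0$, in which $MW_0(L)_0$ is the (finite) moduli of Maslov-two holomorphic disks, is the ``disk bubbles off'' contribution that becomes the curvature term $\mu_0$. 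Conversely, the gluing theorem --- the analog for broken gradient trajectories of the gluing used in Theorem~\ref{bound}, following Fukaya et al \cite{fooo}, Abouzaid \cite{ab:ex}, and the Morse \ainfty construction of Abouzaid \cite{ab:top} --- shows that each such pair is the limit of a \emph{unique} end of $\ol{MW}_n(L)_1$, so that gluing is a homeomorphism from each of these $0$-dimensional fiber products onto the set of boundary points near it, and no end is missed. Finally, index counting together with Theorem~\ref{mwsmooth} eliminates all other codimension-one phenomena: two simultaneous breakings, or a breaking together with a disk bubble, are codimension two, and sphere bubbling is excluded by asphericity.

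As with Theorem~\ref{bound}, the main obstacle is the gluing step. One must run the implicit-function-theorem gluing argument for a broken gradient trajectory with holomorphic disks attached, using weighted Sobolev norms on the neck whose constants are uniform in the gluing parameter, establishing the uniform quadratic estimates and --- the part that actually forces the precise boundary formula --- the surjectivity statement that every glued family accounts for exactly one end of the $1$-manifold and that the one-sided versus two-sided nature of the deformation parameters correctly separates boundary strata from interior strata. Given the exponential decay of gradient trajectories along the edges supplied by the Morse--Smale hypothesis, these are routine (if lengthy) extensions of the estimates in \cite{ab:ex},\cite{ab:top}.
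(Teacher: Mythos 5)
Your proposal follows essentially the same route as the paper's proof: compactness via Theorem \ref{compactthm}, smoothness of the strata via Theorem \ref{mwsmooth}, and the boundary structure via the gluing construction (approximate solution, uniformly bounded right inverse, uniform quadratic estimates as in \cite{ab:ex}, and surjectivity of the gluing map from the exponential decay Lemma \ref{expdecay}), exactly as in the proof of Theorem \ref{bound}. Your extra detail --- distinguishing two-sided deformation parameters (interior points, zero-length or finite edges) from the one-sided parameter at an infinite edge (boundary points), and the index count excluding other codimension-one degenerations --- is consistent with, and slightly more explicit than, what the paper records.
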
 

\begin{proof}   Compactness is Theorem \ref{compactthm}, 
while smoothness of the strata is Theorem \ref{mwsmooth}.  The local
structure at the boundary follows from a gluing construction: For any
collection $\delta$ of {\em gluing parameters}, that is, real numbers
attached to the edges of zero length and nodes of infinite type,
denote by $\Sigma^\delta$ the treed disk obtained by (in the case of
edges of zero length) removing half-disks in a neighborhood of
attaching nodes and gluing together via the map $z_{j,+} \sim
\delta_j/z_{-,j}$ where $z_{j,\pm}$ are the coordinates on the
half-disks near $w_j^\pm$.  Using cutoff functions one defines an {\em
  approximate treed holomorphic map} $ G^{\on{approx}}_\delta u:
\Sigma \to X $, see the proof of Theorem \ref{bound}.  The implicit
function theorem then produces a solution $(\zeta,\xi)$ to
$\cF_{G^{\on{approx}}_\delta(u)} (\zeta,\xi) =0 $ with $\zeta \in
\Def_\Gamma(\Sigma)$, and we set $G_\delta(u) =
\exp_{G^{\on{approx}}_\delta(u)}(\xi)$, using uniform error estimates,
uniformly bounded right inverse, and uniform quadratic estimates that
combine those used in the construction of the Morse and Fukaya \ainfty
categories given in Abouzaid \cite{ab:ex}.  These estimates involve
the gluing at clean intersection ends which we already described in
the proof of Theorem \ref{bound}.  The estimates for the Morse \ainfty
algebra are special cases of the corresponding estimates for those of
Floer trajectories, in the special case that the Floer trajectory
comes from a Morse trajectory.  Uniformity of the estimates and the
exponential decay Lemma \ref{expdecay} then implies that for every
sequence $u_t: \Sigma_t \to X$ converging to $u$, there exists a $T$
such that for $t > T$, $u_t$ is in the image of the gluing map.
\end{proof} 

\noindent In fact, for a regular system of perturbations,
$\ol{MW}_n(L)$ is a topological manifold with corners, with a
non-canonical $C^1$-structure, by using arguments as in \cite{deform}.
However, this requires a very complicated study of the gluing
construction and we will not need it.

Any relative spin structure on $L$ induces orientations on $MW_n(L)$
by combining the orientations on the moduli spaces of stable treed
disks $MW_n$ (if $n \ge 3$) in \eqref{orienttreed} with those on the
linearized operators $D_u$ described in \cite{orient}, see also
Katic-Milinkovic \cite{ka:coh}.  Let $CQF(L)$ denote the formal sum
over critical points (that is, the same vector space appearing in
Morse complex with $\Lambda$ coefficients)
$$ CQF(L) = \bigoplus_j CQF^j(L), \quad CQF^j(L) = \bigoplus _{x \in
  \crit(F)^j} \Lambda \bra{x} $$
where $\crit(F)^j \subset \crit(F)$ denotes the subset of critical
points of index $j$.  We denote by $1_L \in CQF^0(L)$ the class of the
critical point $x_{\min}$ of degree zero (or more generally, the sum
of critical points of degree zero, if the Morse function $F$ has more
than one.)  For any disk $u: (D,\partial D) \to (X,L)$, we denote by
$\Hol_L(u) = \Hol_L([\partial u]) \in \Lambda$ the holonomy of the
flat line bundle $\Lambda$ around the boundary $\partial u$ of $u$.
Denote by $ \ol{MW}_{n}(x_0,\ldots,x_n) \subset \ol{MW}_{n}(L)$ the
subset with markings mapping to $x_0,\ldots, x_n$.

\begin{theorem} \label{ainfty}  For any choice of perturbation system
so that all treed holomorphic disks are regular, the formula
$$ \mu_n(\bra{x_1},\ldots, \bra{x_n}) = \sum_{[u] \in
  \ol{MW}_{n}(x_0,\ldots,x_n)_0 } (-1)^{\heartsuit} \eps(u) q^{A(u)}
\Hol_L(u) \bra{x_0} $$
where 
$ \heartsuit = {\sum_{i=1}^n i|x_i|} $
define the structure of a $\Z_2$-graded strictly $A_\infty$-algebra on
$CQF(L)$.
\end{theorem}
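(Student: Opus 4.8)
The operations $\mu_n$ are well defined: by Gromov compactness (Theorem~\ref{compactthm}) and the regularity of Theorem~\ref{regpert} each space $\ol{MW}_n(x_0,\dots,x_n)_0$ is finite, and since $X$ is aspherical every contributing disk has $A(u)\ge 0$ with equality only for constant disks, so for each bound on $A(u)$ only finitely many homotopy classes contribute and the sum converges in $\Lambda_0\subset\Lambda$; the index formula for $D_u$ gives $|x_0|\equiv\sum_{i=1}^n|x_i|+n\pmod 2$, so $\mu_n$ has the correct degree $2-n$ modulo $2$. For the $A_\infty$ associativity relation \eqref{assoc}, fix $x_0,\dots,x_n\in\crit(F)$ with $\ol{MW}_n(x_0,\dots,x_n)_1$ non-empty. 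By Theorem~\ref{mwbound} it is a compact one-manifold with boundary, and decomposing each fiber product in \eqref{boundary} over the intermediate critical point $y$ through which a broken configuration passes, its boundary is identified with
\begin{multline*}
\bigsqcup_{i+j\le n}\ \bigsqcup_{y\in\crit(F)}\
\ol{MW}_{n-i+1}(x_0,\dots,x_j,y,x_{j+i+1},\dots,x_n)_0 \\
\times\ \ol{MW}_i(y,x_{j+1},\dots,x_{j+i})_0 .
\end{multline*}
Since the signed count of boundary points of a compact one-manifold is zero,
\[
\sum_{[u]\in\partial\ol{MW}_n(x_0,\dots,x_n)_1}\eps(u)\,q^{A(u)}\,\Hol_L(u)=0 ,
\]
and using the additivity $A(u)=A(u')+A(u'')$ and multiplicativity $\Hol_L(u)=\Hol_L(u')\Hol_L(u'')$ along a broken configuration $u=(u',u'')$, this is precisely \eqref{assoc} applied to $\bra{x_1},\dots,\bra{x_n}$, once the signs are matched.

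The sign matching goes as follows. The relative spin structure on $L$ orients each $MW^{\reg}_{\Gamma,n}(L)$ by combining the orientation \eqref{orienttreed} of the stable marked-disk strata with the orientation of the linearized operators $D_u$ as in \cite{orient}. With the Koszul sign $\heartsuit=\sum_{i=1}^n i|x_i|$ built into $\mu_n$, one checks that the two endpoints of every arc of $\ol{MW}_n(x_0,\dots,x_n)_1$ contribute to the relation with opposite sign after inserting the factor $(-1)^{\aleph}$, $\aleph=\sum_{k=1}^j(|a_k|-1)$, of \eqref{aleph}. This is identical to the sign computation for the Fukaya and Morse $A_\infty$ algebras, of which the treed construction is an interpolation: a gluing map at a node of infinite length joining two gradient segments contributes the same sign as the associahedron gluing \eqref{gluemap}, and a gluing map at a node of zero length joining two disk components contributes the sign of a disk node; the perturbations, being ordinary functions along the edges, do not affect orientations. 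I would carry this out as in \cite{orient}, compare \cite{fooo} and \cite{seidel:sub}.

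Finally, strict unitality. The unstable manifold of the index-zero critical point $x_{\min}$ is all of $L$, so the incidence condition at a leaf labelled $x_{\min}$ is vacuous and such a leaf may be treated as an unconstrained marked point. One chooses the perturbation system $F_{n,v(\mdeg)}$, compatibly with the gluing relations of Section~\ref{quasidisks}, so that in addition it is \emph{compatible with adjoining a unit leaf}, i.e.\ the perturbation agrees with the unperturbed gradient flow of $\ti F$ near such a leaf; this is consistent with the regularity of Theorem~\ref{regpert} because a moduli space carrying a unit leaf fibers over the corresponding one without it, with fiber the position of that leaf, so one is regular iff the other is. For $n\ne 2$ this fiber is positive-dimensional, so $\ol{MW}_n(\dots,x_{\min},\dots)_0$ is empty and $\mu_n(\dots,1_L,\dots)=0$, in particular $\mu_1(1_L)=0$. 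For $n=2$, the only rigid configurations in $\ol{MW}_2(x_0,x_{\min},x_2)_0$, resp.\ $\ol{MW}_2(x_0,x_2,x_{\min})_0$, are the constant disk carrying a single broken gradient trajectory from $x_2$ to $x_0$ with the unconstrained $x_{\min}$-edge attached; this forces $x_0=x_2$ and contributes $\pm1$, while positive-area disk classes are excluded by the degree count. Tracking $\heartsuit$ then gives $\mu_2(1_L,\bra{x})=(-1)^{|x|}\mu_2(\bra{x},1_L)=\bra{x}$, so $1_L=\bra{x_{\min}}$ is a strict unit.

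I expect the last step to be the main obstacle. As noted in Section~\ref{algebra}, the tree-dependent perturbation scheme does not respect the forgetful maps, so strict unitality cannot be read off from a divisor-type equation; it must be built into the inductive construction of the perturbations $F_{n,v(\mdeg)}$ from the outset, and one must verify that this choice remains compatible with all the gluing relations used to construct the moduli spaces. Granting that, the associativity relation is immediate from Theorems~\ref{mwbound}, \ref{mwsmooth} and \ref{regpert}, and the sign bookkeeping is routine given \cite{orient}.
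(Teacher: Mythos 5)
Your treatment of the $A_\infty$ relation itself is essentially the paper's: you quote Theorem \ref{mwbound} for the boundary of the one-dimensional moduli spaces, use additivity of the area and multiplicativity of the holonomy, and trace the signs back to the gluing sign \eqref{gluemap} and the orientation conventions of \cite{orient}. The only difference there is that the paper actually carries out the sign bookkeeping term by term (it is a nontrivial computation, not a citation), so deferring it as ``routine'' is a weakness of exposition rather than of substance.

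The genuine gap is in strict unitality. You propose to build it into the perturbation scheme by demanding ``compatibility with adjoining a unit leaf'' and then arguing that a moduli space carrying a unit leaf fibers over the corresponding one without it, with fiber the position of that leaf. That fibration is precisely a forgetful map for the tree-dependent perturbation data, and the paper explicitly points out (in the discussion of the divisor equation) that its scheme admits no such forgetful maps: the perturbation $F_{n,v}$ depends on the whole point of $\ol{UW}_{n,v}$, so adding or moving a leaf changes the perturbation on every edge, and one cannot arrange the perturbations to vanish, or be pulled back, on all segments that would be collapsed after forgetting a marking and stabilizing, consistently with the gluing compatibilities and with the transversality achieved in Theorem \ref{regpert}. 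You flag this as ``the main obstacle'' and then grant it, which is begging the question. The paper's proof needs no forgetful map: since the stable manifold of $x_{\min}$ is dense, the incidence condition at a unit leaf is generic and cuts down no dimension, so a configuration with a $\bra{x_{\min}}$ insertion can be rigid only if none of its parameters can be varied; this forces the unit leaf to attach to a constant disk with exactly three boundary special points whose other two edges have infinite length (otherwise moving the attaching point along a nonconstant disk, or varying a finite edge length, sweeps out a one-parameter family). These constant-triangle configurations give $\mu_2(\bra{x_{\min}},\bra{x}) = \bra{x}$ and $(-1)^{|x|}\mu_2(\bra{x},\bra{x_{\min}}) = \bra{x}$, and for $n \neq 2$ there are no rigid configurations with a unit insertion, so those products vanish. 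Replacing your fibration argument by this rigidity argument closes the gap without modifying the perturbation scheme at all.
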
 

\begin{proof} The \ainfty axiom up to signs follows from Theorem \ref{mwbound},
the additivity of the area and the multiplicativity of the holonomies.
The signs are painfully justified as follows.  The determinant line
$\det(D_u)$ of the linearized operator for the holomorphic disks
admits a canonical orientation, by deforming to a connect sum of
holomorphic spheres and constant disks as in \cite{fooo},
\cite{orient}.  Consider the gluing map
$$MW_{i}(y,x_{j+1},\ldots, x_{j+i})_0 \times
MW_{n-i+1}(x_0,x_1,\ldots,y,\ldots,x_n)_0 
\to MW_n(x_0,\ldots,x_n)_1 .$$
$$TMW_{i}(y,x_{j+1},\ldots, x_{j+i}) \to TMW_{i} \oplus 
TL \oplus
T_yL^+ \oplus T_{x_{j+1}}L^- \oplus \ldots \oplus T_{x_{j+i}}L^- $$
plus a complex vector space.  Here $T_{x_i}L^-$ is the negative part
of the tangent space $T_{x_i}L$ with respect to the Hessian of $F$.
Similarly the orientation on
the second factor 
 is determined by an
isomorphism
$$ TWM_{n-i+1}(x_0,x_1,\ldots,y,\ldots,x_n) \to TMW_{n-i+1} \oplus 
TL
\oplus TL_{x_0}^+ \oplus TL_{x_1}^- \oplus \ldots \oplus TL_{x_n}^-
$$
plus a complex vector space.  A sign arises from a switch of
$TMW_{n-i+1}$ with $TL \oplus \ldots \oplus
T_{x_{j+i}}L^-$, that is, a sign of $(-1)$ to the power $i(n-i)$,
plus the sign arising from a switch of $TL \oplus T_yL^+ \oplus
T_{x_{j+1}}L^- \oplus \ldots \oplus T_{x_{j+i}}L^- $ with $TL
\oplus TL_{x_0}^+ \oplus TL_{x_1}^- \oplus \ldots \oplus TL_{x_j}^-$,
giving $(-1)$ to the power $i \left(|y| + \sum_{k=1 }^j |x_j|\right)
$, for a total of $(-1)$ to the power $ i \left(\sum_{k=j+i+1 }^n
|x_k|\right) $.  The gluing map $MW_i \times MW_{n-i+1} \to MW_n$ is
the same as that of $M_i \times M_{n-i+1} \to M_n$ by construction,
which has sign $(-1)$ to the power $ij + 1 - j - i $, see
\eqref{gluemap}.  
Comparing the contributions from $(-1)^\heartsuit$
with an overall sign of $(-1)^\square$, where $\square = \sum_{k=1}^n
k |x_k|$, contributes $(-1)$ to the power
\begin{multline} \sum_{k=1}^n k | x_k|  + 
\sum_{k = 1}^i k |x_{j+k}| + \sum_{k=1}^j 
k |x_k| + \sum_{k=1}^{n-i-j} (j + k ) |x_{j+i+k}| + (j+1) |y| \\
\equiv_2  j(|y| +
i) + (i-1) (\sum_{k=1}^{n-i-j} |x_{j+i+k}|) + (j+1) |y| \\
\equiv_2 |y| + ji + (i-1)( \sum_{k=1}^{n-i-j} |x_{j+i+k}|)
 \end{multline}
while the sign in the \ainfty axiom contributes $\sum_{k=1}^j (|x_k| -
1)$.  Combining the signs one obtains in total
\begin{multline} 
 i \left( \sum_{k=i+j+1 }^n |x_k|\right)
 + ij + 1 - j - i +
 ji 
  + (i-1) \left(\sum_{k=i+j+1}^{n} |x_{k}| \right) + |y| +  
\sum_{k=1}^j (|x_k| - 1)
 \\
= \sum_{k=1}^j (|x_k| - 1) + |y|  
 + 1 + \sum_{k=i+j+1 }^n |x_k| 
\equiv_2 1 + \sum_{k=1}^n |x_k|
  \end{multline}
which is independent of $i,j$.   The \ainfty-associativity relation
follows. 
 \end{proof} 

If the line bundle is defined over $\Lambda_0$ resp. is trivial then
the \ainfty algebra is defined over $\Lambda_0$ resp. $\Z$.  The
combined gradient trees/holomorphic disks have appeared in many
places, for example the Piunikhin-Salamon-Schwarz \cite{pss},
Biran-Cornea \cite{bc:ql}, Albers \cite{alb:lag}, and Seidel
\cite{seidel:genustwo}.  See also the homological perturbation lemma
in Kontsevich-Soibelman \cite{ks:tf}, which involves a sum over trees.

\subsection{Quasimap Floer cohomology} 

We wish to achieve units in the quasimap Fukaya algebra, in a suitable
weak sense.  Let $1_L = \bra{x_{\min}}$ denote the generator
corresponding to the critical point of index zero $x_{\min}$ of $F:L
\to \R$, assuming it is unique.  We denote by ${\M\W}_\Gamma(L)$ the
moduli space of stable treed holomorphic disks of combinatorial type
$\Gamma$ and by $\PP^*_\Gamma(L)$ the space of perturbation data for
which every element of ${\M\W}_\Gamma(L)$ is regular.

\begin{proposition}   Suppose that $J$ is such that every 
non-trivial stable holomorphic disk in $L$ is regular and has positive
Maslov index.  Then for sufficiently small perturbation data,
\label{mu0}
$$ \mu_0(1) = \sum_{ I(u) = 2} q^{A(u)} \Hol_L(u) 1_L $$
where the sum is over equivalence classes of Maslov index two
holomorphic disks with boundary in $\ti{L}$ such that the boundary of
$u$ maps to a generic point under the projection $X \to X \qu G$.
Furthermore, the element $1_L$ satisfies
$$(-1)^{|c|} \mu_2(1_L,c) = \mu_2(c,1_L) = c, \quad \forall c\in
CF(L). $$
\end{proposition}  

\begin{proof}   
The assertion on $\mu_0(1)$ follows from a dimension count and the
assumption of no disks of non-positive Maslov index.  By the
transversality assumption $\ol{M}_1(x_{\min})$ consists of a gradient
trajectory to $x_{\min}$ attached to a holomorphic disk of index two.
Since $x_{\min}$ is index zero, the gradient trajectory must be length
zero, so that the disk actually passes through $x_{\min}$; the
perturbation has the effect of replacing this with a generic point.

To prove the second assertion, let $[u:C \to X] \in \M\W_\Gamma(L)$ be
an isolated element with limit along an incoming semi-infinite edge
mapping to the maximum $x_0 \in \crit(f)$, the other incoming edge
mapping to $x_1 \in \crit(f)$ and the outgoing edge mapping to $x_2
\in \crit(f)$.  The domain $C$ of $u$ is obtained by replacing the
vertices $v$ of a tree $T$ with disks $D(v)$.  By assumption all
non-constant stable disks are assumed regular and positive Maslov
index.  By transversality, all edges in the domain of $u$ have finite
length.  We claim that $C$ consists of a single disk $D(v)$ on which
$u$ is constant, and three semi-infinite edges attached to the
boundary.  We first show that there are no disks $D(v)$ in the domain
of $u$ with a single special point.  Indeed if there existed such a
disk $D(v) \subset C$ then by varying the length of the connecting
edge $d$ to $D(v)$ one would obtain a one-dimensional family of
configurations as follows.  Let $t(d), h(d) \in C$ denote the
endpoints of $d$.  We have $t(d) = \varphi_{\ell(d)} h(d) $ where
$\ell(d)$ is the length of $d$ and $\phi_{\ell(d)}$ is the time
$\ell(d)$ perturbed gradient flow.  By transversality the moduli space
$\M_1^2(L)$ of holomorphic disks with boundary on $L$ and one marked
point on the boundary of Maslov index $2$ is smooth and evaluation on
the boundary is a local diffeomorphism.  Replacing $\ell(d)$ by a
small perturbation and adjusting $t(d)$ produces a one-dimensional
family in $\M\W_\Gamma(L)$ containing $[u]$ which is a contradiction.
Thus disks with only one special point are involved only in the
definition of $\mu_0$, and not in the composition maps $\mu_n, n > 0$.

Secondly, any disk $D(v)$ meeting the semi-infinite edge labelled
$x_{\min}$ must be constant.  Let $z \in D(v)$ be the point on the boundary
connecting to the semi-infinite edge labelled $x_{\min}$.  We have $\lim_{t
  \to -\infty} \varphi_t(u(z)) =x_{\min}$ which is an open condition.  If
$u$ is non-constant on $D(v)$ one could vary $z$ to obtain a
one-parameter family of elements of $\M\W_\Gamma(L)$ containing $[u]$,
which is a contradiction.

Thirdly, the domain has a single disk component.  Let $D(v)$ denote
the first disk component attached to the semi-infinite edge labelled
$x_{\min}$.  By the previous paragraph, $u$ is constant on $D(v)$ with
value $u(D(v))$. After replacing $D(v)$ with a point, one obtains a
perturbed treed holomorphic disk with only one incoming edge with
$\Ind(D_u) - \dim(\aut(C)) = -1 $ 
where $D_u$ is the associated Fredholm operator and $\aut(C)$ the
space of infinitesimal automorphisms of the domain.  By assumption we
have chosen $(f,g)$ so that the moduli space of holomorphic treed
disks with a single incoming edge is regular, that is, there are no
non-constant holomorphic treed disks $u:C \to X$ with $\Ind(D_u) -
\dim(\aut(C)) = -1$.  The same is true for perturbations in a
neighborhood of $(f,g)$, in particular, for $(f_\Gamma,g_\Gamma)$ in a
neighborhood of $(f,g)$, there are no such configurations.  Hence all
disks in $C$ are constant.  Since each constant disk between the
semi-infinite edges labelled $x_1$ and $x_2$ corresponds to a
semi-infinite edge different from the one labelled $x_1$ and $x_2$,
and there are only three semi-infinite edges, there must be a single
disk component.

Collapsing the disk $D(v)$ and forgetting the incoming trajectory from
$x_{\min}$, one obtains a perturbed gradient trajectory $u'$ joining $x_1$
to $x_2$ with a distinguished point $z \in C$.  For any critical
points $x_1,x_2$ of equal index, there is a parametrized Morse
trajectory for $(f,g)$ of index zero connecting $x_1,x_2$ if and only
if $x_1,x_2$ are equal.  The same holds after any sufficiently small
perturbation $f_\Gamma,g_\Gamma$.  It follows that $\M\W_\Gamma(L)$ is
a point if $x_1=x_2$, and is empty otherwise.  The orientation is
induced by the orientation on the moduli space of parametrized
trajectories from $x_1$ to $x_2$, which is positive.  Thus
$$ (-1)^{|x_1|} \mu_2(\bra{x_{\min}},\bra{x_1}) =
\mu_2(\bra{x_1},\bra{x_{\min}}) = \bra{x_1}, \quad \forall x_1 \in \crit(f)
.$$
The statement for Floer cochains follows by taking linear
combinations.
\end{proof} 

\begin{remark} 
If one wishes an \ainfty algebra with strict units may be obtained by
the method of Fukaya-Oh-Ohta-Ono \cite{fooo}, see also Ganatra
\cite{ganatra:duality} and Sheridan \cite{sh:hmsfano} as we now
explain.  Let $A$ be an \ainfty algebra over $\Lambda$.  A {\em
  homotopy unit} for $A$ is an \ainfty structure on the
$\Lambda$-module $A^+$ generated by $A$ and additional generators
$1_L^-,1_L^+$, that is,
$ A^+ = A \oplus \Lambda 1_L^-[1] \oplus \Lambda 1_L^+ $
so that the \ainfty structure maps $\mu_n, n \ge 1$ coincide with the
previous \ainfty structure on $A$ and in addition the relations on the
maps $\mu_n, n \ge 0$ hold as follows:
\begin{eqnarray*} 
\mu_1(1_L^-) &=& 1_L^+ - 1_L \\
\mu_1(1_L^+) &=& 0 \\
(-1)^{|a|}
 \mu_2(1_L^+,a) &=& \mu_2(a,1_L^+) = a \\ 
\mu_n(\ldots, 1_L^+,\ldots ) &=& 0 \ \text{for} \ 
n \ge 3 \end{eqnarray*}
for all homogeneous elements $a \in A$.  In particular $A^+$ is a
strictly unital \ainfty category with strict unit $1_L^+$.  

A homotopy unit on the quasimap Fukaya \ainfty algebra may be obtained
following Fukaya-Oh-Ohta-Ono \cite[Remark 10.3]{fooo}, Ganatra
\cite{ganatra:duality} and Sheridan \cite{sh:hmsfano} by homotoping
the perturbation data for insertions involving the unit to
perturbation data for which one has forgetful maps.  Since the
composition maps involving the generators of $CF(L)$ and the strict
unit only are already determined, the problem is to define the
composition maps involving at least one input $1_L^-$.  These are
defined by counts of {\em weighted treed disks}, by which mean a treed
disks equipped with a subset $I \subset \{ 1,\ldots, n \}$ of the
semi-infinite edges corresponding to the inputs marked $1_L^-$, and for
each $i \in I$ a {\em weight} $\rho_i \in [0,1]$.  A weighted treed
disk is stable if the underlying treed disk is stable.  A collection
$(f_\Gamma^+, g_\Gamma^+)$ of perturbation data for weighted treed
disks is {\em coherent} if
\begin{enumerate} 
\item in a neighborhood of any boundary stratum $ \T_{\Gamma'} \subset
  \T_\Gamma$, $f_\Gamma^+,g_\Gamma^+$ are pulled back from the
  boundary perturbation data $f_{\Gamma'}^+,g_{\Gamma'}^+$ under the
  gluing construction;
\item whenever a weight parameter $\rho_i$ is equal to $0$,
  $f_{\Gamma}^+, g_{\Gamma}^+$ are pulled back under the forgetful map
  forgetting the $i$-th semi-infinite edge and stabilizing; and
\item whenever a weight parameter $\rho_i$ is equal to $1$, then the
  perturbation data $f_{\Gamma}^+,g_{\Gamma}^+$ is equal to the
  perturbation data $f_\Gamma,g_\Gamma$ used to define the composition
  maps for $CF(L)$.
\end{enumerate} 
Given coherent collections of perturbation data, counting isolated
points in the moduli spaces of stable weighted treed holomorphic disks
defines the higher composition maps for inputs involving $1_L^-$.  In
particular, the \ainfty relations involving $\mu_1(1_L^-) = 1_L^+ - 1_L$
hold for the following reason: Weighted disks with some $\rho_i \in \{
0, 1 \}$ are boundary components of the moduli space of weighted disks
for which the weight $\rho_i$ is allowed to vary freely. When $\rho_i
= 0$, the perturbation data is pulled back under the forgetful map.
So the higher compositions vanish after insertion of $1_L^+$ while for
$\rho_i = 1$ one obtains the corresponding higher composition map with
insertion of $1_L$.  The \ainfty relations are proved in the same way as
before and yield a strictly unital \ainfty algebra $CF^+(L)$.  This ends
the remark.
\end{remark}

The \ainfty algebra $CQF(L)$ might be called the {\em gauged} or {\em
  equivariant Fukaya} algebra of $\ti{L}$.  However, the definition of
the structure maps do not involve any connection.  Note that the
vortex equations are not conformally invariant, that is, they depend
on a choice of area form.  In the next section we investigate the
\ainfty bimodule associated to a pair of Lagrangians given by counting
holomorphic strips.  Since the automorphism group of a strip {\em
  does} preserve the standard area form, one should expect an \ainfty
bimodule defined using the vortex equations.  The rigorous
construction of such a bimodule, however, has not been carried out.

Because of Proposition \ref{mu0} we have $\mu_0(1) = w1_L$ for some $w
\in \Lambda_0$ and the square of the first structure map $\mu_1:
CQF(L) \to CQF(L)$ satisfies
$$ (\mu_1)^2(a) = - \mu_2(a,w 1_L) + (-1)^{|a|} \mu_2(w 1_L,a) = w( -
(-1)^{|a|} a + (-1)^{|a|} a) = 0 .$$
It follows that the {\em quasimap Floer cohomology} 
$$HQF(L) := H(\mu_1) $$ 
is well-defined.

The higher composition maps have an obvious expansion by energy, such
that the leading order term is the higher composition map in the
\ainfty Morse algebra defined by the perturbations $F_{n,v}$.  In
particular the leading order term in $\mu_1$ is the Morse-Smale-Witten
operator counting isolated gradient flow lines, and one has a
Morse-to-Floer spectral sequence (compare e.g. Oh \cite{oh:fc}, Fukaya
et al \cite{fooo}, and especially  Buhovsky \cite{buh:mul}, who uses the
same combined Morse-Fukaya framework.)

\begin{proposition}  \label{spectral} 
Suppose that perturbations have been chosen above so that every
holomorphic treed quasidisks are regular.  There is a spectral
sequence $(E^j,\mu_1^j)_{j \ge 1}$ converging to $HQF(L,\Lambda_0)$
with first page $E^1 = HM(L,\Lambda_0)$ the Morse cohomology.
\end{proposition}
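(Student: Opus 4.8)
The plan is to produce this spectral sequence as the one associated to the energy filtration --- equivalently, the filtration by powers of the Novikov variable $q$ --- on the differential complex $(CQF(L),\mu_1)$, identifying its first page by means of the observation recorded just above the statement that the leading-order term of $\mu_1$ is the Morse-Smale-Witten operator. Recall first that $\mu_1^2=0$ by Proposition \ref{mu0} together with the strict unitality of Theorem \ref{ainfty}, so that $HQF(L,\Lambda_0)=H(\mu_1)$ is defined. Each matrix entry of $\mu_1$ carrying $\bra{x_1}$ to $\bra{x_0}$ is a finite sum $\sum_{[u]\in\ol{MW}_1(x_0,x_1)_0}\pm\,q^{A(u)}\Hol_L(u)$; finiteness of these zero-dimensional moduli spaces, and the fact that only finitely many homotopy classes occur in these entries below any given area bound, follow from the compactness Theorem \ref{compactthm} together with energy quantization for holomorphic disks \cite[Proposition 4.1.4]{ms:jh}. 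Hence the set $\mathcal{E}\subset\R_{\ge 0}$ of exponents of $q$ that appear in entries of $\mu_1$ is discrete, contains $0$, and is unbounded; enumerating $\mathcal{E}=\{0=\lambda_0<\lambda_1<\lambda_2<\dots\}$ one may write $\mu_1=\sum_{k\ge 0}q^{\lambda_k}\delta_k$, with each $\delta_k$ a $\C$-linear endomorphism of the Morse cochain group $\bigoplus_{x\in\crit(F)}\C\bra{x}$, extended $\Lambda_0$-linearly. The term $\delta_0$ counts area-zero, index-one configurations; since a constant disk bearing only the two boundary markings of a $\mu_1$-configuration is unstable, no disk components occur, so $\delta_0$ is the Morse-Smale-Witten differential and computes $HM(L,\C)$.

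Next, filtering $CQF(L)$ by the energy levels $\{\lambda_p\}$ makes $(CQF(L),\mu_1)$ a gapped filtered complex over $\Lambda_0$ in the sense of Fukaya et al \cite{fooo}, and I would take the associated spectral sequence. On the associated graded the only surviving part of $\mu_1$ is the valuation-preserving term $q^{\lambda_0}\delta_0=\delta_0$, so the $E^0$-differential is $\delta_0\otimes\on{id}_{\Lambda_0}$ and $E^1=H(\delta_0)\otimes_\C\Lambda_0=HM(L,\C)\otimes_\C\Lambda_0=HM(L,\Lambda_0)$, which is the asserted identification. The higher differentials $\mu_1^j$, $j\ge 1$, are produced by the usual zig-zag recipe from the non-leading terms $\delta_1,\delta_2,\dots$ and strictly raise the $q$-valuation; gappedness guarantees that for every bound $N$ only finitely many of them raise valuation by at most $N$. (Note that $E^1$ carries the $\Z$-grading of Morse cohomology, while the $\mu_1^j$ respect only the $\Z_2$-grading of $CQF(L)$.)

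Convergence then follows because $CQF(L)$ has finite rank over $\Lambda_0$ and $\Lambda_0$ is complete for the $q$-adic valuation: in each $q$-degree only finitely many pages contribute, so the filtration on $H(\mu_1)$ stabilizes and, with no $\lim^1$ obstruction by completeness, $E^\infty$ is the associated graded of $HQF(L,\Lambda_0)$ for the induced filtration --- which is the claimed convergence. The main obstacle is not conceptual --- the input ``lowest energy $=$ Morse theory'' is already in hand --- but the care required to check that the filtration is genuinely gapped (discreteness of $\mathcal{E}$, resting on Gromov compactness, energy quantization, and finiteness of the zero-dimensional moduli) and that the Novikov spectral sequence converges over the real-exponent ring $\Lambda_0$, whose filtration is neither bounded nor Noetherian; I would carry this out following the standard Novikov spectral sequence arguments of Fukaya et al \cite{fooo}, Oh \cite{oh:fc}, and Buhovsky \cite{buh:mul}.
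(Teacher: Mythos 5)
Your proposal takes essentially the same route as the paper's proof: identify the zero-energy part of $\mu_1$ with the Morse--Smale--Witten differential (constant disks cannot contribute to a two-leaf, i.e.\ $\mu_1$, configuration) and then take the spectral sequence of the filtration by $q$-valuation, the paper filtering by multiples of a sufficiently small $\delta>0$ where you filter by the discrete set of energies actually occurring --- a cosmetic difference. Your additional care about discreteness of the energy spectrum and convergence over $\Lambda_0$ only makes explicit what the paper leaves implicit.
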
  

\begin{proof}  The holomorphic treed disks with zero energy
have all holomorphic disks constant.  These can be isolated only if
each disk has exactly three markings, in which case they can be
collapsed to stable gradient trees.  If any disk occurs then the
resulting tree has at least three semiinfinite edges.  Hence no disks
occur in the zero energy contributions to $\mu_1$, which are therefore
those of the Morse differential.  Filtering the chain complex by
energy intervals
$$ CQF(L,\Lambda_0) = \bigcup_{n \in \Z_{\ge 0}} CQF(L)_{\geq n\delta},
\quad CQF(L)_{\geq n \delta} = \bigoplus_{x \in \crit(F)} q^{n\delta}
\Lambda_0 \bra{x}
 $$
for sufficiently small $\delta >0 $ and taking the associated spectral
sequence gives the result.
\end{proof} 

Note that here we have {\em assumed} that all stable holomorphic
quasidisks are regular, and explicitly constructed perturbations
achieving transversality inductively using moduli spaces of
not-necessarily-stable metric ribbon trees.

\subsection{The divisor equation} 

A feature of the combined Morse-Fukaya moduli spaces is that there is
{\em not} a forgetful morphism of the form $ f_i: \ol{MW}_{n}(L) \to
\ol{MW}_{n-1}(L) $ forgetting the $i$-th marking.  If there was such a
morphism, then one would have an analog of the divisor equation in
Gromov-Witten theory, as explained by Cho \cite{cho:products}: if $b
\in CQF(L)$ represents a cycle of codimension one in Morse cohomology
then
$$\sum_{i=0}^{n} \mu_{n+1}(a_1,\ldots,a_i,b,a_{i+1},\ldots,a_n;\mdeg)
\stackrel{?}{=}  \lan [\partial \mdeg],[b] \ran \mu_n(a_1,\ldots,a_n;\mdeg) $$
where $\mu_n( \cdots; \mdeg)$ is the contribution to $\mu_n(\cdot)$
from disks of class $\mdeg \in \pi_2(X,\ti{L})$, and $[\partial \mdeg]
\in H_1(L)$ is the boundary homology class of $\mdeg$.  Indeed, given
a holomorphic disk with the first $n$ markings mapping to
$a_1,\ldots,a_n$ and class $\mdeg$, the boundary of the disk traces
out a circle of class $[\partial \mdeg] \in H_1(L)$, and so there are
$ \lan [\partial \mdeg], [b] \ran$ possibilities for the placement of
an additional marked point on the disk so that it lies on the cycle
given by $b$. 

Unfortunately the equations on the segments are {\em perturbed}
gradient flows, and it is not possible to choose the perturbation so
that it vanishes on every segment that might be collapsed after
forgetting a marked point and stabilization.  So there is no forgetful
map, for the perturbation system constructed above.  However, we have
the following special case of the divisor equation:

\begin{proposition}   \label{divisor}  Suppose that all non-trivial 
stable holomorphic treed disks are regular and have positive Maslov
index.  For any Morse cocycle $b \in CQF^1(L)$ and $\mdeg \in
\pi_2(X,\ti{L})$ with $I(\mdeg)= 2$, we have $ \mu_{1}(b;\mdeg) = \lan
   [b],[\partial \mdeg] \ran \mu_0(1;\mdeg) .$
\end{proposition}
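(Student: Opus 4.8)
The plan is to identify the $\mdeg$-part of the moduli space underlying $\mu_1$ and to exhibit it as the moduli space underlying $\mu_0(1;\mdeg)$ decorated by one extra boundary marked point. Write $b=\sum_y c_y\bra{y}$ with $y\in\crit(F)$ of index $I(y)=1$; by linearity it suffices to treat $b=\bra{y}$. First I would analyze $\ol{MW}_1(x_0,y;\mdeg)_0$ for $I(\mdeg)=2$. Since $X$ is aspherical there is no sphere bubbling, and a Maslov-index-two class cannot split off a non-constant disk bubble, so every element has a single non-constant $J$-holomorphic disk component $u\colon(D,\partial D)\to(X,\ti L)$ of Maslov index two, and its only one-dimensional pieces are the two semi-infinite gradient edges at the output marking $z_0$ and the input marking $z_1$. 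Counting dimensions as in the proof of Proposition \ref{mu0} — the disk with two boundary markings modulo reparametrization has virtual dimension $\dim L+1$, the outgoing edge to $x_0$ imposes codimension $\dim L-I(x_0)$, and the incoming edge from $y$ imposes codimension $I(y)=1$ — forces $I(x_0)=0$, i.e. $x_0=x_{\min}$, and forces the outgoing edge to be trivial, so $u(z_0)=x_{\min}$ and the boundary loop $\partial u$, pushed to $L$, passes through the generic point $x_{\min}$. In particular $\mu_1(b;\mdeg)\in\Lambda\,1_L$, which is comparable with the multiple of $1_L$ that is $\mu_0(1;\mdeg)$.

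Next I would construct the comparison. Forgetting the incoming edge and the marking $z_1$ sends such a configuration to a Maslov-index-two disk with boundary through $x_{\min}$, that is, to an element of the moduli space defining $\mu_0(1;\mdeg)$ in Proposition \ref{mu0}. This is exactly the point at which the absence of a forgetful morphism $\ol{MW}_1(L)\to\ol{MW}_0(L)$ fails to cause trouble: the only edges present are semi-infinite, hence cannot be collapsed, and the disk still carries $z_0$ and so remains stable, so no perturbation datum on a vanishing segment is involved. The fibre of this map over a $\mu_0$-disk $u'$ is the set of positions of $z_1$ on $\partial D$ with $u'(z_1)\in W^+(y)$, the stable manifold of $y$ (of codimension $I(y)=1$); by the Morse--Smale condition and the standing regularity hypothesis each such $z_1$ determines its incoming perturbed gradient edge uniquely, and the compactified $\ol{W^+(y)}$ is the geometric representative of $PD\bra{y}$ in the sense of Morse homology.

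Finally I would compute the fibre. The loop $\partial u'$ represents $[\partial\mdeg]\in H_1(L)$, so for fixed $u'$ the signed count of admissible $z_1$ equals the signed intersection number of $\partial u'$ with $\ol{W^+(y)}$, which depends only on $[\partial\mdeg]$ and on $\bra{y}$. Summing over $u'$ reproduces the count in Proposition \ref{mu0} defining $\mu_0(1;\mdeg)$, and summing linearly over $y$ against the $c_y$ — at which stage $db=0$ makes $\sum_y c_y\ol{W^+(y)}$ a genuine cycle representing $PD[b]$, so the partial intersection numbers assemble into $\langle[b],[\partial\mdeg]\rangle$ — yields $\mu_1(b;\mdeg)=\langle[b],[\partial\mdeg]\rangle\,\mu_0(1;\mdeg)$ up to sign. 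The step requiring the most care is the orientation bookkeeping: one must check that the orientation on $\ol{MW}_1(x_{\min},y;\mdeg)_0$ induced by the relative spin structure (Proposition \ref{orient}) agrees with the product of the orientation on the $\mu_0$ moduli space and the local intersection signs at $z_1$, which is the same kind of painful sign comparison as in the proof of Theorem \ref{ainfty}. I also expect to need the routine Morse-theoretic facts, persisting under the chosen perturbations, that $W^+(y)$ is cut out transversally and that $\ol{W^+(y)}$ represents $PD\bra{y}$. The genuinely conceptual point, however, is the elementary observation that for $I(\mdeg)=2$ the configuration carries no finite internal edge, so the obstruction that destroys the forgetful map in general — the need to collapse perturbed finite edges — simply does not occur here.
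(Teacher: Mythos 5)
Your argument is correct and is essentially the paper's own proof: both reduce the count to a single Maslov-index-two disk whose output edge is constant at $x_{\min}$, and identify the contributions to $\mu_1(b;\mdeg)$ with the signed intersections of the boundary loop $\partial u$ with the codimension-one Morse cycle representing $b$, which sum to $\lan [b],[\partial \mdeg]\ran$ times the $\mu_0(1;\mdeg)$ count, with the same orientation bookkeeping as in Theorem \ref{ainfty} as the delicate step. The only clause you gloss is the exclusion of configurations whose underlying tree has more than one vertex (constant disk components joined by broken edges), which the paper dispatches by observing that the Morse product of the degree-one class $b$ with a point class vanishes for degree reasons; your dimension count covers this, but it should be stated, since asphericity and the impossibility of splitting off a non-constant bubble do not by themselves rule out such strata.
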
 

\begin{proof}   The only contributing configuration is that of 
a single disk of index two and two gradient trajectories: all
configurations with disks whose degrees sum to more than two vanish
for reasons of degree, while the underlying tree has only one vertex
since the Morse product of $b$ and a point vanishes, so there are no
contributing configurations with constant disks.  Suppose that $b =
\sum_{x \in \crit(F)^1} n_x \bra{x}$ for some coefficients $n_x$.  The
contributions to $\mu_1(b;\mdeg)$ correspond to choices for the
position of the point on the boundary of the disk component connecting
via a gradient trajectory to some $x$ appearing in $b$, that is, with
$n_x \neq 0$, since by assumption the moduli spaces are regular.

To check that the sign is the expected one, note that for any $x_1$
such that $W_{x_1}^-$ meets $u(\partial D)$, the orientation on
$\ol{MW}_1(x_0,x_1)$ is $\pm 1$ depending on whether $\partial_\theta
u (e^{i \theta})$ maps positively or negatively onto $T^-_{x_1}
\ti{L}$: The orientation on $T_u MW(x_0,x_1)_0$ is induced from an
identification of the determinant line with the determinant line of
$\ominus \R \oplus \C \ominus T^-_{x_1} \ti{L}$ where $\C$ is a factor
arising from the complex linear kernel of a Cauchy-Riemann operator on
a sphere, with value prescribed at a marked point.  If $u$ is an
element of the kernel of the Cauchy-Riemann operator vanishing at the
marking, then the orientation on the complex line is determined by
$\partial_v u \wedge \partial_w u$, where $v,w$ are vector fields on
the sphere $\C \cup \{ \infty \}$ such that $v \wedge w$ is positive
on $\C$. This property transfers via the gluing construction, so that
the orientation is given as follows.  Consider an identification $D -
\{ -1,1 \} \to \R \times [0,1]$ mapping $\{-1,1 \}$ to the point at $
\mp \infty$.  Translation on the strip induces vector fields $v,w \in
\Vect(D)$.  Since $v \wedge w$ is positive, $\partial_v u \wedge
\partial_w u$ is the orientation on $\ev(1)^{-1}(0) \cap \ker(D_u)
\cong \C$.  Since the translation $\R$ identifies with span of
$\partial_v u$, the sign of the contribution is given by $1$
resp. $-1$ if $- \partial_\theta u ( e^{i\theta}) |_{\theta = \pi}$
gives the orientation resp. minus the orientation of $T^-_{x_1}
\ti{L}$.  Combining with the factor $(-1)^{|x_1|} = - 1$ in the
definition of $\mu_1$ gives the claim.  Thus the total contribution
from each such disk is the sum of these contributions times $n_x$, for
each $x$ with $n_x \neq 0$, or equivalently, $\lan [b], [\partial
  \mdeg] \ran q^{A(u)} \Hol_L(u)$.  Summing over such disks proves the
Proposition.
\end{proof} 

Combining Proposition \ref{mu0} with Proposition \ref{divisor} we
obtain:

\begin{corollary}  \label{mu1}
 Suppose that the moduli space of treed quasidisks is regular and
 $\beta \in CQF^1(L)$ is a Morse cocycle.  Then
$$ \mu_1(\beta) = \sum_{ I(u) = 2} 
\lan [\beta] ,[\partial \mdeg] \ran q^{A(u)} \Hol_L(u) 1_L $$
where the sum is over equivalence classes of index two holomorphic
disks with boundary in $\ti{L}$ and a boundary point mapping to a
generic point in $L$.
\end{corollary}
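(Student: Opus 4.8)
The plan is to simply combine the two preceding results in the obvious way. By Proposition \ref{divisor}, for each homotopy class $\mdeg \in \pi_2(X,\ti{L})$ with Maslov index $I(\mdeg) = 2$ and any Morse cocycle $\beta \in CQF^1(L)$ one has $\mu_1(\beta;\mdeg) = \lan [\beta],[\partial \mdeg] \ran \mu_0(1;\mdeg)$, where $\mu_0(1;\mdeg)$ is the contribution to $\mu_0(1)$ from disks of class $\mdeg$. By Proposition \ref{mu0}, the full count $\mu_0(1;\mdeg)$ is $\sum q^{A(u)} \Hol_L(u) 1_L$ over equivalence classes of Maslov index two holomorphic disks $u$ of class $\mdeg$ whose boundary passes through a generic point of $L$ (equivalently, maps to $x_{\min}$ under $X \to X \qu G$). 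Summing the identity $\mu_1(\beta;\mdeg) = \lan [\beta],[\partial\mdeg]\ran \mu_0(1;\mdeg)$ over all classes $\mdeg$ with $I(\mdeg) = 2$ — noting by reasons of degree (regularity of all disks, and the fact that the Morse product of $\beta$ with a point vanishes so no constant-disk configurations contribute) that only such classes contribute to $\mu_1(\beta)$ — then yields

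$$ \mu_1(\beta) = \sum_{\mdeg : I(\mdeg) = 2} \lan [\beta],[\partial\mdeg]\ran \mu_0(1;\mdeg) = \sum_{I(u) = 2} \lan [\beta],[\partial\mdeg]\ran q^{A(u)} \Hol_L(u) 1_L ,$$

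where in the last expression the sum runs over equivalence classes of index two holomorphic disks $u$ with boundary in $\ti{L}$ and one boundary point mapping to a generic point in $L$, exactly as in the statement. Here $[\partial\mdeg] \in H_1(L)$ denotes the boundary class of the disk $u$ representing $\mdeg$.

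There is essentially no obstacle: the work has all been done in Propositions \ref{mu0} and \ref{divisor}, and the corollary is a bookkeeping combination. The only minor point to be careful about is that $\mu_1(\beta)$ a priori has contributions from every energy level, but the leading Morse-differential term annihilates $\beta$ if $\beta$ is taken modulo lower-order corrections; more precisely, one should read $\mu_1$ here as in Proposition \ref{divisor}, tracking only the disk contributions $\mu_1(\beta;\mdeg)$ with $I(\mdeg) = 2$ (the purely Morse-theoretic part, of energy zero, is understood to be handled separately, as the statement is about the holomorphic-disk correction term). With that convention the degree count in the proof of Proposition \ref{divisor} shows the only surviving configurations are single index-two disks with two attached gradient trajectories, and the identification of signs and holonomies carried out there applies verbatim, so the sum above is exactly the asserted formula.
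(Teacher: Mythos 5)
Your proposal is correct and is essentially the paper's own argument: the corollary is stated there as an immediate consequence of combining Proposition \ref{mu0} with Proposition \ref{divisor}, exactly the bookkeeping you perform. The one cosmetic point is that no hedging is needed about the energy-zero part: since $\beta$ is a Morse cocycle the zero-energy (Morse-differential) contribution to $\mu_1(\beta)$ vanishes outright, and classes of Maslov index other than $0$ and $2$ are excluded for degree reasons, so the full $\mu_1(\beta)$ equals the index-two disk sum as stated.
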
 

Suppose that $L$ is a torus and consider $H^1(L,\Lambda_0)$ as
parametrizing a family of brane structures on a compact oriented
Lagrangian $L$ equipped with the standard spin structure.  For any $b
\in H^1(L,\Lambda_0)$ we denote by $\mu^b_n$ the structure
coefficients for $L^b$, that is, the Lagrangian $L$ with brane
structure given by $b$.  The {\em potential} for $L$ is the function
$$ W: H^1(L,\Lambda_0) \to \Lambda_0, \quad \mu_0^b(1) = W(b) 1_L .$$
Any $\beta$ in $CQF^1(L)$ on which the Morse differential vanishes
defines a class $[\beta] \in H^1(L,\Lambda_0)$.

\begin{proposition} \label{partials}
For  $\beta \in CQF^1(L)$ with $\mu_1^b(\beta;0) = 0$
we have $\mu_1^b(\beta) = \partial_{[\beta]} W(b)$.
\end{proposition}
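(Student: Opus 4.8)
The plan is to deduce this directly from Proposition \ref{mu0} and Corollary \ref{mu1}, applied to the deformed brane $L^b$, by differentiating the holonomy factors in the deformation parameter. First I would record the effect of $b$ on disk counts: for the brane structure $L^b$ the associated flat $\Lambda$-line bundle has holonomy $e^{\langle a,b\rangle}$ around a loop of class $a\in H_1(L)$, so a holomorphic disk $u$ with boundary on $\tilde L$ contributes with weight $\Hol_{L^b}(u)=e^{\langle [\partial u],b\rangle}$ (the standard spin structure, being fixed, contributes orientation signs independent of $b$), while its area $A(u)$ is unaffected. Hence Proposition \ref{mu0} applied to $L^b$ gives $W(b)\,1_L=\mu_0^b(1)=\bigl(\sum_{I(u)=2} q^{A(u)}e^{\langle [\partial u],b\rangle}\bigr)1_L$, the sum running over equivalence classes of Maslov index two holomorphic disks with boundary on $\tilde L$ through a generic point of $L$.

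Next I would observe that the class-zero part $\mu_1^b(\beta;0)$, coming from constant disks, is the Morse--Smale--Witten differential and is independent of the brane; so the hypothesis $\mu_1^b(\beta;0)=0$ is exactly the statement that $\beta$ is a Morse cocycle, and Corollary \ref{mu1} applies to $L^b$, giving $\mu_1^b(\beta)=\bigl(\sum_{I(u)=2}\langle [\beta],[\partial u]\rangle\,q^{A(u)}e^{\langle [\partial u],b\rangle}\bigr)1_L$. Finally I would compute $\partial_{[\beta]}W(b)$ by substituting $b+t[\beta]$ into the formula for $W$ and differentiating at $t=0$: this multiplies each summand by $\langle [\partial u],[\beta]\rangle$, so $\partial_{[\beta]}W(b)=\sum_{I(u)=2}\langle [\partial u],[\beta]\rangle\,q^{A(u)}e^{\langle [\partial u],b\rangle}$. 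Comparing the two expressions, using the identification of $\Lambda_0$ with $\Lambda_0\cdot 1_L$ via the unit and the symmetry of the pairing, yields $\mu_1^b(\beta)=\partial_{[\beta]}W(b)$.

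The one nontrivial point — the step I would watch most carefully — is the legitimacy of differentiating the series defining $W$ term by term. This should follow from Gromov compactness (Theorem \ref{compactthm}) and energy quantization for holomorphic disks: only finitely many classes $\mdeg\in\pi_2(X,\tilde L)$ with $I(\mdeg)=2$ have area below any given bound, so $W$ is a sum over such classes with exponents $A(\mdeg)\to\infty$, convergent in the Novikov filtration, and differentiation in $b$ commutes with it. In the toric setting of Theorem \ref{main} the Cho--Oh classification (Theorem \ref{blaschke}) makes this sum finite, so there the issue does not even arise and the argument reduces to a direct comparison of the two displayed formulas.
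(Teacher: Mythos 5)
Your argument is exactly the paper's proof, which simply cites Proposition \ref{mu0}, Corollary \ref{mu1}, and the holonomy formula $\Hol_{L^b}(u)=\exp\lan [b],[\partial\mdeg]\ran$, and compares the resulting two sums; your reading of the hypothesis $\mu_1^b(\beta;0)=0$ as the Morse-cocycle condition needed for Corollary \ref{mu1} is also the intended one. The extra remark on term-by-term differentiation of the Novikov series (handled by energy quantization, and vacuous in the toric case) is a sensible addition but does not change the route.
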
 

\begin{proof} By Proposition \ref{mu1}, Corollary \ref{mu0}, and the formula 
$\Hol_L(u) = \exp \lan [b],[\partial \mdeg] \ran. $
\end{proof}

\subsection{\ainfty homotopy invariance}

The homotopy type of the quasimap Fukaya algebra defined above is
independent of all choices.  The argument uses moduli spaces of {\em
  quilted treed disks}, which are a particular realization of
Stasheff's {\em multiplihedron} $L_n$ \cite{st:ho}.  This is a cell
complex whose vertices correspond to total bracketings of
$x_1,\ldots,x_n$, together with the insertion of expressions $f( \cdot
)$ so that every $x_j$ is contained in an argument of some $f$.  For
example, $L_2$ is an interval with vertices $f(x_1)f(x_2)$ and
$f(x_1x_2)$.  A geometric realization of this polytope can be given as
follows: A {\em quilted metric ribbon tree} is a rooted metric ribbon
tree 
$$\Gamma = (V(\Gamma),E(\Gamma), O(\Gamma), l: E(\Gamma) \to [0,\infty])$$
together with a subset $V^{\on{col}}(\Gamma)$ of the vertices
$V(\Gamma)$, satisfying the condition the length of the finite part of
the path $P(z_0,v)$ from the root vertex $z_0$ to any {\em colored
  vertex} $v \in V^{\on{col}}$ given by $\prod_{e \in P(z_0,v)} l(e)$
is independent of the choice of colored vertex $v \in
V^{\on{col}}(\Gamma)$, see Ma'u-Woodward \cite{mau:mult}.  The set of
finite resp. semiinfinite edges is denoted $E_{< \infty}(\Gamma)$
resp. $E_\infty(\Gamma)$; the latter are equipped with a labelling by
integers $0,\ldots, n$.  A quilted tree is {\em stable} if each
colored vertex has valence at least two, any non-colored vertex has
valence at least three or connects two edges of infinite length, and
each edge contains either a trivalent vertex or a colored vertex.

There is a natural notion of convergence of quilted trees, in which
edges whose length approaches zero are contracted and edges whose
lengths go to zero are replaced by broken edges.  Let $\ol{W}_{n,1}$
denote the moduli space of quilted metric ribbon trees; this is
naturally homeomorphic to $L_n$, see \cite{mau:mult}, although not
isomorphic as a cell complex.  

There is a different realization of the multiplihedron given in
Ma'u-Woodward \cite{mau:mult} which gives the correct cell structure.
Namely in \cite{mau:mult} a {\em quilted disk} was defined as a marked
disk $(D,z_0,\ldots, z_n \in \partial D)$ (the points are required to
be in cyclic order) together with a circle $C \subset D$ tangent to
the $0$-th marking $z_0$.  An {\em isomorphism} of quilted disks from
$(D,C,z_0,\ldots,z_n)$ to $(D',C',z_0',\ldots,z_n')$ is an isomorphism
of holomorphic disks $D \to D'$ mapping $C$ to $C'$ and $z_0,\ldots,
z_n$ to $z_0',\ldots, z_n'$.  The moduli space $M_{n,1}$ of quilted
disks admits a compactification $\ol{M}_{n,1}$, isomorphic to $L_n$ as
a cell complex, which allows the interior circle $C$ to ``bubble out''
into the extra disk bubbles, or disk bubbles without interior circles
to form when the points come together.  (The homotopy invariance of
the \ainfty algebra of a Lagrangian is proved in Fukaya et al
\cite{fooo} using a moduli space of {\em weighted} stable disks; we
find the used of quilted stable disks more natural because it
reproduces exactly Stasheff's cell structure, although the quilting
has no geometric meaning in the current paper.)  The open stratum
$M_{n,1}$ may be identified with the set of sequences $0 = w_1 <
\ldots < w_n$; the bubbles form either when the points come together,
in which case a disk bubble forms, or when the markings go to
infinity, in which case one rescales to keep the maximum distance
between the markings constant.  One has a quilted disk bubbles for
each group of markings that come together after re-scaling, but not
before.

There is a combined moduli space $\ol{MW}_{n,1}$ that combines both
objects: A {\em quilted treed disk} $\Sigma$ is given by (i) a quilted
rooted metric ribbon tree (ii) for each non-colored vertex of the tree
with valence at least three, a disk with markings whose number is the
valence of the given vertex (iii) for each colored vertex, a quilted
disk with number of markings equal to the valence that of the colored
vertex.  From this datum one defines a space obtained by attaching the
endpoints of the segments of the quilted metric tree to the marked
points on the disks corresponding to the vertices (except the
uncolored vertices of valence two, which are not attached to disks).
A quilted treed disk is {\em stable} if it has no automorphisms, that
is, each quilted disk resp. unquilted disk has at least two
resp. three marked or nodal points, and in addition any
one-dimensional component is attached to at least one disk.

Thus in particular each disk in a stable quilted treed disk is
connected by a sequence of one resp. two edges, which are of finite
resp. infinite length.  Let $\ol{MW}_{n,1}$ denote the moduli space of
stable quilted treed disks.  See Figure \ref{MWc} for a picture of
$\ol{MW}_{2,1}$.  The quilted disks are those with two shadings; while
the ordinary disks have either light or dark shading depending on
whether they can be connected to the zero-th edge without passing a
colored vertex.  The hashes on the line segments indicate nodes
connecting segments of infinite length, that is, broken segments.
\begin{figure}[ht]
\includegraphics[height=.6in]{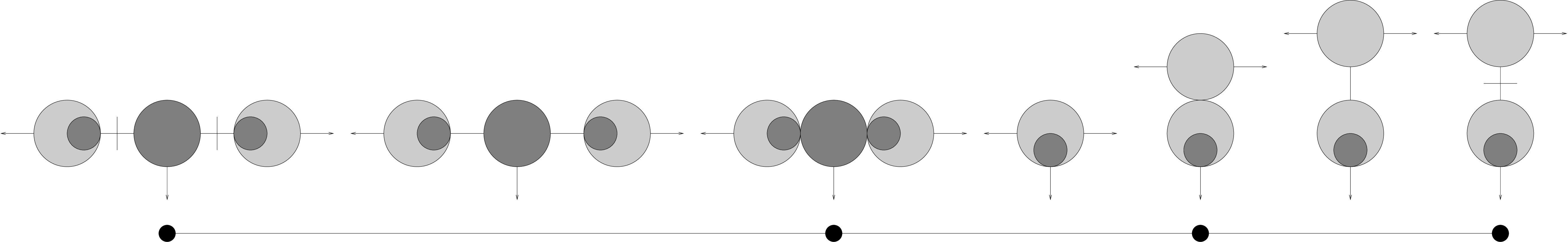}
\caption{Moduli space of stable quilted treed disks}
\label{MWc} 
\end{figure} 
The space $\ol{MW}_{n,1}$ admits two forgetful maps
$ f_M : \ol{MW}_{n,1} \to \ol{M}_{n,1}, \quad 
f_W: \ol{MW}_{n,1} \to \ol{W}_{n,1} $
obtained by forgetting the edges resp. disks and taking the ribbon
structure induced by the order of the markings on the boundary of the
disk.  The product $f_M \times f_W$ defines an injection into
$\ol{M}_{n,1} \times \ol{W}_{n,1}$, defining a topology on
$\ol{MW}_{n,1}$ for which it is compact and a sequence converges iff
the underlying sequence of quilted trees and quilted disks converges.
If a stratum $MW_{n,1,\Gamma}$ is contained in the closure of another
stratum $MW_{n,1,\Gamma'}$, then there is an explicit description of
the normal cone as a real toric singularity, explained in
\cite{mau:mult} for quilted trees or disks.

The cells of $\ol{MW}_{n,1}$ can be oriented by identifying
each with a corresponding cell of $\ol{M}_{n,1}$ times a product of
real lines corresponding to the edges.  The boundary is naturally
isomorphic to a union of moduli spaces with lower numbers of markings:
$$ \partial \ol{MW}_{n,1} \cong \bigcup_{i,j} \left( \ol{MW}_{n-i + 1,1}
\times \ol{MW}_i \right)
\cup \bigcup_{i_1,\ldots, i_r} \left( \ol{MW}_r \times
\prod_{j=1}^r \ol{MW}_{i_j,1} \right) .$$
By construction the sign of the inclusions of boundary strata are the
same as that for the corresponding inclusion of boundary facts of
$\ol{M}_{n,1}$, that is, $(-1)^{ij + n - j - i} $ for the facet of the
first type.  For facets of the second type, the gluing map is
$$ \R \oplus M_r \oplus \bigoplus_{j=1}^r M_{|I_j|,1} \to M_n$$
\begin{multline} \label{gluemap2} (\delta,z_1 = -1, z_2,\ldots, z_{r-1}, z_r= 0, 
(w_{1,j} = 0, w_{2,j},\ldots, w_{|I_j|,j} )_{j=1}^r) \\ \to (-
  \delta^{-1} , - \delta^{-1} + w_{2,1},\ldots, -\delta^{-1} +
  w_{|I_1|,1}, \delta^{-1} z_2,\delta^{-1} z_2 + \delta w_{2,2},
  \ldots , w_{|I_r|,1}, \ldots, \delta w_{|I_r|,r}) \end{multline}
and so changes orientations by $\sum_{j=1}^r (r-j) (|I_j| - 1).$
Finally, there is a {\em universal quilted tree disk} $\ol{UMW}_{n,1}
\to \ol{MW}_{n,1}$ which is a cell complex whose fiber over $\Sigma$
is isomorphic to $\Sigma$.

\begin{theorem}   Suppose that $J \in \J(X)^G, L \subset X \qu G$
are such that every stable disk with boundary in $\ti{L}$ is
regular. Then the \ainfty algebra $CQF(L)$ is independent up to
\ainfty homotopy of the choice of perturbation system used to
construct it.
\end{theorem}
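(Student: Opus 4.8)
The plan is to construct, for any two perturbation systems $F^0_*$ and $F^1_*$ for which every stable treed quasidisk is regular, an \ainfty morphism $\phi = (\phi_n)_{n \ge 0}$ from $(CQF(L),\mu^0_*)$ to $(CQF(L),\mu^1_*)$ by counting holomorphic quilted treed quasidisks, and then to show that $\phi$ is an \ainfty homotopy equivalence because its linear term is a quasi-isomorphism. Note that the underlying $\Lambda$-module $CQF(L) = \bigoplus_x \Lambda\bra{x}$ does not change, since the Morse--Smale pair on $L$ is held fixed; only the perturbations on the moduli spaces of trees vary. First I would fix a system of perturbation data on the universal quilted treed disk $\ol{UMW}_{n,1}$ which restricts to $F^0_*$ on every treed-disk component reachable from the root leaf without crossing a colored vertex, restricts to $F^1_*$ on all other components, and is chosen inductively so as to be compatible with the two gluing families in the boundary decomposition $\partial\ol{MW}_{n,1} \cong \bigcup_{i,j}(\ol{MW}_{n-i+1,1}\times\ol{MW}_i)\cup\bigcup_{i_1,\ldots,i_r}(\ol{MW}_r\times\prod_{j=1}^r\ol{MW}_{i_j,1})$: facets of the first type are glued from the morphism data together with the $F^1_*$-algebra data, facets of the second type from the $F^0_*$-algebra data together with the morphism data. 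A Sard--Smale argument identical in structure to the one in Theorem \ref{regpert}, applied to the universal moduli space over $C^l_b(\ol{UMW}_{n,1}\times L)$ and using a cutoff $\rho$ supported away from the already regular boundary but nonvanishing on each interior edge, then yields a comeager set of such data making every quilted treed quasidisk regular modulo $G$; the induction runs on the number of leaves and on the energy, exactly as in Section \ref{quasidisks}.

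With transversality in hand, the remaining analytic input is standard. Compactness of the moduli spaces of quilted treed quasidisks with bounded energy follows verbatim from the proof of Theorem \ref{compactthm}: asphericity excludes sphere bubbles, energy quantization for disks with boundary in $\ti L$ \cite{ms:jh} bounds the number of disk components, the Morse--Smale condition bounds the number of broken gradient segments, and $\ol{MW}_{n,1}$ is itself compact. A gluing theorem parallel to the proofs of Theorems \ref{bound} and \ref{mwbound} then shows that the one-dimensional part $\ol{MW}_{n,1}(L)_1$ is a compact one-manifold with boundary precisely the union of the two types of configurations appearing in the \ainfty morphism relation \eqref{ainftymorphism}. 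Relative spin structures on $L$ induce coherent orientations as in \cite{fooo},\cite{orient}, by the same determinant-line deformation used for $MW_n(L)$, and the sign bookkeeping is the analogue of that in the proof of Theorem \ref{ainfty}, now using the orientation of $\ol{M}_{n,1}$ and the multiplihedron gluing formula \eqref{gluemap2}. Setting $\phi_n(\bra{x_1},\ldots,\bra{x_n}) = \sum_{[u]\in\ol{MW}_{n,1}(x_0,\ldots,x_n)_0}(-1)^{\heartsuit}\eps(u)\,q^{A(u)}\Hol_L(u)\bra{x_0}$ for a suitable sign $\heartsuit$ defines an \ainfty morphism $\phi\colon CQF(L)_{F^0_*}\to CQF(L)_{F^1_*}$.

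It remains to identify the homotopy type. The curvature $\phi_0(1)$ counts quilted quasidisks with no inputs and output of index zero; for reasons of degree these are Maslov index two, and a short dimension count on $\ol{MW}_{0,1}$, together with Proposition \ref{mu0}, gives $\phi_0 = 0$ and $\mu^0_0(1) = \mu^1_0(1) = w\,1_L$ for the same $w \in \Lambda_0$. Hence $\phi$ is a flat, curvature-preserving morphism, and --- the curvature-correction terms in the $n=1$ morphism relation cancelling because $\mu^0_0 = \mu^1_0$ and $1_L$ is a strict unit --- $\phi_1$ is a chain map for the differentials $\mu^0_1,\mu^1_1$. Expanding $\phi_1$ by energy as in Proposition \ref{spectral}, the zero-energy term has all disk components constant, which forces the underlying quilted tree to be trivial, so this leading term is the identity on the Morse complex; therefore $\phi_1$ induces an isomorphism on the first page $E^1 = HM(L,\Lambda_0)$ of the energy spectral sequence of Proposition \ref{spectral}, hence a quasi-isomorphism $HQF(L)_{F^0_*}\cong HQF(L)_{F^1_*}$ on $\mu_1$-cohomology. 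An \ainfty morphism over the complete ring $\Lambda_0$ whose linear term is a quasi-isomorphism admits a homotopy inverse, the inverse morphism and the homotopies being constructed order by order in the energy filtration with the obstruction at each stage lying in a group that vanishes because $\phi_1$ is a quasi-isomorphism (cf. \cite{fooo}); thus $\phi$ is an \ainfty homotopy equivalence.

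I expect the main obstacle to be the first step together with the signs: arranging a single coherent perturbation datum over all the $\ol{UMW}_{n,1}$ that restricts correctly on both colors, is simultaneously compatible with both boundary gluing families, and keeps the clean-intersection/treed gluing estimates uniform --- all without recourse to a divisor equation, whose forgetful maps are unavailable here as explained above --- and then checking that the induced signs reproduce \eqref{ainftymorphism} exactly, a verification that, as in the proof of Theorem \ref{ainfty}, is mechanical but long.
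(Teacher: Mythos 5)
Your first two paragraphs reproduce the paper's construction essentially verbatim: the paper also defines $\phi=(\phi_n)_{n\ge 0}$ by counting holomorphic quilted treed quasidisks over $\ol{MW}_{n,1}$, with perturbations interpolating between the two systems above and below the colored vertices, regularity as in Theorem \ref{morreg}, and the sign analysis via the multiplihedron gluing map \eqref{gluemap2}. Where you diverge is the final step. The paper does not argue through a quasi-isomorphism criterion at all: it first observes that when the two perturbation systems coincide one may forget the quilting, so that only constant quilted configurations are rigid and $\phi$ is literally the identity, and then compares $\phi_{12}\circ\phi_{01}$ with $\phi_{02}$ geometrically using the moduli space $\ol{MW}_{n,2}$ of \emph{twice-quilted} treed disks (Section \ref{twice} and Figure \ref{twicecol}), whose boundary facets supply both the composition terms and the homotopy operator. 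Your route instead invokes the algebraic Whitehead-type theorem for \ainfty morphisms over $\Lambda_0$. That is a legitimate alternative in principle (it is the standard filtered-\ainfty argument of Fukaya et al.), and it buys you independence from constructing the twice-quilted moduli spaces, at the cost of having to verify carefully that the geometric morphism fits the filtered/gapped algebraic framework.

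There is, however, a genuine gap in how you set up that last step: the claim that $\phi_0=0$. The quilted count has one more modulus than the unquilted one (the position of the coloring relative to the tree enters the perturbation data), so rigid elements of $\ol{MW}_{0,1}(L)$ generically consist of a Maslov index two disk attached by a gradient edge whose output is an \emph{index one} critical point; degree considerations do not exclude these, and $\phi_0(1)$ is a (positive-energy) element of odd degree that has no reason to vanish. Consequently $\phi$ is a curved (filtered) morphism, $\phi_1$ is not a chain map on the nose --- the $n=1$ case of \eqref{ainftymorphism} contains terms such as $\mu_2^1(\phi_0(1),\phi_1(a))$ that are not killed by strict unitality --- and ``$\phi_1$ is a quasi-isomorphism'' is not the right hypothesis for the homotopy-inverse theorem. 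The statement you need is the filtered one: $\phi_0$ lies in $\Lambda_+\cdot CQF(L)$, so the reduction of $\phi$ modulo $\Lambda_+$ is an honest \ainfty morphism of the Morse-level structures, its linear term is a continuation map inducing an isomorphism on $HM(L)$, and the FOOO Whitehead theorem for gapped filtered \ainfty homomorphisms then produces a filtered homotopy inverse order by order in energy. With that correction your argument goes through; as written, the deduction that $\phi$ is flat and that $\phi_1$ is a chain map would fail. (Note also that the identity $\mu_0^{F^0_*}(1)=\mu_0^{F^1_*}(1)$ is fine here, by Proposition \ref{mu0}, since the index two disks and the point $x_{\min}$ are independent of the edge perturbations --- but it does not rescue the flatness claim.)
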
 

To prove this, one considers two systems of perturbations
$(B_{*,0},F_{*,0}),(B_{*,1},F_{*,1})$ and extends them to a set of
perturbations resp. connections for the moduli space of the following:

\begin{definition}  A  
{\em holomorphic quilted treed quasidisk} to $X$ is a quilted treed
disk $\Sigma$ together with a map $u: \Sigma \to X$ such that $u$ is
holomorphic on each disk component, and $u$ is a gradient trajectory
for $\ti{F}_{n,v} \in C^\infty(UW_{n,v} \times \ti{L})$ on each
edge, where $v = v(\mdeg)$ depends on the homology class $\mdeg =
\mdeg(u)$ of $u$.  An {\em isomorphism} of holomorphic quilted treed
quasidisks $u_j: \Sigma_j \to X$ is an isomorphism $\phi: \Sigma_0 \to
\Sigma_1$ together with an element $g \in G$ such that $\phi^* u_1 = g
u_0$; in particular this means that $\phi$ preserves the quilting on
each quilted disk.  A holomorphic quilted treed quasidisk $u: \Sigma
\to X$ is {\em stable} if it has no infinitesimal automorphisms (each
disk component on which $u$ is constant has at least three marked or
nodal points, and $u$ is non-constant on each edge of positive length)
and each node connecting two line segments maps to a critical point.
\end{definition}  

\noindent The additional data of the quilting on each quilted disk is
not used to modify the Cauchy-Riemann equation (as opposed to the
situation in \cite{ainfty} where the quilting has a geometric
meaning.)  Let $\ol{MW}_{n,1}(L)$ denote the moduli space of
holomorphic quilted treed quasidisks with boundary in $L$, and
perturbation system $(B_*,F_*)$.  Perturbations can be constructed
inductively in a neighborhood of the boundary components, by taking
the perturbation system $(B_{*,0},F_{*,0})$ above the quilted disk
components (that is, on the components not connected to the root by
unquilted components) and $(B_{*,1},F_{*,1})$ below, and zero on the
edges of small length.  Assuming that the perturbations have been
constructed inductively, the boundary components of $\ol{MW}_{n,1}(L)$
are the combinatorial types with at least one segment of infinite
length.  (There are also ``fake boundary component'' with edges of
zero length, but these are not part of the ``true boundary''.)  In
particular, the boundary components of dimension one consist of two
types: configurations where a collection of quilted treed disks has
broken off, or an unquilted treed disk has broken off as shown in
Figure \ref{MWc}:
\begin{multline}  \label{morbound}
 \partial \ol{MW}_{d,1}(L) = \bigcup_{i_1 + \ldots + i_r = d}
 \ol{MW}_{r,1}(L) \times_{\crit(F)^r} \prod_{j=1}^r
 \ol{MW}_{i_j}(L;B_{*,1},F_{*,1}) \\ \cup \bigcup_{i+k \leq d}
 \ol{MW}_{i}(L;B_{*,0},F_{*,0}) \times_{\crit(F)} \ol{MW}_{d - i +
   1,1}(L) .\end{multline}


\begin{theorem} \label{morreg}  If every stable holomorphic disk is regular then there
exist compatible systems of perturbations so that every quilted treed
quasidisk is regular.
\end{theorem}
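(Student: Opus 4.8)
The plan is to mimic the regularity argument for ordinary treed quasidisks, Theorem \ref{regpert}, now applied to the moduli space $\ol{MW}_{n,1}(L)$ of holomorphic quilted treed quasidisks, and to carry out the construction inductively over the boundary stratification \eqref{morbound}. The key point is that, exactly as in the unquilted case, the linearized operator $D_u$ associated to a holomorphic quilted treed quasidisk splits into: (i) the linearized Cauchy-Riemann operators on the disk components, which are surjective by the standing assumption that every stable holomorphic disk with boundary in $\ti L$ is regular; (ii) the linearized perturbed gradient operators on the one-dimensional components; and (iii) the evaluation differences at the nodes. The quilting data attached to the colored vertices does not enter the Cauchy-Riemann or gradient equations at all, so it contributes nothing to $D_u$ beyond a constraint on the combinatorial type; hence the analytic structure of the universal moduli space is identical to that in Theorem \ref{regpert}.

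First I would set up the universal moduli space $MW^{\on{univ}}_{n,1}(L)$, parametrizing pairs $(F_{n,v(\mdeg)}, u)$ together with the node positions, over the space $C^l_b(UW_{n,v(\mdeg)} \times L)$ of $C^l$-perturbations of the gradient flow equations (with the metric perturbations $B_{n,v}$ fixed). Exactly as in the proof of Theorem \ref{regpert}, one checks that this universal space is a $C^l$-Banach manifold: on each one-dimensional segment $S_i$ a perturbation $f_{n,v}$ can be chosen so that its image under the linearized gradient operator has arbitrary endpoints in $T_{u(\eps_i^\pm)}L$, and hence the image of $T_{u(\eps_i^-)}L \oplus T_{u(\eps_i^+)}L \oplus \g$ in $T_{u(\eps_i^-)}\ti L \oplus T_{u(\eps_i^+)}\ti L$ lies in the image of the full linearized operator; an inductive argument over the edges, starting from the semiinfinite edges carrying the markings $z_0,\ldots,z_n$, then absorbs the extra $\g$-factor coming from each node being shared by two components. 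The Sard–Smale theorem applied component-by-component with fixed index (for $l$ large relative to the index $d$), followed by the standard density argument passing from $C^l$ to $C^\infty$ perturbations as in \cite[p.~53]{ms:jh}, produces a comeager set of regular perturbations.

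The inductive structure is dictated by \eqref{morbound}: the two types of true boundary facets are (a) a collection of unquilted treed disks $\ol{MW}_{i_j}(L;B_{*,1},F_{*,1})$ bubbling off at the colored end, and (b) an unquilted treed disk $\ol{MW}_i(L;B_{*,0},F_{*,0})$ bubbling off at the root end. On these facets the perturbation data are already fixed (by regularity of the two input \ainfty structures from Theorems \ref{regpert} and \ref{ainfty}), together with the rule that the perturbation vanish near infinite nodes and on edges of length going to zero; so one extends the perturbation datum from a neighborhood $V$ of $\partial W_{n,1,v(\mdeg)}$ inward, choosing a cutoff $\rho \in C^\infty_c(UW_{n,1,v(\mdeg)})$ that is non-vanishing somewhere on each edge off $V$, and applies the argument of the previous paragraph to $F^0_{n,v} + \rho F^1_{n,v}$. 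Induction on $n$ (and, within fixed $n$, on the energy bound which bounds the number of disk components via energy quantization for disks, \cite[Proposition 4.1.4]{ms:jh}) then yields compatible regular perturbations for all $\ol{MW}_{n,1}(L)$.

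The main obstacle is purely bookkeeping: verifying that the inductively-chosen perturbations are genuinely compatible along the overlaps of the two families of boundary facets in \eqref{morbound}, i.e.\ that the cutoff and gluing prescriptions at nodes of infinite type and at edges of length tending to zero patch consistently where several facets meet. This is the same compatibility issue handled in the unquilted case in the discussion preceding Theorem \ref{mwbound}, and it is resolved in the same way — by taking the perturbation to vanish on any collapsing segment and to equal the glued-in datum near an infinite node — with the quilting playing no role. Everything else (the Fredholm property of $D_u$ from finite-dimensionality of $\Def_\Gamma(\Sigma)$, freeness and properness of the $G \times \Aut(\Sigma)$-action, the comeagerness of the regular set) is word-for-word the argument of Theorems \ref{trajreg}, \ref{mwsmooth}, and \ref{regpert}, so I would simply refer back to those.
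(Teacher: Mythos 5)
Your proposal is correct and is essentially an expanded version of the paper's own proof, which simply says the argument is the same as for the unquilted case in Theorem \ref{regpert} and leaves the details to the reader; your filling-in of the universal moduli space, Sard--Smale, and the inductive extension over the boundary facets of \eqref{morbound} with the quilting playing no analytic role is exactly the intended argument.
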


\begin{proof} 
This is similar to the argument given for the unquilted case above in
Theorem \ref{regpert}, and is left to the reader.
\end{proof} 

Assuming that a perturbation system has been chosen as in Theorem
\ref{morreg} define
\begin{multline}
 \phi_n: CQF(L;B_{*,0},F_{*,0})^{\otimes n} \to CQF(L;B_{*,1},F_{*,1})
 \\ (\bra{x_1},\ldots,\bra{x_n}) \mapsto \sum_{[u] \in
   \ol{MW}_{n,1}(L;x_0,\ldots,x_n)_0} (-1)^{\heartsuit} \eps(u)
 q^{A(u)} \Hol_L(u) \bra{x_0} .\end{multline}

\begin{theorem} 
$\phi = (\phi_n)_{n \ge 0}$ is an \ainfty morphism.
\end{theorem}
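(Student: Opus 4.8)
The plan is to verify the $A_\infty$-morphism relation \eqref{ainftymorphism} by analyzing the codimension-one boundary of the moduli space $\ol{MW}_{n,1}(L)$ of holomorphic quilted treed quasidisks, in exactly the same way that Theorem \ref{ainfty} was deduced from Theorem \ref{mwbound}. First I would fix $x_0,\ldots,x_n \in \crit(F)$ and consider the one-dimensional component $\ol{MW}_{n,1}(L;x_0,\ldots,x_n)_1$. Regularity of all stable quilted treed quasidisks (Theorem \ref{morreg}) together with the compactness statement (an analogue of Theorem \ref{compactthm}, proved by combining Gromov compactness for marked disks with the exponential-decay and energy-quantization Lemmas \ref{cleanquant}, \ref{expdecay}) and a gluing construction identical to those in the proofs of Theorems \ref{bound} and \ref{mwbound} show that this is a compact one-manifold with boundary given by the two families appearing in \eqref{morbound}. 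Counting boundary points with signs and with the weights $\eps(u)q^{A(u)}\Hol_L(u)$ then yields that the two sums in \eqref{ainftymorphism} — the first coming from a $\mu_{j,1}$-bubble in the domain $L^{B_{*,0},F_{*,0}}$ breaking off below the quilting, the second from unquilted $\phi_{i_k}$-components capped by a $\mu_{r,2}$ in the target $L^{B_{*,1},F_{*,1}}$ above — add to zero.

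The steps in order: (i) state compactness of $\ol{MW}_{n,1}(L)$ with bounded energy and number of leaves, citing Theorem \ref{compactthm} with the obvious modifications for the extra quilted structure and the forgetful-map topology on $\ol{MW}_{n,1}$; (ii) state that $\ol{MW}_{n,1}^{\reg}(L)$ is a smooth manifold of the expected dimension, by the implicit function theorem exactly as in Theorems \ref{mwsmooth} and \ref{trajreg}, with tangent space $\ker(D_u)/(\aut(\Sigma)\oplus\g)$; (iii) establish the gluing theorem describing $\partial \ol{MW}_{n,1}(L)_1$ via \eqref{morbound}, reusing verbatim the approximate-trajectory / approximate-right-inverse / quadratic-estimate machinery of the proof of Theorem \ref{mwbound}, since the quilted disks contribute no new analytic difficulty (the quilting does not enter the Cauchy–Riemann equation); (iv) check the orientations, using the identification of the cells of $\ol{MW}_{n,1}$ with cells of $\ol{M}_{n,1}$ times products of $\R$'s for the edges, and the sign formulas $(-1)^{ij+n-j-i}$ for facets of the first type and $\sum_{j=1}^r (r-j)(|I_j|-1)$ for facets of the second type recorded just before the statement, combined with the $(-1)^\heartsuit$ in the definitions of $\mu_n$ and $\phi_n$; (v) conclude that the signed count of boundary points vanishes, and that the resulting identity is precisely \eqref{ainftymorphism}. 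Strict unitality of $\phi$ (that $\phi_n$ vanishes when some input is $1_L$ for $n\neq 1$ and $\phi_1(1_L)=1_L$) follows as in the unital part of the proof of Theorem \ref{ainfty}: a configuration with an insertion $\bra{x_{\min}}$ can only be isolated if $\bra{x_{\min}}$ is attached by two infinite edges to a constant triangle, forcing $n=1$.

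The main obstacle is the sign verification in step (iv). The $A_\infty$-morphism relation \eqref{ainftymorphism} is quadratic in the $\phi$'s and so the bookkeeping is genuinely more involved than in Theorem \ref{ainfty}: one must track the sign of the gluing map \eqref{gluemap2} for the multi-component facets $\ol{MW}_r \times \prod_j \ol{MW}_{i_j,1}$, reconcile it against the product of $(-1)^\heartsuit$ factors for the outgoing $\mu_{r,2}$ and the incoming $\phi_{i_j}$'s, and show the discrepancy is a single overall sign independent of the combinatorial type, exactly as in the displayed multi-line congruence computation in the proof of Theorem \ref{ainfty}. As in that proof, the disk-component orientations are compared to a connect-sum of holomorphic spheres and constant disks following \cite{fooo}, \cite{orient}, and the key point is that the gluing maps on the treed moduli spaces agree with those on $\ol{M}_{n,1}$ by construction. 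Since this is a routine but lengthy sign chase entirely parallel to the one already carried out for Theorem \ref{ainfty}, I would present the argument in outline and relegate the detailed sign computation to a remark, or simply assert that it proceeds as in the proof of Theorem \ref{ainfty} together with the orientation conventions for $\ol{M}_{n,1}$ fixed above.
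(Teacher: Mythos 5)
Your proposal follows essentially the same route as the paper: the $A_\infty$-morphism relation \eqref{ainftymorphism} is read off from the two types of codimension-one boundary strata in \eqref{morbound} of the one-dimensional moduli space of holomorphic quilted treed quasistrips, with compactness, regularity and gluing handled exactly as in Theorems \ref{compactthm}, \ref{mwsmooth} and \ref{mwbound}, and orientations via the identification of cells of $\ol{MW}_{n,1}$ with cells of $\ol{M}_{n,1}$ times edge factors. The only difference is that you defer the sign verification, whereas the paper carries out the full computation for the facets $\ol{MW}_r\times\prod_j\ol{MW}_{i_j,1}$ (using the gluing sign \eqref{gluemap2} and the extra ordering sign \eqref{clubs}), which is somewhat more involved than the chase in Theorem \ref{ainfty} but does not change the structure of your argument.
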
 

\begin{proof}    For $\Z_2$ coefficients, this follows from 
the description of the boundary in \eqref{morbound}.  The signs for
the terms \eqref{ainftymorphism} of the first type is similar to those
for the \ainfty axiom and will be omitted.  For terms of the second
type we need to determine the sign of the isomorphism
$$ \R \oplus T \ol{MW}_r(y_0,\ldots,y_r) \oplus \bigoplus_{j=1}^r T
\ol{MW}_{|I_j|,1}(y_i,x_{I_j}) \to T \ol{MW}_n(y_0,x_1,\ldots,x_n) .$$
The former is determined by an isomorphism with
$$ \R \oplus TL \oplus T_{y_0}^+ \oplus T \ol{MW}_r \oplus T_{y_1}^- \ldots
\oplus T_{y_r}^- \oplus \bigoplus_{j = 1}^r \left( TL \oplus T_{y_j}^+
\oplus T \ol{MW}_{|I_j|,1} \oplus \bigoplus_{k \in I_j} T_{x_k}^-
\right) $$
(where $T_{y_0}^+$ denotes $T_{y_0}^+L$ etc.)  except that this
differs by signs
\begin{equation} \label{clubs}  |y_0|(r-2) + \sum_{j=1}^r (|I_j|-1)|y_j|\end{equation}
from our previous convention.  Equivalently since each moduli space
has formal dimension zero the determinant line is given by
$$ \R \oplus TL \oplus T_{y_0}^+ \oplus T \ol{MW}_r \oplus \bigoplus_j
\left( T_{y_j}^- \oplus TL \oplus T_{y_j}^+ \oplus T \ol{MW}_{|I_j|,1}
\oplus \bigoplus_{k \in I_j} T_{x_k}^- \right) .$$
Using $T_{y_j}^- \oplus T_{y_j}^+ \cong TL$ these three factors
disappear.
%
Moving each $T \ol{MW}_{|I_j|,1}$ past $T_{x_k}^-$ for $k < \min I_j$ 
contributes a number of signs $(|I_j| - 1) \sum_{k < \min I_j} |x_k| $.
Moving $TL \oplus T_{y_0}^+$ past $\R$ gives $|y_0|$ additional signs. 
Finally we have a contribution from the signs in the definition of
$\phi_{|I_j|}$ and the sign from the definition of $\mu_r$
$$ \sum_{j = 1}^r \sum_{i=1}^{|I_j|} i|x_i| + \sum_{j=1}^r j |y_j|  $$
and the overall sign used in the proof the \ainfty axiom,
$1 + \sum_{k=1}^n (k + 1) |x_k|  .$
The gluing map has sign \eqref{gluemap2}.  In total, the number of
signs is
\begin{multline} \label{total}
\sum_{j=1}^r \left( (|I_j| - 1) \sum_{k
  < \min I_j} |x_k| \right) + |y_0| + \sum_{j=1}^r \sum_{i=1}^{|I_j|}
i|x_i| \\
+ \sum_{j=1}^r j |y_j| + 1 + \sum_{k=1}^n (k + 1) |x_k| 
+ \sum_{j=1}^r (r-j) (|I_j| - 1).\end{multline}
The difference is 
$$\sum_{k=1}^n k |x_k| - \sum_{j=1}^r
\sum_{i=1}^{|I_j|} i|x_i| = \sum_{j=1}^r \sum_{i \in I_j} (|I_1| +
\ldots + |I_{j-1}|) |x_i| ,$$
so \eqref{total} equals
\begin{multline}
\sum_{j=1}^r (|I_j| - 1) \sum_{k
  < \min I_j} |x_k| + |y_0| + \sum_{j=1}^r |I_j| \left( \sum_{k >
  \max(I_j)} |x_k| \right) + \\
1 + \sum_{k=1}^n |x_k| + \sum_{j=1}^r j |y_j| 
+ \sum_{j=1}^r (r-j) (|I_j| - 1)
.\end{multline}
Now 
$$\sum_{j=1}^r |I_j| \sum_{k \notin I_j} |x_k| = \sum_{j=1}^r |I_j|
(\sum_k |x_k| - \sum_{k \in I_j} |x_k|) = 
n( |y_0| +
n- 2) - \sum_{j=1}^r |I_j| ( |y_j| + |I_j| - 1) .$$
So \eqref{total} equals
$$ \left( \sum_{j=1}^r \sum_{k < \min I_j} |x_k| \right) + |y_0| + 1 +
(n-2 + |y_0|) + \left( \sum_{j=1}^r j |y_j| \right) + n( |y_0| + n- 2)$$
$$ - \sum_{j=1}^r \left( |I_j| ( |y_j| + |I_j| - 1) \right)
+ \sum_{j=1}^r (r-j) (|I_j| - 1).$$
The first term is 
$$ \sum_{j=1}^r \sum_{k \in I_j} (r-j)|x_k| = 
\sum_{j=1}^r (|y_j| +
|I_j| - 1)(r-j) = 
\sum_{j=1}^r (|y_j|r - |y_j|j  +
(|I_j| - 1)(r-j)
.$$
So \eqref{total} equals
\begin{multline} 
\left( \sum_{j=1}^r |y_j|r - |y_j|j + (|I_j| - 1)(r-j) \right) +
|y_0| + 1 + (n-2 + |y_0|) + \left( \sum_{j=1}^r j |y_j| \right) +
\\  n(
|y_0| + n- 2) - \sum_{j=1}^r \left( |I_j| ( |y_j| + |I_j| - 1)
\right) + \sum_{j=1}^r (r-j) (|I_j| - 1)
.\end{multline}
Cancelling and simplifying gives
$$ (|y_0| + r)r + n |y_0| + 1 - \sum_{j=1}^r |I_j| (
|y_j| + |I_j| - 1)  $$
which is congruent mod $2$ to 
$ (n-r)|y_0| + (1-r) - \sum_{j=1}^r |I_j| 
|y_j|  .$
Combining with \eqref{clubs} and switching $T_{y_j}^+ \oplus TL$ with
$MW_n$ we obtain
$$ (n-r)|y_0| + (1-r) - \sum_{j=1}^r |I_j| |y_j| + |y_0|(r-2) +
\sum_{j=1}^r (|I_j|-1)|y_j| + (n-1)|y_0|$$
which equals $ (1-r) + |y_0| + \sum_{j=1}^r |y_j| = (1-r) + (r- 1) = 0
$ as claimed.
\end{proof} 

If two perturbation systems are equal then we may take as perturbation
system for quilted strips the one given by pullback under the map that
forgets the quilting.  In this case the only isolated holomorphic
quilted treed disks are the constant ones, since then
$\ol{MW}_{n,1}(L)$ is a $[0,1]$-fiber bundle over the subset of
$\ol{MW}_n(L)$ consisting of non-constant holomorphic treed disks.
The morphism $\phi$ is then the identity morphism.


Returning to the general case, the morphism $\phi$ is a homotopy
equivalence of \ainfty algebras, by an argument using {\em
  twice-quilted disks} similar to the argument for \ainfty bimodules
given later in Section \ref{twice}.  We show in Figure \ref{twicecol}
the moduli space of twice-quilted stable disks $\ol{M}_{n,2}$, in the
case $ n = 2$ which is a pentagon whose vertices correspond to the
expressions
$$f(g(x_1x_2)), f(g(x_1)g(x_2)), f(g(x_1))f(g(x_2)),
((fg)(x_1)) ((fg)(x_2)), (fg)(x_1x_2) .$$  
There is a similar treed version $\ol{MW}_{n,2}$, whose description we
omit.  
\begin{figure}[ht]
\includegraphics[height=2in]{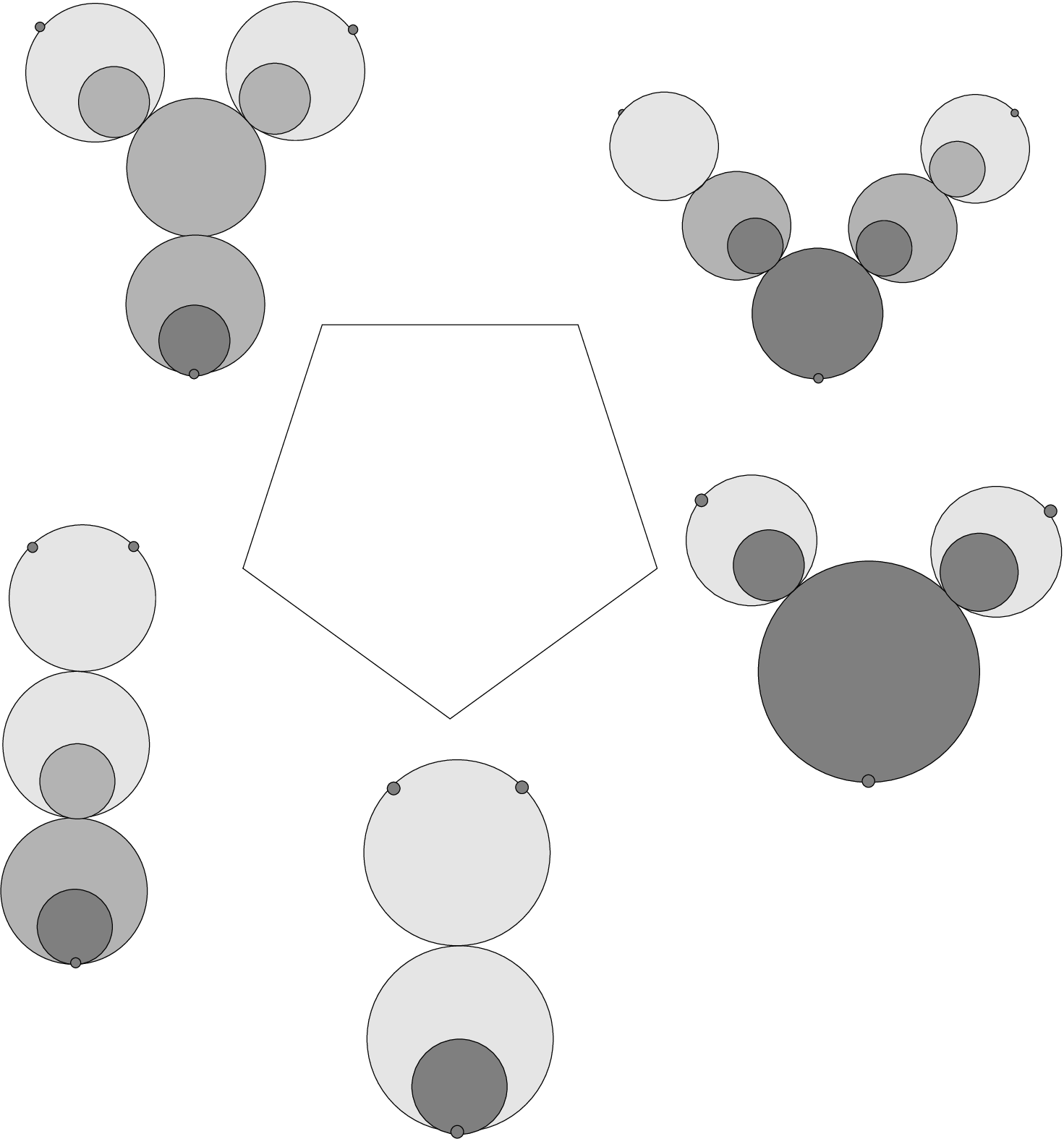}
\caption{Twice-quilted disks}
\label{twicecol} 
\end{figure} 
Given a triple of perturbation systems
$(B_{*,0},F_{*,0}),(B_{*,1},F_{*,1}),(B_{*,2},F_{*,2})$ and morphisms
$$\phi_{ij}: CQF(L;B_{*,i},F_{*,i}) \to CQF(L;B_{*,j},F_{*,j}), \quad
0 \leq i < j \leq 2 $$
defined by extension of these over the moduli space of quilted disks,
we wish to compare the composition $\phi_{12} \circ \phi_{01}$ with
$\phi_{02}$.  Over $\ol{MW}_{n,2}$ we consider the moduli space
$\ol{MW}_{n,2}(L)$, using a perturbation system $(B_*,F_*)$ equal to
the given perturbation systems equal to the given perturbation systems
in a neighborhood of the boundary.  (That is, $(B_{*,0}, F_{*,0})$ on
the edges above the medium shaded region, $(B_{*,1},F_{*,1})$ on edges
between the beginning of the medium shaded region and the darkly
shaded region, and $(B_{*,2},F_{*,2})$ after the beginning of the
darkly shaded region.)  The facets of $\ol{MW}_{n,2}$ correspond to
either to terms in the definition of composition of \ainfty maps
$\phi_{12} \circ \phi_{01}: CQF(L; B_{*,0}, F_{*,0}) \to CQF(L;
B_{*,2},F_{*,2})$, to the components contributing to $\phi_{02} :
CQF(L; B_{*,0},F_{*,0}) \to CQF(L; B_{*,2}, F_{*,2})$, or to terms
corresponding to the bubbling off of some markings on the boundary
which define a homotopy operator for the difference $\phi_{12} \circ
\phi_{01} - \phi_{02}$.  Since we do not need homotopy invariance for
any of our results (we do need homotopy invariance of the \ainfty
bimodule) we omit the proof; see \cite{ainfty} for a related argument
for \ainfty functors associated to Lagrangian correspondences.  In
case $(B_{*,0},F_{*,0}) = (B_{*,2}, F_{*,2})$ this produces a homotopy
between $\phi_{12} \circ \phi_{01}$ and the identity, and hence
$\phi_{12},\phi_{01}$ define a homotopy equivalence between $CQF(L;
B_{*,0},F_{*,0})$ and $CQF(L; B_{*,1},F_{*,1})$.  In particular
homotopy invariance of $CQF(L)$ implies that the quasimap Floer
cohomologies $HQF(L)$ defined using different perturbation systems are
isomorphic.

\section{Quasimap \ainfty bimodule for Lagrangian pair} 
\label{bimodule}

We describe here a version of Floer theory for pairs $(L_0,L_1)$ of
Lagrangian submanifold, which counts {\em perturbed holomorphic treed
  strips} giving rise to an {\em \ainfty bimodule}, invariant up to
\ainfty homotopy of the choice of perturbation.  In particular, if
$L_1$ is displaceable from $L_0$ by a Hamiltonian diffeomorphism then
the cohomology of this \ainfty bimodule vanishes.

\subsection{\ainfty bimodules} 
\label{bimod}

Let $A_0,A_1$ be \ainfty algebras.  An {\em \ainfty bimodule} is a
$\Z$-graded vector space $M$ equipped with operations
$$ \mu_{d|e} : A_0^{\otimes d} \otimes M \otimes A_1^{\otimes e} \to
M[1-d-e] $$
satisfying the relations 
$$ \sum_{i,k} (-1)^{\aleph} \mu_{d-i+1|e}( a_{0,1}, \ldots,
\mu_{0,i}(a_{0,k},\ldots, a_{0,k+i-1}), a_{0,k+i}, \ldots, a_{0,d},
m,a_{1,1},\ldots, a_{1,e})) $$
$$ + \sum_{j,k} (-1)^{\aleph} \mu_{d|e-j+1}(a_{0,1},\ldots,
a_{0,d},m,a_{1,1},\ldots, \mu_{1,j}(a_{1,k},\ldots, a_{1,k+j-1}),\ldots,
a_{1,e})  $$
$$ + \sum_{i,j} (-1)^{\aleph} \mu_{d-i|e-j}( a_{0,1},\ldots, a_{0,d-i},
\mu_{i|j}(a_{0,d-i+1},\ldots, a_{0,d},m,a_{1,1},\ldots,a_{1,j}),
\ldots, a_{1,e}) = 0 $$
where we follow Seidel's convention \cite{seidel:sub} of denoting by
$(-1)^{\aleph}$ the sum of the reduced degrees to the left of the
inner expression, except that $m$ has ordinary (unreduced) degree.  As
before, the $\Z$-grading is not necessary and a $\Z_2$-grading
suffices.

A {\em morphism} $\phi$ of \ainfty-bimodules $M_0$ to $M_1$ of degree
$|\phi|$ is a collection of maps
$$ \phi_{d|e} : A_0^{\otimes d} \otimes M_0 \otimes A_1^{\otimes e}
\to M_1[|\phi|-d-e] $$
satisfying a splitting axiom
\begin{multline} \label{morphism} 
 \sum_{i,j} (-1)^{|\phi| \aleph}
\mu_{1,d|e}(a_{0,1},\ldots,a_{0,d-i}, \phi_{i|j}(a_{0,d-i+1},\ldots,
a_{0,d},m,a_{1,1},\ldots,a_{1,j}), a_{1,j+1},\ldots,a_{1,e}) \\ 
+ \sum_{i,j} (-1)^{|\phi| + 1 + \aleph} \phi_{d|e}(a_{0,1},\ldots,a_{0,d-i} ,
\mu_{0,i|j}(a_{0,d-i+1},\ldots,a_{0,d},m,a_{1,1},\ldots,a_{1,j} ),
a_{1,j+1},\ldots,a_{1,e}) \\
 + \sum_{i,j} (-1)^{|\phi | + 1 + \aleph}
\phi_{d|e}(a_{0,1}, \ldots, a_{0,j-1}, \mu_{0,i}(a_{0,j},\ldots, a_{0,j+i-1} ), \ldots,
a_{0,d},m,a_{1,1},\ldots,a_{1,e}) \\
+ \sum_{i,j} (-1)^{|\phi| + 1 + \aleph} \phi_{d|e}(a_{0,1},\ldots,
a_{0,e},m,a_{1,1},\ldots, \mu_{1,j}(a_{1,j},\ldots,a_{j+i-1}),
\ldots,a_{1,e}) = 0 .\end{multline}
Composition of morphisms $\phi: M_0 \to M_1, \psi: M_1 \to M_2$ is defined by 
\begin{multline} (\phi \circ \psi)_{d|e}(a_{0,1},\ldots, a_{0,d},m, a_{1,1},\ldots,a_{1,e})  \\
= \sum_{i,j} (-1)^{|\psi| \aleph} 
\phi_{i|j}(a_{0,1},\ldots,a_{0,i}, 
\psi_{d-i|e-j}(a_{0,i+1},\ldots, a_{0,d},m,
a_{1,1},\ldots,a_{1,e-j}),a_{1,e-j+1},
\ldots,a_{1,e}) .\end{multline}
A {\em homotopy} of morphisms $\psi_0,\psi_1: M_0 \to M_1$ of degree
zero is a collection of maps $(\phi_{d|e})_{d,e \ge 0}$ such that the
difference $\psi_1 - \psi_0$ is given by the expression on the left
hand side of \eqref{morphism}.  Any \ainfty algebra $A$ is an \ainfty
bimodule over itself with operations
\begin{multline} \label{change}
 \mu_{d|e}(a_{0,1},\ldots, a_{0,d},m,a_{1,1},\ldots,a_{1,e}) \\ =
(-1)^{1 + \diamondsuit } \mu_{d+ e + 1}(a_{0,1},\ldots,
a_{0,d},m,a_{1,1},\ldots,a_{1,e}), \quad \diamondsuit = \sum_{j=1}^e
(|a_{1,j}| + 1),
 \end{multline}
see Seidel \cite[2.9]{seidel:susp}.

\subsection{Treed strips}
\label{treedstrips}

In this section we describe the combinatorics of the objects used to
define our \ainfty bimodule.  A {\em treed strip with $(d,e)$
  markings} is a treed disk with $d + e + 2$ markings, such that the
edges connecting the $0$-th marking with the $d+1$-marking all have
length zero.  The disk components in between $z_0$ and $z_{d+1}$ have
canonical embeddings of holomorphic strips $\R \times [0,1]$, whose
complements are the nodes or markings connecting (eventually) to $z_0$
or $z_{d+1}$.  We denote by $\ol{MW}_{d|e}$ the moduli space of {\em
  stable} $(d,e)$-marked treed strips.  See Figure \ref{MW12} for the
case $d = 1, e= 2$.
\begin{figure}[ht]
\includegraphics[height=.5in]{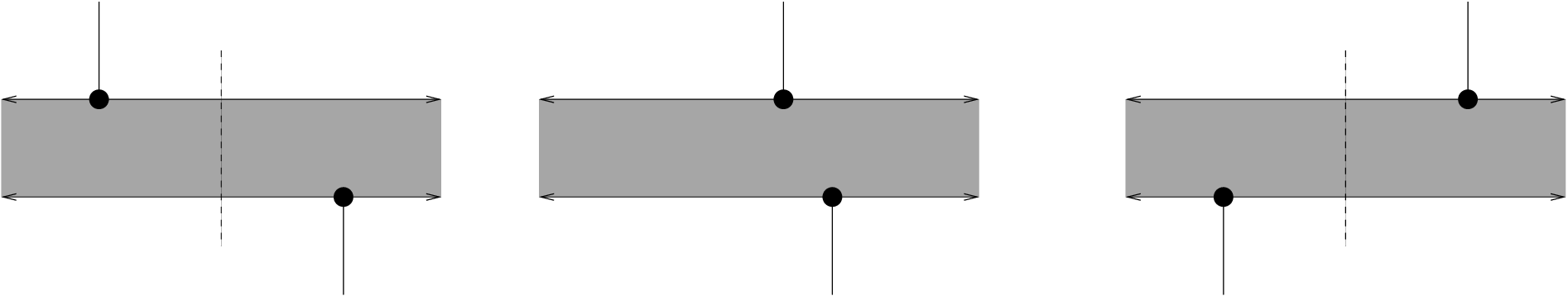}
\caption{Moduli space of stable treed strips with $d =1, e = 2$}
\label{MW12}
\end{figure} 
\noindent The orientation on $\ol{MW}_{d|e}$ can be chosen via its
identification with $\ol{MW}_{d+e+1}$.  Denote by $\ol{UMW}_{d|e} \to
\ol{MW}_{d|e}$ the {\em universal treed strip}.  An {\em infinitesimal
  deformation} of a treed strip of fixed type $\Sigma$ is an
infinitesimal deformation of the nodal points or lengths of the line
segments as in \eqref{defsig}.  Let $\Def_\Gamma(\Sigma)$ denote the
space of deformations preserving the combinatorial type.  We denote by
$z$ the number of nodes corresponding to edges of zero length not
connecting strips, and by $i$ the number of infinite nodes connected
edges of infinite length or strips.  Define
$$ \Def(\Sigma) = \Def_\Sigma(\Sigma) \times ( (-\infty,0) \cup \{ 0
\} \cup (0,\infty))^z \times [0,\infty)^i $$
with the second two factors representing {\em gluing parameters} for
line segments of length zero resp. infinity.  
The gluing construction described above
defines a homeomorphism from a neighborhood of zero in
$\Def(\Sigma)/\aut(\Sigma)$ to a neighborhood of $\Sigma$ in
$\ol{MW}_{d|e}$.

\subsection{Holomorphic treed quasistrips} 

Let $(X,\omega)$ be a compact or convex Hamiltonian $G$-manifold
equipped with a $G$-invariant compatible almost complex structure $J
\in \J(X,\omega)^G$ and $L_0,L_1 \subset X \qu G$ compact Lagrangian
submanifolds.  We suppose that the $L_i, i=0,1$ are equipped with
metrics induced from $G$-invariant metrics on $\ti{L}_i, i = 0,1$, and
Morse-Smale functions $F_i \to \R$.  Denote by $\ti{F}_i : \ti{L}_i
\to X$ the lifts of $F_i$ to $\ti{L}_i$.

\begin{definition}  A {\em holomorphic treed 
quasistrip} for $L_0,L_1 \subset X \qu G$, $J \in \J(X)^G$ and $H \in
  C^\infty_c([0,1] \times X)^G$ consists of a treed strip $\Sigma$, a
  continuous map $u : \Sigma \to X$ such that (i) on each strip, $u$
  is $(J,H)$-holomorphic strip with boundary in $(\ti{L}_0,\ti{L}_1)$;
  (ii) on each disk connecting to $\R \times \{ j \}$, $u$ is a
  $J$-holomorphic disk with boundary in $\ti{L}_j$, and (iii) on each
  edge connecting to $\R \times \{j \}, j = 0,1$, $u$ is a gradient
  trajectory of $\ti{F}_j$ on $\ti{L}_j$.
An {\em isomorphism} of treed quasistrips $u_j : \Sigma_j \to X, j =
0,1$ is a morphism of treed strips $\phi: \Sigma_0 \to \Sigma_1$
together with an element $g \in G$, such that $g u_0 = \phi^* u_1$.  A
treed quasistrip is {\em stable} if it has no automorphisms (that is,
each strip or disk on each $u$ is constant has at least three nodal or
marked points) and each node connecting two edges maps to a critical
point.
\end{definition}  

See Figure \ref{treedtraj}, in which the dotted line indicates a
broken trajectory.  For each treed disk we have a metric tree
$\Sigma_{(1)}$ obtained by forgetting the disk and strip components.
As in the construction of the Fukaya algebra we allow perturbation
systems $F_* = (F_{n,v} \in C^\infty(UW_{n,v} \times (L_0 \cup L_1)))$
depending on $\Sigma_{(1)}$.  We require that $F_{n,v}$ is equal to
$F$ on the complement of a compact subset of the union of open edges.
Let $\ol{MW}_{d|e}(L_0,L_1;H,F_*)$ denote the moduli space of
isomorphism classes of finite energy stable treed perturbed
holomorphic quasistrips with boundary in $(\ti{L}_0,\ti{L}_1)$ with
$d$ left resp. $e$ right markings.

\begin{figure}[h]
\begin{picture}(0,0)%
\includegraphics{coupledtraj.pstex}%
\end{picture}%
\setlength{\unitlength}{4144sp}%
\begingroup\makeatletter\ifx\SetFigFontNFSS\undefined%
\gdef\SetFigFontNFSS#1#2#3#4#5{%
  \reset@font\fontsize{#1}{#2pt}%
  \fontfamily{#3}\fontseries{#4}\fontshape{#5}%
  \selectfont}%
\fi\endgroup%
\begin{picture}(2059,2721)(1789,-5019)
\put(3514,-2867){\makebox(0,0)[lb]{{{$x_{0,3}$}%
}}}
\put(2847,-4980){\makebox(0,0)[lb]{{{$x_{1,1}$}%
}}}
\put(2134,-2378){\makebox(0,0)[lb]{{{$x_{0,1}$}%
}}}
\put(2825,-2644){\makebox(0,0)[lb]{{{$x_{0,2}$}%
}}}
\end{picture}%
\caption{A holomorphic treed strip}
\label{treedtraj}
\end{figure} 

Given a holomorphic quasistrip $u$ we denote by $D_u$ the
linearization of the map
\begin{multline} \cF_u: \Def_\Gamma(\Sigma) \oplus \Omega^0(\ti{\Sigma}_{(2)},u_2^*
TX, (\partial u_2)^* T (\ti{L}_0 \sqcup \ti{L}_1))_{1,p,\alpha} \oplus
\Omega^0(\ti{\Sigma}_{(1)},u_1^* T (\ti{L}_0 \sqcup
\ti{L}_1))_{1,p,\alpha} \\ \to \Omega^{0,1}(\ti{\Sigma}_{(2)},u^*
TX)_{0,p,\alpha} \oplus \Omega^1(\ti{\Sigma}_{(1)},u_1^* T (\ti{L}_0
\sqcup \ti{L}_1))_{0,p,\alpha} \oplus \bigoplus_{j=1}^m T_{u(w^\pm_j)}
(I_{\delta(j)}) \end{multline}
where $T(\ti{L}_0 \sqcup \ti{L}_1)$ denotes either boundary conditions
in $T \ti{L}_0$ or $T \ti{L}_1$, depending on the boundary component,
$I_{\delta(j)} = \ti{L}_0,\ti{L}_1$ or $\ti{L}_0 \cap \ti{L}_1$
depending on the type of node, given by $\xi \mapsto \cT_u(\xi)^{-1}
\olp_{J,H} \exp_u(\xi)$ on the strip components of dimension $2$, by
the Cauchy-Riemann operator on the disk components, the gradient
operator on the one-dimensional components, and the difference of
evaluation maps at the nodes $w^\pm_j$, see \eqref{diffs}.  If $H$ is
chosen so that the set of generalized intersection points
$\cI(L_0,L_1;H)$ of \eqref{genint} is transverse, then $D_u$ is
Fredholm, by the discussion in Section \ref{floer}.  We say that $u$
is {\em regular} if $D_u$ is surjective.  Let
$MW_{d|e,\Gamma}^{\reg}(X,L_0,L_1;H)$ denote the moduli space of
regular stable treed quasistrips with the combinatorial type $\Gamma$.
Then $MW_{d|e,\Gamma}^{\reg}(X,L_0,L_1;H)$ is a smooth manifold, with
tangent space at $u$ equal to the kernel of $D_u$ modulo the subspace
generated by $\g$ and $\aut(\Sigma)$, by an application of the
implicit function theorem for Banach spaces similar to that of Theorem
\ref{trajreg}.

As in the previous section, choices of perturbations used to define
the spaces $ \ol{MW}_{d'|e'}(L_0,L_1), \ol{MW}_i(L_0), \ol{MW}_j(L_1)
$ for $d'< d, e' <e$ and all $i,j$ induce perturbations
$F_{n,v}^{pre}$ on $\ol{MW}_{d|e}(L_0,L_1)$ in a neighborhood of the
image of the strata
$$ \ol{MW}_{d_0|e_0}(L_0,L_1)_0 \times_{\cI(L_0,L_1)} \ol{MW}_{d_1|e_1}(L_0,L_1)_0 $$
and
$$\ol{MW}_{d - i + 1|e}(L_0,L_1)_0 \times_{\crit(F_0)}
\ol{MW}_i(L_0)_0, \quad \ol{MW}_{d|e - i + 1}(L_0,L_1)_0 \times_{\crit(F_1)}
\ol{MW}_i(L_1)_0 $$ 
under the gluing construction.  These extend to a system of
perturbations over all of $\ol{MW}_{d'|e'}(L_0,L_1)$, since the space
of perturbations is convex.

\begin{theorem} \label{pairreg} 
Suppose that every stable holomorphic disk with boundary in
$\ti{L}_0,\ti{L}_1$ is regular, and a Hamiltonian perturbation $H$ has
been chosen so that every quasistrip is regular.  For any $d,e,\mdeg$,
given a system of perturbations $F_*$ for strata with $d' < d, e' < e,
E(\mdeg') < E(\mdeg)$ extending the given pairs on the semi-infinite
ends such that every holomorphic treed strip of class $\mdeg' $ is
regular, there exists a comeager subset of perturbations
$\PP^{\reg}(L_0,L_1)$ in $C^\infty_b(UW_{d+e+1,v} \times (L_0 \sqcup
L_1))$ such that if $F_1 \in \PP^{\reg}(L_0,L_1)$ and $\rho$ is a
cutoff function on $UW_{d+e+1,v}$ with sufficiently large compact
support on the union of open edges, every holomorphic treed quasistrip
$u: \Sigma \to X$ for $F_{n,v} := F_{n,v}^{pre} + \rho F_1$ with
homotopy class $\mdeg$ is regular.
\end{theorem}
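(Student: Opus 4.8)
The proof follows the pattern of Lemma \ref{intreg} and Theorem \ref{regpert}: construct a universal moduli space, show it is a Banach manifold by a surjectivity argument for the universal linearized operator, apply Sard--Smale, and pass from $C^l$ to $C^\infty$ perturbations by a density argument. The plan is the following. Fix $p > 2$ and $\alpha > 0$ sufficiently small, write $\PP := C^\infty_b(UW_{d+e+1,v}\times(L_0\sqcup L_1))$ and $\PP^l := C^l_b(UW_{d+e+1,v}\times(L_0\sqcup L_1))$, and for each $l \geq 1$ introduce the universal moduli space
$$ MW^{\on{univ}}_{d|e}(L_0,L_1) = \left\{ (F_1,u,(w_j^\pm)_{j=1}^m)\ \Big|\ F_1 \in \PP^l,\ u \text{ solves the equations for } F_{n,v}^{\pre}+\rho F_1,\ u(w_j^+)=u(w_j^-)\right\} $$
of triples where $u$ is $(J,H)$-holomorphic on the strip components, $J$-holomorphic on the disk components, a perturbed gradient trajectory of $\ti{F}_{n,v}$ on the edges, and the evaluation maps match at the nodes. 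As in the proof of Theorem \ref{regpert} this is the zero set of a Fredholm section of a Banach bundle over a chart $(h_1,\xi)\mapsto(F_1+h_1,\exp_u\xi)$, and the key point is that the universal linearized operator --- which on the edges acquires the additional term $((\rho h_1)^\#(u))^{0,1}$ as in \eqref{univlin} --- is surjective.

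For surjectivity I would work component by component. By the standing hypothesis every stable holomorphic disk with boundary in $\ti{L}_0$ or $\ti{L}_1$ is regular, so the linearized Cauchy--Riemann operator is already surjective on each disk component; by the standing hypothesis on $H$ (i.e. Lemma \ref{intreg}) the linearized operator is surjective on each strip component, where one uses that $\rho$ is supported on the open edges so the strip components carry no extra perturbation. It then remains to make the node-matching conditions and the gradient flow transverse, and for this I would repeat the argument in Theorem \ref{regpert}: for each segment $S_i=[\eps_i^-,\eps_i^+]$ a perturbation supported on $S_i$ realizes, under the linearized gradient operator, arbitrary endpoints in $T_{u(\eps_i^\pm)}L_j$, so the image of $T_{u(\eps_i^-)}L_j\oplus T_{u(\eps_i^+)}L_j\oplus\g$ at the nodes lies in the image of $D_u$; keeping in mind that each node is contained in two components, an induction beginning with the semiinfinite edges carrying the markings shows the extra $\g$-factor is in the image as well, so $D_u$ --- hence the universal operator --- is surjective. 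Together with unique continuation for $D_u^*$ this makes $MW^{\on{univ}}_{d|e}(L_0,L_1)$ a $C^l$-Banach manifold.

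Restricting to the subset of index at most $d$ and applying Sard--Smale (valid for $l \geq d$) to the projection $MW^{\on{univ}}_{d|e}(L_0,L_1)\to\PP^l$ gives a comeager set $\PP^{l,\reg}(L_0,L_1)$ of regular values. Finally, to pass to smooth perturbations I would run the standard argument (c.f. \cite[p.~53]{ms:jh} and the last paragraph of the proof of Lemma \ref{intreg}): for each $C>0$ the set $\PP^{C,l,\reg}(L_0,L_1)\subset\PP^l$ resp. $\PP^{C,\reg}(L_0,L_1)\subset\PP$ of perturbations for which every treed quasistrip $u$ with $\sup|\d u(s,t)e^{\alpha|s|}|<C$ is regular is open and dense --- the relevant family of such $u$ is compact, since exponential decay (Lemma \ref{expdecay}) rules out bubbling at the strip ends, energy quantization (Lemma \ref{cleanquant}) together with asphericity rules out bubbling on the strip components, and the Morse--Smale condition bounds the number of edge segments --- and $\PP^{\reg}(L_0,L_1):=\bigcap_{C}\PP^{C,\reg}(L_0,L_1)$ is therefore comeager. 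I expect the surjectivity step to be the main obstacle: one must put the Lie-algebra direction $\g$ at each node into the image of $D_u$ while the perturbation $\rho F_1$ is constrained to vanish near the boundary strata where regularity is already inherited inductively, and somewhere injectivity (Lemmas \ref{someinj}, \ref{somewhere}) is needed precisely to make the inductive argument over components go through; the bookkeeping of which node lies on which components is the delicate part, though it is entirely parallel to the unquilted case treated in Theorem \ref{regpert}.
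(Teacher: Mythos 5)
Your proposal is correct and follows essentially the same route as the paper, which simply defers to the argument of Theorem \ref{regpert}: regularity is already assumed on the disk and strip components, and the only new point is to make the evaluation maps at the nodes joining edges to disks or strips transverse by choosing generic (sufficiently large) perturbations on the open edges, then applying Sard--Smale for $C^l$ perturbations and a density argument for smooth ones. One small caveat: somewhere injectivity (Lemmas \ref{someinj}, \ref{somewhere}) is not needed for the edge-induction itself --- it enters only through the standing hypothesis that $H$ was chosen as in Lemma \ref{intreg} --- so your closing worry there is unfounded.
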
 
 
\begin{proof} As in Theorem \ref{regpert}; the only issue is to make
the evaluation maps at the nodes connecting edges with disks
transverse. This holds for generic perturbations $F_1$ by Sard-Smale,
by choosing sufficiently large perturbations on the interiors of the
edges so that the evaluation maps at the endpoints are submersions,
for perturbations of class $C^l$, and then for smooth perturbations by
density.  \end{proof}

\begin{theorem}  \label{mwde}  Suppose that $L_0,L_1$ are such 
that all holomorphic disks with boundary in $\ti{L}_0,\ti{L}_1$ are
regular; a compactly supported Hamiltonian perturbation $H$ has been
chosen as above so that all $(J,H)$-holomorphic strips are regular;
and perturbations $F_{n,v}$ have been chosen inductively as above so
that all treed quasistrips are regular.  Then
$\ol{MW}_{d|e}(L_0,L_1)_1$ is a compact one-manifold with boundary
given by the union of broken treed trajectories
$$ \ol{MW}_{d_0|e_0}(L_0,L_1)_0 \times_{\cI(L_0,L_1)} \ol{MW}_{d_1|e_1}(L_0,L_1)_0 $$
and configurations
$$\ol{MW}_{d - i + 1|e}(L_0,L_1)_0 \times_{\crit(F_0)}
\ol{MW}_i(L_0)_0, \quad \ol{MW}_{d|e - i + 1}(L_0,L_1)_0 \times_{\crit(F_1)}
\ol{MW}_i(L_1)_0 $$ 
of a treed strip and a treed disk.
\end{theorem}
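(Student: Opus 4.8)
The plan is to mirror, and amalgamate, the proofs of Theorem \ref{bound} (untreed Floer strips) and Theorem \ref{mwbound} (treed quasidisks for a single Lagrangian): Theorem \ref{mwde} is the common generalization, and all of the analytic ingredients have already been assembled. Smoothness of the strata is essentially in hand: by the regularity hypotheses and the implicit function argument of Theorem \ref{trajreg}, adapted to $MW_{d|e,\Gamma}^{\reg}(X,L_0,L_1;H)$ as in the preceding subsection, the open stratum $MW_{d|e}(L_0,L_1)_1$ is a smooth one-manifold and each of the listed codimension-one strata is a smooth zero-manifold; so the content is compactness plus gluing.

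First I would establish compactness. On the index-$1$ component the energy is bounded (it depends only on the homotopy class), so by Theorem \ref{trajcompact} together with the treed refinement used in Theorem \ref{compactthm}, any sequence Gromov-converges after passing to a subsequence: the number of disk bubbles is bounded by energy quantization for holomorphic disks (McDuff--Salamon \cite[4.1.4]{ms:jh}), the number of broken gradient segments by $\dim X$ via the Morse--Smale condition, energy quantization for holomorphic strips with clean intersection boundary conditions is Lemma \ref{cleanquant}, and exponential decay (Lemma \ref{expdecay}) forces bubbles to connect; asphericity and convexity exclude sphere bubbling and escape to infinity. Hence only finitely many combinatorial types occur and the limit of a sequence in $\ol{MW}_{d|e}(L_0,L_1)_1$ has index at most one. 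A dimension count then shows the limit either lies in $MW_{d|e}(L_0,L_1)_1$ or has exactly one node of infinite type, which is either a Floer-type node at a point of $\cI(L_0,L_1)$, giving $\ol{MW}_{d_0|e_0}(L_0,L_1)_0 \times_{\cI(L_0,L_1)} \ol{MW}_{d_1|e_1}(L_0,L_1)_0$, or a broken gradient segment along $\R \times \{j\}$ attached to a bubbled-off treed disk at a critical point of $F_j$, for $j = 0$ or $1$, giving the two remaining fibered products. Configurations with a disk node or a finite edge collapsed to length zero are fake boundary, interior to $\ol{MW}_{d|e}(L_0,L_1)$, exactly as in Theorems \ref{bound} and \ref{mwbound}.

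Second I would prove the gluing statement, namely that each listed configuration $u$ bounds a unique arc in $MW_{d|e}(L_0,L_1)_1$. For a collection $\delta$ of gluing parameters one builds a pre-glued approximate solution $G^{\on{approx}}_\delta u : \Sigma^\delta \to X$ from $u$ using cutoff functions on the necks, as in the proof of Theorem \ref{bound}; on a Floer-type neck one uses the identification with a strip $[-2S,2S]\times[0,1]$ and the $\delta$-dependent weighted norms of \cite[(5.50)]{ab:ex}, on a collapsing gradient edge the weighted norms of \eqref{weight} on $\R$. One then constructs an approximate right inverse $T^\delta_u$ to $D_{G^{\on{approx}}_\delta u}$, estimates $\Vert \cF_{G^{\on{approx}}_\delta u}(0) \Vert \le C e^{-S(\alpha_0 - \alpha)}$ via the exponential decay constant $\alpha_0$ of Lemma \ref{expdecay}, and applies the implicit function theorem for Banach manifolds to obtain a genuine solution $G_\delta(u) = \exp_{G^{\on{approx}}_\delta u}(\xi)$ for $S$ large. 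The $G$-action is free and proper on the relevant moduli spaces, and the residual $\R$-reparametrization of the strip direction is free and proper on non-constant trajectories, so all constructions descend to the quotient, as in Proposition \ref{mfd} and Theorem \ref{trajreg}. Uniqueness in the implicit function theorem then shows each configuration occurs as an endpoint of a unique one-parameter family, completing the identification of $\partial \ol{MW}_{d|e}(L_0,L_1)_1$.

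The hard part will be the gluing: one must run a single gluing theorem covering simultaneously the three types of node --- Floer-strip necks at clean rather than transverse intersections, collapsing gradient-segment necks, and disk--disk necks --- and check the uniform quadratic estimates near clean-intersection ends, as in \cite[p.~89--96]{ab:ex} together with the corrections of \cite[Section 4.3]{mau:gluing}, which rely on the weighted Sobolev norms controlling the $C^0$-norm uniformly in $\delta$, valid because the cone angle of the metric on $\Sigma^\delta$ is uniformly bounded below. Every ingredient is available in \cite{ab:ex}, \cite{fooo}, \cite{mau:gluing} and in the proofs of Theorems \ref{bound} and \ref{mwbound}; the remaining work is organizational. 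Orientations and the signs of the boundary inclusions are handled separately as in Proposition \ref{orient} and are not part of this statement.
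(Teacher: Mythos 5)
Your proposal follows essentially the same route as the paper: compactness via Gromov compactness combined with the clean-intersection energy quantization and exponential decay of Lemmas \ref{cleanquant} and \ref{expdecay} (with convexity and the compactly supported perturbation controlling behavior at infinity), and gluing via a preglued approximate solution, uniformly bounded approximate right inverse, uniform quadratic estimates, and the implicit function theorem, combining the clean-intersection strip gluing of Theorem \ref{bound} with disk and Morse-trajectory gluing. The structure, hypotheses invoked, and references are the ones the paper itself uses, so the argument is correct as an outline of the intended proof.
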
 

\begin{proof}   Compactness is Gromov compactness as in \cite{ms:jh}
combined with compactness for Floer trajectories for Lagrangian pairs
with clean intersection, discussed in Section \ref{floer}, and for
broken gradient trajectories; note that compact support of the
Hamiltonian perturbation $H$ allows us to apply the maximum principal
outside of a compact set given by the convexity condition.  Let $u$
represent a regular point in $\ol{MW}_{d|e}(L_0,L_1)$.  Define the
{\em preglued treed holomorphic strip} $G_\delta^{\on{approx}}(u):
\Sigma^\delta \to X$ by combining the gluing procedures for Morse
trajectories for pairs of Lagrangians with clean intersections
(discussed above in the proof of Theorem \ref{bound}) and gluing for
disks (see Abouzaid \cite{ab:ex}) and gluing for Morse trajectories
(see Schwarz \cite{sch:morse}).  The proofs of existence of a
uniformly bounded right inverse, error estimate and uniform quadratic
bound are the same as in those situations, since the third term in
$\cF_{G_\delta^{\on{approx}}(u)}$ does not involve the gluing
parameter.  By the implicit function theorem there exists a unique
solution $(\zeta,\xi)$ to $\cF_{G_\delta^{\on{approx}}(u)}(\zeta,\xi))
= 0$ in a sufficiently small neighborhood of $0$ in
$\Def_\Gamma(\Sigma) \oplus \Omega^0(\Sigma^\delta_{(2)},
(G_\delta^{\on{approx}}(u))^* TX)_{1,p,\delta,\alpha} \oplus
\Omega^0(\Sigma^\delta_{(1)}, (G_\delta^{\on{approx}}(u))^* T (\ti{L}_0
\sqcup \ti{L}_1))_{1,p,\delta,\alpha}$, and we set $G_\delta(u) :=
\exp_{G_\delta^{\on{approx}}(u)}(\xi): \Sigma^{\zeta,\delta} \to X$,
where $\cF_{G_\delta^{\on{approx}}(u)}$ is as in the proof of Theorem
\ref{bound} but with the perturbed Cauchy-Riemann operator on the
strips.
\end{proof} 

\begin{remark} In the proof above we required weighted Sobolev spaces on 
$\Sigma_{(1)}$ because the function $\ti{F}$ is Morse-Bott but not
  Morse.  However, there is bijection between gradient trajectories of
  $\ti{F}$ modulo $G$ and gradient trajectories of $F$, and by working
  with gradient trajectories of $F$ instead of $\ti{F}$ the use of
  weighted Sobolev spaces on $\Sigma_{(1)}$ may be avoided.
\end{remark}

Suppose that every element of $\ol{MW}_{d|e}(L_0,L_1)$ is regular.
Orientations are constructed by choosing for each such end an
orientation on the disk one with marking, and boundary given by a path
from $TL_0$ to $TL_1$, see \cite{orient}.  Define operations
$$ \mu_{d|e}: CQF(L_0)^{\otimes d} \otimes CQF(L_0,L_1) \otimes
CQF(L_1)^{\otimes e} \to CQF(L_0,L_1) $$
by counting treed Floer trajectories;
\begin{multline}
\bra{x_{0,1}} \otimes \ldots \otimes \bra{x_{0,d}} \otimes \bra{x}
\otimes \bra{x_{1,1}} \otimes \ldots \otimes \bra{x_{1,e}} \\ \mapsto
\sum_{ [u] \in
\ol{MW}_{d|e}(x_{0,1},\ldots,x_{0,d},x,x_{1,1},\ldots,x_{1,e},y)_0}
(-1)^{\heartsuit + \diamondsuit} \eps(u) \Hol_{L_0,L_1}(u) q^{A(u)}
\bra{y} \end{multline}
where $\diamondsuit$ is defined in \eqref{change}

\begin{theorem} Suppose that perturbations have been chosen
as in Theorem \ref{mwde}.  Then the maps $(\mu_{d|e})_{d,e \ge 0}$
induce on $CQF(L_0,L_1)$ the structure of a $\Z_2$-graded \ainfty
$(CQF(L_0), CQF(L_1))$-bimodule.
\end{theorem}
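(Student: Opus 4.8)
The plan is to read the \ainfty bimodule axiom off the boundary structure of the one-dimensional moduli spaces $\ol{MW}_{d|e}(L_0,L_1)_1$, in exact parallel with the way Theorem~\ref{ainfty} derives the \ainfty associativity relation from Theorem~\ref{mwbound}. By Theorem~\ref{pairreg} I may assume a compatible system of perturbations has been chosen so that every stable treed quasistrip is regular, so that Theorem~\ref{mwde} applies: for each tuple $x_{0,1},\ldots,x_{0,d},x,x_{1,1},\ldots,x_{1,e},y$ the moduli space $\ol{MW}_{d|e}(x_{0,1},\ldots,x_{0,d},x,x_{1,1},\ldots,x_{1,e},y)_1$ is a compact oriented one-manifold with boundary, so its signed count of boundary points vanishes. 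The three families of boundary configurations in Theorem~\ref{mwde}---a treed disk with boundary in $\ti{L}_0$ bubbling off along a left edge of a shorter treed strip, a treed disk with boundary in $\ti{L}_1$ bubbling off along a right edge, and a treed strip breaking into two treed strips along a point of $\cI(L_0,L_1)$---match term by term the three sums in the bimodule relation: the inner composition $\mu_{0,i}$ on $CQF(L_0)$ (the case $i=0$ producing the curvature $\mu_0(L_0)$ from a Maslov-two disk in $\ti{L}_0$), the inner composition $\mu_{1,j}$ on $CQF(L_1)$, and the inner bimodule operation $\mu_{i|j}$ (with $i=j=0$ giving the differential $\mu_{0|0}$). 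Additivity of the area $A(u)$ and multiplicativity of the holonomies $\Hol_{L_0,L_1}$ and $\Hol_{L_k}$ over broken configurations then yield the relation over $\Lambda_0$ (or $\Lambda$), up to sign.

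For orientations I would proceed as in Proposition~\ref{orient}: the relative spin structures on $L_0,L_1$ orient the determinant lines of the operators $D_u$ by deforming each to a complex-linear operator plus tangent spaces of the Lagrangians, the relative spin condition guaranteeing independence of the deformation, and one combines this with the orientation of the base $\ol{MW}_{d|e}$ coming from its identification with $\ol{MW}_{d+e+1}$. As in Proposition~\ref{orient}, the breaking stratum is included orientation-preservingly, whereas a bubbling treed disk is included orientation-preservingly on the $\ti{L}_0$ side and orientation-reversingly on the $\ti{L}_1$ side; this asymmetry is exactly what is needed to match the two different sign patterns in the first and second sums of the bimodule axiom.

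The main obstacle, as for the \ainfty algebra in Theorem~\ref{ainfty}, is the sign bookkeeping. I would check that the sign $(-1)^{\heartsuit+\diamondsuit}$ built into the definition of $\mu_{d|e}$---where $\diamondsuit$ is the shift of \eqref{change}, encoding that the bimodule element carries unreduced rather than reduced degree in the convention of Section~\ref{bimod}---combines with the gluing-map signs (which, by construction, coincide with the signs of the corresponding inclusions of boundary facets of $\ol{M}_{d+e+1}$, computed as in \eqref{gluemap}) and with the orientation signs of the previous paragraph to produce precisely the reduced-degree signs $(-1)^\aleph$ appearing in each of the three sums. This would be carried out family by family, as in Theorem~\ref{ainfty} and in the morphism relation \eqref{morphism}; the right-boundary bubbling terms in particular use the shift $\diamondsuit$ to absorb the $\sum_j(|a_{1,j}|+1)$ contributions from the right tensor factors. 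I expect this computation to be routine but lengthy, and the write-up would largely reduce it to the sign analyses already recorded for $CQF(L)$.

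Finally, I would record strict unitality of the bimodule---$\mu_{1|0}(1_{L_0},m)=\pm m$, $\mu_{0|1}(m,1_{L_1})=\pm m$, and vanishing of every higher operation with a $1_{L_k}$ insertion---by the mechanism used for $CQF(L)$ in Theorem~\ref{ainfty}: a gradient segment limiting on the index-zero critical point $x_{\min}$ of $F_k$ is isolated only when it attaches to a constant disk carrying three boundary marked points. The $\Z_2$-grading is the one induced by the $N=2$ Maslov covers on $X$ and $X\qu G$, and since all signs depend only on the $\Z_2$-reductions of the degrees no $\Z$-grading is required.
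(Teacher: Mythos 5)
Your proposal is correct and follows essentially the same route as the paper: the bimodule relation is read off from the boundary description of $\ol{MW}_{d|e}(L_0,L_1)_1$ in Theorem \ref{mwde}, with additivity of area, multiplicativity of holonomies, and a sign analysis equivalent to that of Theorem \ref{ainfty} (incorporating the shift $\diamondsuit$ of \eqref{change}). The unitality discussion is extra material not claimed in the statement, but it does not affect the argument.
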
 

\begin{proof}  By the previous Theorem \ref{mwde} and the definition 
in \eqref{morphism}.  The sign computation is equivalent to that in
Theorem \ref{ainfty}.
\end{proof} 

The \ainfty bimodule $CQF(L_0,L_1)$ is independent of the choice of
Hamiltonian perturbation $H$, metrics $B_j$ on $\ti{L}_j, j = 0,1$,
and perturbation system $F_*$ used to construct it, up to homotopy of
\ainfty bimodules over $\Lambda$.  (Independence of $F_*$ holds over
$\Lambda_0$, while independence from $H$ holds only over $\Lambda$.)
However, we will not give the argument, since it is very similar to
the \ainfty bimodule map given in the next section.  Note we are
assuming that all holomorphic disks are regular, and $H$ is such that
$(J,H)$-holomorphic strips are regular.

\begin{lemma} \label{mu00} 
Suppose that $J \in \J(X)^G$ is such that every non-constant stable
holomorphic disk with boundary in $L$ is regular and has positive
Maslov index.  Then for sufficiently small perturbations we have
$ \mu_{0,0}^2 = 0 $
and hence the quasimap Floer cohomology of the pair 
$$ HQF(L_0,L_1) := H(\mu_{0,0}) $$
is well-defined.
\end{lemma} 

\begin{proof}  The proof is similar to that of Proposition \ref{mu0}.   The \ainfty relation  
$\mu_{0,0}^2(\cdot) = \mu_{1,0}(\mu_0^{L_0}(1), \cdot) \pm
  \mu_{0,1}(\cdot, \mu_0^{L_1}(1))$ and the fact that $\mu_0^{L_j}(1)$
  is a multiple of the generator corresponding to the maximum of the
  Morse function implies that $\mu_{0,0}^2(\cdot) = 0$.
\end{proof} 

\section{\ainfty-bimodule isomorphism in the case of an equal pair}
\label{bimoduleiso}

We show that the \ainfty algebra constructed in Section \ref{algebra}
for a Lagrangian $L$, considered as an \ainfty bimodule over itself,
is isomorphic to the \ainfty bimodule constructed in Section
\ref{bimodule}, for the pair $(L,L)$.

\subsection{Quilted strips and quilted treed quasistrips} 

A {\em $(d,e)$-marked quilted strip} is the same as a $(d,e)$-marked
strip, that is, a strip with markings on the boundary, except that
isomorphisms do not include translations.  Equivalently, a
$(d,e)$-marked quilted strip is a $(d,e)$-marked strip $(\R \times
[0,1], z_{0,1},\ldots, z_{0,d}, z_{1,1}, \ldots, z_{1,e})$ with an
additional interior marking at some point $y = (s,1/2)$, and an
isomorphism from $(\Sigma, z_{0,1},\ldots, z_{0,d}, z_{1,1}, \ldots,
z_{1,e},y)$ to $(\Sigma', z_{0,1}',\ldots, z_{0,d}', z_{1,1}', \ldots,
z_{1,e}',y')$ is an isomorphism $\Sigma \to \Sigma'$ mapping $z_{i,j}$
to $z_{i,j}'$ and $y$ to $y'$.  Geometrically the interior marking $y$
represents the point at which we ``turn on'' a Hamiltonian
perturbation, and we will draw a quilted strip by drawing a shaded
region beginning at $y$ and extending to the right.  

\begin{figure}[ht]
\includegraphics[height=.5in]{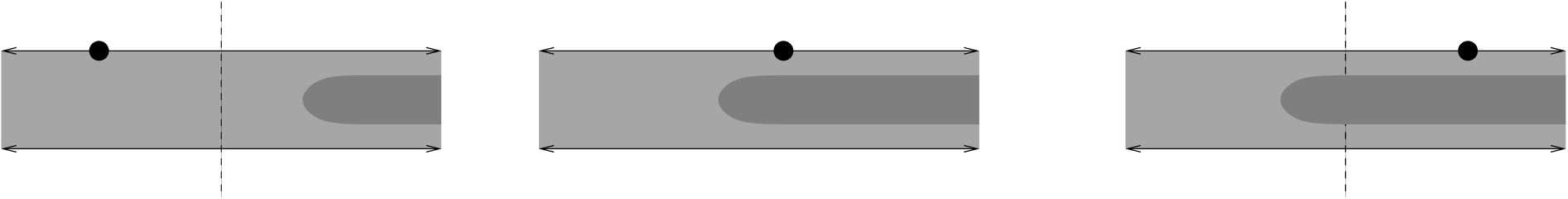}
\caption{Quilted strips}
\label{quiltedstrips}
\end{figure}

The space $M_{d|e,1}$ of $(d|e)$-marked quilted strips has a
compactification $\ol{M}_{d|e,1}$ allowing disk bubbles to develop on
the boundary and strips to bubble off on both ends.  In the pictures,
we find it convenient to indicate the quilted strip (as opposed to the
strip bubbles) by adding a shaded region, signifying that the
Cauchy-Riemann equation is given a Hamiltonian perturbation on this
region.  See Figure \ref{quiltedstrips} for the moduli space of
quilted $(1,0)$-marked strips.  A {\em quilted component} of the
quilted strip is a component with some shading, otherwise the
component is {\em unquilted}.  Each quilted component has either one
or two {\em quilted ends}, that is, ends with some shaded region.  The
cells of $\ol{M}_{d|e,1}$ are in one-to-one correspondence with
expressions in $d$ variables $x_1,\ldots,x_d$, a variable $m$, and
variables $y_1,\ldots,y_e$, and a function $f$ obtained by adding
parentheses to the expression $x_1\ldots x_d m y_e \ldots y_1 $ and
then adding $f(\ldots )$ around some expression including $m$.  For
example, for the case $(d,e) = (1,0)$ the possible expressions are $
f((x_1 m)), f(x_1 m), x_1 f(m)$.  This parametrization of cells is
similar to that of the associahedron, but the combinatorial types
correspond to $d + e + 2$-leaved trees together with a choice of
vertex on the path connecting the $0$-th to the $d + 1$-st leaf.

As before, there is also a treed version.  A {\em $(d,e)$-marked
  quilted treed strip} is the same as a quilted strip, but now
allowing components of dimension one of possibly infinite or zero
length between disk components or between disk and strip components or
between unquilted strip components or between unquilted strip
components and quilted strip components, but not between quilted strip
components.  In addition, there are one-dimensional components
attached to the markings on the boundary, and to the point at infinity
at the left of the strip.  Two quilted treed strips are {\em
  isomorphic} if they are related by automorphism.  A quilted treed
strip is {\em stable} if it has no infinitesimal automorphisms, and
any node connecting two components of dimension one is of infinite
type, that is, attaching ends of the segments of infinite length.
Let $\ol{MW}_{d|e,1}$ denote the moduli space of stable quilted treed
strips.  The case $d = 2,e = 0$ is shown in Figure \ref{coloredtreed}.
We have not drawn the line segments connecting the strips, as this
makes the picture even more complicated; thus dotted lines in the
left-hand configurations mean that the strip has broken and is
connected by a segment of infinite length.  Each picture shows the
configuration represented by a nearby stratum. The moduli spaces
$\ol{MW}_{d|e,1}$ are complicated even for low values of $d,e$,
as can be expected from the definition of morphism of \ainfty
bimodules and the fact that we are adding trees.

\begin{figure}[ht]
\includegraphics[height=3in]{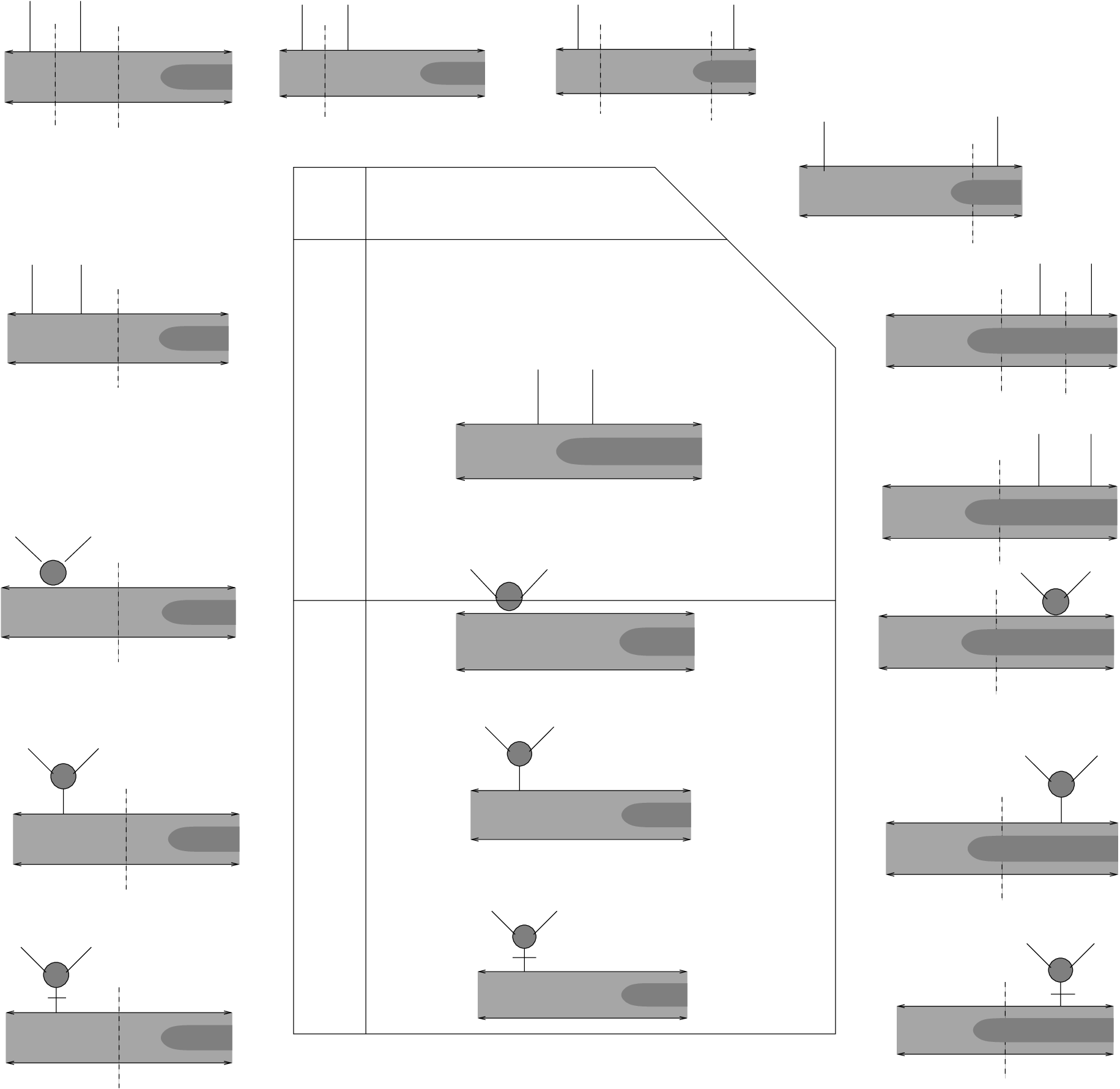}
\caption{The moduli space $\ol{MW}_{2|0,1}$ of $2|0$-marked treed quilted strips}
\label{coloredtreed}
\end{figure}

The forgetful map $\ol{MW}_{d|e,1} \to \ol{MW}_{d|e}$ has
one-dimensional fibers that are canonically oriented so that the
positive orientation corresponds to moving the quilting to the left,
and so the orientation of $\ol{MW}_{d|e}$ induces an orientation on
$\ol{MW}_{d|e,1}$.

\subsection{\ainfty bimodule morphisms} 

We construct a morphism of \ainfty bimodules $CQF(L) \to CQF(L,L)$ via
a continuation argument which counts {\em quilted treed holomorphic
  quasistrips}.  Let $H \in C^\infty_c((0,1) \times X)^G$ be a regular
perturbation for the pair $(L,L)$.  Given a quilted strip with shaded
region beginning at $s_-$ and shaded region ending at $s_0$ we
consider a Hamiltonian perturbation $H_1 = H_{1,s} \d s + H_{1,t} \d
t$ equal to $H \d t$ for $ s \gg 0$ and vanishing for $s \ll 0$.  For
any $(J,H_1)$-holomorphic $u$, we denote by $D_u$ the associated
linearized operator, using weighted Sobolev spaces on the strip-like
ends.  Let $\ti{F}_{n,v} \in C^\infty(UW_{n,v} \times \ti{L})^G$ be a
perturbation of $\ti{F}$ depending on the underlying tree
$\Sigma_{(1)}$ as above.

\begin{definition}  A (perturbed) {\em holomorphic quilted treed quasistrip} 
to $X$ consists of a quilted treed strip $\Sigma$ and a continuous map
$u: \Sigma \to X$ such that (i) on each quilted strip with one quilted
end, $u$ is a $(J,H_1)$-holomorphic map (ii) on each quilted strip
with two quilted ends, $u$ is $(J,H)$-holomorphic map (iii) on each
unquilted disk or strip, $u$ is a $J$-holomorphic map (iv) on each
edge, $u$ is a gradient trajectory of $\ti{F}_{n,v}$.  An {\em
  isomorphism} of holomorphic quilted treed quasistrips $u_j: \Sigma_j
\to X$ is an isomorphism $\phi: \Sigma_0 \to \Sigma_1$ and an element
$g \in G$ such that $\phi^* u_1 = g u_0$; in particular $\phi$ must
preserve the quilting on the quilted strip component.  A holomorphic
quilted treed quasistrip is {\em stable} if it has no automorphisms,
and every node connecting two edges maps to a critical point of $F$.
\end{definition}

See Figure \ref{continue} where the unquilted components are
represented as disks, the component where the quilted regions begins
is represented as a disk with a single strip-like end, and the quilted
components are represented by strips.

\begin{figure}[ht] 
\includegraphics[height=1in]{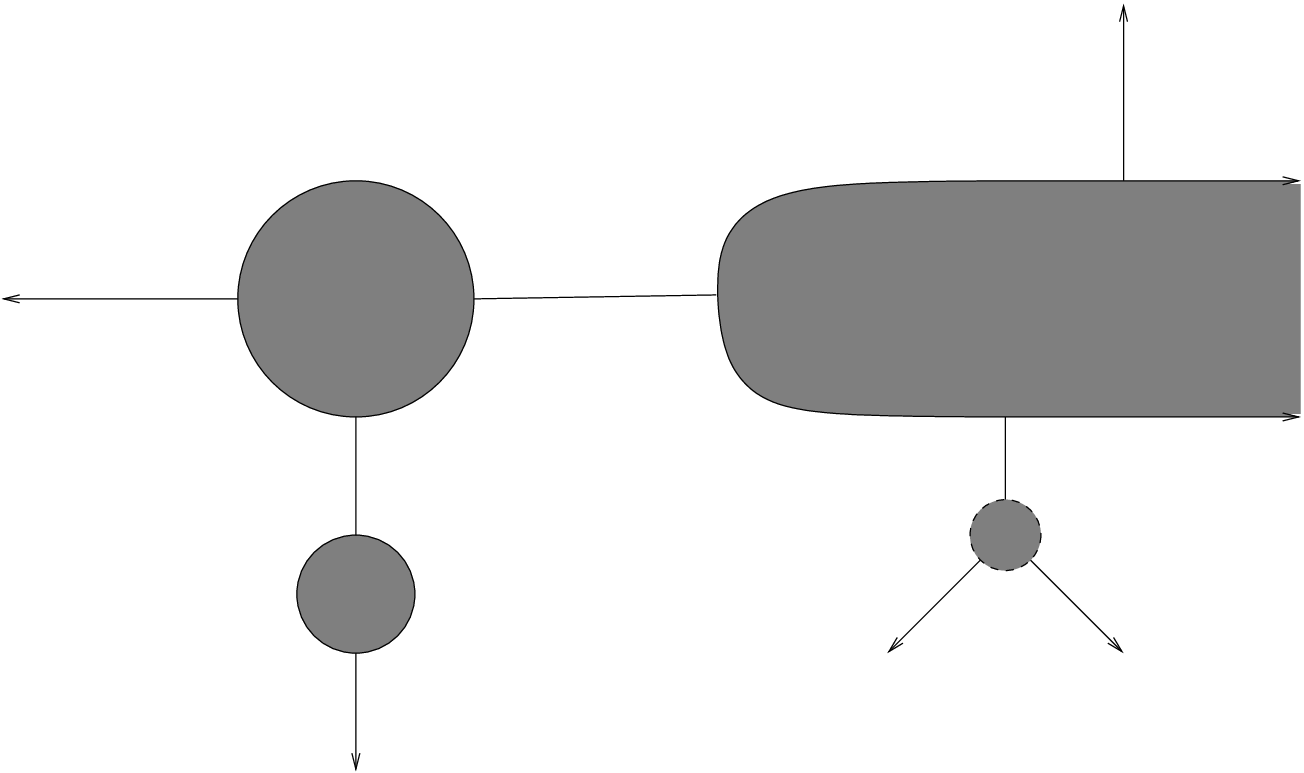}
\caption{A holomorphic quilted treed quasistrip}
\label{continue}
\end{figure} 

Let $\widetilde{\PP}(X)$ be the subset of $\Omega^1(\Sigma,
C^\infty(X)^G)$ consisting of forms $H_s \d s + H_t \d t$ with
$H_s,H_t \in C^\infty_b(X)^G$ having all derivatives bounded.  Let
$\rho \in C^\infty(\Sigma)$ be a function with $\rho(s) = 0, s \leq 0$
and $\rho(s) = 1, s \ge 1$.  Let $\rho_1 \in C^\infty_c(\Sigma \times
X)$ be non-zero on $[0,1] \times [0,1] \times (L_0 \cup L_1)$.  For
any element $H_1' \in \widetilde{\PP}(X)$, we form an admissible perturbation
$H_1 = \rho H \d t + \rho_1 H_1'$.  The proof of the following is
similar to that of Theorem \ref{regpert} and is omitted:

\begin{proposition}  \label{tiHreg}
There exists a comeager subset $\widetilde{\PP}^{\reg}(X,L) \subset
\widetilde{\PP}(X)$ such if $H_1' \in \widetilde{\PP}^{\reg}(X,L)$
then every $(J,H_1)$-holomorphic strip is regular.
\end{proposition}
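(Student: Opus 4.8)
The plan is to run the standard Sard–Smale transversality scheme, exactly as in the proof of Lemma \ref{intreg}, keeping track of the single new feature: the Hamiltonian perturbation is now a one-form $H_1 = \rho H\,\d s\!\cdot\!0 + \rho H\,\d t + \rho_1 H_1'$ on the strip, rather than a $t$-dependent function, and the ``free part'' of the perturbation is the term $\rho_1 H_1'$ with $H_1'$ ranging over $\ti P^l(X) := C^l_b$-forms in $\Omega^1(\Sigma, C^\infty_b(X)^G)$. First I would set up the universal moduli space
$$ M^{\on{univ}} = \{ (H_1', u) \ | \ u : \R \times [0,1] \to X \text{ is } (J,\rho H\,\d t + \rho_1 H_1')\text{-holomorphic with boundary in } \ti L, \ti L \} $$
inside $\ti P^l(X) \times \Map(\Sigma, X, \ti L, \ti L)_{1,p,\alpha}$, with the weighted Sobolev spaces and exponential-decay package of Lemma \ref{expdecay} and the Fredholm theory already recorded after \eqref{du}. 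The linearization of the defining section at $(H_1',u)$ in the fibre direction is
$$ (h_1', \xi) \mapsto D_u \xi + (\rho_1 h_1')^\#(u)^{0,1}, $$
and as usual it suffices to show this map is surjective; then $M^{\on{univ}}$ is a Banach manifold of class $C^l$, the projection to $\ti P^l(X)$ is Fredholm, Sard–Smale gives a comeager set of regular values, and the passage from $C^l$ to $C^\infty$ perturbations is the density argument verbatim from Lemma \ref{intreg} (writing $\ti P^{\reg}(X,L)$ as a countable intersection of the open-dense sets $\ti P^{C,\reg}$ cut out by an a-priori $C^0 e^{\alpha|s|}$-bound on $\d u$, which exhaust the moduli space because exponential decay prevents bubbling at the ends).

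The one place that needs the hypothesis ``$G$ abelian'' and the specific structure of the situation is the surjectivity of the fibrewise linearization, i.e. ruling out a nonzero cokernel element $\eta \in \Omega^{0,1}(\Sigma, u^* TX)$ with $D_u^*\eta = 0$ and $\int_\Sigma \langle (\rho_1 h_1')^\#(u), \eta\rangle = 0$ for all admissible $h_1'$. This is the argument from the proof of Lemma \ref{intreg}: by unique continuation $\eta$ cannot vanish on an open set; an admissible infinitesimal perturbation $h_1 \in C^\infty(X)^G$ is exactly one whose Hamiltonian vector field is tangent to the level sets of $\Phi$ on the free locus, and since $T_x\Phinv(c) + J_x T_x\Phinv(c) = T_xX$ at points where $G$ acts freely (this is where connectedness/abelianness of $G$ and freeness on $\ti L$ are used, together with the K\"ahler hypothesis $J \in \J(X)^G$ integrable built into the setup), one can find $h_1'$ pairing nontrivially with $\eta$ at a chosen point $u(s_0,t_0)$ and then globalize using a cutoff supported where $\rho_1 \neq 0$ and a somewhere-injectivity statement for $u$. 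The required somewhere-injectivity is Lemma \ref{someinj}/\ref{somewhere} applied to the underlying $(J, \rho H\,\d t)$-perturbed strip on the region $\{s \ll 0\}$ where the perturbation vanishes, or more simply to the composed map $u \qu G$ into $X \qu G$ on the semistable locus; this is legitimate because the free part $\rho_1 H_1'$ is supported in a compact region and the strip has genuine holomorphic behaviour off it.

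The main obstacle, and the only real content beyond citing Lemma \ref{intreg}, is confirming that the somewhere-injectivity input still applies when the perturbation is a genuine one-form $H_s\,\d s + H_t\,\d t$ rather than $H\,\d t$: the argument of Lemma \ref{someinj} is local near a value $t \in [0,1]$ and is run on the subset $\{H = 0\} \subset [0,1]$, but here $H_1$ need not vanish at any fixed $t$, so one instead restricts to the strip-like end $s \to -\infty$ where $\rho = \rho_1 = 0$ and $u$ is honestly $(J,0)$-holomorphic, applies somewhere-injectivity there, and then propagates the conclusion by unique continuation for $D_u^*\eta = 0$ (so $\eta$ is determined by its behaviour on any open set). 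I would state this reduction explicitly and otherwise refer to the proof of Theorem \ref{regpert} and Lemma \ref{intreg} for the routine Fredholm, Sard–Smale, and density steps. $\qed$
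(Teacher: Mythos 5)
Your scaffolding (universal moduli space, Fredholm setup on the weighted spaces, Sard--Smale for $C^l$ perturbations, then the density/exhaustion argument) is exactly the route the paper intends: the paper omits the proof of Proposition \ref{tiHreg} and simply points to Theorem \ref{regpert} and Lemma \ref{intreg}. However, the one step you yourself identify as ``the only real content'' is handled incorrectly. Somewhere-injectivity of $u$ on the end $s \ll 0$ is of no use for surjectivity of the universal linearization, because every admissible variation has the form $(\rho_1 h_1')^\#(u)^{0,1}$ and $\rho_1$ vanishes identically on that end (it is compactly supported in $\Sigma \times X$), so no pairing with a cokernel element $\eta$ can be produced there. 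Moreover, unique continuation for $D_u^*\eta = 0$ constrains $\eta$, not $u$: it tells you the zero set of $\eta$ has empty interior, but it cannot transport injectivity of $u$ from the unperturbed end into the region where $\rho_1$ is supported; and if, as in Lemma \ref{intreg}, you localize $h_1'$ only in the target $X$, it is precisely injectivity modulo $G$ over the support of $\rho_1$ that you would need to rule out cancellations in the integral. As written, the ``restrict to $s\ll 0$ and propagate'' maneuver does not close the argument.

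The repair is simpler than what you attempt, and is why this statement is easier than Lemma \ref{intreg}: the perturbation space $\ti{P}(X) \subset \Omega^1(\Sigma, C^\infty(X)^G)$ consists of one-forms on the strip, and a quilted strip has no translation symmetry the perturbation must respect, so you may localize $h_1'$ in the domain variables. Given $\eta \neq 0$ in the cokernel, pick $z_0$ in the open set $\{ z : \rho_1(z, u(z)) \neq 0 \}$ close to a boundary point of $[0,1]^2$, so that $u(z_0)$ lies near $\ti{L} \subset \Phinv(0)$ where $G$ acts freely and hence $T_x\Phinv(c) + J T_x \Phinv(c) = T_x X$ at $x = u(z_0)$; by unique continuation such $z_0$ can be chosen with $\eta(z_0) \neq 0$. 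Then take $h_1' = h_s \d s + h_t \d t$ with $h_s, h_t$ invariant, supported near the orbit of $u(z_0)$ in $X$, multiplied by a cutoff in $(s,t)$ concentrated near $z_0$: since the domain support is localized, no contribution from other points of the strip can cancel, and no somewhere-injectivity (indeed no integrability of $J$, so Lemmas \ref{someinj} and \ref{somewhere} are not needed for this step) enters. If instead you insist on $(s,t)$-independent Hamiltonians as in Lemma \ref{intreg}, you must prove somewhere-injectivity modulo $G$ on the support of $\rho_1$ itself, e.g.\ by running the argument of Lemma \ref{someinj} for $u \qu G$ on that region, not on the unperturbed end. With that correction, the rest of your proposal goes through and coincides with the argument the paper sketches.
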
 

The moduli space $\ol{MW}_{d|e,1}(L)$ of holomorphic quilted treed
quasistrips with boundary in $\ti{L}$ has a natural evaluation map
$$ \ev: \ol{MW}_{d|e,1}(L) \to \crit(F)^d \times \crit(F) \times
\crit(F)^e \times \cI(L,L) $$
where $\cI(L,L) \cong L \cap \phi_1(L)$ is the set of generalized
intersection points of $L$ with itself. Let
$\ol{MW}_{d|e,1}(z_1,\ldots,z_{d+e+1},\ldots, y)$ denote the moduli
space of holomorphic quilted treed disks with limits
$z_1,\ldots,z_{d+e+1},y$ along the semi-infinite edges.  Assuming that
every stable holomorphic disk is regular and $H,H_1$ have been chosen
as above, perturbations of the functions on the edges can be
constructed inductively so that every quilted treed holomorphic strip
is regular.  Consider $CQF(L)$ as an \ainfty bimodule over itself
(note the change of signs \eqref{change}), and let $CQF(L,L)$ denote
the \ainfty bimodule defined by the perturbation $H$.  Using the
trivialization of the $\Lambda$-line bundle over the paths in
$\cI(L,L)$ we let $\Hol_L(u) \in \Lambda$ denote the holonomy around
the line bundle around the boundary of $u$.  Define
\begin{multline} 
 \phi_{d|e}: CQF(L)^{\otimes d} \otimes CQF(L;\Lambda) \otimes CQF(L)^{\otimes
   e} \to CQF(L, L) \\ \bra{z_{0,1}} \otimes \ldots \otimes
 \bra{z_{0,d}} \otimes \bra{z} \otimes \bra{z_{1,1}} \otimes \ldots
 \otimes \bra{z_{1,e}} \\ \mapsto \sum_{ [u] \in
   \ol{MW}_{d|e,1}(z_{0,1},\ldots,z_{0,d},z,z_{1,1},\ldots,z_{1,e},y)_0}
 (-1)^{\heartsuit + \diamondsuit} \eps(u) \Hol_{L}(u) q^{A(u)} \bra{y}
\end{multline} 
where the cochain groups involved are defined using $\Lambda$
coefficients; the values $A(u)$ are not necessarily positive because
of the additional term in the energy-area relation \eqref{energyarea}.

\begin{proposition} \label{phi00}  Assuming perturbations have been chosen so that 
all holomorphic treed quasistrips are regular, the collection $\phi =
(\phi_{d|e})_{d,e \ge 0}$ is a morphism of \ainfty bimodules from
$CQF(L)$ to $CQF(L,L)$.  If all non-trivial holomorphic disks have
positive Maslov index then $\phi_{0|0}$ is a chain map: $\phi_{0,0}
\circ \mu_{1,L} = \mu_{1,L,L} \circ \phi_{0,0}.$
\end{proposition} 

\begin{proof}   The boundary 
components of
$\ol{MW}_{d|e,1}(z_{0,1},\ldots,z_{0,d},z,z_{1,1},\ldots,z_{1,e},y)_1$
consist of configurations where a treed strip has broken off or a
treed disk has broken off.  The former case corresponds to one of the
first two terms in \eqref{morphism} while the latter corresponds to
the last two terms.  The signs are similar to that of Theorem
\ref{ainfty}: The first term in \eqref{morphism} (in which $\mu$
appears before $\phi$) has an additional sign from coming from the
definition of the orientation on $\ol{M}_{d|e,1}$ as an $[0,1]$-bundle
over $\ol{M}_{d|e}$, so that the orientation of the fiber corresponds
to moving the quilting to the right; this means that the positive
orientation on the gluing parameter for the boundary components
corresponding to terms of the first type becomes identified with the
negative orientation on these fibers, giving rise to the additional
sign.  The degree of the morphism is zero, which causes those
contributions to the sign in \eqref{morphism} to vanish.  This leaves
the contributions from $\aleph$, which are similar to those dealt with
before and left to the reader.  The proof of the assertion on
$\phi_{0|0}$ is similar to that of Lemma \ref{mu00}: in the absence of
holomorphic disks of non-positive index, the additional terms in the
\ainfty relation vanish.
 \end{proof}

On the other hand, counting strips with given limits on the {\em
  right} defines a similar morphism of \ainfty bimodules $\psi =
(\psi_n)_{n \ge 0}$ from $CQF(L,L)$ to $CQF(L)$, by the same argument
but reading everything from right to left.

\subsection{Homotopies of morphisms of \ainfty bimodules} 
\label{twice} 

To show that the composition of $\psi \circ \phi$ is homotopic to the
identity we introduce a moduli space of {\em twice} quilted treed
quasistrips.  This means that each strip $\Sigma = \R \times [0,1]$
has two distinguished lines, represented by values $s_-,s_+ \in \R$
where the quilted region starts and where it ends; the corresponding
components of $\Sigma$ are the quilted components.  Let
$\ol{MW}_{d|e,2}$ denote the moduli space of twice quilted strips with
$d$ markings on the first resp. second boundary component.  The moduli
space $\ol{M}_{0|0,2}$ is shown in Figure \ref{twicefig}; the shading
represents the region where a Hamiltonian perturbation is allowed.
\begin{figure}[ht] 
\includegraphics[height=.5in]{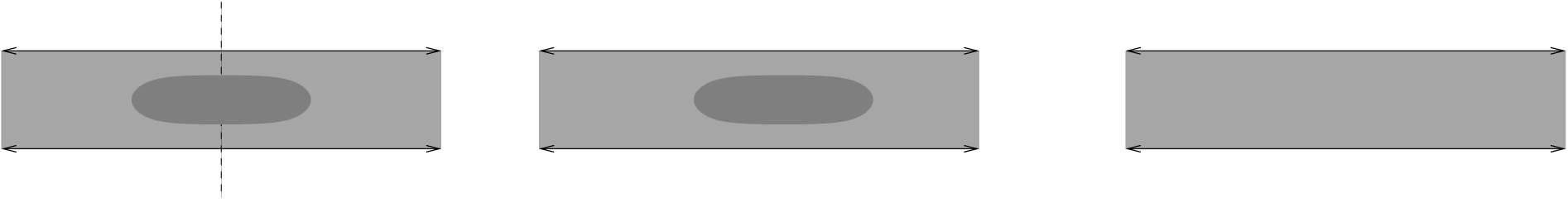}
\caption{Twice quilted quasistrips}
\label{twicefig}
\end{figure} 
As before there is a notion of {\em treed twice-quilted strip} which
allows one-dimensional segments between the disk and strip components.
Let $\ol{MW}_{d|e,2}$ denote the moduli space of stable treed
twice-quilted strips.  The reader is encouraged to draw the picture of
$\ol{MW}_{1|0,2}$ for his or herself.

The structure of a twice-quilting defines a region where the
Hamiltonian perturbation is non-zero.  Suppose we are given a
compactly supported Hamiltonian perturbation $H \in C^\infty_c([0,1]
\times X)^G$ that is a regular perturbation for the pair $L,L$, and a
one-form $H_1 \in \Omega_1(\Sigma, C^\infty_c([0,1] \times X)^G)$ that
makes the moduli space of quilted strips regular.  Given a twice
quilted strip with shaded region beginning at $s_-$ and shaded region
ending at $s_+$ consider a Hamiltonian-valued one-form $H_2$ depending
on $s_+ - s_-$ with support on $[s_-,s_+] \times [0,1]$ which ``turns
on and off the Hamiltonian perturbation''.  That is, if $s_+ - s_- >
2$ then $H_2$ is equal to $ H \d t$ between $s_- + 1 $ and $s_+ - 1$,
is equal to the translation of $H_1$ by $s_-$ on $(s_-,,s_- + 1)$ and
the reflection and translation of $H_1$ on $(s_+-1,s_+)$, and if $s_+
- s_- < 1$ then $H_2$ vanishes.  Let $F_{n,v} \in C^\infty(UW_{n,v}
\times L)$ be a perturbation of $F$ depending on the underlying tree
$\Sigma_{(1)}$ as above.  A (perturbed) {\em holomorphic twice-quilted
  treed quasistrip} to $X$ consists of a twice-quilted treed disk
$\Sigma$ and a continuous map $u: \Sigma \to X$ such that (i) on each
twice-quilted strip, $u$ is a $(J,H_2)$-holomorphic map (ii) on each
quilted strip, $u$ is a $(J,H_1)$-holomorphic map (recall $H_1$ is the
homotopy from $0$ to $H \d t$ used in \ref{tiHreg}) (iii) on each
unquilted strip between two quilted components, $u$ is a
$(J,H)$-holomorphic map (iv) on each unquilted strip not between
quilted components, $u$ is $J$-holomorphic (v) on each edge, $u$ is a
gradient trajectory of $\ti{F}_{n,v}$.  An {\em isomorphism} of
holomorphic twice-quilted treed quasistrips $u_j: \Sigma_j \to X$ is
an isomorphism $\phi: \Sigma_0 \to \Sigma_1$ and an element $g \in G$
such that $\phi^* u_1 = g u_0$ and the quilt structures are preserved.
A holomorphic twice-quilted treed quasistrip is {\em stable} if it has
no automorphisms, and every node connecting two edges maps to a
critical point of $F$.  Let $\ol{MW}_{d|e,2}(L)$ denote the moduli
space of stable holomorphic $(d,e)$-marked treed twice quilted
quasistrips.  Assume that all holomorphic quasidisks with boundary in
$\ti{L}$ are regular.  By a generic choice of perturbations on the
twice-quilted strips (that is, a generic perturbation of $H_2$
depending on $s_+ - s_-$ that is equal to the given function in a
neighborhood of the boundary, and a generic perturbation of the
function $\ti{F}$ equal to the given function on a neighborhood of the
boundary) one obtains transversality as in the previous discussion in
the proof of Proposition \ref{tiHreg}.  For generic choices of
perturbations chosen inductively, the moduli spaces of treed strips
$\ol{MW}_{d|e,2}(L)$ consists entirely of regular elements and the
boundary consists of configurations where either (i) the shaded strip
has separated into two components of a broken strip (ii) a strip has
broken off at $\pm \infty$ (iii) a treed disk has bubbled off along an
edge of infinite length.  Define
\begin{multline} 
 \tau_{d|e}: CQF(L)^{\otimes d} \otimes CQF(L) \otimes
 CQF(L)^{\otimes e} \to CQF(L)[-1]
 \\ (z_{0,1},\ldots,z_{0,d},z,\ldots,z_{1,1},\ldots,z_{1,e},y)
 \\ \mapsto \sum_{ [u] \in
   \ol{MW}_{d|e,2}(z_{0,1},\ldots,z_{0,d},z,z_{1,1},\ldots,z_{1,e},y)_0}
 (-1)^{\heartsuit + \diamondsuit} \eps(u) \Hol_{L_0,L_1}(u) q^{A(u)}
 \bra{y}.
\end{multline}

\begin{theorem}  Suppose that every stable holomorphic disk with boundary
in $L$ is regular, and perturbations have been chosen so that all
treed strips and treed quilted strips are regular leading to maps of
\ainfty bimodules $\phi: CQF(L) \to CQF(L,L)$ resp. $\psi: CQF(L,L)
\to CQF(L)$.  Then $(\tau_{d|e})_{d,e \ge 0}$ defines a homotopy of
morphisms of \ainfty bimodules from the identity to $\psi \circ \phi$
in $\Hom(CQF(L), CQF(L,L))$.  If every non-trivial holomorphic disks
has positive Maslov index then $\tau_{0|0}$ is a chain homotopy of the
identity to $\psi_{0|0} \circ \phi_{0|0}$.  In particular,
$HQF(L;\Lambda)$ is isomorphic to $HQF(L,L)$.
\end{theorem}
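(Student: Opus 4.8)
The plan is to run the usual ``one extra quilting gives a nullhomotopy'' argument, with the homotopy operator $\tau=(\tau_{d|e})$ built from the moduli spaces $\ol{MW}_{d|e,2}(L)$ of twice-quilted treed quasistrips. First I would check that, under the stated regularity hypotheses, the one-dimensional pieces $\ol{MW}_{d|e,2}(L)_1$ are compact one-manifolds with boundary. Compactness is a Gromov-type statement obtained by assembling the compactness inputs already available: energy quantization and exponential decay for Floer strips with clean-intersection boundary conditions (Lemmas \ref{cleanquant} and \ref{expdecay}), Gromov compactness for holomorphic disks with Lagrangian boundary, compactness for broken gradient trajectories, and the degeneration in which the separation $s_+-s_-$ of the two quilting lines tends to $0$ or $\infty$; as in Theorem \ref{mwde}, compact support of $H$ together with the convexity hypothesis lets one apply the maximum principle outside a fixed compact set. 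The boundary description is then supplied by a gluing theorem obtained by concatenating the gluing constructions of Theorems \ref{bound}, \ref{mwbound} and \ref{mwde} and of the quilted-strip case, all of which are compatible because the evaluation-at-nodes component of the relevant section $\cF_u$ is independent of the gluing parameters.

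Granting this, $\partial\,\ol{MW}_{d|e,2}(L)_1$ is a disjoint union of three types of strata: (i) the shaded region breaks into two quilted pieces; (ii) the shaded region shrinks away, so that the strip between the two quilting lines becomes trivial; and (iii) a treed strip or treed disk bubbles off along an edge of infinite length, leaving the shading intact. Counting signed index-zero configurations over each type and using that the signed count over the boundary of a compact oriented one-manifold vanishes yields an identity (contributions of type (iii)) $+$ (contributions of type (i)) $+$ (contributions of type (ii)) $=0$. Type (iii) reproduces exactly the four families of terms on the left-hand side of \eqref{morphism} for the degree $-1$ operator $\tau$, by the same matching of boundary strata with \ainfty-bimodule relations used in the constructions of $\mu_{d|e}$, $\phi$ and $\psi$ earlier in this section and in Section \ref{bimodule}. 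Type (i) reproduces the composite $\psi\circ\phi$, the broken configuration being a quilted treed quasistrip counted by $\phi$ glued to one counted by $\psi$ along a generalized intersection point, exactly as in the composition formula for bimodule morphisms. Type (ii) reproduces the identity morphism of $CQF(L)$ viewed as a bimodule over itself: once the shading is absent, the equation on the middle strip is $J$-holomorphic and $\R$-translation invariant, so regularity forces the only rigid solutions to be constant; the $(0|0)$-component then contributes $\Id$ and the $(d|e)$-components with $(d,e)\neq(0,0)$ are empty for dimension reasons. The signs are the tedious part: with the orientation on $\ol{MW}_{d|e,2}$ coming from its $[0,1]$-fibration over $\ol{MW}_{d|e,1}$ oriented so the positive fiber direction moves the outer quilting, one checks — by the same bookkeeping as in Theorem \ref{ainfty} and the earlier sign computations, keeping track of the sign change \eqref{change} — that the signed boundary count is precisely $\psi\circ\phi-\Id$ on the one side and the left-hand side of \eqref{morphism} (with $\phi$ replaced by $\tau$) on the other. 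This is exactly the assertion that $\tau$ is a homotopy from $\Id$ to $\psi\circ\phi$ as morphisms $CQF(L)\to CQF(L)$ of \ainfty bimodules.

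For the final clause, the completely symmetric argument — running the same construction with the roles of $\phi$ and $\psi$ interchanged, equivalently reading the twice-quilted strips from right to left — produces a homotopy from $\Id$ to $\phi\circ\psi$ on $CQF(L,L)$. Hence $\phi$ and $\psi$ are mutually inverse homotopy equivalences of \ainfty $(CQF(L),CQF(L))$-bimodules, and in particular induce a chain homotopy equivalence of the underlying complexes $(CQF(L),\mu_{0|0})$ and $(CQF(L,L),\mu_{0|0})$. Since the bimodule structure on $CQF(L)$ over itself has $\mu_{0|0}=\pm\mu_1$ by \eqref{change}, its cohomology is $HQF(L)$ with $\Lambda$-coefficients, and $HQF(L,L)$ is by definition the cohomology of $\mu_{0|0}$ on $CQF(L,L)$; passing to cohomology of the linear parts $\phi_{0|0}$ and $\psi_{0|0}$ gives $HQF(L;\Lambda)\cong HQF(L,L)$. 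As in the rest of the section, the real work is analytic rather than algebraic: the main obstacle is establishing compactness and, above all, the gluing theorem for the twice-quilted moduli spaces, which simultaneously involves clean-intersection ends, disk bubbling, broken gradient trajectories, and the two quilting degenerations, with uniform quadratic estimates independent of all gluing parameters; the combinatorics of $\ol{MW}_{d|e,2}$ is also involved. The sign verification, the symmetric argument, and the passage to cohomology are then routine given the corresponding facts already established for $CQF(L)$, $CQF(L,L)$, and the morphism $\phi$.
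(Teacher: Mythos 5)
Your proposal follows essentially the same route as the paper's own argument in Section \ref{twice}: the homotopy $\tau$ is extracted from the twice-quilted treed quasistrip moduli spaces $\ol{MW}_{d|e,2}(L)$, with the boundary strata of the one-dimensional part identified exactly as you do (breaking of the shaded region giving $\psi\circ\phi$, disappearance of the shaded region giving the identity via the translation-invariance/constant-strip argument, and the remaining strip/disk breakings giving the terms of \eqref{morphism}), with regularity, compactness and gluing imported from the earlier Theorems \ref{bound}, \ref{mwbound}, \ref{mwde} and Proposition \ref{tiHreg}. The only point where you go slightly beyond the paper is in spelling out the symmetric (right-to-left) homotopy for $\phi\circ\psi$ and the passage to cohomology, which the paper leaves implicit in its ``in particular'' clause; this is a harmless and correct addition.
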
  

\begin{proof} 
The components of the boundary of $
\ol{MW}_{d|e,2}(z_{0,1},\ldots,z_{0,d},z,z_{1,1},\ldots,z_{1,e},y)_1$
which do {\em not} involve the shaded region breaking, or the shaded
region disappearing, correspond the terms on the left-hand-side of
\eqref{morphism}.  The components where the shaded region breaks
correspond to the contributions to the composition $\psi \circ \phi$,
while the components where the shaded region vanishes give the
identity morphism of \ainfty modules.  The latter follows from a
standard symmetry argument: since we have chosen the perturbation so
that regularity has been achieved, and the equation on the strip with
no shading is translation invariant, the only solutions are the
constant strips.  The first assertion follows.  The proof of the
second assertion is similar to that of Proposition \ref{phi00}: In the
case that every non-trivial holomorphic disk has positive Maslov
index, $\mu_{0,L}(1)$ resp. $\mu_{0,L,L}(1)$ is a multiple of the
generator $1_L$ resp. $1_{L,L}$ and the additional terms in the
\ainfty relation vanish.
\end{proof} 

\begin{corollary}  Suppose that $L \subset X \qu G$ is a Lagrangian such that 
every non-trivial holomorphic disk in $X$ with boundary in $\ti{L}$ is
regular and positive Maslov index.  If $L$ is displaceable, then
$HQF(L;\Lambda) = 0$.
\end{corollary}  

\begin{proof}
Since $HQF(L;\Lambda) \cong HQF(L,L)$ and the latter vanishes if $L$
is displaceable, since $CQF(L,L)$ does for the function $H$ whose flow
displaces $L$.
\end{proof} 

\section{Gauged potentials for toric moment fibers} 

In this section we prove the main Theorem \ref{main}.  Let $X \cong
\C^N$, $G \subset H := U(1)^N$ a sub-torus, acting on $X$ with moment
map $\Phi: X \to \g^\dual$.  Suppose that the $G$ action on
$\Phinv(0)$ has only finite stabilizers, so that $X \qu G$ is a
presentation of a quasiprojective toric orbifold with residual action
of $T = H/G$ and moment map $\Psi: X \qu G \to \t^\dual$.  Recall that
the moment polytope $\Psi(X \qu G)$ is given by a finite set of linear
inequalities \eqref{lineq}.  Let $\lambda$ lie in the interior of
$\Psi(X \qu G)$.  Recall that $L_\lambda = \Psi^{-1}(\lambda) \subset
X \qu G$ is a Lagrangian torus, and $\ti{L}_\lambda \subset X$ its
inverse image in $X$.  By definition $ \lan v_i, \cdot \ran - c_j$ is
the function on $\t^\dual$ defined by the $i$-th coordinate function
on $\h^\dual \cong \R^N$, so the moment map for the action of $H$ on $\C^N$ takes
value $l_j(\lambda)$ on $\ti{L}_\lambda$. Hence $\ti{L}_\lambda$ is
obtained from the standard torus $(S^1)^N \subset \C^N$ by re-scaling
the factors:
$$ \ti{L}_\lambda = \{ (z_1,\ldots, z_N) \ | \ |z_j|^2/2 =
l_j(\lambda)/2\pi, j = 1,\ldots, N \} = \prod_{j=1}^N (
l_j(\lambda)/\pi)^{1/2} S^1 .$$
The following classification result of Cho-Oh \cite[Theorem
  5.3]{chooh:toric} for disks with boundary in $\ti{L}_\lambda$ allows
the computation of the gauged potential:

\begin{proposition} \label{blaschke} Any holomorphic disk in $X = \C^N$
with boundary in $\ti{L}_\lambda$ is given by a Blaschke product
$$ u(z) = \left( (l_j(\lambda)/\pi)^{1/2} \prod_{k=1}^{d_j}
\frac{z - \alpha_{j,k}}{1 - \ol{\alpha}_{j,k}z} \right)_{j=1}^N $$
for some constants $ \alpha_{j,k}$ of norm less than one and some
non-negative integers $d_j$.
\end{proposition}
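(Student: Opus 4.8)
The plan is to reduce the statement to the standard classification of proper holomorphic self-maps of the disk. First I would observe that a holomorphic disk $u:(D,\partial D)\to(\C^N,\ti L_\lambda)$ has components $u_j:D\to\C$, $j=1,\dots,N$, each of which is holomorphic on the interior of $D$, and, since $\ti L_\lambda=\prod_j (l_j(\lambda)/\pi)^{1/2}S^1$, each $u_j$ maps $\partial D$ into the circle $(l_j(\lambda)/\pi)^{1/2}S^1$. Rescaling by the constant $(l_j(\lambda)/\pi)^{1/2}$, it therefore suffices to classify holomorphic maps $v:D\to\C$ extending continuously to $\partial D$ with $v(\partial D)\subset S^1$, i.e.\ $|v|=1$ on the boundary. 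The key point is that such a map is necessarily a finite Blaschke product $v(z)=c\prod_{k=1}^{d}\frac{z-\alpha_k}{1-\ol\alpha_k z}$ with $|\alpha_k|<1$ and $|c|=1$; the constant $c$ is then absorbed into the scaling constant by adjusting the $\alpha_k$ (or, since $v$ may be constant of modulus $1$, allowing $d=0$).

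The main step is the classical argument for that reduction, which I would carry out as follows. If $v\equiv 0$ it does not satisfy $|v|=1$ on $\partial D$, so $v$ is not identically zero; by the maximum principle applied to $v$ and to $1/v$ (away from zeros), $v$ is bounded and its zeros are isolated, hence finite in number, say $\alpha_1,\dots,\alpha_d$ with multiplicity, all in the open disk. Form the finite Blaschke product $B(z)=\prod_{k=1}^d\frac{z-\alpha_k}{1-\ol\alpha_k z}$, which also has modulus $1$ on $\partial D$ and the same zeros as $v$ with the same multiplicities. Then $w=v/B$ is holomorphic and non-vanishing on $D$, extends continuously to $\overline D$, and has $|w|=1$ on $\partial D$; applying the maximum modulus principle to $w$ and to $1/w$ gives $|w|\equiv 1$, so $w$ is a unimodular constant. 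This yields $v=cB$, which after reindexing the $\alpha_k$ is of the desired Blaschke form; restoring the scaling constant $(l_j(\lambda)/\pi)^{1/2}$ in each coordinate gives the stated formula for $u$.

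I expect the main obstacle to be a regularity/boundary-behavior issue rather than the algebra: one must know that a finite-energy holomorphic disk with totally real (here Lagrangian) boundary conditions is genuinely continuous — indeed smooth — up to $\partial D$, so that ``$u_j(\partial D)$ lies on a circle'' makes sense and the zeros of $u_j$ cannot accumulate at the boundary. This is standard elliptic boundary regularity for $J$-holomorphic maps with Lagrangian boundary (see McDuff--Salamon \cite{ms:jh}), and in the present integrable case it is elementary; I would simply invoke it. A second, more bookkeeping-level point is to confirm that no component $u_j$ is forced to be nonconstant — the degrees $d_j$ are allowed to be zero independently — which is immediate from the above since a unimodular constant is the $d=0$ case. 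With these in hand the proposition follows; the argument is essentially that of Cho--Oh \cite[Theorem 5.3]{chooh:toric}, to which I would refer for the details I have sketched.
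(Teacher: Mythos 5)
Your argument is essentially the paper's own proof (which follows Cho--Oh): split into coordinate components using that the complex structure and Lagrangian are products, divide by the Blaschke product having the same zeros, and apply the maximum principle to the quotient and its reciprocal to conclude it is a unimodular constant. One small quibble: that unimodular constant cannot literally be absorbed by ``adjusting the $\alpha_{j,k}$'' (the zeros of $u_j$ determine them), so the classification is really only up to unit scalars in each factor --- exactly what the paper's proof concludes, and all that is needed for the area/holonomy computations that use it.
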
 

\begin{proof} For completeness we reproduce the proof.  Since the 
complex structure and Lagrangian are split, it suffices to consider
the case that $X = \C$ and $\ti{L}_\lambda = S^1$ is the unit circle.
Let $u: D \to X$ be a holomorphic disk with boundary in
$\ti{L}_\lambda$ with $d$ zeroes counted with multiplicity.  Let $v$
be the Blaschke product whose zeroes are those of $u$ with boundary in
$\ti{L}_\lambda$.  Then $u/v$ is a map without zeroes, whose boundary
lies in $\ti{L}_\lambda$.  But any such map must be constant (by e.g. the
maximum principal for the components of $u/v$ and $v/u$) so $v$ is
equal to $u$ up to scalars of unit norm.
\end{proof} 

\begin{corollary}  For the standard complex structure on $X$, 
every stable disk with boundary in $\ti{L}_\lambda$ is regular and has
positive Maslov index.
\end{corollary}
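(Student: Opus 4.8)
The plan is to combine the Cho--Oh classification, Proposition \ref{blaschke}, with a standard automatic transversality argument for Cauchy--Riemann operators on the disk carrying totally real boundary conditions of non-negative Maslov index, applied factor by factor in the product $X = \C^N$.

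First I would treat a single non-constant holomorphic disk $u = (u_1,\ldots,u_N) : (D,\partial D) \to (\C^N, \ti{L}_\lambda)$. By Proposition \ref{blaschke} each component $u_j$ is a unit-modulus constant times $(l_j(\lambda)/\pi)^{1/2}$ times a Blaschke product of some degree $d_j \ge 0$, so $u$ has Maslov index $2 \sum_j d_j$. Since the complex structure on $\C^N$ is the standard integrable one and $\ti{L}_\lambda = \prod_{j} (l_j(\lambda)/\pi)^{1/2} S^1$ is a split product of circles, the bundle pair $(u^* T\C^N, (\partial u)^* T\ti{L}_\lambda)$ is the direct sum of the pairs $(u_j^* T\C, (\partial u_j)^* T S^1)$, and the linearized Cauchy--Riemann operator $D_u$ of the disk decomposes accordingly as $D_u = \bigoplus_{j=1}^N D_{u_j}$. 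It therefore suffices to prove each $D_{u_j}$ is surjective.

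Next is the automatic transversality step. Each $D_{u_j}$ is a Cauchy--Riemann operator on a complex line bundle over $D$ with totally real boundary condition of Maslov index $2 d_j$; by Riemann--Roch for the disk (McDuff--Salamon \cite{ms:jh}, Oh \cite{oh:fl1}) it has real Fredholm index $1 + 2 d_j$, and it is surjective as soon as this Maslov index is $\ge -1$: its cokernel is identified with the kernel of a conjugate Cauchy--Riemann operator whose boundary condition has Maslov index $-2 - 2 d_j \le -2$, and a holomorphic section of such a negative line bundle pair over the disk vanishes identically, by the same maximum-principle argument used in the proof of Proposition \ref{blaschke}. Since $d_j \ge 0$ we have $2 d_j \ge 0 > -1$, so each $D_{u_j}$, hence $D_u$, is surjective; this already gives regularity of honest (non-nodal) holomorphic disks.

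Finally I would treat stable disks. Since $X = \C^N$ is exact, hence aspherical, there is no sphere bubbling, so a stable disk is a tree of disk components with boundary in $\ti{L}_\lambda$ glued at boundary nodes, and regularity means surjectivity of the combined operator $D_u$ built from the component Cauchy--Riemann operators together with the differences of the evaluations at the nodes, as in Theorem \ref{mwsmooth} and \eqref{third}. Its $\Omega^{0,1}$ part is surjective by the previous steps applied to each component; for the node-matching part, the evaluation of $\ker D_{u_j}$ at any boundary point surjects onto the corresponding factor of $T\ti{L}_\lambda$ -- trivially for a constant factor, and for a factor of positive degree by the explicit form of the kernel of a Cauchy--Riemann operator with positive winding boundary data (equivalently, by automatic transversality of the evaluation maps for disks in $(\C^n, T^n)$, as in Cho--Oh \cite{chooh:toric} and Fukaya--Oh--Ohta--Ono \cite{fooo}). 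An induction along the edges of the tree, absorbing one node constraint at a time into the kernels and deformation directions of the already-matched components, then shows $D_u$ is surjective. I expect the only point needing any care is this last tree bookkeeping, which is nonetheless entirely routine.
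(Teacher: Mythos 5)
Your proposal is correct and takes essentially the same route as the paper: regularity of a single disk by splitting into rank-one factors and automatic transversality for non-negative Maslov index (the paper phrases this via doubling to a non-negative-index problem on the sphere, you via vanishing of the dual operator of Maslov index $\le -2$ — two standard forms of the same argument), submersivity of boundary evaluation maps (the paper deduces this from the transitive $H$-action on $\ti{L}_\lambda$, which is exactly your infinitesimal-rotation kernel elements, factor by factor), and then induction over the components of the tree, as in Fukaya et al, for stable disks. No gaps worth flagging.
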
 

\begin{proof} 
For smooth domain this is proved by Cho-Oh \cite[Section
  6]{chooh:toric}.  An alternative argument involves doubling the disk
to obtain a sum of rank one, non-negative index Fredholm problems on
the sphere, see e.g. \cite[p.5]{orient}; these always have trivial
cokernel.  Because of the $H$-action on space of holomorphic disks
with the given boundary condition the evaluation map at a marked point
on the boundary is a submersion, since $H$ acts transitively on
$\ti{L}_\lambda$.  Regularity for stable disks whose combinatorial
type is a tree with $k$ components, follows by induction on $k$, as in
Fukaya et al \cite[Theorem 11.1]{fooo:toric1}: Suppose that $\Sigma$
is a marked disk such that the component containing the root $z_0$ is
attached to $d$ other stable disks.  By the inductive hypothesis the
$d$ stable disks are regular and the evaluation map at the attaching
point is submersive. It follows that the evaluation maps at all nodes
are transverse to the diagonal, and in addition the evaluation map at
the root is a submersion.  The assertion on the Maslov index is
immediate from the explicit description.
\end{proof}

By Theorem \ref{regpert} we have

\begin{corollary}  For $X = \C^n$ and $L \subset X \qu G$ a toric
moment fiber as above, there exist compatible systems of perturbations
so that every holomorphic treed quasidisk is regular.
\end{corollary}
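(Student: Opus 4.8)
The statement to prove is: \emph{For $X = \C^N$ and $L \subset X\qu G$ a toric moment fiber as above, there exist compatible systems of perturbations so that every holomorphic treed quasidisk is regular.}

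The plan is to combine the preceding corollary (every stable holomorphic disk with boundary in $\ti L_\lambda$ is regular for the standard complex structure) with the inductive perturbation machinery of Theorem \ref{regpert}. First I would observe that the hypothesis of Theorem \ref{regpert} is precisely that every \emph{holomorphic disk} with boundary in $\ti L$ is regular; the foregoing corollary supplies exactly this in the toric case. So the issue is not regularity of the two-dimensional (disk) part of the linearized operator, but rather regularity of the combined operator $D_u$ which also involves the one-dimensional (gradient-flow) segments and the evaluation maps at the nodes joining segments to disks.

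The key steps, in order: (1) Fix the energy $E$ and hence, by energy quantization for disks (Theorem \ref{compactthm} and the constant in \cite[Prop.~4.1.4]{ms:jh}), a bound $v(\mdeg)$ on the number of disk components, so that all relevant treed quasidisks of class $\mdeg$ define points in the fixed moduli space $\ol W_{n,v(\mdeg)}$ of metric ribbon trees. (2) Induct on the pair $(n,E(\mdeg))$ (lexicographically), with the base case the minimal energy / fewest markings, where a treed quasidisk is either a single Maslov-two disk through the minimum or a constant configuration; here regularity is immediate from the disk-regularity corollary. (3) For the inductive step, use the gluing description of a neighborhood of the boundary $\partial \ol{MW}_n(L,\mdeg)$ from Theorem \ref{mwbound}: over a neighborhood $V$ of $\partial W_{n,v(\mdeg)}$ the perturbation system is already determined by the lower-complexity data, and by the inductive hypothesis every treed quasidisk meeting $V$ is regular. (4) Apply Theorem \ref{regpert} with this $F^0_{n,v(\mdeg)}$ (regular over $V$) and a cutoff $\rho$ supported away from $V$ and somewhere nonvanishing on each edge, obtaining a comeager set $\PP^{\reg}(L)$ of perturbations $F^1_{n,v(\mdeg)}$ such that $F_{n,v(\mdeg)} = F^0 + \rho F^1$ makes every treed quasidisk of class $\mdeg$ regular; choose one such $F^1$, compatibly with the choices already made. (5) Intersecting the countably many comeager sets over all $(n,\mdeg)$ (or rather, choosing recursively) gives a single compatible system $F_* = (F_{n,v(\mdeg)})$ with the desired property.

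The main obstacle I expect is verifying that the hypotheses of Theorem \ref{regpert} are genuinely met at each inductive stage --- specifically, that the perturbation datum $F^0_{n,v(\mdeg)}$ obtained by gluing the lower-complexity data is regular over a full neighborhood $V$ of $\partial W_{n,v(\mdeg)}$, and that the evaluation maps at the nodes joining edges to disks can be made transverse. The first point is handled by the compatibility of the gluing construction together with the inductive hypothesis (regularity is an open condition, so it propagates to a neighborhood). The second point --- transversality of the node evaluation maps --- is exactly where the toric structure helps: since the holomorphic disks are explicit Blaschke products and $H = U(1)^N$ acts transitively on $\ti L_\lambda$, the evaluation map at any boundary marking of a disk component is already a submersion (as noted in the proof of the disk-regularity corollary), so perturbing only the gradient segments via generic $F^1$ suffices to achieve transversality at the nodes, by the Sard--Smale argument of Theorem \ref{regpert}. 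The remaining routine points --- Fredholmness of $D_u$, the density argument passing from $C^l$ to smooth perturbations, and the countable intersection of comeager sets --- are identical to those already carried out in Theorem \ref{regpert} and Lemma \ref{intreg} and need no repetition.
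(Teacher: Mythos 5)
Your proposal is correct and follows essentially the same route as the paper: the paper deduces the corollary directly from Theorem \ref{regpert} (applied inductively in $n$ and the homotopy class, exactly as you describe), with the hypothesis of disk regularity supplied by the preceding Cho--Oh corollary. The extra details you supply --- compatibility of the glued perturbations near the boundary strata, submersiveness of the boundary evaluation maps via the $H$-action, and the recursive choice from comeager sets --- are precisely what the paper's inductive construction after Theorem \ref{regpert} implicitly relies on.
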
 

\begin{corollary}  The {\em gauged potential} for $L_\lambda$, that is,
the potential associated to the \ainfty algebra $CQF(L_\lambda)$, is
given by
$$\mu_0(1) = W_\lambda^G(\beta) 1_L, \quad W_\lambda^G(\beta) =
\sum_{i=1}^N e^{ \lan v_i, \beta \ran } q^{l_i(\lambda)}  .$$
\end{corollary}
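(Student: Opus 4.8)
The plan is to compute the curvature $\mu_0(1)$ of the \ainfty algebra $CQF(L_\lambda)$ by combining the classification of holomorphic disks (Proposition \ref{blaschke}) with the formula for $\mu_0$ in terms of Maslov index two disks through a generic point (Proposition \ref{mu0}). First I would recall from Proposition \ref{mu0} that, since all holomorphic quasidisks are regular by the preceding Corollary, $\mu_0(1) = \sum_{I(u)=2} q^{A(u)} \Hol_{L_\lambda}(u) 1_L$, where the sum is over equivalence classes of index two disks with boundary in $\ti{L}_\lambda$ whose boundary passes through a generic point of $L_\lambda = \ti{L}_\lambda/G$. So the computation reduces to: (a) enumerating the Maslov index two disks modulo the $G$-action and the automorphism group, (b) computing the area $A(u)$ of each, and (c) computing the holonomy $\Hol_{L_\lambda}(u) = e^{\lan [\partial u],\beta\ran}$.

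For step (a): by Proposition \ref{blaschke}, a holomorphic disk with boundary in $\ti{L}_\lambda$ is a product of Blaschke products of degrees $(d_1,\ldots,d_N)$, and its Maslov index is $2\sum_j d_j$ (each degree-$d_j$ Blaschke factor contributes $2d_j$, by the standard computation for disks with boundary on $S^1\subset\C$, since a degree-one Blaschke factor is the basic Maslov two disk). Hence Maslov index two forces $d_i = 1$ for a single index $i$ and $d_j = 0$ for $j\neq i$; up to the $H\cong U(1)^N$ action and the reparametrization group of the disk fixing a boundary marking, there is exactly one such disk for each $i = 1,\ldots, N$, namely $z\mapsto (l_j(\lambda)/\pi)^{1/2}$ in each coordinate $j\neq i$ and $z\mapsto (l_i(\lambda)/\pi)^{1/2}\, z$ in the $i$-th coordinate (the Blaschke factor $\tfrac{z-\alpha}{1-\bar\alpha z}$ with $\alpha$ absorbed by the automorphism). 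Requiring the boundary to pass through a fixed generic point of $\ti{L}_\lambda$ pins down the remaining $H$-freedom, leaving exactly $N$ contributing disks, indexed by $i$. For step (b): the $i$-th disk $u_i$ has $u_i^*\omega$ equal to the area swept in the $i$-th $\C$-factor, which is the symplectic area of the standard disk of radius $(l_i(\lambda)/\pi)^{1/2}$, i.e. $\pi\cdot(l_i(\lambda)/\pi) = l_i(\lambda)$; so $A(u_i) = l_i(\lambda)$. For step (c): the boundary loop $\partial u_i$ wraps once around the $i$-th circle factor of $\ti{L}_\lambda = \prod_j (l_j(\lambda)/\pi)^{1/2} S^1$, and its image in $H_1(L_\lambda)$ under the identification $H^1(L_\lambda,\Lambda_0)\cong \t^\dual\otimes\Lambda_0$ pairs with $\beta$ to give $\lan v_i,\beta\ran$ — this is precisely the defining property of the $v_i\in\t$ as images of minus the standard basis vectors $e_i\in\h$, tracking carefully the sign conventions in \eqref{lineq} and in the identification $H_1(\ti{L}_\lambda)\cong\h_\Z \to \t_\Z$ via the quotient by $\g$. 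Thus $\Hol_{L_\lambda}(u_i) = e^{\lan v_i,\beta\ran}$.

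Assembling (a)--(c) gives $\mu_0(1) = \sum_{i=1}^N e^{\lan v_i,\beta\ran} q^{l_i(\lambda)}\, 1_L$, which is exactly $W_\lambda^G(\beta) 1_L$ as claimed. The main obstacle I anticipate is not the enumeration or the areas — those are immediate from Proposition \ref{blaschke} — but rather bookkeeping two subtler points. First, one must verify that the contribution of each disk $u_i$ comes with coefficient $+1$ (the orientation sign $\eps(u_i) = +1$ from the relative spin structure): for the standard spin structure on a torus this follows as in Cho-Oh and \cite{orient}, since each $u_i$ is a product of a basic Maslov two disk with constant disks, but it should be stated. Second, and more delicate, one must confirm that no additional contributions to $\mu_0(1)$ arise from \emph{treed} configurations — a disk attached by a gradient segment to $x_{\min}$ — beyond the ``disk through $x_{\min}$'' configurations already accounted for in Proposition \ref{mu0}; this is exactly what Proposition \ref{mu0} asserts via the degree/dimension count (the gradient trajectory from an index two disk to the index zero critical point $x_{\min}$ must have length zero), so here I would simply cite Proposition \ref{mu0} and the regularity Corollary above, noting that asphericity of $X = \C^N$ rules out sphere bubbles.
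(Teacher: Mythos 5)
Your proposal is correct and follows essentially the same route as the paper's proof: reduce to Maslov index two classes for degree reasons (Proposition \ref{mu0}), use the Blaschke-product classification of Proposition \ref{blaschke} to see there is exactly one contributing disk per class $\mdeg$ corresponding to a coordinate divisor, and read off the area $l_i(\lambda)$ and holonomy $e^{\lan v_i,\beta\ran}$. The extra bookkeeping you flag (the sign $\eps(u_i)=+1$ from the standard spin structure and the exclusion of nontrivial treed configurations) is exactly what the paper delegates to Proposition \ref{mu0} and the regularity corollary, so no new ideas are needed.
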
 

\begin{proof}  
For degree reasons, only classes $\mdeg$ of index two contribute, in
which case the classification theorem shows there is exactly one up to
isomorphism for each class $\mdeg$ corresponding to a divisor, given
by $z \mapsto (l_i(\lambda)/\pi)^{1/2} z$ in the $i$-th component and
constant on the other components.  The holonomy of the brane structure
around the boundary of $u$ is $\exp( \lan v_i, \beta \ran )$ by
definition, while the area is $l_i(\lambda)$, hence the claim.
\end{proof}  

\begin{remark} The proof in Fukaya et al \cite{fooo:toric1}, uses a 
stronger version of the divisor equation, which we have avoided
by taking a more direct (but less general) definition of the potential. 
\end{remark} 

\begin{theorem} \label{hqfhm}
 If  $W_\lambda^G$ has a critical point  
$b \in H^1(L_\lambda,\Lambda_0)$ then 
$HQF(L_\lambda^b;\Lambda_0) =
  HM(L_\lambda;\Lambda_0)$.
\end{theorem}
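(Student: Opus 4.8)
The plan is to show that for a critical point $b$ of $W^G_\lambda$, the first structure map $\mu_1^b$ of the \ainfty algebra $CQF(L_\lambda^b)$ vanishes identically, so that $HQF(L_\lambda^b;\Lambda_0) = CQF(L_\lambda) = HM(L_\lambda;\Lambda_0)$ as $\Lambda_0$-modules. Recall from the construction of $\mu_1$ that it decomposes by energy, with leading-order term the Morse--Smale--Witten differential; but since $HM(L_\lambda;\Lambda_0)$ appears on the right-hand side we cannot use the spectral sequence of Proposition \ref{spectral} directly --- rather we want $\mu_1^b$ to be exactly zero, not just quasi-isomorphic to the Morse differential.

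First I would analyze the contributions to $\mu_1^b$ on the cochain group $CQF(L_\lambda)$. The higher structure maps $\mu_n^b$ count configurations of treed quasidisks; for $\mu_1$ the contributing configurations (by the index/dimension count and the classification Proposition \ref{blaschke}, which forces all holomorphic disks to have Maslov index two) are: a single holomorphic disk of index two with a boundary marking connected by a (possibly length-zero) gradient trajectory, together with the gradient trajectory to the output; and purely Morse-theoretic trajectories with no disk. The purely Morse contributions give the Morse differential, which vanishes because we are working with the \emph{cohomology-level} statement --- here I would instead use that the relevant quantity is the full $\mu_1^b$ on chains, and show the disk contributions cancel the Morse differential is \emph{not} the right framing. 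The correct framing: by Corollary \ref{mu1} (the divisor equation special case) applied fiberwise, for a Morse \emph{cocycle} $\beta \in CQF^1(L_\lambda)$ we have $\mu_1^b(\beta) = \partial_{[\beta]} W^G_\lambda(b)\, 1_L$ by Proposition \ref{partials}, which vanishes precisely because $b$ is a critical point. Thus $\mu_1^b$ vanishes on the image of $1_L$-multiplication and on cocycles; combined with strict unitality (Theorem \ref{ainfty}) and the module structure over the Morse cohomology ring $H^*(L_\lambda) = \Lambda(\t^\dual)$, one deduces $\mu_1^b = 0$ on all of $CQF(L_\lambda)$.

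The key step, then, is the following algebraic observation: $CQF(L_\lambda)$ with the perturbation-induced \ainfty structure has $HM^*(L_\lambda;\Lambda_0) \cong \Lambda^*(\t^\dual \otimes \Lambda_0)$ generated in degree one, and $\mu_1^b$ is a derivation-type operator determined by its values on degree-one classes via the Leibniz rule coming from $\mu_2^b$ (which at leading order is the cup product). More precisely I would: (i) choose the Morse function $F$ on the torus $L_\lambda$ so that $CQF(L_\lambda)$ is the exterior algebra on $\crit(F)^1$ and the Morse differential is zero (a perfect Morse function on $T^n$); (ii) observe that every generator $\bra{x}$ of $CQF^j(L_\lambda)$ is, up to the ring structure, a product of degree-one generators, which are cocycles; (iii) apply Proposition \ref{partials} to each degree-one generator $\beta_k$, giving $\mu_1^b(\beta_k) = \partial_{\beta_k} W^G_\lambda(b) = 0$; (iv) use the \ainfty associativity relation in low degrees --- specifically the relation involving $\mu_1, \mu_2$ --- together with strict unitality to propagate $\mu_1^b = 0$ from degree one to all degrees, since $\mu_1^b(\mu_2^b(a,a')) = \pm\mu_2^b(\mu_1^b a, a') \pm \mu_2^b(a,\mu_1^b a') \pm \mu_3^b(\ldots)$ and the $\mu_3$-terms involving a closed input also vanish by a similar divisor-type argument, or are handled inductively.

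The main obstacle I anticipate is step (iv): controlling the higher $\mu_n^b$-contributions that appear when one tries to propagate $\mu_1^b=0$ up the degree filtration. The divisor equation in our perturbation scheme holds only in the restricted form of Proposition \ref{divisor} (one marking, index-two class), so I cannot freely apply it to $\mu_n$ for $n \ge 2$. The cleanest route around this is probably to avoid propagation entirely: choose the perturbation data and Morse function so that $CQF(L_\lambda) \cong HM(L_\lambda;\Lambda_0)$ already as a \emph{chain} complex with zero differential at the perturbation level before turning on $q$, then show that $\mu_1^b$, which raises energy, must vanish because each of its matrix coefficients is (by the disk classification and the divisor equation) a $\Lambda_0$-multiple of a partial derivative of $W^G_\lambda$ evaluated at the critical point $b$. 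I would carry this out by writing $\mu_1^b(\bra{x}) = \sum_{\mdeg}\langle [\text{co-cycle dual to }x],\partial\mdeg\rangle\, q^{A(\mdeg)}\,e^{\langle v(\mdeg),b\rangle}\bra{x}$-type expression and recognizing the coefficient of each output generator as $\sum_i (\text{integer})\, \partial_i W^G_\lambda(b) = 0$; the bookkeeping of which output generators arise, and the claim that only index-two disks contribute so that each coefficient is genuinely a first partial derivative, is the technical heart, but it follows from Proposition \ref{blaschke} and the index count exactly as in Corollary \ref{mu1}.
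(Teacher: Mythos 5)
Your core ingredients are the right ones (Proposition \ref{partials} for degree-one classes, generation of $HM(L_\lambda)$ in degree one, strict unitality, and the Leibniz rule coming from the $A_\infty$ relation), and your step (iv) is in fact essentially the paper's proof --- but you then set it aside, and the route you actually commit to has a genuine gap. First, your reason for discarding the spectral sequence of Proposition \ref{spectral} is a misconception: the theorem only asserts an isomorphism of cohomology groups, so one does not need $\mu_1^b$ to vanish on the chain level. The paper works on the first page, where the induced differential is defined on $HM(L_\lambda)$, shows it vanishes on degree-one classes by Proposition \ref{partials}, and then propagates by induction on degree using exactly the Leibniz rule you wrote: the $\mu_0$-insertion terms are killed by strict unitality (Theorem \ref{ainfty}), and the terms $\mu_1^b(\mu_{2,\mdeg_2}^b(a_1,a_2))$ with $\mdeg_2\neq 0$ involve classes of degree at least two lower (any nonconstant disk has Maslov index $\ge 2$), so the inductive hypothesis applies. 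Your worry about uncontrollable higher $\mu_n^b$-contributions in this propagation is therefore unfounded; there was no need to look for a different argument.

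The replacement you propose --- computing every matrix coefficient of the chain-level $\mu_1^b$ and recognizing each as a $\Lambda_0$-combination of first partials of $W_\lambda^G$, with ``only index-two disks contributing, exactly as in Corollary \ref{mu1}'' --- does not follow from what is available. Proposition \ref{divisor} and Corollary \ref{mu1} are proved only for a degree-one Morse cocycle input, where for degree reasons the output is forced to be a multiple of $1_L$ and the configuration is a single Maslov-two disk; the paper is explicit that the general divisor equation fails for this perturbation scheme. For an input generator of degree $k\ge 2$ the rigid configurations contributing to $\mu_1^b$ include, besides single Maslov-two disks with output in degree $k-1$, configurations of total Maslov index $4,6,\dots$ (several disks joined by gradient segments) with outputs in degree $k-3,k-5,\dots$, which no index count excludes once $\dim T\ge 3$; and even the Maslov-two coefficients are counts of boundary circles against stable and unstable manifolds of two non-extremal critical points, which the restricted divisor equation does not convert into partial derivatives of the potential. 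So the claimed identity for arbitrary generators is unproven, and the ``technical heart'' of your plan is precisely the step that is missing. The fix is simply to return to your step (iv) in the form the paper gives it: run the degree induction on the first page of the Morse-to-Floer spectral sequence and conclude that all differentials vanish, whence $HQF(L_\lambda^b;\Lambda_0)=HM(L_\lambda;\Lambda_0)$.
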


\begin{proof} 
The argument is the same as in Cho-Oh \cite{chooh:toric}, Fukaya et al
\cite[Theorem 13.1]{fooo:toric1}.  Suppose that $b \in
\crit(W_\lambda^G)$, and consider the first page of the Morse-to-Floer
spectral sequence of Proposition \ref{spectral}, so that the
differential $\mu_1^b$ is defined on $HM(L_\lambda)$.  If the partial
derivatives of the potential $W$ vanish at $b$ then $\mu_1(\beta) = 0$
for all classes $\beta$ of degree one, by Corollary \ref{partials}.
In general we induce on the degree of a class $a$.  Suppose that the
coefficient $\mu_{1,\mdeg}^b(a)$ of $q^\mdeg$ in $\mu_1^b(a)$ vanishes
for any $a \in HM^{|a|}(L_\lambda)$ with $|a| \leq k$ and any $\mdeg
\in \pi_2(X,\ti{L})$.  For elements $a_1,a_2$ of degree at most $k$,
consider the Leibniz rule arising from the \ainfty structure equation,
\begin{multline}  \mu_{1,\mdeg}^b(a_1 \cup a_2) = \sum_{\mdeg_1 + \mdeg_2 = \mdeg} \pm
\mu^b_{2,\mdeg_1}(\mu^b_{1,\mdeg_2}(a_1),a_2) \pm
\mu^b_{2,\mdeg_1}(a_1,\mu^b_{1,\mdeg_2}(a_2)) \\ \pm \sum_{\mdeg_1 + \mdeg_2=
  \mdeg, \mdeg_2 \neq 0} \mu^b_{1,\mdeg_1}(\mu_{2,\mdeg_2}^b(a_1,a_2)) \end{multline}
where $\cup$ denotes the leading order term in the product structure,
that is, defined by gradient trees; there are no contributions from
$\mu_0(1)$, by unitarity.  The right-hand side vanishes by the
inductive hypothesis on the degree, since $\mu_{2,\mdeg_2}^b(a_1,a_2)$
has lower degree than $a_1 \cup a_2$ for $\mdeg_2 \neq 0$.  Since
$HM(L_\lambda)$ is generated by degree one classes, $\mu_1^b$ vanishes
on all classes, hence the higher differentials in the spectral
sequence vanish and $HQF(L_\lambda^b;\Lambda_0) =
HM(L_\lambda;\Lambda_0)$.
\end{proof} 

See also Biran-Cornea \cite{bc:ql} for further discussion of this
technique.  As explained in Fukaya et al \cite{fooo}, non-vanishing of
the free part of $HQF(L_\lambda;\Lambda_0)$ is an invariant of
Hamiltonian isotopy. In particular if $HQF(L_\lambda;\Lambda_0)$ is a
non-trivial free $\Lambda_0$-module then $HQF(L_\lambda;\Lambda) =
HQF(L_\lambda;\Lambda_0) \otimes_{\Lambda_0} \Lambda$ is
non-vanishing.  (In general, passing to $\Lambda$ coefficients may
kill the torsion, but non-vanishing of the free part is preserved.)
Hence if $W^G_\lambda$ has a critical point $b \in
H^1(L_\lambda;\Lambda_0)$ then $L_\lambda$ is non-displaceable, which
concludes the proof of Theorem \ref{main} except for the following
technical point in the case that $X \qu G$ is non-compact: So far we
have assumed that the Hamiltonian perturbations are compactly
supported.  However, suppose that $\ti{L}_\lambda$ is displaced by a
Hamiltonian flow $\phi_t$ generated by arbitrary Hamiltonians $H_t \in
C^\infty(X)^G$.  Since $\ti{L}_\lambda$ is compact, the union of
images $K = \cup_{t \in [0,1]} \phi_t(\ti{L}_\lambda)$ is compact, so
there exists a cutoff function $\rho$ equal to one on $K$, and
vanishing outside a compact set.  Let $\phi_t^\rho$ denote the flow of
$\rho H_t$.  Then $\phi_t^\rho$ is equal to $\phi_t$ on $K$ for all $t
\in [0,1]$ and so $\phi_1^\rho(\ti{L}_\lambda) \cap \ti{L}_\lambda =
\phi_1(\ti{L}_\lambda) \cap \ti{L}_\lambda = \emptyset$.

We have the following improvement of Theorem \ref{main}, under the
same assumptions.

\begin{proposition}  \label{est}
Suppose the {\em gauged potential} $ W^G_\lambda$ has a critical
point.  Then for any compactly supported Hamiltonian diffeomorphism
$\phi \in \on{Diff}^h(X \qu G)$ such that $\phi(L_\lambda)$ intersects
$L_\lambda$ transversally, the number of intersection points is at
least $2^{\dim(T)}$.
\end{proposition}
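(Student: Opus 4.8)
The plan is to deduce this lower bound from the Morse-theoretic identification $HQF(L_\lambda^b;\Lambda_0) \cong HM(L_\lambda;\Lambda_0)$ established in Theorem \ref{hqfhm}, together with the invariance of quasimap Floer cohomology under Hamiltonian isotopy. Since $L_\lambda$ is a single free $T$-orbit, $L_\lambda \cong T \cong (S^1)^{\dim(T)}$, so $HM(L_\lambda;\Lambda_0) \cong H^*(L_\lambda;\Lambda_0)$ is a free $\Lambda_0$-module of rank $2^{\dim(T)}$. The critical point $b \in H^1(L_\lambda,\Lambda_0)$ of $W^G_\lambda$ thus yields $HQF(L_\lambda^b;\Lambda_0)$ free of rank $2^{\dim(T)}$, hence $HQF(L_\lambda^b;\Lambda)$ is non-vanishing of the same rank over $\Lambda$.

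First I would note that, given a compactly supported $\phi \in \on{Diff}^h(X \qu G)$ with $\phi(L_\lambda) \pitchfork L_\lambda$, we may compute $HQF(L_\lambda^b, \phi(L_\lambda)^b)$ using a Hamiltonian $H$ generating $\phi$ (lifted to a $G$-invariant Hamiltonian on $X$, using the compact-support cutoff trick described after Theorem \ref{hqfhm} if necessary). By the bimodule isomorphism of Section \ref{bimoduleiso} applied with the pair $(L_\lambda^b, L_\lambda^b)$ — more precisely, by the chain of isomorphisms $HQF(L_\lambda^b;\Lambda) \cong HQF(L_\lambda^b, L_\lambda^b) \cong HQF(L_\lambda^b, \phi(L_\lambda)^b)$, where the second isomorphism is the Hamiltonian-invariance statement in the theorem preceding the remark about $\Lambda$ versus $\Lambda_0$ coefficients — the cohomology of the Floer complex $CQF(L_\lambda^b, \phi(L_\lambda)^b)$ over $\Lambda$ has total rank at least $2^{\dim(T)}$. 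Here the brane structure $b$ enters only through the flat line bundle, which does not affect the underlying generators.

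Next I would recall that the generators of $CQF(L_0,L_1;H)$ are indexed by $\cI(L_0,L_1;H)$, the set of perturbed intersection points in $X \qu G$, which by transversality of $\phi(L_\lambda) \pitchfork L_\lambda$ is in bijection with $L_\lambda \cap \phi(L_\lambda)$. Since the rank of the cohomology of any chain complex is bounded above by the number of generators, $\#(L_\lambda \cap \phi(L_\lambda)) \ge \dim_\Lambda HQF(L_\lambda^b, \phi(L_\lambda)^b) \ge 2^{\dim(T)}$. This gives exactly the claimed estimate.

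The main obstacle is a bookkeeping issue rather than a conceptual one: one must be slightly careful that the various isomorphisms used (the Morse identification of Theorem \ref{hqfhm}, which is over $\Lambda_0$; the bimodule isomorphism of Section \ref{bimoduleiso}, which for the equal-pair case holds over $\Lambda$; and the $H$-independence of $HQF(L_0,L_1)$, which again is a $\Lambda$-coefficient statement) are chained together over a consistent coefficient ring. The correct route is to first pass from the $\Lambda_0$-module freeness of rank $2^{\dim(T)}$ to non-vanishing of the free part over $\Lambda$ — as in the discussion in the proof of Theorem \ref{main} — and only then invoke the $\Lambda$-coefficient invariance statements; since base change $-\otimes_{\Lambda_0}\Lambda$ preserves rank of the free part, the bound $2^{\dim(T)}$ survives. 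One also needs the regularity hypotheses of Section \ref{bimodule} to hold for the pair $(L_\lambda, \phi(L_\lambda))$, but since $\phi(L_\lambda)$ is again a Lagrangian in the smooth toric locus and the ambient $X = \C^N$ is Kähler with integrable $J$, Lemma \ref{intreg} provides a suitable Hamiltonian perturbation achieving regularity of all quasistrips, while the Cho--Oh classification gives regularity of all disks.
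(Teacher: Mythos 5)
Your argument is essentially the paper's own proof: the paper also reduces to Theorem \ref{hqfhm} (giving $HQF$ of rank $2^{\dim(T)}$ equal to the Morse cohomology of the torus fiber), uses Lemma \ref{intreg} to arrange regularity of quasistrips for a perturbation of the Hamiltonian generating $\phi$ (noting a small perturbation does not change the transverse intersection count), and then bounds the number of intersection points below by the rank of the Floer cohomology since the intersection points generate the complex. You simply make explicit the chain of isomorphisms ($HQF(L_\lambda^b;\Lambda)\cong HQF(L_\lambda^b,L_\lambda^b)\cong HQF$ of the displaced pair) and the coefficient bookkeeping that the paper leaves implicit, so no correction is needed.
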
 

\begin{proof}   
Suppose that $\phi$ is the flow of $H \in C^\infty_c((0,1) \times X
\qu G)$.  A generic small perturbation $H'$ of $H$ has the property
that every $(J,H')$-holomorphic quasistrip is regular by Theorem
\ref{tiHreg}.  Let $\phi' \in \on{Diff}^h(X \qu G)$ denote the flow of
$H'$.  For $H'$ sufficiently small, the number of points in
$\phi'(L_\lambda) \cap L_\lambda$ is the same as $\phi(L_\lambda) \cap
L_\lambda$.  By Theorem \ref{hqfhm}, this number is at least
$2^{\dim(T)}$, which proves the proposition.
\end{proof} 

The following characterization of the class of non-compact symplectic
toric manifolds admitting presentations as symplectic quotients of
vectors spaces (so that the main result Theorem \ref{main} applies)
resulted from discussions with E. Lerman and M. Abouzaid, and appears
in \cite[Theorem 1.14]{le:nc}.

\begin{proposition} 
A non-compact symplectic toric orbifold $Y$ with action of a torus $T$
and moment map $\Psi: Y \to \t^\dual$ admits a presentation as a
symplectic quotient of a vector space $X$ by a torus $G$ if and only
if the following three conditions are satisfied: (i) $\Psi$ is proper,
(ii) $\Psi(Y)$ has finitely many facets, and (iii) $\Psi(Y)$ has at
least one vertex.
\end{proposition}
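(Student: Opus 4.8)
The plan is to prove the two implications separately: the ``only if'' direction is elementary, and the ``if'' direction is a Delzant-type reconstruction whose substantial analytic inputs — the convexity theorem for proper Hamiltonian torus actions on orbifolds and the classification of non-compact symplectic toric orbifolds by labeled polyhedra — I would import from \cite{le:nc}.

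For the forward direction, suppose $Y = X \qu G$ with $X \cong \C^N$, $H = U(1)^N$, $G \subset H$ a subtorus, $T = H/G$, and $\Psi$ the residual moment map. Condition (ii) is immediate from \eqref{lineq}, which presents $\Psi(Y)$ as the intersection of the $N$ half-spaces $\{l_i \ge 0\}$. For (i), the $H$-moment map $\Phi_H$ on $\C^N$ is, up to a constant, the proper map $z \mapsto (|z_j|^2/2)_{j=1}^N$; writing $\t^\dual = \on{ann}(\g) \subset \h^\dual$ one has $\Phi_H^{-1}(\t^\dual) = \Phinv(0)$ and $\Psi$ is $\Phi_H|_{\Phinv(0)}$ descended to $X \qu G$, so the $\Psi$-preimage of a compact set $C$ is the image under $\Phinv(0) \to X \qu G$ of the compact set $\Phi_H^{-1}(C \cap \t^\dual)$, whence $\Psi$ is proper. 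For (iii), the surjection $\h \to \t$ sends $e_1,\dots,e_N$ onto $-v_1,\dots,-v_N$, so the $v_i$ span $\t$; thus $\{\mu \in \t^\dual : \langle \mu, v_i\rangle = 0\ \forall i\} = 0$, the recession cone of the nonempty polyhedron $\Psi(Y)$ is strongly convex, and $\Psi(Y)$ has a vertex.

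For the converse, assume (i)--(iii) and set $\Delta = \Psi(Y)$. First I would invoke the convexity theorem for proper moment maps (\cite{le:nc}) to see that $\Delta$ is a closed convex polyhedron; by (ii) it has finitely many facets $F_1,\dots,F_N$, by (iii) at least one vertex, and — since $Y$ is a symplectic toric orbifold — each $F_j$ carries a well-defined inward lattice normal $v_j \in \t_\Z$ (not necessarily primitive, recording the orbifold structure along $F_j$) and a constant $c_j$ with $\Delta = \{\lambda : \langle \lambda, v_j\rangle \ge c_j\}$; moreover $\Delta$ is simple (each vertex is cut out by exactly $\dim T$ facets whose normals form a $\Q$-basis of $\t$) and the $v_j$ span $\t$. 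Then I would run the Delzant construction: let $\pi : \R^N \to \t$ be the surjection $e_j \mapsto -v_j$, with rational kernel $\k$; let $G \subset H := U(1)^N$ be the subtorus with Lie algebra $\k$, so $H/G \cong T$; and equip $X = \C^N$ with the standard $H$-action and moment map $\Phi_H$ normalized by the constants $c_j$, inducing the $G$-moment map $\Phi$.

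The key point to check is that $G$ acts locally freely on $\Phinv(0)$: for $z \in \Phinv(0)$ the $H$-stabilizer is $\prod_{j : z_j = 0} U(1)$, and the class of $z$ in $X \qu G$ has $\Psi$-image in the nonempty face $\bigcap_{j : z_j = 0} F_j$ of $\Delta$; simplicity forces $\{v_j : z_j = 0\}$ to be linearly independent, so $\k \cap \on{span}(e_j : z_j = 0) = 0$ and the $G$-stabilizer of $z$ is finite. Hence $X \qu G$ is a symplectic toric orbifold with residual $T$-action, with $\Psi$ proper by the forward argument, and a direct identification shows its labeled moment polytope is exactly $(\Delta,(v_j))$. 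Finally I would apply the uniqueness part of the classification of non-compact symplectic toric orbifolds with proper moment map \cite{le:nc}: two such orbifolds with the same labeled polyhedron are $T$-equivariantly symplectomorphic, so $X \qu G \cong Y$. The main obstacle is precisely that the non-compact orbifold convexity theorem and this uniqueness statement are nontrivial results internal to \cite{le:nc}; in a write-up I would carry out the forward direction and the set-up and local-freeness check of the Delzant construction in full, and cite \cite{le:nc} for the convexity and classification inputs.
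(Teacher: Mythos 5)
Your proof is correct, and its reliance on \cite{le:nc} for the hard non-compact inputs matches the level at which the paper itself argues (the statement is attributed to \cite[Theorem 1.14]{le:nc}), but the route differs from the paper's in two places. For the forward direction the paper gets (i)--(ii) the same way you do, but proves (iii) geometrically: the proper function $f(z)=\sum_j |z_j|^2$ descends to a proper function on $X \qu G$ generating a Hamiltonian circle action whose minimum is a \emph{compact} symplectic toric orbifold, hence contains a $T$-fixed point, which is a vertex of $\Psi(Y)$; you instead argue purely convex-geometrically from \eqref{lineq} (the normals $v_i$ span $\t$, so the polyhedron contains no line and has an extreme point), which is more elementary and self-contained but leans on the exactness of the presentation \eqref{lineq}. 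For the converse the paper is much shorter: it quotes from \cite{le:nc} that $\Psi(Y)$ sits in the interior of the moment image of a linear Hamiltonian $T$-action on a vector space $Z$, and then observes that $Y$ is obtained by symplectic cutting, which automatically exhibits $Y$ as a torus quotient of a vector space (cutting along the $N$ facets is reduction of $Z\times \C^N$); you instead run the Delzant/Lerman--Tolman construction from the labeled polyhedron and then invoke the uniqueness part of the classification of non-compact toric orbifolds with proper moment map. The two converses are logically comparable -- the paper's cutting argument also implicitly needs the proper-case classification to identify the cut space with $Y$ -- but yours is more explicit about exactly which statements (convexity, simplicity/rationality with labels, uniqueness) are being imported, at the cost of essentially re-deriving that direction of \cite[Theorem 1.14]{le:nc}; the one caveat worth flagging in a write-up is that \cite{le:nc} is stated for manifolds, so the orbifold uniqueness you use requires the labeled-polyhedron refinement (which your construction does carry along), a point the paper glosses over in the same way.
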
 

Indeed, any $Y$ which can be realized as $X \qu G$ satisfies these
conditions: (i) and (ii) follow from the corresponding property the
$N$-torus action on $X \cong \C^N$, while (iii) follows from the fact
that $f: X \to \R, (z_1,\ldots, z_N) \mapsto \sum_{j=1}^N |z_j|^2$ is
proper and descends to a proper function $f \qu G$ on $Y $ and
generates a Hamiltonian circle action. Its minimum is a compact
symplectic toric manifold and as such automatically contains a
$T$-fixed point, which is then a vertex of $\Psi(Y)$.  Conversely, any
symplectic toric manifold satisfying these conditions has the property
that $\Psi(Y)$ is contained in the interior of the moment image of
some Hamiltonian $T$-action on a vector space $Z$, see \cite{le:nc},
and so can be constructed by symplectic cutting.  

Theorem \ref{main} can be generalized to the case of an arbitrary
symplectic toric manifold with proper moment map such that the moment
polytope has finitely many facets (not necessarily a symplectic
quotient) as follows.  Any convex polyhedron $P$ is a prism over a
convex polyhedron $Q$ containing a vertex, that is, isomorphic to $Q
\times R$ for some vector space $R$.  By the classification in
\cite{le:nc} it follows:

\begin{corollary} \label{prodcor} Suppose that $Y$ is a  
symplectic toric orbifold $Y$ with proper moment map $\Psi$ such that
$\Psi(Y)$ has finitely many facets.  Then $Y$ is symplectomorphic to
the product of a quotient $X \qu G$ of a vector space $X$ by a torus
$G$ with a cotangent bundle $T^* (S^1)^r$, for some $r \ge 0$.
\end{corollary}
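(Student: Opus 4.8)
The plan is to reduce the statement to the Delzant-type classification of symplectic toric orbifolds together with the characterization of quotients of vector spaces recalled in the Proposition above. Write $P := \Psi(Y) \subset \t^\dual$, a rational convex polyhedron with finitely many facets by hypothesis. First I would split off the lineality space $R \subset \t^\dual$ of $P$, the largest linear subspace with $P + R = P$. Since $P$ is rational, $R$ is a rational subspace, so $(\t^\dual)_\Z \cap R$ is a saturated full-rank sublattice and there is a rational direct-sum decomposition $\t^\dual = Q_0 \oplus R$ identifying $P$ with $Q \times R$, where $Q \subset Q_0$ is a \emph{pointed} rational polyhedron (one possessing a vertex) with finitely many facets. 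Dually this gives a splitting $T \cong T_Q \times T_R$ of the acting torus with $T_R \cong (S^1)^r$, $r := \dim R$, and the facets of $P$, together with their orbifold labels, all come from the $Q$-factor.

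Next I would invoke the classification of symplectic toric orbifolds by their labelled moment polytopes (Lerman--Tolman), which is compatible with products: the decomposition $P \cong Q \times R$ of labelled polytopes yields a $(T_Q \times T_R)$-equivariant symplectomorphism $Y \cong Y_Q \times Y_R$, where $Y_Q$ resp.\ $Y_R$ realizes the labelled polytope $Q$ resp.\ $R$. The polytope $R$ fills its ambient vector space, so $Y_R$ carries no boundary and no nontrivial orbifold structure; by the same uniqueness statement $Y_R$ is the unique symplectic toric orbifold with moment polytope $\R^r$, namely $T^*(S^1)^r$ with its standard structure (the moment map $\R^r \times (S^1)^r \to \R^r$ is a trivial principal bundle and the invariant symplectic form is normalized to the canonical one).

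Finally, since $\Psi = \Psi_Q \times \Psi_R$ under this decomposition and $\Psi_R \colon T^*(S^1)^r \to \R^r$ is proper, properness of $\Psi$ forces $\Psi_Q$ to be proper; and $\Psi_Q(Y_Q) = Q$ has finitely many facets and at least one vertex. Hence $Y_Q$ satisfies conditions (i)--(iii) of the Proposition above, so $Y_Q \cong X \qu G$ for a Hermitian vector space $X$ and a subtorus $G$ (in the compact case this is just Delzant's construction for orbifolds), and therefore $Y \cong (X \qu G) \times T^*(S^1)^r$, as claimed. The main point to verify is the product claim in the second paragraph: that the affine product structure on labelled polytopes genuinely lifts to a product decomposition of the orbifolds compatible with the symplectic forms and moment maps. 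This is where the Lerman--Tolman classification is essential, and it uses that the $R$-factor contributes no facets, hence no orbifold weights, so that the labels split exactly as the polytope does.
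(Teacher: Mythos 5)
Your argument is essentially the paper's: the paper also writes the polyhedron as a prism $Q\times R$ over a pointed polyhedron and then invokes the classification of (non-compact) symplectic toric orbifolds by their moment polyhedra together with the preceding Proposition to conclude $Y\cong (X\qu G)\times T^*(S^1)^r$. The only difference is that you spell out the details (lineality space, splitting of labels, properness of $\Psi_Q$) and cite the compact Lerman--Tolman classification, whereas the paper appeals to the non-compact classification of Karshon--Lerman \cite{le:nc}, which is the version actually needed here since $Y$ may be non-compact.
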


\begin{proposition}  
Let $Y$ be as in Corollary \ref{prodcor}.  Then the statement of
Theorem \ref{main} holds.
\end{proposition}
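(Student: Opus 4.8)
The plan is to reduce the general case to Theorem \ref{main} for the vector-space quotient factor by a product argument. By Corollary \ref{prodcor}, the symplectic toric orbifold $Y$ is symplectomorphic to $(X \qu G) \times T^*(S^1)^r$ for some torus $G$ acting on a Hermitian vector space $X$ and some $r \ge 0$. A moment fiber $L_\lambda \subset Y$ over an interior point $\lambda$ of the moment polytope then splits, under this symplectomorphism, as a product $L_{\lambda'} \times L''$, where $L_{\lambda'} \subset X \qu G$ is a moment fiber of the quotient factor (over the image $\lambda'$ of $\lambda$ under the projection of polytopes) and $L'' \subset T^*(S^1)^r$ is the zero section, which is a Lagrangian torus. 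The moment polytope of $Y$ is a product $\Psi(X \qu G) \times \R^r$, and the linear functions $l_i$ defining the facets of $\Psi(Y)$ are pulled back from $\Psi(X \qu G)$; consequently the Hori--Vafa potential $W^G_\lambda$ for $Y$ is independent of the $\R^r$ directions and is exactly the Hori--Vafa potential of $X \qu G$ at $\lambda'$, viewed as a function on $H^1(L_\lambda,\Lambda_0) \cong H^1(L_{\lambda'},\Lambda_0) \oplus H^1(L'',\Lambda_0)$ that factors through the first summand.

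First I would record this splitting carefully at the level of polytopes and potentials: a critical point $b$ of $W^G_\lambda$ on $H^1(L_\lambda,\Lambda_0)$ exists if and only if a critical point $b'$ of the Hori--Vafa potential of $X \qu G$ exists on $H^1(L_{\lambda'},\Lambda_0)$, because the potential is constant in the $H^1(L'',\Lambda_0)$ directions and those directions therefore contribute no constraint. So the hypothesis of the Proposition gives a critical point of the potential for the moment fiber $L_{\lambda'}$ of the vector-space quotient $X \qu G$. Second, I would apply Theorem \ref{main} (which applies to $X \qu G$, a quotient of a vector space) to conclude that $L_{\lambda'} \subset X \qu G$ is non-displaceable.

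Third, I would upgrade non-displaceability of the factor $L_{\lambda'}$ to non-displaceability of the product $L_{\lambda'} \times L''$ in $(X \qu G) \times T^*(S^1)^r$. The cleanest route is to run the quasimap Floer machinery of Sections \ref{floer}--\ref{bimoduleiso} directly for $L_\lambda$ rather than invoking a Künneth theorem: the zero section $L'' \subset T^*(S^1)^r$ is a product of circles, all its holomorphic disks are constant (the cotangent bundle of a torus is aspherical and the zero section bounds no non-constant holomorphic disk for the standard complex structure), every such disk is regular, and the Morse--Floer $A_\infty$ algebra of $L_\lambda = L_{\lambda'} \times L''$ is the tensor product of that of $L_{\lambda'}$ with that of $L''$; since $\mu_0$ for $L''$ vanishes, the potential of $L_\lambda$ agrees with that of $L_{\lambda'}$ (this is just the corollary computing the gauged potential, applied to the product). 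Theorem \ref{hqfhm} then gives $HQF(L_\lambda^b;\Lambda_0) = HM(L_\lambda;\Lambda_0) \neq 0$ from the critical point $b$, and the Corollary at the end of Section \ref{bimoduleiso} gives non-displaceability exactly as in the proof of Theorem \ref{main} (including the cutoff argument in the non-compact case, since $L_\lambda$ is compact). The main obstacle is technical rather than conceptual: one must check that the clean-intersection compactness, exponential decay, and gluing results (Lemmas \ref{cleanquant}, \ref{expdecay}, Theorems \ref{trajcompact}, \ref{bound}) and the regularity results apply to Lagrangians in the non-quotient factor $T^*(S^1)^r$ — i.e. that $T^*(S^1)^r$ is convex in the required sense and its zero section behaves as expected — but this is standard, and alternatively one can phrase the whole argument as symplectic cutting: realize $Y$ as an open subset of a genuine vector-space quotient $Z \qu G'$, note that the moment fiber and its potential are unchanged, and apply Theorem \ref{main} verbatim to $Z \qu G'$, using that the displacing Hamiltonian for the compact Lagrangian $L_\lambda$ can be cut off to be supported in $Y$.
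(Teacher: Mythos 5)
Your main argument is essentially the paper's proof: you run the quasimap machinery for the $G$-action on $X \times T^*(S^1)^r$ (trivial on the second factor), use that the zero section of $T^*(S^1)^r$ bounds no nonconstant holomorphic disks and that convexity of the two factors (maximum principle factorwise) gives compactness, so the quasimap Floer cohomology of the product fiber is the tensor product of that of $L_{\lambda'}$ with the Morse homology of $(S^1)^r$ and nonvanishing obstructs displaceability exactly as in Theorem \ref{main}; your intermediate step ``apply Theorem \ref{main} to the factor and then upgrade'' is logically superfluous (factorwise non-displaceability would not by itself give non-displaceability of the product), but you do not actually rely on it since the product computation does the work. One caution: the closing ``alternatively, phrase everything as symplectic cutting'' is not sound as stated, because cutting adds facets to the polytope and hence new terms to the Hori--Vafa potential, so a critical point of the original potential need not persist for the cut presentation --- this is precisely the presentation-dependence issue noted in the remark following the proposition --- but since this is offered only as an aside, your primary argument stands.
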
 

\begin{proof}  
The Floer homology of a toric moment fiber in $Y$ is the tensor
product of Floer homologies of a moment fiber $(S^1)^r$ and a moment
fiber in $X \qu G$.  Consider the quasimap Floer theory for the action
of $G$ on $X \times T^*(S^1)^r$ trivial on the second factor.  The
various moduli spaces used to defined quasimap Floer cohomology are
compact, since any holomorphic maps in $X \times T^* (S^1)^r$ projects
to holomorphic maps in $X$ and $T^* (S^1)^r$, which are convex; one
may then apply the maximum principle on the factors.  The quasimap
Floer cohomology of $L_\lambda \times (S^1)^r$ is the tensor product
of quasimap Floer cohomology of $L_\lambda$ with the Morse homology of
$(S^1)^r$, since there are no non-trivial holomorphic disks in $T^*
(S^1)^r$ with boundary on $(S^1)^r$.  Non-vanishing obstructs
displaceability as before by the same argument involving the quasimap
Floer cohomology of the pair. \end{proof}

\begin{remark} The statement of Theorem \ref{main}, and so the Proposition,
depends on the choice of a realization of $X \qu G$ as a symplectic
quotient, of which there are several; it seems from computations to be
the case that the minimal presentation of $X \qu G$ (that is,
involving only irredundant inequalities) gives the same list of
non-displaceable fibers as the other presentations.\end{remark}

\begin{remark}   
The existence of a non-displaceable torus can be rephrased as follows
for those readers interested mainly in Hamiltonian dynamics: Recall
that a {\em quasifixed point} of a diffeomorphism $\varphi$ of a
symplectic $T$-orbifold $Y$ is an orbit $O \subset Y$ such that
$\varphi(O) \cap O \neq \emptyset$.  Any compact completely integrable
torus action is a projective toric variety, by Delzant's theorem and
its extension to orbifolds \cite{le:ha}.  Theorem \ref{main} then
implies: For any completely integrable Hamiltonian torus action on a
compact symplectic orbifold $Y$ with rational symplectic class and
possibly with orbifold singularities, there exists at least one orbit
$O \subset Y$ that is a quasifixed point for {\em any} Hamiltonian
diffeomorphism.
\end{remark} 

\section{Non-displaceability via bulk deformations}

In this section we briefly describe an extension of quasimap Floer
cohomology to include {\em bulk-deformations} as explored in
\cite{fooo:toric2}.  These groups give additional obstructions to
displaceability.

\subsection{Bulk deformations via twists}

We first give a somewhat naive definition of bulk-deformed \ainfty
algebra for degree two classes by twisting the coefficients of the
composition maps.  Let $X$ be a Hamiltonian $G$-manifold so
that $X \qu G$ is locally free, $L \subset X \qu G$ a Lagrangian
contained in the free locus, and $\ti{L} \subset X$ its inverse image
in $X$.  For $\alpha \in H^2_G(X,\ti{L};\Lambda_0)$, define an {\em
  $\alpha$-twisted} \ainfty algebra $CQF^\alpha(L)$ by 
$$ \mu_n^\alpha(\bra{x_1}, \ldots, \bra{x_n}) = \sum_{[u] \in
  \ol{MW}_{n}(x_0,\ldots,x_n)_0} (-1)^{\heartsuit} \eps(u) e^{\lan \alpha, [u] \ran } q^{A(u)}
\Hol_L(u) \bra{x_0} $$
well-defined via perturbations as above if every holomorphic disk with
boundary in $\ti{L}$ is regular.  The proof of \ainfty associativity
is the same as in Theorem \ref{ainfty}.  We denote by
$$W_\alpha^G : H^1(L,\Lambda_0) \to \Lambda_0, \quad \beta \mapsto \sum_{I(u) = 2} e^{\lan \alpha, [u] \ran } q^{A(u)} \Hol_{L^\beta}(u) $$
the $a$-deformed gauged potential corresponding to the family of brane
structures $L^\beta$ on $L$ determined by $\beta \in H^1(L,\Lambda_0)$.
Similarly given Lagrangians $L_0,L_1 \subset X \qu G$ and a class
$\alpha \in H^2_G(X,\ti{L}_0 \cup \ti{L}_1)$ there is, if every
holomorphic disk with boundary in $\ti{L}_0$ and $\ti{L}_1$ is
regular, an $\alpha$-deformed \ainfty bimodule $CQF(L_0,L_1)$,
isomorphic to $CQF(L)$ in the case $L_0 = L_1 = L$.  In particular, if
the free part of $HQF(L)$ is non-vanishing then $\ti{L}$ is not
displaceable by $H \in C^\infty_c([0,1] \times X)^G$.  This involves
no new analysis, but only a check that the \ainfty associativity
relations are unchanged by the twisting above.

Suppose that $X \qu G$ is a (possibly orbifold) projective toric
variety and $L_\lambda$ is a toric moment fiber.  Then $H_G^2(X,\C) =
\bigoplus_{i=1}^N \C c_1^G(\C_i) $ where $\C_i$ is the $i$-th
component of $\C^N$, and $c_1^G(\C_i)$ denotes the equivariant first
Chern class, and each of these classes vanishes on $\ti{L}_\lambda$.
Suppose that
$ \alpha = \sum_{i=1}^N \alpha_i c_1^G(\C_i) .$
Then for $\beta \in H^1(L_\lambda;\Lambda_0)$ we have
$$ W_{\lambda,\alpha}^G(\beta) = \sum_{i=1}^N \exp( \alpha_i + \lan v_i,
\beta \ran ) q^{l_i(\lambda)} $$
using the fact that the pull-back of $\alpha$ to $L$ vanishes, that
is, $\alpha$ is in the image of $H^2_G(X,\ti{L}_\lambda) \to
H^2_G(X)$, since each $c_1^G(\C_i)$ is a Thom class for the $i$-th
coordinate.

\begin{theorem}\label{bulk} If $W^G_{\alpha,\lambda}$ has a critical point
at $\beta \in H^1(L_\lambda;\Lambda_0)$, then
$HF^{\alpha,\beta}(L_\lambda,\Lambda_0)$ is isomorphic to
$H(L_\lambda,\Lambda_0)$, and $\ti{L}_\lambda$ is not displaceable by
the flow of any invariant time-dependent Hamiltonian.
\end{theorem}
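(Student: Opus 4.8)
The plan is to run the arguments proving Theorems \ref{main} and \ref{hqfhm} again, carrying the twisting factor $e^{\lan \alpha, [u]\ran}$ along as one more multiplicative weight attached to each contributing configuration. The first observation is that, since the relative class $[u] \in \pi_2(X,\ti{L}_\lambda)$ is additive under every gluing of treed quasidisks and quasistrips, the weight $e^{\lan\alpha,[u]\ran}$ behaves exactly like the holonomy $\Hol_L(u)$ and the area weight $q^{A(u)}$ already do under the boundary decompositions of Theorems \ref{mwbound}, \ref{bound}, \ref{mwde}: it is multiplicative across broken configurations. Hence the $\alpha$-twisted maps $\mu_n^\alpha$ satisfy the $\ainfty$ associativity relations and the $(\mu_{d|e}^\alpha)$ define an $\ainfty$ bimodule, with the same orientations and signs as in Theorem \ref{ainfty} and Section \ref{bimodule}; no new moduli spaces and hence no new analysis appear. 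I would then use the Cho--Oh classification (Proposition \ref{blaschke}) and the regularity Corollary: the only index-two disks are the $N$ Blaschke disks $u_i$ with $[\partial u_i]=v_i$, $A(u_i)=l_i(\lambda)$, $\lan\alpha,[u_i]\ran=\alpha_i$, so exactly as in Proposition \ref{mu0} and the Corollary preceding Theorem \ref{hqfhm} one gets $\mu_0^{\alpha,\beta}(1)=W^G_{\alpha,\lambda}(\beta)\,1_L$ for the brane $L_\lambda^\beta$, and the twisted analog of Corollary \ref{mu1} and Proposition \ref{partials} gives $\mu_1^{\alpha,\beta}(\gamma)=\partial_{[\gamma]}W^G_{\alpha,\lambda}(\beta)$ whenever $\mu_1^{\alpha,\beta}(\gamma;0)=0$.

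Granting a critical point $\beta$ of $W^G_{\alpha,\lambda}$, the preceding identity shows the first differential of the twisted Morse-to-Floer spectral sequence of Proposition \ref{spectral} vanishes on degree-one classes. Next I would repeat the induction of Theorem \ref{hqfhm}: since $HM(L_\lambda)=H(L_\lambda)$ is generated in degree one, the Leibniz rule coming from the twisted $\ainfty$ structure equation propagates the vanishing of $\mu_{1,\mdeg}^{\alpha,\beta}$ to all classes and all $\mdeg\in\pi_2(X,\ti{L}_\lambda)$ (the term $\mu_0^{\alpha,\beta}$ drops out by strict unitality), so every higher differential in the spectral sequence vanishes and $HF^{\alpha,\beta}(L_\lambda;\Lambda_0)\cong HM(L_\lambda;\Lambda_0)=H(L_\lambda;\Lambda_0)$, which in particular is a nonzero free $\Lambda_0$-module.

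For non-displaceability I would set up the $\alpha$-twisted bimodule $CQF^\alpha(L_\lambda,L_\lambda)$ and repeat the quilted and twice-quilted arguments of Section \ref{bimoduleiso} verbatim — again the twisting only alters coefficients, not moduli spaces — to obtain $HF^{\alpha,\beta}(L_\lambda;\Lambda)\cong HQF^\alpha(L_\lambda,L_\lambda)$, together with independence of the twisted bimodule under the choice of invariant Hamiltonian perturbation, over $\Lambda$. If $\ti{L}_\lambda$ were displaceable by the flow of an invariant time-dependent Hamiltonian $H$, then — after cutting $H$ off outside a compact neighborhood of the compact set $\cup_{t}\phi_t(\ti{L}_\lambda)$ exactly as at the end of the proof of Theorem \ref{main}, so that the cut-off flow still displaces $\ti{L}_\lambda$ — the twisted bimodule $CQF^\alpha(L_\lambda,\phi(L_\lambda))$ would have empty set of generators and hence vanishing cohomology, contradicting the non-vanishing of the free part of $HF^{\alpha,\beta}(L_\lambda;\Lambda_0)$, which survives passage to $\Lambda$ coefficients. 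The only point that genuinely needs to be checked is the very first one, namely that inserting the factor $e^{\lan\alpha,[u]\ran}$ is compatible with all the gluing, orientation and sign conventions used in Sections \ref{algebra}--\ref{bimoduleiso}; I expect this to be immediate bookkeeping rather than a real obstacle, precisely because the weight is additive in homotopy class in the same way as $q^{A(u)}$, and $\lan\alpha,[u]\ran\in\Lambda_0$ guarantees that the exponential is well-defined over $\Lambda_0$.
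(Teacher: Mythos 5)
Your proposal is correct and follows essentially the same route as the paper, whose proof of Theorem \ref{bulk} simply observes that the argument of the untwisted case (Theorems \ref{main} and \ref{hqfhm}) goes through verbatim once one checks that the multiplicative, gluing-additive weight $e^{\lan \alpha, [u] \ran}$ does not disturb the \ainfty relations, the computation of the deformed potential via the Blaschke disks, the divisor/Leibniz induction in the spectral sequence, or the bimodule displaceability argument. You have merely written out the details the paper leaves implicit, so no further comment is needed.
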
 

The proof is the same as in the untwisted case.  We refer to Fukaya et
al \cite{fooo:toric2} for examples and computations.

\subsection{Bulk deformations via insertions} 

We now explain the connection with ``bulk insertions''.  Let
$\ol{M}_{d;e}$ denote the moduli space of stable disks with $d$
markings on the boundary and $e$ markings in the interior.  The
boundary strata are formed when markings on the boundary come together
to form disks, interior markings in the interior go to the boundary to
form disks, or interior markings come together to form spheres.

\begin{figure}[ht]
\includegraphics[height=1in]{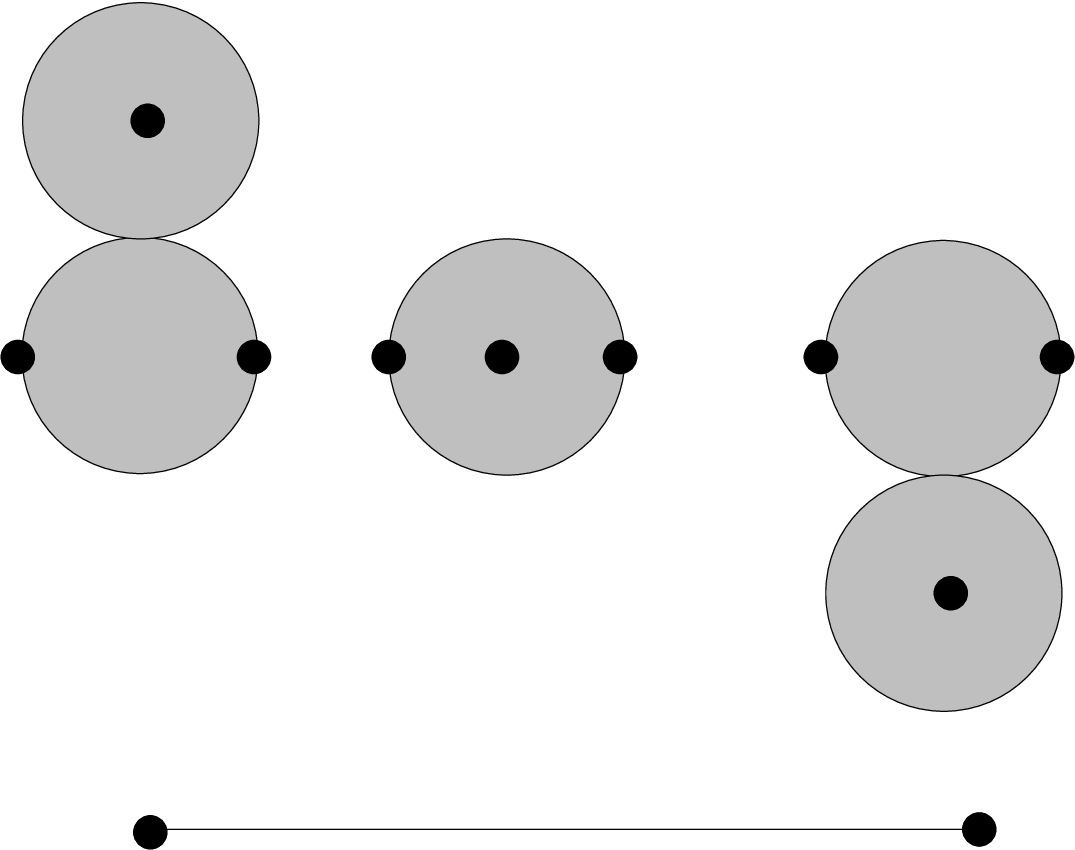}
\caption{Moduli space of stable $2;1$ marked disks}
\end{figure} 

Similarly let $\ol{MW}_{d;e}$ denote the moduli space of stable treed
disk with $d$ markings on the boundary and $e$ markings in the
interior; this means that there are, in addition to disk and sphere
components, dimension one components attached to the boundary.  See
Figure \ref{treeddedisk} for the case $(d;e) = (2,1)$, where the 
edges attached to boundary markings have been omitted to save space. 
\begin{figure}[ht]
\includegraphics[height=1in]{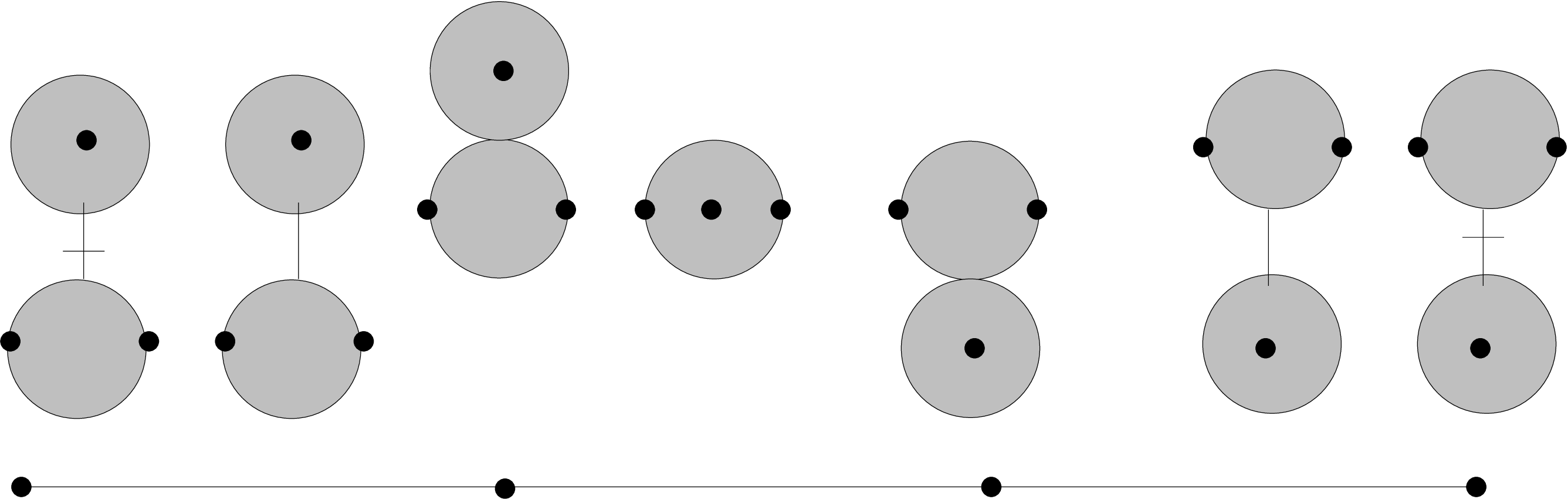}
\caption{Moduli space of treed disks with interior marking}
\label{treeddedisk}
\end{figure} 
The boundary of $\ol{MW}_{d;e}$ is the same as that for
$\ol{MW}_{d;e}$, but there are also codimension two strata
corresponding to sphere bubbles.  Standard arguments imply that
$\ol{MW}_{d;e}$ is compact and local descriptions of the moduli space
may be obtained by combining the local descriptions of $\ol{MW}_d$
with local descriptions of the moduli space of stable marked curves;
these are left to the reader.

Let $X$ and $L \subset X \qu G$ be as above.  A {\em treed holomorphic
  $(d;e)$-marked quasidisk} to $(X,\ti{L})$ consists of a treed
$(d;e)$-marked disk together with a continuous map $u: \Sigma \to X$
which is holomorphic on each disk and sphere component and a gradient
trajectory on each edge.  A treed holomorphic quasidisk is {\em
  stable} if it has no infinitesimal automorphisms, and each node
connecting two segments maps to a critical point.  Let
$\ol{MW}_{d;e}(L)$ denote the moduli space of stable tree marked
quasidisks.  Let $\ol{MW}^{\fr}_{d;e}(X,L)$ denote the moduli space of
{\em framed} quasidisks: this means that we do not mod out by the
$G$-action, so that $\ol{MW}_{d;e}(X,L) = \ol{MW}^{\fr}_{d;e}(X,L)/G$.
Associated to any treed holomorphic $(d;e)$-marked disk is a {\em
  linearized operator} $D_u$ as before.  The map $u$ is {\em regular}
if $D_u$ is surjective.

\begin{theorem}  The moduli space $\ol{MW}^{\reg}_{d;e}(X,L)$ 
resp. $\ol{MW}^{\fr,\reg}_{d;e}(X,L)$ of {\em regular} holomorphic
$(d;e)$-marked treed disks resp. framed $(d;e)$-marked treed disks has
the structure of a topological manifold with boundary given by the
union over strata corresponding to types where a tree has broken off,
resp. principal $G$-bundle over $\ol{MW}^{\reg}_{d;e}(X,L)$.
\end{theorem}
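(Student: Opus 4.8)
The plan is to reduce this statement to the corresponding results already established for the moduli spaces $\ol{MW}_n(L)$ of treed quasidisks (Theorems \ref{mwsmooth}, \ref{regpert}, \ref{mwbound}), since adding interior markings changes neither the analytic setup nor the gluing estimates in any essential way. First I would set up, exactly as in Section \ref{quasidisks}, the local Fredholm model: near a stable treed $(d;e)$-marked quasidisk $u:\Sigma\to X$ of combinatorial type $\Gamma$ one writes $MW_{\Gamma,d;e}(X,L)$ as the quotient, by $G\times\Aut(\Sigma)$, of the zero set of a map $\cF_u$ of the same form as in Section \ref{quasidisks}, but with the deformation space $\Def_\Gamma(\Sigma)$ enlarged to include the positions of the interior markings (on disk and sphere components) and, for sphere bubbles, the gluing and nodal-matching parameters familiar from the theory of stable marked curves. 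The third-factor matching condition at nodes is still the difference of evaluation maps as in \eqref{diffs}, and at interior nodes between sphere and disk (or sphere and sphere) components one uses the matching in $X$ rather than in $\ti{L}$. Asphericity of $X$ (standing assumption, as in the rest of this section) forces every sphere component to be constant, so the sphere bubbles only contribute a finite-dimensional family of extra automorphisms/deformations with no new analytic content; this is the point that keeps the linearized operator $D_u$ Fredholm.

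Next I would prove the smooth-manifold-with-corners statement for the regular locus. Granting $D_u$ surjective, the implicit function theorem for Banach manifolds gives that $\cF_u^{-1}(0)$ is a smooth manifold with tangent space $\ker(D_u)$; freeness and properness of the $G$-action follow because $G$ acts freely on $\ti L$ (hence on the disk boundaries), and freeness and properness of $\Aut(\Sigma)$ follow exactly as in the proof of Theorem \ref{trajreg} — an automorphism fixing $Gu$ and a boundary point must be trivial, and the presence of interior markings only makes stabilizers smaller, not larger. So $\ol{MW}^{\reg}_{d;e}(X,L)$ is locally a smooth manifold; the manifold-with-corners / topological-manifold-with-boundary structure near codimension-one strata comes from the gluing construction, which is verbatim that of Theorem \ref{mwbound} (clean-intersection gluing from Theorem \ref{bound}, Morse-trajectory gluing, and disk gluing from Abouzaid \cite{ab:ex}), the interior markings simply being carried along by the preglued map. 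The stratification of the boundary by types where a tree has broken off is then read off exactly as in \eqref{boundary}, with the codimension-two strata (interior markings colliding to form a constant sphere, or an interior marking running to the boundary) genuinely codimension two by a dimension count using asphericity, hence not part of the boundary of a one-manifold.

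For the framed version $\ol{MW}^{\fr,\reg}_{d;e}(X,L)$, the point is simply to run the same analysis without taking the quotient by $G$: the zero set of $\cF_u$, quotiented only by $\Aut(\Sigma)$, is a smooth manifold on which $G$ acts freely and properly (again because $G$ acts freely on $\ti L$), so $\ol{MW}^{\fr,\reg}_{d;e}(X,L)\to\ol{MW}^{\reg}_{d;e}(X,L)$ is a principal $G$-bundle. I expect the only place needing genuine care — the main obstacle — is the bookkeeping for sphere bubbling in the compactness/gluing part: one must check that, with interior markings present, bubbled configurations still "connect" (the bubble-tree is connected and each sphere carries enough special points), so that Gromov convergence produces a stable object in $\ol{MW}_{d;e}(X,L)$ and the gluing map is a local homeomorphism onto a neighborhood of the corresponding boundary stratum. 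Since $X$ is aspherical this is a finite combinatorial check rather than a new estimate, which is why the theorem can reasonably be left with the local descriptions "left to the reader," but it is the step where the details actually live. The orientation and the identification with a principal $G$-bundle then follow formally from the corresponding facts in \cite{orient} together with the left-invariant spin structure on $G$, as in the discussion preceding Proposition \ref{orient}.
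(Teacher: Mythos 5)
There is a genuine gap, and it is precisely the step you dismiss as routine. Your proposal rests on the claim that the topological-manifold-with-boundary structure of the full compactified moduli space $\ol{MW}^{\reg}_{d;e}(X,L)$ "comes from the gluing construction, which is verbatim that of Theorem \ref{mwbound}." But Theorem \ref{mwbound} (like Theorem \ref{bound}) only does two things: it shows each open stratum is a smooth manifold via the implicit function theorem, and it shows that each boundary configuration of the \emph{one-dimensional} component is the limit of a unique one-parameter family of glued solutions. That is much weaker than what the present theorem asserts, namely a global topological manifold-with-boundary structure on the compactified moduli space in \emph{all} dimensions — which is exactly what is needed later to pair $\ev^*a$ with a (relative) fundamental class. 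To get that, one must construct gluing charts near strata of arbitrary codimension, prove they are local homeomorphisms onto neighborhoods (injectivity and surjectivity in families, not just for isolated glued solutions), and verify that the charts coming from different strata are mutually compatible, e.g.\ $C^1$-compatible. The paper is explicit that this is not a corollary of the earlier results: already after Theorem \ref{mwbound} it remarks that the manifold-with-corners structure of $\ol{MW}_n(L)$ "requires a very complicated study of the gluing construction," and in the proof of the present theorem it refers to \cite{deform} for $C^1$-compatible charts in the closed case, states that the case with boundary is \emph{not} proved, and instructs the reader to treat the statement as an axiom. So the paper does not prove this theorem at all, and your reduction does not supply the missing analysis; it relabels it.

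Two smaller points. First, your treatment of interior markings and sphere bubbles as "carried along" with no new content presumes asphericity (reasonable in the toric application, but the constant-sphere components do introduce new automorphism/deformation bookkeeping at interior nodes that is not covered by the boundary-node analysis of Sections \ref{floer}--\ref{algebra}, and the paper leaves these local descriptions unproved as well). Second, your argument for the framed statement — run the same construction without quotienting by $G$ and observe that the free, proper $G$-action makes $\ol{MW}^{\fr,\reg}_{d;e}(X,L)\to\ol{MW}^{\reg}_{d;e}(X,L)$ a principal $G$-bundle — does agree with the paper's one-line justification; that part is fine, but it is conditional on the unestablished manifold structure of the base and total space.
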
 

This theorem requires a slightly more detailed analysis of the gluing
construction, see the review section of \cite{deform} which constructs
$C^1$-compatible charts for the moduli space of holomorphic maps on
surfaces without boundary.  (Since we do not give a proof for the case
with boundary, the reader should feel free to consider this an
axiom for the following discussion.)  The claim on
$\ol{MW}^{\fr,\reg}_{d;e}(X,L)$ follows from the fact that any free
differentiable $G$-action has the structure of a principal $G$-bundle.

The moduli space admits combined evaluation maps to $\crit(F)^{d+1}$,
by evaluation at the boundary markings, and an evaluation map at the
interior markings
$$ \ev^\fr: \ol{MW}^{\fr}_{d;e}(X,L) \to X^e .$$
By the second part of the theorem, there is a classifying map to the
Borel construction $\ol{MW}^{\fr,\reg}_{d;e}(X,L) \to EG .$ Combining
this with the framed evaluation maps we obtain evaluation maps at the
interior markings
$$ \ev: \ol{MW}^{\reg}_{d;e}(X,L) \to X_G^e .$$
Assuming every point in the moduli space is regular, ``integration'' over these
moduli spaces defines operations
\begin{multline}
 \mu_{d;e} : CQF(L)^{\otimes d} \otimes Z_G(X)^{\otimes e} \to CQF(L)
 \\
\bra{x_1} \otimes \ldots \otimes \bra{x_d} \otimes  a_1 \otimes \ldots \otimes a_e \\
\mapsto \sum_{\mdeg \in
  \pi_2(X,\ti{L})} q^{A(\mdeg)} \Hol_L(\mdeg) 
( \ol{MW}_{d;e}(X,L;x_0,\ldots,x_d;\mdeg), 
\ev^*_1 a_1 \cup \ldots \cup \ev^*_e a_e ) 
\bra{x_0} \end{multline}
where $A(\mdeg)$ denotes the symplectic area, depending only on on the
homotopy class $\mdeg$ of $u$, $( \cdot, \cdot)$ denotes the pairing
of the cochain $\ev^* a = \ev_1^* a_1 \cup \ldots \cup \ev^*_e a_e $
with a chain representing the relative fundamental class, obtained by
for example triangulating $\ol{MW}_{d;e}(X,L;x_0,\ldots,x_d;\mdeg)$;
since $\ev^*a $ is closed the pairing is independent of the choice of
triangulation.  One can also define the pairing using equivariant de
Rham theory.  In this setting, the form $\ev^*a$ is obtained by
pull-back of an equivariant form $a \in \Omega_G(X)$ \cite{gu:eqdr} to
$\ol{MW}^\fr_{d;e}(X,L;x_0,\ldots,x_d;\mdeg)$ and then obtaining an
ordinary form on the quotient
$\ol{MW}_{d;e}(X,L;x_0,\ldots,x_d;\mdeg)$ using the Cartan
construction.

The operations satisfy a relation similar to that of \ainfty
associativity, (continuing to assume that every stable marked quasimap
is regular)
\begin{multline}
 0 = \sum_{k,i,J \subset \{1,\ldots,e\}}
 \mu_{d-i;e-|J|}(x_1,\ldots,x_j \\ \mu_{i;|J|}(x_{j+1},\ldots,x_{j+i};
 a_j, j \in J),\ldots,x_n; a_j, j \notin J) .\end{multline}
For any cycle $\alpha \in Z_G(X,\Lambda_+)$ we obtain an \ainfty algebra
with operations denotes $\mu^\alpha_n$ called the {\em bulk-deformed}
\ainfty algebra of $L$, by summing over all possible insertions:
$$ \mu^\alpha_d(x) = \sum_{e \ge 0} \mu_{d;e}(x;\alpha,\ldots,
\alpha)/e! .$$
Any cohomologous choice $\alpha' \in Z_G(X,\Lambda_+)$ gives a homotopic
\ainfty algebra, so that the {\em bulk-deformed Floer cohomology}
$HQF^\alpha(L) = H(\mu^\alpha_1)$ depends only on the class $[\alpha] \in H_G(X)$, up
to isomorphism.  The bulk-deformed invariants satisfy a divisor
relation for $\alpha \in Z^2_G(X)$ representing a class in the image of
$H^2_G(X,\ti{L}) \to H^2_G(X)$
$$ \mu_{\alpha,n}(b;\mdeg) = e^{ \lan [\alpha], \mdeg \ran} \mu_n(b;\mdeg) $$
obtained from the forgetful map $\ol{MW}_{d;e}(L,\mdeg) \to
\ol{MW}_{d;e-1}(L,\mdeg)$.  In particular, this divisor relation
implies that the bulk-deformed \ainfty algebra is equal to that
defined by twisting in the previous section for classes of degree $2$.

One also has a bulk-deformed \ainfty bimodule $CQF^\alpha(L_0,L_1)$
for a pair $L_0,L_1 \subset X \qu G$ of Lagrangians, obtained by
adding bulk insertions to the holomorphic strips, and an isomorphism
of \ainfty bimodules $CQF^\alpha(L_0,L_1) \to CQF^\alpha(L)$ in the
case of an equal pair $L = L_0 = L_1$.  In particular, if the free
part of $HF^{\alpha}(L,\Lambda_0)$ is non-trivial then $\ti{L}$ is not
displaceable by any flow generated by a $G$-invariant family $H \in
C^\infty_c([0,1] \times X)^G$.  However, except for $\alpha$ in
$H^2_G(X,\ti{L})$ these seem difficult to compute.

\section{Relationship to Floer cohomology in the quotient}

In this final section we outline a formal argument which relates the
potential of a symplectic quotient to the gauged potential by a
combined bulk-boundary quantum Kirwan map.  Let $\Sigma$ be the unit
disk equipped with standard area form $\Vol_\Sigma$.  For any $\eps
\in [0,\infty)$ let $\ol{M}_{d;e}(x_0,\ldots,x_d)_\eps$ denote the
  moduli space of solutions to the vortex equations on $\Sigma$ with
  area form $\eps \Vol_\Sigma$ with $d$ markings on the boundary, $e$
  markings in the interior, and boundary in $\ti{L}$, compactified as
  for the previous moduli spaces, with boundary markings mapping to
  $x_0,\ldots,x_d \in \crit(F)$, modulo holomorphic automorphisms of
  $\Sigma$ which preserve $\Vol_\Sigma$.  Let $ \ev:
  \ol{M}_{d;e}(x_1,\ldots,x_d)^\eps \to X_G^e $ denote the evaluation
  maps at the interior markings (obtained by combining with a
  classifying map as above).  Integration over
  $\ol{M}_{d;e}(x_1,\ldots,x_n)_\eps$ should define invariants
$$ \tau_{d;e}^\eps: CQF(L_\lambda,\Lambda_0)^{\otimes d} \otimes
  Z_G(X,\Lambda_0)^{\otimes e} \to \Lambda_0 $$
via the formula
$$ \tau_{d;e}^\eps(x_1,\ldots,x_d,\alpha) = \sum_{\mdeg} q^{A(\mdeg)}
\Hol_{L}(\mdeg) ([\ol{M}_{d;e}(x_1,\ldots,x_d;\mdeg)_\eps ], \ev^*
\alpha) .$$
If $\eps = 0$ then these moduli spaces are quasimap moduli spaces, but
with parametrized domain.  Define a {\em vortex potential}
$$ W_\lambda^\eps(\beta,\alpha) = \sum_{e \ge 1} (1/e!)  \tau_{d;e}
(\alpha,\ldots, \alpha) $$
where $\beta \in H^1(L,\Lambda_0)$ corresponds to the local system and
$\alpha \in Z_G(X,\Lambda_0)$.  Since the vortex moduli spaces are
always reducible free one expects these potentials to be independent
of $\eps$.  We study the large area limit $\eps \to \infty$.  We
consider, as in Gaio-Salamon \cite{ga:gw}, a sequence $(A_\nu,u_\nu)$
of vortices on $\Sigma = \R \times [0,1]$ with boundary in $\ti{L}$
with area form $\eps_\nu \Vol_{\Sigma}$ with $\eps_\nu \to \infty$.
Suppose that 
$$c_\nu := |\d_{A_\nu} u_\nu (z_\nu)| := \sup |\d_{A_\nu} u_\nu|  .$$
We denote by $\dist(z, \partial \Sigma)$ the distance of a point $z
\in \Sigma$ to the boundary $\partial \Sigma$.  Six types of bubbling
are possible (we do not exclude the possibility of sphere bubbling in
$X$ in the following discussion:)

\begin{enumerate} 
\item 
If $c_\nu/\eps_\nu \to \infty$ and $c_\nu \dist(z_\nu, \partial
\Sigma) \to \infty$ as $\nu \to \infty$ then after re-scaling and
passing to a subsequence one obtains a sphere bubble in $X$;
\item 
If $c_\nu/\eps_\nu \to \infty$ and $c_\nu \dist(z_\nu, \partial
\Sigma)$ has a finite limit as $\nu \to \infty$ then after re-scaling
and passing to a subsequence one obtains a disk bubble in
$(X,\ti{L})$;
\item If $c_\nu/\eps_\nu $ has a finite limit and $c_\nu \dist(z_\nu,
  \partial \Sigma) \to \infty$ as $\nu \to \infty$ then after
  re-scaling and passing to a subsequence one obtains a vortex on $\C$
  with values in $X$;
\item 
If $c_\nu/\eps_\nu$ has a finite limit and $c_\nu \dist(z_\nu,
\partial \Sigma)$ has a finite limit as $\nu \to \infty$ then after
re-scaling and passing to a subsequence one obtains a vortex on $\H =
\{ \on{Im}(z) \ge 0\}$ with boundary in $\ti{L}$;
\item 
If $c_\nu/\eps_\nu \to 0$ and $c_\nu \dist(z_\nu, \partial \Sigma)
\to \infty$ as $\nu \to \infty$ then after re-scaling and passing
to a subsequence one obtains a sphere bubble in $X \qu G$;
\item 
If $c_\nu/\eps_\nu \to 0$ and $c_\nu \dist(z_\nu, \partial \Sigma)$
has a finite limit as $\nu \to \infty$ then after re-scaling and
passing to a subsequence one obtains a disk bubble in $X \qu G$ with
values in $L$.
\end{enumerate} 

One expects the potential $W^\infty_\lambda(\beta,\alpha)$ counting
quasimap invariants to be related to the parametrized potential on the
quotient after incorporating vortex bubbles.  Let $MW_{d;e}(\H,X,L)$
denote the moduli space of treed vortices on $(\H,\R)$ with values in
$(X,\ti{L})$ with $d$ markings on the boundary and $e$ markings in the
interior, and $\ol{MW}_{d,e}(\H,X,L)$ its compactification allowing
disk and sphere bubbles.  The codimension one boundary strata consist
of configurations where a holomorphic disk with values in $\ti{L}$ has
bubbled off
$$ (\ol{MW}_{d_0;e_0}^{\fr}(\H,X,L) \times_{\ti{L}}
\ol{MW}_{d_1;e_1}(X,\ti{L}))/G^n $$
and configurations
$$ \ol{MW}^\fr_{r;e_0}(X \qu G, L) \times_{L^r} \prod_{j=1}^r
\ol{MW}_{|I_j|;e_j}(\H,X,L) $$
where $r$ vortices on $\H$ have bubbled off, leaving as the ``main
component'' a holomorphic disk in $X \qu G$ with values in $L$.  Let
$CQF(L)$ denote the quasimap Fukaya algebra defined above and suppose
that the Fukaya algebra $CF(L)$ of $L \subset X \qu G$ using
holomorphic disks in $X \qu G$ has been rigorously defined.  If we
assume that all moduli spaces are regular and compact, then one can
define an {\em open quantum Kirwan morphism}
$$Q\kappa_{X,L}: CQF(L)^{\otimes d} \to CF(L)$$
$$ (\bra{x_1},\ldots,\bra{x_d}) \mapsto \sum_{[u] \in
  \ol{M}_d(\H,X,L;x_0,\ldots,x_d)} (-1)^{\heartsuit} \eps(u) q^{A(u)}
\Hol_L(u) \bra{x_0}.
$$
Adding bulk insertions produces maps 
$$Q\kappa_{X,L}: CQF(L)^{\otimes d} \otimes Z_G(X) \to
CF(L)$$
\begin{multline} 
(\bra{x_1},\ldots,\bra{x_d};\alpha) \mapsto \sum_{\mdeg,e} (1/e!)
(-1)^{\heartsuit} q^{A(\mdeg)} \Hol_L(\mdeg) \\ (
\ol{M}_{d;e}(\H,X,L;x_0,\ldots,x_d), \ev_1^* \alpha \cup \ldots \cup
\ev_e^* \alpha) \bra{x_0}.
\end{multline}
Formally, $Q\kappa_{X,L}$ without bulk insertions satisfies the axioms
of an \ainfty-morphism, because the boundary components of the
one-dimensional strata correspond to facets of the multiplihedra as in
Ma'u-Woodward \cite{mau:mult}.  Similarly we have a version for
vortices on $\C$, which was already discussed in for example
Woodward-Ziltener \cite{manifesto} and formally produces a map 
$$ Q \kappa_X:  Z_G(X) \otimes \Lambda \to Z(X \qu G) \otimes \Lambda .$$
Since the bubbling that appears in the limit have been incorporated
into the bulk and boundary quantum Kirwan morphisms, one expects

\begin{conjecture} With $X, X \qu G $ and $L \subset X$ any $G$-Lagrangian
brane,
$$ W^\infty_\lambda(Q\kappa_{L,X}(\beta,\alpha), Q\kappa_X(\alpha)) =
W^0_\lambda(\beta,\alpha) .$$
\end{conjecture}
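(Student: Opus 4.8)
The plan is to prove the conjecture in two stages: first, that the vortex potential $W^\eps_\lambda(\beta,\alpha)$ is independent of the area parameter $\eps\in[0,\infty)$, so that the quasimap potential $W^0_\lambda$ coincides with the finite-$\eps$ vortex potential; and second, that the adiabatic limit $\eps\to\infty$ of this potential equals the (parametrized) disk potential on the quotient $X\qu G$ once its inputs are corrected by the bulk and boundary quantum Kirwan morphisms. For the first stage one forms, over a compact subinterval $[\eps_0,\eps_1]\subset(0,\infty)$, the parametrized moduli space $\bigcup_\eps \ol{M}_{d;e}(x_0,\ldots,x_d)_\eps$; granting the heuristic that vortex moduli spaces on the disk are cut out transversally (``reducible free'', in the spirit of Wehrheim's \cite{jw:vi} identification of vortices with quasimaps), this is a manifold with boundary only over the endpoints, so the signed count defining $W^\eps_\lambda$ is locally constant in $\eps$. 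At $\eps\to 0$ the equation $F_A + \eps\, u^*\Phi\,\Vol_\Sigma = 0$ forces $A$ to be gauge-trivial, so the count reduces to the quasimap count defining $W^0_\lambda$; hence $W^\eps_\lambda=W^0_\lambda$ for every finite $\eps$.

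The substance is the second stage, which follows Gaio--Salamon \cite{ga:gw}. Taking a sequence of vortices on $\Sigma$ with area forms $\eps_\nu\Vol_\Sigma$, $\eps_\nu\to\infty$, one applies the six-fold rescaling dichotomy listed above. Asphericity of $X$ excludes sphere bubbles in $X$ (type (1)); the energy--area identity together with the convexity hypothesis rules out escape of energy to infinity, so a convergence-modulo-bubbling statement holds. The limit is a configuration whose principal component is a parametrized $J$-holomorphic disk in $X\qu G$ with boundary on $L$ (together with sphere and disk bubbles in $X\qu G$ of types (5), (6)), carrying two families of bubble trees: at interior points, trees of $\C$-vortices with values in $X$ (type (3)); and along the boundary, trees of $\H$-vortices with boundary in $\ti L$ together with holomorphic disks in $(X,\ti L)$ (types (2), (4)). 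The first family is precisely what the bulk quantum Kirwan map $Q\kappa_X$ records, and the second is precisely what the open quantum Kirwan morphism $Q\kappa_{L,X}$ records, including the $\alpha$-insertions with their $1/e!$ weights. Thus the $\eps\to\infty$ limit of $W^\eps_\lambda(\beta,\alpha)$ should be the parametrized quotient potential $W^\infty_\lambda$ evaluated at $Q\kappa_{L,X}(\beta,\alpha)$ and $Q\kappa_X(\alpha)$.

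To convert this picture into an identity one proves a gluing theorem in the adiabatic limit: each broken configuration of the above type is the limit of a unique one-parameter family of honest vortices for $\eps$ large, obtained by combining the adiabatic vortex gluing of \cite{ga:gw} with the disk gluing and clean/transverse-intersection strip gluing already used in Sections~\ref{floer} and~\ref{algebra}. This exhibits the $\eps=\infty$ moduli space as a compact manifold whose boundary consists exactly of the Kirwan-corrected configurations, so that its $0$-dimensional count equals $W^\infty_\lambda(Q\kappa_{L,X}(\beta,\alpha),Q\kappa_X(\alpha))$; combined with the first stage this yields the conjecture. Equivalently, one may package the second stage algebraically: the data $Q\kappa_{L,X}$ (with bulk) satisfy the axioms of an $A_\infty$ morphism $CQF(L)\to CF(L)$ --- its one-dimensional moduli spaces degenerate along facets of the multiplihedra as in Ma'u--Woodward \cite{mau:mult} --- and $Q\kappa_X$ is a bulk-compatible ring morphism, whence a formal $A_\infty$ computation shows that push-forward carries a bounding cochain to a bounding cochain and intertwines the curvatures, i.e.\ the potentials.

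The main obstacle is the one the paper already flags: none of the relevant vortex moduli spaces --- those containing sphere bubbles, those in the quotient, or those for the standard integrable $J$ --- are generically regular, so the counts above are only defined once a system of virtual fundamental classes compatible with all boundary and bubbling strata has been constructed. On top of this, the adiabatic-limit analysis for vortices with Lagrangian boundary conditions (compactness modulo the six bubble types with no energy loss, exponential decay on the long necks, and the matching gluing theorem) has not been written down in the literature, and carrying it out rigorously, together with the bookkeeping of orientations and of the bulk insertions, is the bulk of the work; this is why the statement must remain conjectural here.
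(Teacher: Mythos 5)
The statement you are addressing is left as a conjecture in the paper, and the paper itself offers no proof of it --- only the heuristic program in the final section (the expectation that $W^\eps_\lambda$ is $\eps$-independent, the six-fold Gaio--Salamon bubbling dichotomy, and the packaging of the bubble trees into the bulk and open quantum Kirwan morphisms), explicitly deferred because it ``would require not only the compactness and gluing results above but a proper development of virtual fundamental classes in this setting.'' Your proposal is essentially a faithful transcription of that same program, organized into the same two stages, and you end by conceding that the key ingredients are missing. So as a comparison: you take the same route the paper sketches, and you correctly identify why it cannot currently be closed; but it is not a proof, and you should not present the first stage as established either. The $\eps$-independence claim rests on the phrase ``reducible free,'' which in the paper is only an expectation: to make it an argument you would need transversality for the parametrized moduli space over $[\eps_0,\eps_1]$ (not available for the integrable $J$ and with sphere bubbles in the compactification, hence again virtual classes), a compactness statement uniform in $\eps$ with no energy loss, and a separate analysis of the $\eps\to 0$ degeneration in the spirit of \cite{small}, none of which is supplied.

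Two further points where your sketch overstates what is known. First, you invoke asphericity of $X$ to exclude type (1) bubbles, but the conjecture is stated for a general Hamiltonian $G$-manifold $X$ and the paper deliberately does not exclude sphere bubbling in $X$ in this discussion; asphericity is only available in the toric model $X\cong\C^N$. Second, the assertion that $Q\kappa_{X,L}$ with bulk insertions intertwines bounding cochains and curvatures is itself part of what must be proved: the $A_\infty$-morphism property is only ``formal'' in the paper (the moduli of $\H$-vortices with Lagrangian boundary, their compactness, exponential decay on necks, orientations, and the adiabatic gluing theorem matching broken configurations to ends of the $\eps$-large moduli spaces have not been constructed). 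Naming these obstacles, as you do, is the honest conclusion; but the net content of your proposal is the paper's own conjectural outline rather than an argument that goes beyond it.
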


This conjecture is closely related to a question of Auroux (related to
the discussion in \cite{auroux:wall}), who asked whether $W_\lambda$
is related to $W^G_\lambda$ by a change of coordinates.  In particular
we conjecture that the ``bulk'' part of the necessary change of
coordinates is equivalent to that appearing in Givental's work as the
``mirror map''.  There are at least two explicit examples which
provide evidence for this conjecture, besides the conceptual framework
provided by the large area limit explained above: first, Auroux's
computation of the potential for the second Hirzebruch surface
\cite[Proposition 3.1]{auroux:wall} shows that the corrected potential
is related to the naive potential by a transformation equivalent to
Givental's mirror map.  Secondly, the computation in \cite{pand:disk}
relates a disk potential defined without markings on the boundary
(thus, not the potential that appears in \ainfty setting) to a naive
potential via a coordinate change that is the same as in the closed
case.  However, it is not clear in general how to fit the invariants
of \cite{pand:disk} into the above framework, in general; the relation
between the various disk potentials seems an interesting topic for
further investigation.

\def\cprime{$'$} \def\cprime{$'$} \def\cprime{$'$} \def\cprime{$'$}
  \def\cprime{$'$} \def\cprime{$'$}
  \def\polhk#1{\setbox0=\hbox{#1}{\ooalign{\hidewidth
  \lower1.5ex\hbox{`}\hidewidth\crcr\unhbox0}}} \def\cprime{$'$}
  \def\cprime{$'$}

\end{document}